\newcommand{\oset}[3][0ex]{%
	\mathrel{\mathop{#3}\limits^{
			\vbox to#1{\kern-2\ex@
				\hbox{$\scriptstyle#2$}\vss}}}}
\newtheorem*{thm*}{Theorem}
\newtheorem{thm}{Theorem}[section]{\bf}{\it}
\newtheorem{prop}[thm]{Proposition}
\newtheorem{lemma}[thm]{Lemma}
\newtheorem{cor}[thm]{Corollary}
\theoremstyle{definition}
\newtheorem{dfn}[thm]{Definition}
\theoremstyle{remark}
\newtheorem{rmk}[thm]{Remark}
\theoremstyle{remark}
\newtheorem{exm}[thm]{Example}
\newcommand{\dRRs}[2]{
	{\rm R\Gamma_{\dR}}
	(#1/#2)_{\bs}
}
\newcommand{\dRRo}[2]{
	{\rm R\Gamma^\dagger_{\dR}}
	(#1/#2)_{\bs}
}
\newcommand{\A}{\mathbb{A}}
\newcommand{\B}{\mathbb{B}}
\newcommand{\C}{\mathbb{C}}
\newcommand{\F}{\mathbb{F}}
\newcommand{\G}{\mathbb{G}}
\newcommand{\HH}{\mathbb{H}}
\newcommand{\LL}{\mathbb{L}}
\newcommand{\N}{\mathbb{N}}
\newcommand{\Q}{\mathbb{Q}}
\newcommand{\RR}{\mathbb{R}}
\newcommand{\R}{\mathbb{R}}
\newcommand{\T}{\mathbb{T}}
\newcommand{\Z}{\mathbb{Z}}
\newcommand{\cat}{\mathcal{C}}
\newcommand{\ra}{\rightarrow}
\newcommand{\mcA}{\mathcal{A}}
\newcommand{\mcB}{\mathcal{B}}
\newcommand{\mcC}{\mathcal{C}}
\newcommand{\mcF}{\mathcal{F}}
\newcommand{\mcD}{\mathcal{D}}
\newcommand{\mcO}{\mathcal{O}}
\newcommand{\mcP}{\mathcal{P}}
\newcommand{\mcR}{\mathcal{R}}
\newcommand{\mcU}{\mathcal{U}}
\newcommand{\mcX}{\mathcal{X}}
\newcommand{\mcY}{\mathcal{Y}}
\newcommand{\mfS}{\mathfrak{S}}
\newcommand{\mfX}{\mathfrak{X}}
\newcommand{\uOmega}{\underline{\Omega}}
\newcommand{\bs}{{\scriptscriptstyle\blacksquare}}
\newcommand{\del}{\partial}
\DeclareMathOperator{\catD}{Adic}
\DeclareMathOperator{\catC}{C}
\DeclareMathOperator{\an}{an}
\DeclareMathOperator{\Aff}{Aff}
\DeclareMathOperator{\colim}{colim}
\DeclareMathOperator{\dR}{dR}
\DeclareMathOperator{\eff}{eff}
\DeclareMathOperator{\et}{\acute{e}t}
\DeclareMathOperator{\Et}{Et}
\DeclareMathOperator{\FDA}{{FDA}}
\DeclareMathOperator{\Frob}{Frob}
\DeclareMathOperator{\gc}{gc}
\DeclareMathOperator{\Hom}{Hom}
\DeclareMathOperator{\id}{id}
\DeclareMathOperator{\Map}{Map}
\DeclareMathOperator{\op}{{op}}
\DeclareMathOperator{\Perf}{Perf}
\DeclareMathOperator{\PerfSm}{PerfSm}
\newcommand{\Prl}{{\rm{Pr}^{L}}}
\newcommand{\Prlm}{{\rm{CAlg}(\rm{Pr}^{L})}}
\newcommand{\Prlo}{{\rm{Pr}^{L}_\omega}}
\newcommand{\Prloo}{{\rm{CAlg}(\rm{Pr}^{L}_\omega)}}
\newcommand{\Prr}{\rm{Pr}^{R}}
\DeclareMathOperator{\Pro}{Pro}
\DeclareMathOperator{\proet}{pro\acute{e}t}
\DeclareMathOperator{\qcqs}{qcqs}
\DeclareMathOperator{\Rig}{Rig}
\DeclareMathOperator{\RigSm}{RigSm}
\DeclareMathOperator{\Sing}{Sing}
\DeclareMathOperator{\Sm}{Sm}
\DeclareMathOperator{\Spa}{Spa}
\DeclareMathOperator{\Spec}{Spec}
\DeclareMathOperator{\Spf}{Spf}
\DeclareMathOperator{\uhom}{\underline{Hom}}
\DeclareMathOperator{\fd}{fd}
\DeclareMathOperator{\Ch}{{Ch}}
\DeclareMathOperator{\DA}{{DA}}
\DeclareMathOperator{\Mod}{{-Mod}}
\DeclareMathOperator{\PerfDA}{{PerfDA}}
\DeclareMathOperator{\QCoh}{{QCoh}}
\DeclareMathOperator{\RigDA}{{RigDA}}
\DeclareMathOperator{\RigDAeff}{{RigDA^{\eff}}}
\DeclareMathOperator{\RigSH}{{RigSH}}
\DeclareMathOperator{\Psh}{{Psh}}
\DeclareMathOperator{\Sh}{{Sh}}
\DeclareMathOperator{\holim}{holim}
\renewcommand{\Im}{\mathsf{Im}}
\begin{document}
	\title{The de Rham-Fargues-Fontaine cohomology}

\author[Le Bras]{Arthur-C\'esar Le Bras}
\address{CNRS / Institut de Recherche Math\'ematique Avanc\'ee - Universit\'e de Strasbourg}
\email{lebras@math.cnrs.fr}
\urladdr{lebras.perso.math.cnrs.fr/}
\author[Vezzani]{Alberto Vezzani}
\address{Dipartimento di Matematica ``F. Enriques'' - Universit\`a degli Studi di Milano}
\email{alberto.vezzani@unimi.it}
\urladdr{users.mat.unimi.it/users/vezzani/}

\thanks{The authors are partially supported by the {\it Agence Nationale de la Recherche} (ANR), 
	project ANR-19-CE40-0015 and  ANR-18-CE40-0017.}

\begin{abstract}
 We show how to attach to any rigid analytic variety $V$ over a perfectoid space $P$ a rigid analytic motive over the Fargues-Fontaine curve $\mcX(P)$ functorially in $V$ and $P$. We combine this construction with the overconvergent relative de Rham cohomology to produce a complex of solid quasi-coherent sheaves over $\mcX(P)$, and we show that its cohomology groups are vector bundles if $V$ is smooth and proper over $P$ or if {$V$ is quasi-compact and} $P$ is a {perfectoid} field, {thus proving and generalizing a conjecture of Scholze.} %
 The main ingredients of the proofs are explicit $\B^1$-homotopies, the motivic proper base change and the  formalism of solid quasi-coherent sheaves.
\end{abstract}

\maketitle
\setcounter{tocdepth}{1}
\tableofcontents

\section{Introduction}

The aim of this article is twofold. On the one hand, we define a \emph{relative} version of the overconvergent de Rham cohomology for rigid analytic varieties over an{ {(admissible)} } adic space $S$ in characteristic zero, generalizing the work of Gro\ss e-Kl\"onne \cite{gk-over,gk-fin,gk-dR} for rigid varieties over a field. We prove that this cohomology theory can be canonically defined for any variety $X$ locally of finite type over $S$, takes values in the infinity-category of solid quasi-coherent $\mcO_S$-modules, in the sense of Clausen and Scholze \cite{scholze-an}, is functorial, has \'etale descent and is $\B^1$-invariant. In particular, we deduce that it is \emph{motivic}, i.e., it can be defined as a contravariant realization functor
$$
\dR_S \colon\RigDA(S)\to\QCoh(S)^{\op} 
$$
on the (unbounded, derived, stable, \'etale) category $\RigDA(S)$ of rigid analytic motives over $S$ with values in the {infinity}-category of solid quasi-coherent $\mcO_S$-modules. As a matter of fact, in order  to prove the properties above we make extensive use of the theory of motives, and more specifically of their  six-functor formalism \cite{agv} and of a homotopy-based relative version of Artin's approximation lemma (\Cref{thm:oc=}) inspired by the absolute motivic proofs given in \cite{vezz-mw}. {If $X$ is a proper smooth rigid variety over $S$, $\dR_S(X)$ is a perfect complex, whose cohomology groups are vector bundles}. To prove this finiteness result, {we combine} the characterization of {dualizable objects in $\QCoh(S)$ due to Andreychev \cite{andreychev} (see also\cite{scholze-an})}, the motivic proper base change  and  the ``continuity'' property for rigid analytic motives (see \cite{agv}). The latter result, which is based on the use of explicit rigid homotopies,  states that whenever one has a weak limit of adic spaces (in the sense of Huber) $X\sim\varprojlim X_i$, then any compact motive over $X$ has a model over some $X_i$. We apply this fact to reduce ourselves to the case $S=\Spa A$ with $A$ being a classical Tate algebra, and eventually to the case of a field {$S=\Spa(K,K^{\circ})$}, by considering the limit $x\sim\varprojlim_{x\in U} U$ whenever $x$ is a  closed point  (a  technique that was already exploited in \cite{scholze}).

On the other hand, in the second part of this paper, we define a motivic version {of a pullback functor along the relative Fargues-Fontaine curve  that }works for  smooth rigid analytic \emph{varieties} over a perfectoid {space} $P$ in positive characteristic. %
More specifically, we define a monoidal functor $\mcD$ from rigid analytic motives over $P$ to the category of rigid analytic motives over the relative Fargues-Fontaine curve $\mcX(P)$. This {lets} us associate to an adic space $V$ which is locally of finite type over $P$ the motive of a \emph{rigid analytic variety }over $\mcX(P)$ (and not a relatively perfectoid space!).  
Let us sketch the simple idea of the construction in the case where $P=\Spa(C,C^{\circ})$, {with $C$ a complete algebraically closed non-archimedean field of characteristic $p$}. %
The adic space $\mcY_{[0,\infty)}(C)$, as defined by Fargues and Fontaine, is equipped with an action of Frobenius $\varphi$ such that, for any quasi-compact neighborhood $U$ of the point $C$ one has $U\subset\varphi(U)$. %
By motivic continuity applied to $\Spa C\sim\varprojlim_{\varphi_*} U$   we can extend any motive $V$ over $C$ to some motive $U(V)$ defined on  $U$. We may also extend the (motivically invertible!) geometric Frobenius map $\varphi^*V\cong V$ to some gluing datum $U(V)\cong (\varphi_*U(V))|_U$ enabling us to stretch $U(V)$ to  $\mcY_{[0,\infty)}(C)$ and eventually to $\mcX(C)$.

 This motivic take on Dwork's trick {(see e.g. \cite{dJ-BT,kedlaya-dJ})} admits an explicit description when applied to varieties with good reduction %
  and, in general, gives a ``globalization'' of the motivic tilting equivalence $\RigDA(C)\cong\RigDA(C^\sharp)$ of \cite{vezz-fw} at the level of each classical point $C^\sharp$ of $\mcX(C)$. The functor  $\mcD$  above can be considered as being the avatar of the pullback $p^*$ along the map $p\colon\mcY_{(0,\infty)}(C)\to C$ as if it existed in adic spaces (and not just diamonds).

Putting the two main results above together, we are led to consider the composition
$$
\RigDA(P) {\overset{\mathcal{D}} \longrightarrow} \RigDA(\mcX(P))   {\overset{\dR_{\mcX(P)}} \longrightarrow} \QCoh(\mcX(P))^{\op}
$$
giving rise to a functorial cohomology theory for adic spaces which are locally of finite type over a perfectoid space $P$ \emph{in positive characteristic}, that takes values {in the category of solid quasi-coherent sheaves} on the relative Fargues-Fontaine curve $\mcX(P)$. {When $P$ is a geometric point, this is closely related to a conjecture which was formulated by  Fargues in  \cite[Conjecture 1.13]{fargues-icm}   and Scholze in \cite[Conjecture 6.4]{scholze-icm2} that we prove{ (that we prove, see below)}; but the construction makes good sense for any $P$.} More precisely (see \Cref{thm:main}):

\begin{thm*}
Let $P$ be an {{admissible}} perfectoid space of characteristic $p$. There is a functor
$$\begin{aligned}
\RigDA(P)&\to\QCoh(\mcX(P))\\
M&\mapsto {\mathrm{dR}_P^{\rm FF}(M)}
\end{aligned}
$$
where $\mathrm{QCoh}(\mcX(P))$ is the category of solid quasi-coherent sheaves over the {relative Fargues-Fontaine} curve $\mcX(P)$ with the following properties:
\begin{enumerate}
	\item It {satisfies } \'etale descent,  $\B^1$-invariance and a K\"unneth formula. 
\item For any untilt $P^\sharp$ of $P$, the pullback of $\mathrm{dR}_P^{\rm FF}(M)$ along $P^\sharp\to\mcX(P)$ 
is isomorphic to the overconvergent de Rham cohomology $\dR_{P^\sharp}(M^\sharp)$ of the motive
$M^\sharp$ corresponding to $M$ via the motivic equivalence $\RigDA(P)
\cong \RigDA(P^\sharp)$.
\item The object  ${\mathrm{dR}_P^{\rm FF}(M)}$ is a perfect complex of $\mathcal{O}_{\mcX(P)}$-modules whose cohomology sheaves are vector bundles, whenever $M$ is (the motive of) a smooth proper variety over $P$, or whenever $M$ is compact and $P$ is a perfectoid field.
\end{enumerate}
\end{thm*}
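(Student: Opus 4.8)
The plan is to define $\mathrm{dR}_P^{\rm FF}$ as the composite $\dR_{\mcX(P)}\circ\mcD$ of the motivic relative Fargues--Fontaine functor $\mcD\colon\RigDA(P)\to\RigDA(\mcX(P))$ from the second part of the paper with the relative overconvergent de Rham realization $\dR_{\mcX(P)}\colon\RigDA(\mcX(P))\to\QCoh(\mcX(P))^{\op}$ from the first part, and then to deduce each of the three properties from properties of the two factors. Property (1) is then essentially formal: since $\mathrm{dR}_P^{\rm FF}$ factors through $\RigDA(\mcX(P))$, the associated presheaf $U\mapsto\mathrm{dR}_P^{\rm FF}(M(U))$ on smooth rigid $P$-varieties inherits \'etale descent and $\B^1$-invariance, these being built into the motivic category, while the K\"unneth formula follows because $\mcD$ is monoidal (established when it is constructed) and $\dR_{(-)}$ is monoidal on compact, hence dualizable, objects, so the composite is too.

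For Property (2), recall that the classical points of $\mcX(P)$ are exactly the untilts $P^\sharp$ of $P$, each giving a morphism of adic spaces $\iota_{P^\sharp}\colon P^\sharp\to\mcX(P)$. The functor $\mcD$ is constructed precisely so that $\iota_{P^\sharp}^*\circ\mcD$ is the motivic tilting equivalence $\RigDA(P)=\RigDA(P^\flat)\xrightarrow{\ \sim\ }\RigDA(P^\sharp)$ of \cite{vezz-fw}; this is the content of the assertion that $\mcD$ ``globalizes'' the tilting equivalence over the curve, and is recorded when $\mcD$ is defined. On the other hand, $\dR_{(-)}$ is compatible with pullback along morphisms of adic spaces, i.e. there is a natural isomorphism $\iota_{P^\sharp}^*\circ\dR_{\mcX(P)}\cong\dR_{P^\sharp}\circ\iota_{P^\sharp}^*$; this base-change statement for the relative overconvergent de Rham realization is part of the first part's package and reflects the fact that the relative overconvergent de Rham complex commutes with base change after solidification. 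Composing the two isomorphisms yields $\iota_{P^\sharp}^*\,\mathrm{dR}_P^{\rm FF}(M)\cong\dR_{P^\sharp}(M^\sharp)$, which is (2). One must pay attention here to using the pullback appropriate to solid quasi-coherent sheaves, but this is handled by the compatibility statement.

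Property (3) follows a common pattern: obtain \emph{perfectness} from dualizability via Andreychev's theorem \cite{andreychev}, then upgrade to \emph{vector-bundle cohomology}. In both cases the input motive is dualizable in $\RigDA(P)$ — for $M=M(X)$ with $X/P$ proper smooth by Poincar\'e duality in the six-functor formalism of \cite{agv}, and for $M$ compact over a perfectoid field because, via the tilting equivalence $\RigDA(P)\cong\RigDA(P^\sharp)$ to a characteristic-zero perfectoid field, compact rigid motives are dualizable there. Since $\mcD$ and $\dR_{\mcX(P)}$ are monoidal, $\mathrm{dR}_P^{\rm FF}(M)$ is then a dualizable object of $\QCoh(\mcX(P))$, hence a perfect complex of $\mcO_{\mcX(P)}$-modules with coherent cohomology sheaves. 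For proper smooth $X/P$ I would moreover show, extending the construction of $\mcD$, that $\mcD(M(X))=M(Y)$ for a proper smooth rigid variety $Y$ over $\mcX(P)$: in the Dwork-trick construction one extends $X$ to a motive over a quasi-compact neighbourhood $U$ of the Frobenius-fixed point by motivic continuity, which — properness and smoothness being of finite type — upgrades a proper smooth variety over the weak limit to one over a finite level, and then one glues along the geometric Frobenius and descends along $\varphi^{\Z}$, both operations preserving properness and smoothness; then $\mathrm{dR}_P^{\rm FF}(M(X))=\dR_{\mcX(P)}(M(Y))$ is the relative overconvergent de Rham cohomology of a proper smooth family, so its cohomology sheaves are vector bundles by the finiteness theorem of the first part applied over $S=\mcX(P)$. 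In the remaining case, $P$ is a field, so $\mcX(P)$ is regular of dimension one and each coherent cohomology sheaf of $\mathrm{dR}_P^{\rm FF}(M)$ splits as a vector bundle plus a torsion sheaf supported at finitely many closed points; but $\mcD(M)$, hence $\mathrm{dR}_P^{\rm FF}(M)$, is obtained by $\varphi^{\Z}$-descent from a Frobenius-equivariant object on $\mcY_{(0,\infty)}(P)$, so the torsion subsheaf of $H^i(\mathrm{dR}_P^{\rm FF}(M))$ lifts to a $\varphi$-equivariant torsion coherent sheaf there, whose support is a $\varphi$-stable finite set of classical points; since $\varphi$ is expanding on $\mcY_{(0,\infty)}(P)$ (every quasi-compact neighbourhood $U$ of the distinguished point satisfies $U\subset\varphi(U)$) it has no periodic classical point, so this support is empty and the cohomology sheaves are vector bundles.

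The main obstacle is Property (3), and within it two points. First, the identification $\mcD(M(X))=M(Y)$ with $Y/\mcX(P)$ proper smooth: one must run the continuity/Dwork-trick construction of $\mcD$ carefully enough to see that it outputs an honest rigid variety and that properness, smoothness and the gluing datum survive each step. Second, in the field case, one must make precise that $\mathrm{dR}_P^{\rm FF}(M)$ carries a canonical Frobenius-equivariant model on $\mcY_{(0,\infty)}(P)$ compatibly with the de Rham realization, so that the torsion-vanishing argument genuinely applies. Everything else reduces to the first part's finiteness and base-change results, the monoidality and construction of $\mcD$ from the second part, and Andreychev's characterization of dualizable objects.
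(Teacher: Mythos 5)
Your definition of $\mathrm{dR}_P^{\rm FF}=\dR_{\mcX(P)}\circ\mcD$ and your treatment of (1), (2) and of \emph{perfectness} in (3) (dualizability preserved by the monoidal functors, then Andreychev's identification of dualizable objects of $\QCoh$ with perfect complexes) agree with the paper. The genuine gap is in how you upgrade perfectness to \emph{vector-bundle cohomology} in (3); both of your substitutes for the paper's argument fail. First, your claim that for $X/P$ smooth proper one has $\mcD(\Q_P(X))=\Q_{\mcX(P)}(Y)$ for a smooth proper rigid variety $Y$ over $\mcX(P)$ cannot be run: the Dwork-trick construction glues along the relative Frobenius $\varphi^*V\cong V$, which is an equivalence only \emph{motivically} (a universal homeomorphism inverted in $\RigDA$), not an isomorphism of adic spaces, so the descent along $\varphi^{\Z}$ cannot be performed at the level of varieties. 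The paper explicitly warns that even for a smooth rigid $X/S$ one cannot claim $\mcD(\Q_S(X))$ is the motive of a smooth rigid variety over $\mcX(S)$, except in the good-reduction situation of its Proposition \ref{prop:gr}. Second, your torsion-vanishing argument over a field is incorrect: the pullback to $\mcY_{(0,\infty)}(P)$ of a torsion coherent sheaf supported at finitely many closed points of $\mcX(P)$ is supported on the \emph{preimage}, i.e. on a union of full $\varphi$-orbits, which is $\varphi$-stable without containing any periodic point; skyscrapers at classical points of $\mcX(P)$ do lift to $\varphi$-equivariant torsion sheaves on $\mcY_{(0,\infty)}(P)$, so ``$\varphi$ has no periodic classical point'' rules out nothing.

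The missing idea is the paper's Theorem \ref{thm:dRm}: for \emph{any} admissible base $S$ over $\Q_p$ (in particular $S=\mcX(P)$) and any dualizable $M\in\RigDA(S)$, the complex $\dR_S(M)$ is a \emph{split} perfect complex, hence has vector-bundle cohomology. Its proof is where the real work happens, and it is not case-by-case on the shape of $M$: after Andreychev gives perfectness, one uses continuity ($S\sim\varprojlim\Spa(A_i,A_i^+)$ with $A_i$ topologically of finite type over $\Q_p$) to descend $M$ to a dualizable model over a classical affinoid, then localizes at a closed point $s$, and uses continuity again ($s\sim\varprojlim_{s\in U}U$) to see that near $s$ the motive is pulled back from the residue field, so the perfect complex is pulled back from a field and therefore split. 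Once this theorem is in place, both halves of (3) follow exactly as you set them up: $\Q_P(X)$ is dualizable for $X/P$ smooth proper by the six-functor formalism, and compact motives over a non-archimedean field are dualizable, so $\mcD$ sends them to dualizable objects and \ref{thm:dRm} applied over $\mcX(P^{\Perf})$ finishes the proof. Without this (or an equivalent splitting argument) your proposal establishes perfectness but not that the cohomology sheaves are vector bundles.
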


{{Examples of admissible perfectoid spaces include those which are pro-\'etale over rigid analytic varieties, and} }
examples of compact motives over a field include motives of quasi-compact smooth varieties, or analytifications of algebraic varieties. The  cohomology theory induced by ${\mathrm{dR}_P^{\rm FF}}$ will be called the \emph{de Rham-Fargues-Fontaine} cohomology. Its construction is purely made at the level of the generic fibers,  makes no use of log-geometry and requires  weak hypotheses on the base $P$. It is expected to enhance the de Rham and the de Rham-Fargues-Fontaine realizations with  coefficients, in a compatible way with the motivic six-functor formalism.

One may pre-compose this realization functor with the motivic tilting equivalence
$$
\RigDA(P)\cong\RigDA(P^\flat)
$$
allowing $P$ to be a perfectoid space in characteristic zero as well (in this case, the target category would be obviously  $\QCoh(\mcX(P^\flat)$) or with the analytification functor. On the other hand, {if $P$ is a characteristic $p$ perfectoid space}, one can post-compose it with specialization along a chosen untilt $P^\sharp\to\mcX(P)$ and get a perfect complex {of $\mathcal{O}_{P^\sharp}$-modules}. {By doing so when $P=C$ is an algebraically closed perfectoid field of characteristic $p$, we recover a construction from \cite{vezz-tilt4rig} and also Bhatt-Morrow-Scholze's $B^+_{\dR}(C^\sharp)$-cohomology \cite[Section 13]{bms1} for each untilt $C^\sharp$ of $C$. This proves that $\dR^{\rm FF}$ satisfies all the requirements of Scholze's \cite[Conjecture 6.4]{scholze-icm2}. There is also a connection to rigid cohomology, that we sketch at the end of the article.}
\\

In  \Cref{sec:mot} we begin by recalling the properties of rigid analytic motives and we give a proof of their pro-\'etale descent. This allows us to define motives over any  {{(admissible)}} diamond. In  \Cref{sec:oc} we give a definition of relative dagger varieties (or relative varieties with an overconvergent structure) and we show that up to homotopy, any smooth relative variety can be equipped with such a structure. In  \Cref{sec:dR} we introduce the de Rham complex of a relative dagger space and prove that it gives rise to a motivic realization with values in solid modules, or even {} perfect complexes, under suitable hypotheses. 

In the second part, we build the motivic  rigid-analytic version of the relative Fargues-Fontaine curve and we compare it to the usual construction  in \Cref{sec:dwork}. Finally, in \Cref{sec:fin} we put together the ingredients of the previous sections introducing the de Rham-Fargues-Fontaine cohomology and its properties, {including its relation to the cohomology theories mentioned above}.

\subsubsection*{Acknowledgments} 
 We are grateful to Grigory Andreychev for having shared and discussed his results   that we use in \Cref{sec:dR}, to Dustin Clausen for answering some questions on the theory of analytic rings,  to Guido Bosco   for having suggested the proof of \Cref{lemma:guido}, to Fabrizio Andreatta for having pointed out the analogy to Dwork's trick and to Joseph Ayoub for many discussions on \Cref{cor:proetd}.  We also thank Martin Gallauer,  Elmar Gro\ss e-Kl\"onne and Peter Scholze  for their helpful remarks on  preliminary versions of this paper{, and the  referees for their constructive comments and recommendations.}

\section{Adic \'etale motives}
\label{sec:mot}We start by laying down the main definitions and properties of the type of adic spaces we consider, and the homotopy theory associated to them.

\subsection{Definitions and formal properties}Our conventions and notation are mostly taken from \cite{ayoub-rig} and \cite{agv} even if we typically omit any visual reference to the \'etale topology and the ring of coefficients in what follows. 

\begin{dfn}\label{df:adic}
	We say that a Tate Huber pair $(A,A^+)$ over $\Z_p$ is \emph{{stably strongly}  uniform} if for any $n\in\N$ and any map $(A\langle T_1,\ldots,T_n\rangle,A^+\langle T_1,\ldots,T_n\rangle)\to (B,B^+)$ obtained as a composition of rational localizations and finite \'etale maps (as defined in \cite[Definition 7.1(i)]{scholze}), the space $\Spa(B,B^+)$ is uniform, i.e. the ring $B^+$ is (open and) bounded. An adic space is \emph{{stably strongly}  uniform} if it is locally the spectrum of a {stably strongly } uniform pair. Examples of {stably strongly } uniform spaces include diamantine spaces \cite[Theorem 11.14]{hk},  sous-perfectoid spaces {(such as perfectoid spaces)} \cite[Proposition 6.3.3]{berkeley}, and  reduced rigid analytic varieties over non-archimedean fields \cite[Theorem 6.2.4/1]{BGR}. 
	We let $\catD$ be the full subcategory of  quasi-separated adic spaces over $\Z_p$ which consists in {stably strongly}  uniform spaces. %
	 {{having a {}  cover of affinoid open spaces with finite (topological) Krull dimension {(see e.g. \cite[Section 0054]{stacks-project})}. Its objects will be sometimes referred to as \emph{admissible adic spaces}}}. For any full subcategory $\catC$ of $\catD$ we let $\catC^{\qcqs}$ be the subcategory of $\catC$ of quasi-compact quasi-separated morphisms (referred to as qcqs from now on). We let $\B^n$ and $\T^n$ be the adic spaces
	$$\B^n=\Spa(\Z_p\langle T_1,\ldots,T_n\rangle,\Z_p\langle T_1,\ldots,T_n\rangle)$$
	$$
	\T^n=\Spa(\Z_p\langle T_1^{\pm1},\ldots,T_n^{\pm1},\rangle,\Z_p\langle T_1^{\pm1},,\ldots,T_n^{\pm1}\rangle).$$
	We remark that %
	$\B^n_S=S\times_{\Z_p}\B^n$  and $\T^n_S=S\times_{\Z_p}\T^n$ lie in $\catD$ for any $S\in\catD$ and any $n\in\N$.%
\end{dfn}

\begin{rmk}{We point out that reduced rigid analytic varieties over a non-archimedean field $K$ are admissible. Also their perfection (assuming $K$ has characteristic $p$) is an admissible perfectoid space, and as we will remark later (\Cref{adm}) the Fargues-Fontaine curves associated to such perfectoid spaces are admissible too. }As a matter of fact, in all what follows one can replace the category $\catD$ with any subcategory of adic spaces over $\Z_p$ {{which are locally of finite Krull dimension }}that is stable under open immersions, finite \'etale extensions as well as relative discs, and that contains reduced rigid analytic varieties and relative Fargues-Fontaine curves. %
	Alternatively, one may consider the (larger) category of rigid spaces as defined by \cite{fk} and considered in \cite{agv}. In this article, we  stick to an adic perspective  and we leave it  to the reader  to extend the statements and definitions of the present article to any more general setting.
\end{rmk}

\begin{dfn}
	Let $f\colon X\to S$ be a morphism in $\catD$. 
	\begin{itemize}
		\item  We say that $f$ is \emph{\'etale }if it is, locally on $X$ and $S$ the composition of an open immersion and a finite \'etale morphism. A collection of \'etale maps $\{X_i\to S\}$ is an \emph{\'etale cover} if it is jointly surjective on the underlying topological spaces.
		\item We say $f$ is \emph{smooth} (or even, by abuse of notation, that $X$ is a \emph{smooth rigid analytic variety over $S$}) if it is, locally on $X$, the composition of an \'etale map $X\to\B^N_S$ and the canonical projection $\B^N_S\to S$ for some $N$.	The category of smooth rigid analytic varieties over $S$ will be denoted by $\Sm/S$.
	\end{itemize}
\end{dfn}

We point out that if $S$ is in $\catD$ and $f$ is smooth (using the above definition) then $X$ lies in $\catD$ as well. Also, we remark that pullbacks of smooth [resp. \'etale] maps exist in $\catD$ and they are again smooth [resp. \'etale].

\begin{dfn}\label{dfn:mot}
	Let %
	 $S$ be in $\catD$.
	\begin{itemize}\item For any $X\in\Sm/S$ we let $\Q_S(X)$ be the (free) presheaf of $\Q$-modules represented by $X$. That is $\Gamma(Y,\Q_S(X))=\Q[\Hom_S(Y,X)]$.
			\item 	We let $\Psh(\Sm/S,\Q)$ be the infinity-category of presheaves on the category $\Sm/S $ taking values on the derived  infinity-category of  $\Q$-modules, and we let $\RigDAeff(S,\Q)$ be its full stable infinity-subcategory spanned by those objects $\mcF$  such that:
		\begin{enumerate}
			\item For any $X\in \Sm/S$ the canonical map $\mcF(X\times_S\B^1_S)\to\mcF(X)$ is an equivalence (we refer to this property as  \emph{$\B^1$-invariance}).
			\item For any Cech \'etale hypercover $\mcU\to X$ in $\Sm/S$ the canonical map $\mcF(X)\to\holim\mcF(\mcU)$ is an equivalence (we refer to this property as  \emph{\'etale descent}).
		\end{enumerate}
	We will typically omit $\Q$ in the notation. The category $\RigDAeff(S)$ is equipped with the structure  of a symmetric monoidal infinity-category and a localization functor
		$$
		L\colon\Psh(\Sm/S,\Q)\to\RigDAeff(S)
		$$
		which is symmetric monoidal and left adjoint to the canonical inclusion.
		\item  For any $X\in\Sm/S$ we  use the  notation $\Q_S(X)$ also to refer to the object $L\Q_S(X)$ in $\RigDAeff(S)$. There is a symmetric monoidal structure on $\RigDAeff(S)$ which is such that $\Q_S(X)\otimes\Q_S(Y)\cong\Q_S(X\times_SY)$.
		\item We let $T_S$ be the object of $\Psh(S,\Q)$  which is the split {cofiber} of the morphism $\Q_S(S)\to\Q_S(\T^1_S)$ induced by $1$ and we set $\RigDA(S,\Q)=\RigDAeff(S,\Q)[T_S^{-1}]$ in $\Prl$ (see \cite[Definition 2.6]{robalo}). We will typically omit $\Q$ in the notation. The (extension of the) endofunctor $M\mapsto M\otimes T_S^{\otimes n}$ in $\RigDA(S)$ will be denoted by $M\mapsto M(n)$ and its quasi-inverse by $M\mapsto M(-n)$. We still denote by $\Q_S(X)$  the images of these objects by the natural functor $\RigDAeff(S)\to\RigDA(S)$.%
		\item When we write $\RigDA^{(\eff)}(S)$ in a statement, we mean that the statement holds both for $\RigDA^{\eff}(S)$ {(sometimes called the category of \emph{effective motives}) }and for $\RigDA(S)$.
	\end{itemize}
\end{dfn}

\begin{rmk}{In \cite{agv}, the category $\RigDA^{(\eff)}(S)$ is denoted by $\RigSH^{(\eff)}(S,\Q)$. We use the notation $\DA$ which is more customary in the case of sheaves of $\Lambda$-modules for a ring $\Lambda$. All adic spaces in $\catD$ are rigid analytic  spaces in the sense of \cite[Notation 1.1.8]{agv} by \cite[Corollary 1.2.7]{agv}. }
	Contrary to \cite{agv}, we use the  notation $\RigDA^{(\eff)}(S)$ to refer both to the presentable category in $\Prl$ as well as to the structure $\RigDA^{(\eff)}(S)^\otimes$ of symmetric monoidal  category in $\Prlm$ it is equipped with.
\end{rmk}

\begin{rmk}
	We now give a triangulated, more down-to-earth definition of $\RigDAeff(S)$. One can consider the derived category of \'etale sheaves on $\Sm/S$ with values in $\Q$-modules. Its full subcategory given by complexes of sheaves $\mcF$ such that $\R\Gamma(X,\mcF)\cong\R\Gamma(\B^1_X,\mcF)$ is (the triangulated category underlying) $\RigDAeff(S)$. We remark that there is a left adjoint to the canonical inclusion, and that these categories are actually  DG-categories. Similarly, we can give a more down-to-earth definition of $\RigDA(S)$: its objects are collections $\{\mcF_i\}_{i\in\N}$ of complexes of sheaves in $\RigDAeff(S)$ together with quasi-isomorphisms $\mcF_{i}\to\underline{\Hom}(T_S,\mcF_{i+1})$. %
\end{rmk}

\begin{rmk}
	We now give a more blue-sky definition of $\RigDAeff(S)$. %
	By \cite[Proposition 4.8.1.17]{lurie-ha} one can consider the (presentable) infinity-category $\Sh_{\et}(\Sm/S)$ of simplicial \'etale sheaves on $\Sm/S$ as well as its tensor product $\Sh_{\et}(\Sm/S)\otimes\Ch\Q$ with the derived infinity category of (chain complexes of) $\Q$-modules and let  $\RigDAeff(S)$ be its full infinity-subcategory of $\B^1_S$-invariant objects (one may equivalently consider \'etale \emph{hypersheaves} by \cite[Corollary 2.4.19]{agv}). We can also define $\RigDA(S)$ as the homotopy colimit $\varinjlim\RigDAeff(S)$ following the functor $\mcF\mapsto\mcF\otimes T_S$, computed in the category of presentable infinity-categories and left adjoint functors $\Prl$. Equivalently, it is the homotopy limit $\varprojlim\RigDAeff(S)$ following the functor $\mcF\mapsto\underline{\Hom}(T_S,\mcF)$, computed in the category of presentable infinity-categories and right adjoint functors $\Prr$ (or {computed} in infinity-categories) by \cite[Corollary 2.22]{robalo}.
\end{rmk}

\begin{rmk}
	\label{rmk:up}
	By definition, (a suitable localization of) the projective model structure on presheaves makes the natural functor $\Sm/S\to\RigDA(S)$  universal  among functors $R\colon\Sm/S $  to $\Q$-enriched model categories $M$ {satisfying the following requirements:
	\begin{enumerate}[(i)]
\item $R(X)\cong\holim R(\mcU)$ for any Cech \'etale hypercover $\mcU\to X$;
\item The maps $R(\B^1_X)\to R(X)$ are invertible in the homotopy category;
\item  $R(M)\mapsto R(T^1\otimes M) $ is an automorphism on the homotopy category.
	\end{enumerate}}
The same is true by replacing $M$ with an arbitrary infinity-category with small colimits (see \cite[Theorem 2.30]{robalo}). We remark that, as we take coefficients in $\Q$, the condition on Cech hypercovers extends automatically to arbitrary \'etale hypercovers  (see \cite[Proposition 2.4.19]{agv}).
\end{rmk}

\begin{rmk}{We use the fact that coefficients are in $\Q$ already in \Cref{thm:basicprop} and \Cref{cor:proetd}. Nonetheless,} for most of the  results in this article, it is possible to replace $\Q$ with $\Z[1/p]$ or even more general ring spectra, by eventually  restricting the category $\catD$  to its full subcategory of objects having a suitably bounded point-wise cohomological dimension (see for example \cite[Proposition 2.4.22]{agv}). As we are mostly interested in a rational cohomology theory here, we leave this task to the reader.
\end{rmk}

The following statement follows from the results of \cite{agv}. For the definition of the category of [symmetric monoidal] presentable infinity categories and [symmetric monoidal] left adjoint functors $\Prl$ [resp. $\Prlm$], as well as the definition of compactly generated [symmetric monoidal] presentable categories and [symmetric monoidal] compact-preserving left adjoint functors $\Prlo$ [resp. $\Prloo$] we refer to \cite[Definitions 5.5.3.1 and 5.5.7.5]{lurie} [resp. to  \cite[Proposition 4.8.1.15 and Lemma 5.3.2.11(2)]{lurie-ha}].

\begin{thm}\label{thm:basicprop}
\begin{enumerate}
\item For any $S\in\catD$ the category $\RigDA^{(\eff)}(S)$ is a  compactly generated stable symmetric monoidal category, in which a set of compact generators is given by $\Q_S(X)(n)$ with $X\in\Sm/S$ affinoid and $n\in\Z$. Moreover,  $\Q_S(X)(n)\otimes\Q_S(X')(n')\cong\Q_S(X\times_SX')(n+n')$.
\item For any morphism $f\colon S'\to S$ in $\catD$ the pullback functor $X\mapsto X\times_SS'$ induces a symmetric monoidal left (Quillen) adjoint functor $f^*\colon\RigDA^{(\eff)}({S})\to\RigDA^{(\eff)}({S'})$ whose right adjoint will be denoted by $f_*$. If $f$ is quasi-compact and quasi-separated, then $f^*$ is compact-preserving.
\item One can define contravariant functors $\RigDA^{(\eff)*}$ from $\catD$ to the infinity-category $ {\Prlm}$ of symmetric monoidal, presentable infinity categories and left adjoint symmetric monoidal functors,  sending $S$ to $\RigDA^{(\eff)}(S)$ and a morphism $f$ to $f^*$. Their restrictions to $\catD^{\qcqs}$ take values in $\Prloo$.
\item\label{eq:sm} For any smooth morphism $f\colon S'\to S$ in $\catD$ the ``forgetful'' functor  $(X\to S')\mapsto (X\to S'\to S)$ induces a compact-preserving left (Quillen) adjoint functor $f_\sharp\colon\RigDA^{(\eff)}(S')\to\RigDA^{(\eff)}(S)$ whose right adjoint coincides with $f^*$.
\item The functors $\RigDA^{(\eff)*}$ satisfy \'etale hyperdescent. This means that for any \'etale hypercover $\mcU\to S$ in $\catD$  which is levelwise representable, one has the following equivalence in $\Prlm$:
$$
\RigDA^{(\eff)}(S)\cong\lim\RigDA^{(\eff)}(\mcU).
$$
\end{enumerate}
\end{thm}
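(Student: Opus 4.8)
The plan is to obtain each item by transporting the corresponding result of \cite{agv}, so the real work is to check that the category $\catD$ of \Cref{df:adic} is stable under all the constructions used there. Much of this is already in place: strong stable uniformity is preserved by open immersions, finite \'etale maps, relative discs and by base change along smooth or \'etale morphisms, and the finite-Krull-dimension condition survives these operations; that same condition guarantees that the \'etale cohomology of an affinoid in $\catD$ has bounded cohomological dimension with $\Q$-coefficients, hence that $\mathrm{R}\Gamma_{\et}(X,-)$, and so \'etale (hyper)sheafification and the $\B^1$-localization, commute with filtered colimits on affinoids. Granting this, item (1) is formal: $\Psh(\Sm/S,\Q)$ carries the Day convolution monoidal structure; the $\B^1$-equivalences and the \'etale-local equivalences form a localizing class generated by morphisms between compact objects and compatible with the tensor product, since $\B^1$-invariance and \'etale descent are stable under tensoring with representables; so the Bousfield localization $L$ exists, is symmetric monoidal, and its target is compactly generated by the images of $\Q_S(X)$ for $X\in\Sm/S$ affinoid. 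Inverting $T_S$ in $\Prl$ preserves compact generation and the monoidal structure by \cite{robalo}, producing the twists $(n)$ and the identity $\Q_S(X)(n)\otimes\Q_S(X')(n')\cong\Q_S(X\times_SX')(n+n')$.

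For item (2), the pullback $X\mapsto X\times_SS'$ is a symmetric monoidal functor of sites (it commutes with fibre products), so by the universal property recalled in \Cref{rmk:up} it extends to a symmetric monoidal, colimit-preserving functor $f^*$ on motives, and its right adjoint $f_*$ exists because both categories are presentable. When $f$ is qcqs and $X$ is affinoid, $X\times_SS'$ is quasi-compact over $S'$, hence (legitimately, by \'etale descent on the base) a finite union of affinoids, so $f^*$ carries compact generators to compact objects and is compact-preserving. Item (3) is then the organisational task, carried out in \cite{agv}, of straightening the fibration whose fibre over $S$ is the category of \'etale hypersheaves on $\Sm/S$ into a functor $\catD^{\op}\to\Prlm$ sending $f$ to $f^*$; on $\catD^{\qcqs}$ its values land in $\Prloo$ by the compact generation of (1) and the compact-preservation just noted.

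For item (4): if $f$ is smooth, the forgetful functor $u\colon\Sm/{S'}\to\Sm/S$ is well defined (a composite of smooth morphisms is smooth) and is left adjoint, as a functor of sites, to the pullback $v\colon Y\mapsto Y\times_SS'$, via the tautological bijection $\Hom_S(X,Y)\cong\Hom_{S'}(X,Y\times_SS')$ for $X\in\Sm/{S'}$; since restriction along $u$ then coincides with left Kan extension along $v$, the induced functor $f_\sharp$ on motives is left adjoint to $f^*$. Moreover $f_\sharp$ sends the generator $\Q_{S'}(X)(n)$, for $X$ affinoid, to $\Q_S(X)(n)$, which is again an affinoid generator of $\RigDAeff(S)$ (resp. $\RigDA(S)$), so $f_\sharp$ is compact-preserving.

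The substantive input is item (5), and here I would simply cite the \'etale hyperdescent theorem of \cite{agv}: for a levelwise representable \'etale hypercover $\mcU\to S$ in $\catD$, the canonical functor $\RigDAeff(S)\to\lim\RigDAeff(\mcU)$, and likewise for $\RigDA(S)$, is an equivalence in $\Prlm$. The mechanism of its proof is that $\RigDAeff(S)$ is by construction a category of $\B^1$-local \'etale hypersheaves, so one reduces to hyperdescent for the assignment $S\mapsto\Psh(\Sm/S,\Q)$ with its $\B^1$-localization; this in turn follows because base-changing $\mcU\to S$ along any $X\in\Sm/S$ produces an \'etale hypercover of $X$, together with \'etale descent for $\Ch\Q$-valued hypersheaves. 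The only thing to verify on our side — and this, rather than any new homotopical input, is the main (and essentially bookkeeping) obstacle — is that every space arising in these covers, in particular the fibre products over $S$ of objects of the various $\Sm/{U_i}$, again lies in $\catD$, which holds by the stability properties of strong stable uniformity and finite Krull dimension noted at the outset; the proof of \cite{agv} then applies verbatim.
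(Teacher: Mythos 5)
Your proposal is correct and follows essentially the same route as the paper: the paper simply observes that the finite Krull dimension hypothesis makes every $S\in\catD$ $(\Q,\et)$-admissible in the sense of \cite[Definition 2.4.14]{agv} and then quotes \cite[Propositions 2.1.21, 2.4.22, 2.2.1 and Theorem 2.3.4]{agv} for points (1)--(5), which is exactly what your admissibility/stability checks plus transport of the results of \cite{agv} amount to. The extra details you supply (Day convolution, compact generation via bounded $\Q$-cohomological dimension, the Kan-extension description of $f_\sharp$) are just an unwinding of how those cited results are proved, not a different argument.
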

\begin{proof}
{{As $S$ is locally of finite Krull dimension by hypothesis, it is $(\Q,\et)$-admissible in the}}{{ sense of }} \cite[Definition 2.4.14]{agv}.
Points (1)-(2)-(3) follow {{then}} from \cite[Propositions 2.1.21 and 2.4.22]{agv}, Point (4) can be deduced from (1) and \cite[Proposition 2.2.1]{agv} while Point (5) is proved in \cite[Theorem 2.3.4]{agv}.
\end{proof}

\begin{rmk}
The formal properties above hold true already for the infinity categories of hypersheaves  $\Sh_{\et}(\Sm/S)$ and are easily inherited by  $\RigDAeff(S)$ and its stabilization $\RigDA(S)$. Homotopies play therefore no special role in their proofs.
\end{rmk}

\subsection{Continuity and pro-\'etale descent}

We now list further properties which are satisfied by rigid motives. In all that follows the role of homotopies over $\B^1$ is crucial, and the analogous statements for the categories of (hyper)sheaves are not expected to hold in general. We start by a ``spreading out'' result.

\begin{thm}[{\cite[Theorem 2.8.14 and Remark 2.3.5]{agv}}]\label{thm:cont}
Let $\{S_i\}$ be a cofiltered diagram in $\catD$ with  quasi-compact and quasi-separated transition maps, and let $S\in\catD$ be such that $S\sim\varprojlim S_i$ in the sense of Huber (see \cite[Definition 2.4.2]{huber} and \cite[Definition 2.8.9]{agv}). The pullback functors induce an equivalence in $\Prlm$:
$$
\varinjlim\RigDA^{(\eff)}(S_i)\cong\RigDA^{(\eff)}(S)
$$	
\end{thm}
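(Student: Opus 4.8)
The plan is to use that $\RigDAeff(S)$ is compactly generated by the motives $\Q_S(X)$ of affinoid $X\in\Sm/S$, reduce the claim to (a) the spreading-out of such $X$ and (b) a computation of mapping spaces between these generators, and then to settle (b) by combining an explicit model for the $(\B^1,\et)$-localization with Huber's continuity theorem for \'etale cohomology of adic spaces, the finite-dimensionality hypothesis built into $\catD$ ensuring everything in sight is finitary. In detail, one first reduces to the effective statement: since $T_S\cong f_i^*T_{S_i}$ for all $i$ and inverting $T$ is itself a filtered colimit in $\Prl$, it commutes with $\varinjlim_i$; and the symmetric monoidal ($\Prlm$) enhancement is then automatic, since the $f_i^*$ are symmetric monoidal and $\Prlm\to\Prl$ preserves filtered colimits. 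Because the transition maps are qcqs, all the functors $f_{ij}^*$, $f_i^*$ are compact-preserving (\Cref{thm:basicprop}(2)), so the diagram lies in $\Prlo$; using that $\Prlo$ is equivalent, via $\mathcal{C}\mapsto\mathcal{C}^\omega$, to the $\infty$-category of small idempotent-complete stable $\infty$-categories, in which filtered colimits are computed in $\Cat_\infty$ followed by idempotent completion, it suffices to show that the canonical functor
$$
\Phi\colon\ \varinjlim_i\, \RigDAeff(S_i)^\omega\ \longrightarrow\ \RigDAeff(S)^\omega
$$
is fully faithful and essentially surjective up to retracts.

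For essential surjectivity up to retracts it is enough, by \Cref{thm:basicprop}(1), that every generator $\Q_S(X)$ with $X\in\Sm/S$ affinoid lies in the essential image of $\Phi$. This is the spreading-out step: from $S\sim\varprojlim S_i$ one gets $|S|=\varprojlim|S_i|$ and that $\mcO(S)$ is the completion of $\varinjlim_i\mcO(S_i)$ on affinoids, so after shrinking the base to a quasi-compact open (which itself descends) a presentation of $X$ as an iterated rational localization and finite \'etale extension of some $\B^N_S$ descends to a finite level $i$, producing $X_i\in\Sm/S_i$ with $X_i\times_{S_i}S\cong X$ and hence $f_i^*\Q_{S_i}(X_i)\cong\Q_S(X)$. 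This is the adic-space version of the classical limit arguments; the only point needing care is that rational domains and finite \'etale algebras are stable under the small perturbations introduced by the completions.

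For full faithfulness one compares mapping spaces between generators: fixing affinoid $X_i,Y_i\in\Sm/S_i$ with pullbacks $X,Y$ over $S$, one needs
$$
\varinjlim_i\, \Map_{\RigDAeff(S_i)}\!\big(\Q_{S_i}(X_i),\ \Q_{S_i}(Y_i)[k]\big)\ \xrightarrow{\ \sim\ }\ \Map_{\RigDAeff(S)}\!\big(\Q_S(X),\ \Q_S(Y)[k]\big).
$$
The right-hand side is computed by the explicit $(\B^1,\et)$-local replacement of $\Q_S(Y)$, which by the finite-dimensionality of the objects of $\catD$ is a finitary construction assembled from \'etale hypercohomology of the relative balls $\B^n_{X}$; and each ingredient is compatible with the limit, since $X\sim\varprojlim_iX_i$ and $\B^n_X\sim\varprojlim_i\B^n_{X_i}$ as adic spaces (again objects of $\catD$, hence of bounded cohomological dimension), so Huber's continuity theorem makes \'etale (hyper)cohomology commute with $\varinjlim_i$ and the finitary $(\B^1,\et)$-localization terminates at a level bounded uniformly in $i$. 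Assembling these compatibilities would give the displayed equivalence, hence the full faithfulness of $\Phi$, and, with the previous paragraph, the theorem. The main obstacle is exactly this last step: pinning down a sufficiently explicit and manifestly limit-compatible description of the mapping spaces, i.e. controlling the interaction between $\B^1$-localization and \'etale hyperdescent and verifying finitariness from the dimension bounds, and then invoking Huber's continuity for \'etale cohomology of adic spaces; by contrast, the reductions and the spreading-out of smooth varieties are routine in spirit.
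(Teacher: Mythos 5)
This theorem is not proved in the paper at all: it is quoted verbatim from \cite[Theorem 2.8.14 and Remark 2.3.5]{agv}, so the only meaningful comparison is with the proof given there. Your preliminary reductions --- passing from $\RigDA$ to $\RigDAeff$ by commuting $T$-inversion with the filtered colimit, deducing the $\Prlm$-enhancement formally, and spreading out an affinoid smooth $X/S$ (presented via rational localizations and finite \'etale maps over some $\B^N_S$) to a finite level $S_i$ --- are indeed the routine part and match the expected route. The genuine gap is exactly the step you yourself flag as the ``main obstacle'': the comparison of mapping spaces. The mechanism you propose for it would not work as stated, because the $(\B^1,\et)$-localization is not a ``finitary construction'' that ``terminates at a level bounded uniformly in $i$''; it is implemented by the infinite filtered colimit $\Sing^{\B^1}$ combined with \'etale (hyper)sheafification, which is a limit-type, transfinite process, and nothing terminates. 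What actually makes the comparison work is a different assembly: (i) mapping spectra out of the compact generators are computed as \'etale hypercohomology of the $\Sing^{\B^1}$-complex, and $\Sing^{\B^1}$, being a filtered colimit, commutes with the colimit over $i$; (ii) the finite Krull dimension built into $\catD$ gives finite rational \'etale cohomological dimension, which is what allows the hypercohomology (a limit) to commute with the filtered colimits in play; (iii) Huber's theorems identify the small \'etale sites, their cohomology, and hypercovers over $S$ with the colimits of those over the $S_i$, including spreading out of \'etale covers. Your sketch names Huber's continuity and the dimension bound, but it does not articulate (i)--(ii) nor how they interact with hyperdescent, so the displayed equivalence of mapping spaces --- hence full faithfulness, hence the theorem --- is not established.

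A secondary but real issue: you assert that the cocone functors $f_i^*\colon\RigDAeff(S_i)\to\RigDAeff(S)$ preserve compact objects ``because the transition maps are qcqs''. By \Cref{thm:basicprop}(2) compact-preservation of $f_i^*$ requires the maps $S\to S_i$ themselves to be qcqs, which is not among the hypotheses and is precisely the extra condition singled out in \Cref{rmk:cont} to upgrade the conclusion to $\Prloo$. In particular $\Q_S(X_i\times_{S_i}S)$ is not obviously compact in $\RigDAeff(S)$, so your reduction to the functor $\Phi$ between categories of compact objects, and the ``essentially surjective up to retracts'' criterion, either silently uses this additional hypothesis or must be replaced by an argument purely in $\Prl$: compute the colimit as $\Ind$ of the colimit of the compact subcategories on the source side only, and check full faithfulness and generation against the images of the generators in $\RigDAeff(S)$, without assuming those images are compact.
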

\begin{rmk}\label{rmk:cont}
In case the maps $S\to S_i$ are also quasi-compact and quasi-separated, then the equivalence holds true in $\Prloo$, as colimits in $\Prlo$ can be computed in $\Prl$ by \cite[Lemma 5.3.2.9]{lurie-ha}.
\end{rmk}
{\begin{rmk}
	The algebraic analog of the spreading out result above is also true,  and it is much more straightforward as it holds at the level of sheaves, without the need of using $\A^1$-homotopies (see e.g. \cite[Proposition 2.5.11]{agv}). In the adic setting, this is no longer true: even if $S\sim\varprojlim S_i$, the (big) \'etale topos  $\Sh_{\et}(\Sm/S)$ may not be equivalent to  $\Sh_{\et}(\varinjlim\Sm/S_i)$. The main difference  is that here a a \emph{completion} of the underlying topological rings is performed. 
\end{rmk}}

The continuity property above strongly suggests that the \'etale sheaf $\RigDA$ is also a pro-\'etale sheaf. This is indeed the case, and is the content of the next theorem. We remark nonetheless that its proof is more complicated  than the analogous statement for sheaves of sets or groups (see for example \cite[Proposition 8.5]{scholze-diam}
\footnote{{The same proof shows that an \'etale sheaf with a ``spreading out'' property, taking values in an $n$-category with $n<\infty$ in which  filtered colimits commute with finite limits, has pro-\'etale descent.}}
)
 as $\RigDA$ takes values in the infinity-category $\Prl$ in which the co-simplicial Cech diagrams appearing in the descent criterion cannot be truncated on the right. {In the proof, we will use crucially some  results of Scholze on pro-\'etale sheaves \cite[Section 14]{scholze-diam}.}

\begin{thm}\label{cor:proetd}
	The functors $\RigDA^{(\eff)*}\colon\catD^{\op}\to\Prlm$ satisfy pro-\'etale descent. This means that for any bounded pro-\'etale hypercover $\mcU\to S$ in $\catD$, one has the following equivalence in $\Prlm$:
	$$
	\RigDA^{(\eff)}(S)\cong\lim\RigDA^{(\eff)}(\mcU).
	$$
\end{thm}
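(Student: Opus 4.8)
The plan is to deduce pro-\'etale descent from the \'etale descent of \Cref{thm:basicprop}(5) together with the continuity property of \Cref{thm:cont}, using the observation that a pro-\'etale cover can be written as a cofiltered limit of \'etale covers. First I would reduce to a statement about a single pro-\'etale cover: it suffices to check the descent condition for the \v{C}ech nerve of one bounded pro-\'etale surjection $U\to S$ in $\catD$, since descent for levelwise-representable hypercovers follows formally from descent for \v{C}ech nerves of covers once one knows the functor is a sheaf (this is a standard argument with the \'etale-local nature of hypercovers; alternatively one works directly with hypercovers throughout, as the reduction below is insensitive to this). By definition of the pro-\'etale topology on $\catD$ (following \cite{scholze-diam}), after localizing on $S$ one may assume $U = \varprojlim_j U_j$ where $\{U_j\to S\}$ is a cofiltered system of \'etale covers with qcqs affine transition maps; here boundedness of the hypercover guarantees the cohomological dimension stays controlled, so that the tower of \v{C}ech nerves behaves well.

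Next I would compare the two limits. For each $j$, \Cref{thm:basicprop}(5) gives $\RigDA^{(\eff)}(S)\cong\lim_{[n]\in\Delta}\RigDA^{(\eff)}(U_j^{\times_S(n+1)})$ in $\Prlm$. Taking the colimit over $j$ (a filtered colimit in $\Prlm$, which exists and is computed as a colimit of symmetric monoidal left adjoints) and using that each $U_j^{\times_S(n+1)}$ is again in $\catD$, I get
$$
\RigDA^{(\eff)}(S)\;\cong\;\varinjlim_j\,\lim_{[n]\in\Delta}\RigDA^{(\eff)}\bigl(U_j^{\times_S(n+1)}\bigr).
$$
On the other hand, the tower $\{U_j^{\times_S(n+1)}\}_j$ has limit $U^{\times_S(n+1)}$ in the sense of Huber for each fixed $n$, so \Cref{thm:cont} (and \Cref{rmk:cont}, since the transition maps and the maps to $S$ are qcqs) identifies $\varinjlim_j\RigDA^{(\eff)}(U_j^{\times_S(n+1)})\cong\RigDA^{(\eff)}(U^{\times_S(n+1)})$. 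Thus what remains is to interchange the filtered colimit $\varinjlim_j$ with the cosimplicial limit $\lim_{[n]\in\Delta}$ inside $\Prlm$.

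The main obstacle is precisely this interchange: filtered colimits do not commute with totalizations (infinite cosimplicial limits) in $\Prl$ in general, and as the authors themselves note in the remark preceding the statement, the \v{C}ech diagram here cannot be truncated on the right. To handle this I would pass to the underlying large categories and argue at the level of mapping spaces / compact objects: by \Cref{thm:basicprop}(1) each $\RigDA^{(\eff)}$ is compactly generated, and by \Cref{thm:cont}--\Cref{rmk:cont} the relevant colimits are computed in $\Prloo$, i.e. by taking a filtered colimit of the subcategories of compact objects and then Ind-completing. Since a compact object of $\RigDA^{(\eff)}(U^{\times_S(n+1)})$ is already defined over some finite stage $U_j^{\times_S(n+1)}$, one can test the equivalence on compact generators $\Q(X)(m)$ coming from a fixed stage; for such objects the relevant mapping spectra stabilize, and the cosimplicial limit computing descent over $U$ agrees with the colimit over $j$ of the cosimplicial limits computing descent over $U_j$ — the key point being that, after fixing a compact object, only finitely much data is involved in each cosimplicial degree and the limit-colimit interchange becomes legitimate. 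Concretely I expect this to run through the Bousfield--Kan spectral sequence (or the Milnor exact sequence for the relevant $\lim^1$) for the totalization, checking that the input filtered colimits of $\mathrm{Ext}$-groups are exact and that the $\lim^1$-terms vanish or match up; boundedness of the hypercover is what makes this spectral sequence converge uniformly in $j$. Once the interchange is justified, combining the displayed equivalences yields $\RigDA^{(\eff)}(S)\cong\lim_{[n]\in\Delta}\RigDA^{(\eff)}(U^{\times_S(n+1)})$, which is the desired pro-\'etale descent, and the symmetric monoidal refinement is automatic since every functor in sight is symmetric monoidal.
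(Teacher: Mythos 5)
Your reduction (write $U=\varprojlim_j U_j$, use \'etale descent at each stage $j$ and \Cref{thm:cont} at each cosimplicial level, then interchange $\varinjlim_j$ with $\lim_{\Delta}$) correctly isolates the difficulty, but the proposal does not actually overcome it, and the interchange is precisely where the content of \Cref{cor:proetd} lies. Two concrete problems. First, your compactness argument at best addresses full faithfulness: for $M,N$ compact in $\RigDA^{(\eff)}(S)$ you would compare $\lim_{\Delta}\varinjlim_j\Map(M|_{U_j^{\times_S(n+1)}},N|_{U_j^{\times_S(n+1)}})$ with $\varinjlim_j\lim_{\Delta}$ of the same terms. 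Even granting this, essential surjectivity --- that every descent datum in $\lim_{\Delta}\RigDA^{(\eff)}(U^{\times_S\bullet})$ is effective, equivalently that the pullbacks of objects from $S$ generate the limit --- is not addressed, and it does not follow from ``compact objects are defined at a finite stage'': an object of the totalization is not an object of any single stage $j$, and the compact generators of $\RigDA^{(\eff)}(U)$ (which come from some $U_j$) are not pulled back from $S$, so the colimit description gives you no preimage. Second, the claimed uniform convergence of the Bousfield--Kan spectral sequence is attributed to ``boundedness of the hypercover'', but boundedness of the simplicial object has nothing to do with it; what one would need is a coconnectivity bound on the mapping spectra uniform in $j$ and in the cosimplicial degree $n$ (ultimately a uniform finite rational \'etale cohomological dimension statement for all the spaces $U_j^{\times_S(n+1)}$), and this is asserted rather than proved. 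As written, the key step of your argument is a hope, not a proof.

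For comparison, the paper does not attempt this interchange at all. After reducing to an affinoid pro-\'etale cover $P\sim\varprojlim P_i\to X$, it works with the underlying sheaf categories: it introduces sites of pro-objects, adapts \cite[Proposition 14.10]{scholze-diam} (using finite rational \'etale cohomological dimension and left-completion) to obtain descent for the derived categories $\mcD_{\et}(\Sm/-)$ along the pro-\'etale cover, and in particular conservativity of $p^*$ at the sheaf level; it then identifies $\RigDA^{\eff}(X)$ inside $\mcD_{\et}(\Sm/X)$ as a pullback, the only non-formal point being that $p^*$ commutes with $\pi_*$ (for $\pi\colon\B^1_X\to X$) and with $\uhom(T,-)$, which is checked on compact generators via continuity --- this is the sequence of equivalences $(\star\star)$, and it is the only place where the colimit-compactness argument you envisage is used, in a situation where it is legitimate (a filtered colimit of mapping spectra, with no totalization to commute past). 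The effectivity of descent data, which your sketch misses, is exactly what the sheaf-level pullback-square argument (Steps 5 and 7 of the paper's proof) delivers. If you want to pursue your route, you would need to supply both a uniform cohomological-dimension bound justifying the limit--colimit exchange and a separate generation/comonadicity argument for essential surjectivity; at that point you have essentially rebuilt the paper's proof.
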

\begin{proof}
	The proof will be split into some intermediate steps. {In what follows, whenever $(\mcC,\tau)$ is a site, we will use the symbol $\mcD_\tau(\mcC)$ to refer to the derived infinity-category of $\tau$-sheaves of $\Q$-vector spaces, for brevity.}
	\\
	{\it Step 1: }
	 Since the functor $\Prlm\to\Prl$ is  limit-preserving and conservative (see \cite[Corollary 3.2.2.5 and Lemma 3.2.2.6]{lurie-ha}), we might as well prove the statement for $\RigDA^{(\eff)}$ as  functors with values in $\Prl$. We first consider the case of $\RigDA^{\eff}$. 
\\
{\it Step 2:}	As we already know that $\RigDA^{\eff}$ is an \'etale hypersheaf, we may  prove the claim for  its restriction to the subcategory $\Aff$ of $\catD$ made of affinoid spaces. It suffices to show then that if $p\colon P\sim\varprojlim_{i\in I} P_i\to X$ is a pro-\'etale affinoid cover of{ an affinoid}  $X$ with $p_i\colon P_i\to X$ \'etale surjective, then 
	\begin{equation}\tag{$\star$}\label{eqdesc}
	\RigDA^{\eff}(X)\cong\lim\left(
	\xymatrix@=1em{\RigDA^{\eff}(P)\ar@<0.5ex>[r]\ar@<-0.5ex>[r]&\RigDA^{\eff}(P\times_XP)\ar@<1ex>[r]\ar@<-1ex>[r]\ar[r]&%
		\cdots
	}\right).
\end{equation}
\\
{\it Step 3: }
From now on we consider the category $\Pro_{\et}\Aff\Sm/X$ of pro-objects in affinoid smooth varieties over $X$ with \'etale transition maps with a quasi-compact weak limit. We will use the letter $\tilde{P}$ to refer to
the  object $\varprojlim P_i$ in this category. 
We say that a map in $\Pro_{\et}\Aff\Sm/X$ is smooth if [resp. \'etale] if it is of the form $\varprojlim T_0\times_{S_0}S_i\to\varprojlim S_i$ for some smooth [resp. \'etale] map $T_0\to S_0$, we say it is pro-\'etale if it has a strictification which is levelwise \'etale, and pro-smooth if it is a composition of a pro-\'etale map, followed by a smooth map. We say it is a cover if the map on the underlying topological spaces $\varprojlim|T_i|\to\varprojlim |S_i|$ is surjective. In particular, we may consider the full subcategory $\Pro\Sm/\tilde{P}$ whose objects are pro-smooth maps over  $\tilde{P}$, and equip it with the pro-\'etale topology. We  remark that {$\tilde{P}\to X$ is a cover by assumption,} and that there are continuous equivalences $(\Pro\Sm/X)/\tilde{P}\cong\Pro\Sm/\tilde{P}$ giving rise to the following diagram (see \cite[Proposition 2.3.7]{agv} which is essentially \cite[Proposition 6.3.5.14]{lurie}):
$$\mcD_{\proet}(\Pro\Sm/X)\cong\lim\left(
\xymatrix@C=1em{\mcD_{\proet}(\Pro\Sm/\tilde{P})\ar@<0.5ex>[r]\ar@<-0.5ex>[r]&\mcD_{\proet}(\Pro\Sm/\tilde{P}\times_X\tilde{P})\ar@<1ex>[r]\ar@<-1ex>[r]\ar[r]&\cdots
}\right).$$
\\
{\it Step 4: }By   definition, the \'etale topos on $\Sm/\tilde{P}$  is equivalent to the one on $\varinjlim\Sm/P_i$ (these toposes are \emph{not} equivalent to the one on $\Sm/P$!). By the proof of \cite[Proposition 2.5.8]{agv}  {we} then deduce that $\mcD_{\et}(\Sm/\tilde{P})\cong\varinjlim\mcD_{\et}(\Sm/P_i)$ and that  $\RigDAeff(P)\cong\RigDAeff(\tilde{P})\cong\varinjlim\RigDAeff(P_i)$ (using  \Cref{thm:cont} for the first equivalence)  where the colimits are taken in $\Prl$. Note that the map of sites $\nu\colon(\Pro\Sm/\tilde{P},\proet)\to(\Sm/\tilde{P},\et)$ induces a functor $\nu^*\colon\Sh_{\et}(\Sm/\tilde{P},\Q)\to\Sh_{\proet}(\Pro\Sm/\tilde{P},\Q)$. {By adapting the proof of \cite[Proposition 14.10]{scholze-diam} this functor can be described explicitly as $\nu^*\mcF(\varprojlim Q_i)=\varinjlim\mcF(Q_i\times_{P_i}\tilde{P})$ and  induces a fully faithful inclusion $\nu^*\colon \mcD^+_{\et}(\Sm/\tilde{P})\to\mcD^+_{\proet}(\Pro\Sm/\tilde{P})$. We may  extend this inclusion by left-completion (we are using that any object has a finite rational \'etale cohomological dimension, see \cite[Corollary 2.4.13]{agv}) to a fully faithful inclusion $\nu^*\colon \mcD_{\et}(\Sm/\tilde{P})\to\mcD_{\proet}(\Pro\Sm/\tilde{P})$.  }
  \\
 {\it Step 5: } We  claim that $\mcD_{\et}(\Sm/X)${ fits in the following pullback square
 $$
 \xymatrix{
 	\mcD_{\et}(\Sm/X)\ar[r]^{p^*}\ar@{^{(}->}[d]^{\nu^*}&\mcD_{\et}(\Sm/\tilde{P})\ar@{^{(}->}[d]^{\nu^*}\\ \mcD_{\proet}(\Pro\Sm/X)\ar[r]^{p^*}&\mcD_{\proet}(\Pro\Sm/\tilde{P})
 }$$ 
  i.e. we claim that for any $\mcF$ in $ \mcD_{\proet}(\Pro\Sm/X)$, one has $\mcF\cong \nu^*\nu_*\mcF$ provided that $p^*\mcF\cong\nu^*\nu_*p^*\mcF$. Note that the analogous claim for the \emph{small} (pro-)\'etale sites holds (\cite[Proposition 14.10]{scholze-diam})  and we now show that we can reduce to it.  As any object in $\Pro\Sm/X$ is locally pro-\'etale over some  affinoid variety $Y$  in $\Sm/X$, we may prove the equivalence $\mcF\cong \nu^*\nu_*\mcF$ by restricting to each one of the small sites $\Pro\Et/Y$ with  $Y$ as before.  In other words, it suffices to check that $\iota_*\mcF\cong\iota_* \nu^*\nu_*\mcF$ with $\iota$ being the natural map of sites $\Pro\Sm/X\to\Pro\Et/Y$.} By construction, we have $\iota'_*p^*\cong p'^*\iota_*$, $\iota_*\nu_*\cong\nu'_*\iota'_*$ and $\iota_*\nu^*\cong\nu'^*\iota'_*$ with $p'$ being $\tilde{P}\times_XY\to Y$ and  $\nu'$ [resp. $\iota'$] being the map of sites $\nu'\colon \Pro\Et/Y\to\Et/Y$ [resp. $\iota'\colon \Sm/X\to\Et/Y$]. In particular, we can deduce the claim from the analogous claim on the small (pro-)\'etale sites {as claimed}. We can reproduce this proof also for each one of the pro-\'etale maps of pro-objects $\delta\colon \tilde{P}^{\times_Xn+1}\to \tilde{P}^{\times_Xn}$. 
 This  also proves the  equivalence:
  $$\mcD_{\et}(\Sm/X)\cong\lim\left(
  \xymatrix@=1em{\mcD_{\et}(\Sm/\tilde{P})\ar@<0.5ex>[r]\ar@<-0.5ex>[r]&\mcD_{\et}(\Sm/\tilde{P}\times_X\tilde{P})\ar@<1ex>[r]\ar@<-1ex>[r]\ar[r]&\cdots
  }\right)$$
  and implies in particular that the map $p^*\colon \mcD_{\et}(\Sm/X)\to \mcD_{\et}(\Sm/\tilde{P})$ is conservative.
  \\
  {\it Step 6: }
  We show that the functor $p^*\colon\mcD_{\et}(\Sm/X)\to\mcD_{\et}(\Sm/\tilde{P})$ sends a class of compact generators to a class of compact generators. As we have   $\mcD_{\et}(\Sm/\tilde{P})=\varinjlim\mcD_{\et}(\Sm/P_i)$, it suffices to show that the functors $p_i^*$ send compact generators to compact generators. In other words (see \cite[Lemma 2.8.3]{agv}) we need to show that the functor $e_*$ is conservative whenever $e\colon Y\to X$ is an \'etale map of affinoid varieties. The statement is \'etale-local on $X$ so we may assume that $e$ is given by a trivial finite \'etale cover $Y=X\sqcup X\to X$ and $e_*$ is thus the functor $\mcD_{\et}(\Sm/Y)\cong\mcD_{\et}(\Sm/X)\times\mcD_{\et}(\Sm/X)\to\mcD_{\et}(\Sm/X)$, $(\mcF,\mcF')\mapsto\mcF\oplus\mcF'$ which is obviously conservative. The same proof shows also that $p^*\colon\RigDAeff(X)\to\RigDAeff(P)$ sends a class of compact generators to a class of compact generators.
  \\
  {\it Step 7: }
  We now claim that $\RigDA^{\eff}(X)$ fits in the following pullback square
  $$
  \xymatrix{\RigDA^{\eff}(X)\ar[r]\ar@{^{(}->}[d]&\RigDA^{\eff}(P)\cong\varinjlim\RigDA^{\eff}(P_i)\ar@{^{(}->}[d]\\ \mcD_{\et}(\Sm/X)\ar[r]&\mcD_{\et}(\Sm/\tilde{P})
}$$ 
{  This amounts to saying that an object $\mcF$ in $\mcD_{\et}(\Sm/X)$ is $\B^1$-invariant, if and only if it is so after applying the pullback functor $p^*$ i.e. we claim that } $\mcF\cong \pi_*\pi^*\mcF$ provided that $p^*\mcF\cong\pi_*\pi^*p^*\mcF$ where $\pi$ denotes the natural projection $\B^1_X\to X$ (as well as its pullback over $\tilde{P}$). From { Step 5} we already know that the functor $p^*\colon\mcD_{\et}(\Sm/X)\to\mcD_{\et}(\Sm/\tilde{P})$ is conservative, so it suffices to show that it commutes with $\pi^*$ (which is obvious) and with $\pi_*$. To this aim, by Step 6, we fix a compact object $M$ in $\RigDA^{\eff}(X)$ and we prove that $\Map(p^*M,p^*\pi_*\mcF)\cong\Map(p^*M,\pi_*p^*\mcF)$ for any $\mcF$ in $\mcD_{\et}(\Sm/\B^1_{\tilde{P}})\cong\varinjlim\mcD_{\et}(\Sm/\B^1_{P_i})$. This follows from the following sequence of equivalences
  \begin{equation}\tag{$\star\star$}\label{eq:desc2}
  \begin{aligned}
  \Map(p^*M,p^*\pi_*\mcF)&\cong\varinjlim\Map(p_i^*M,p_i^*\pi_*\mcF)\\
&\cong\varinjlim\Map(p_i^*M,\pi_*p_i^*\mcF)\\
&\cong \varinjlim\Map(p_i^*\pi^*M,p_i^*\mcF)\\
&\cong\Map(p^*\pi^*M,p^*\mcF)\\
&\cong\Map(p^*M,\pi_*p^*\mcF)
\end{aligned}
  \end{equation}  
  where we used the obvious commutation $\pi^*p^*\cong p^*\pi^*$ and the commutation $\pi_*p_i^*\cong p_i^*\pi_*$ which follows from the natural equivalence $\pi^*p_{i\sharp}\cong p_{i\sharp}\pi^*$ (see \cite[Proposition 2.2.1]{agv}).  The same proof shows more generally that 
    $\RigDA^{\eff}(P^{\times_Xn})$ is the pullback of $\RigDA^{\eff}(P^{\times_X{n+1}})$ along $\delta^*\colon\mcD_{\et}(\Sm/\tilde{P}^{\times_Xn})\to\mcD_{\et}(\Sm/\tilde{P}^{\times_Xn+1})$. We have then finally deduced the equation $\eqref{eqdesc}$, i.e. descent for effective motives $\RigDAeff$.
  \\
  {\it Step 8: }
  We now move to proving the statement for $\RigDA$. Just like in the proof of \cite[Theorem 2.3.4]{agv} this  follows formally from the commutation $\underline{\Hom}(T,-)\circ p^*\cong p^*\circ\underline{\Hom}(T,-)$ which can be deduced from the commutation $\underline{\Hom}(T,-)\circ p_i^*\cong p_i^*\circ\underline{\Hom}(T,-)$ using a similar argument  to the one used in Step 7 for the sequence \eqref{eq:desc2}.
\end{proof}
Pro-\'etale descent implies the possibility to extend motives to diamonds {({provided that we impose the same conditions on their Krull dimension as in} \Cref{df:adic}). 
\begin{dfn}
{We say a diamond is \emph{admissible} if it is pro-\'etale locally a perfectoid space in $\catD$ (i.e. locally of finite Krull dimension).}%
\end{dfn}
}

\begin{cor}\label{cor:DAdiam}
Consider the restrictions of the  functors $\RigDA^{(\eff)}$ to the category $\catD_{/\F_p}$. They can be extended uniquely as  pro-\'etale sheaves to the category of{ {admissible}} diamonds.
\end{cor}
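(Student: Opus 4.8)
The plan is to deduce this from \Cref{cor:proetd} by a comparison-of-sites argument, the role of homotopies being already absorbed into that theorem. Write $\catC\subset\catD_{/\F_p}$ for the full subcategory of admissible perfectoid spaces (necessarily of characteristic $p$). Every object of $\catC$ is tautologically an admissible diamond, and conversely, by the very definition of admissibility, every admissible diamond $\mcY$ admits a pro-\'etale surjection $P_0\to\mcY$ with $P_0\in\catC$; moreover the \v{C}ech nerve $P_\bullet=P_0^{\times_\mcY(\bullet+1)}$ is again levelwise in $\catC$, since each $P_n\to P_0$ is a base change of $P_0\to\mcY$ — hence pro-\'etale — and pro-\'etale maps preserve both the strong stable uniformity and the finiteness of the Krull dimension defining $\catD$. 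First I would record that this makes the inclusion $\catC\hookrightarrow\{\text{admissible diamonds}\}$ a dense, continuous and cocontinuous morphism of sites for the pro-\'etale topology; by the ($\infty$-categorical) comparison lemma it induces an equivalence of the associated $\infty$-topoi, and therefore an equivalence between $\Prlm$-valued pro-\'etale sheaves on the two categories.

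Next I would invoke \Cref{cor:proetd}, which is exactly the assertion that $\RigDA^{(\eff)*}|_{\catC}$ is a $\Prlm$-valued pro-\'etale sheaf (a bounded pro-\'etale hypercover inside $\catC$ is one in $\catD$). Transporting it along the equivalence above produces the desired functor on admissible diamonds, given explicitly by $\RigDA^{(\eff)}(\mcY):=\lim_{\Delta}\RigDA^{(\eff)}(P_\bullet)$ for any pro-\'etale hypercover $P_\bullet\to\mcY$ by objects of $\catC$; its independence of the chosen hypercover, and the fact that \v{C}ech covers already suffice, follow from \Cref{cor:proetd} applied to a common refinement of two given covers. Uniqueness among pro-\'etale sheaves is then automatic: any such sheaf is determined by its restriction to $\catC$, and $\catC$ covers every admissible diamond. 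One can also avoid citing the comparison lemma and instead take the displayed formula as the definition, checking directly that it is well defined and satisfies pro-\'etale descent by a cofinality reshuffling of the bicomplex obtained from a cover of $\mcY$ and the cover $P_\bullet$; both routes rely only on \Cref{cor:proetd}.

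It remains to check that the functor so obtained restricts, along the diamondification $X\mapsto X^\diamond$, to the original $\RigDA^{(\eff)*}$ on all of $\catD_{/\F_p}$, and not merely on $\catC$. For $X\in\catD_{/\F_p}$ I would use that in characteristic $p$ the completed perfection $\widehat{X^{\perf}}\to X$ induces an isomorphism of diamonds $(\widehat{X^{\perf}})^\diamond\cong X^\diamond$, and that $\RigDA^{(\eff)}$ with $\Q$-coefficients is invariant under this completed perfection — for instance via \Cref{thm:cont} applied to $\widehat{X^{\perf}}\sim\varprojlim(X\xleftarrow{\Frob}X\xleftarrow{\Frob}\cdots)$, the Frobenius pullback being invertible on $\RigDA^{(\eff)}_\Q$. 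Choosing a pro-\'etale hypercover of $X^\diamond$ by objects of $\catC$ and comparing the two descent computations it induces — one for the extension just constructed, one for $\RigDA^{(\eff)}$ itself applied to $\widehat{X^{\perf}}$ via \Cref{cor:proetd} — then yields $\RigDA^{(\eff)}(X^\diamond)\cong\RigDA^{(\eff)}(X)$ naturally in $X$; on the perfectoid objects of $\catD_{/\F_p}$ this reduces to the tautology $P^\diamond\cong P$.

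The step I expect to be genuine work rather than formality is the $\infty$-topos bookkeeping behind the first paragraph: since $\RigDA$ is valued in $\Prlm$, its descent condition involves cosimplicial limits that cannot be truncated on the right (as stressed just before \Cref{cor:proetd}), so the comparison lemma must be applied in its full $\infty$-categorical form, and one must make sure that the \v{C}ech nerves arising from perfectoid pro-\'etale covers of diamonds are ``bounded'' in the sense required by \Cref{cor:proetd}. Everything else — the identification of $\catC$-local models, the behaviour of fibre products, the perfection input — is standard from the theory of diamonds and from the earlier parts of the paper.
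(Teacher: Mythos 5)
Your proposal is correct and takes essentially the same route as the paper: the paper's (very brief) proof is exactly pro-\'etale descent (\Cref{cor:proetd}) combined with the equivalence of pro-\'etale topoi between perfectoid spaces over $\F_p$ and diamonds, invoked via \cite[Lemma 6.4.5.6]{lurie}, which is the dense-subcategory/comparison-lemma step you spell out. Your extra compatibility check along diamondification via invariance under completed perfection is fine but already available in the paper as \Cref{thm:stab}.
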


\begin{proof}
This follows (see \cite[Lemma 6.4.5.6]{lurie} or \cite[Lemma 2.1.4]{agv}) from pro-\'etale descent and the equivalence between the pro-\'etale toposes on perfectoid spaces over $\F_p$ and on diamonds.
\end{proof}
\begin{rmk}At this stage, we can't say that the construction of $\RigDA$ is compatible with the ``diamondification'' functor from adic spaces to diamonds. In other words, 
it is not yet clear that $\RigDA(S)\cong\RigDA(S^\diamond)$ if $S$ is an adic space in $\catD_{/\Q_p}$. We will show this only in \Cref{thm:R=P2}.
\end{rmk}

\subsection{Frobenius-invariance and perfectoid motives}
We continue to inspect the formal properties of $\RigDA$ which depend on homotopies, now focusing on the behavior of the functor $\RigDA$ under the action of Frobenius which is studied in \cite[Section 2.9]{agv}. %

\begin{thm}
	\label{thm:stab}
	Let $S'\to S$ be a  universal homeomorphism  in $\catD$.  The pullback functor induces an equivalence 
	$ 
	\RigDA^{(\eff)}(S)\cong\RigDA^{(\eff)}(S').
	$
	In particular, if $S$ is in $\catD_{/\F_p}$ then the pullback along $S^{\Perf}\to S$ induces an equivalence in $\Prloo$:$$\RigDA^{(\eff)}(S)\cong\RigDA^{(\eff)}(S^{\Perf})$$
	which is compatible with the functors $f^*$.
\end{thm}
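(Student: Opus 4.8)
The plan is to isolate the single homotopy-theoretic input, namely the invariance of $\RigDA^{(\eff)}$ under the relative Frobenius, and to obtain everything else---the reduction of a general universal homeomorphism to a finite one, and the ``in particular'' about perfections---formally from \Cref{thm:cont} and \Cref{thm:basicprop}. Let me first record how the perfection case follows from the main assertion. For $S\in\catD_{/\F_p}$ one has the weak limit presentation $S^{\Perf}\sim\varprojlim\bigl(\cdots\xrightarrow{\Phi}S\xrightarrow{\Phi}S\bigr)$, with $\Phi$ the absolute Frobenius; the transition maps and the projection $S^{\Perf}\to S$ are affine, hence qcqs, so \Cref{thm:cont} and \Cref{rmk:cont} give an equivalence $\varinjlim_{\Phi^*}\RigDA^{(\eff)}(S)\cong\RigDA^{(\eff)}(S^{\Perf})$ in $\Prloo$. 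The absolute Frobenius is a universal homeomorphism, so---granting the first assertion of the theorem, proved below---every $\Phi^*$ in this filtered system is an equivalence; hence the colimit is already computed by the initial term, and the corresponding structural functor, which is exactly the pullback along $S^{\Perf}\to S$, is an equivalence. Compatibility with the functors $f^*$ follows from the naturality of the continuity equivalences and of the perfection construction.

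For the main statement itself: a universal homeomorphism pulls back to a universal homeomorphism, so the assertion is \'etale-local on $S$ by \Cref{thm:basicprop}(5), and I may assume $S=\Spa(A,A^+)$ affinoid; then $S'\to S$ is of the form $\Spa(B,B^+)\to S$ with $A\to B$ integral (and injective, since $A$ is reduced). Writing $B$ as the filtered union of its finite $A$-subalgebras $B_j$, each $\Spa B_j\to S$ is again a \emph{finite} universal homeomorphism---it is finite, hence integral, and surjective and radicial because $\Spa B\to\Spa B_j$ is surjective by integrality---and it lies in $\catD$ because $B_j\subseteq B$ is reduced; so \Cref{thm:cont} reduces us to a finite universal homeomorphism $f\colon S'\to S$. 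The homotopical heart is the Frobenius: for $S\in\catD_{/\F_p}$ the absolute Frobenius induces a \emph{natural} equivalence $\Phi_S^*\cong\id$ on $\RigDA^{(\eff)}(S)$. Indeed $\Phi_S^*\,\Q_S(X)\cong\Q_S(X^{(1)})$ for the Frobenius twist $X^{(1)}$; when $X=\B^N_S$ one has $X^{(1)}=\B^N_S$, and the relative Frobenius $F\colon\B^N_S\to\B^N_S$, $T_i\mapsto T_i^p$, is a morphism over $S$ that commutes with the projection $\pi\colon\B^N_S\to S$ and with its zero-section, so---$\Q_S(\pi)$ being invertible by $\B^1$-invariance---$\Q_S(F)$ is canonically equivalent to the identity. \'Etale descent (\Cref{thm:basicprop}(5)), together with the \'etale-local description of smooth morphisms and the compatibility of the Frobenius twist with \'etale maps to $\B^N_S$ (the relevant Frobenius squares being cartesian, by topological invariance of the small \'etale site), propagates this to a natural equivalence $\Q_S(X)\xrightarrow{\sim}\Phi_S^*\Q_S(X)$ for every $X\in\Sm/S$, hence to $\Phi_S^*\cong\id$; the $T_S$-stabilization step is harmless, since $\Phi_S^*T_S\cong T_S$. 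Granting this, a finite universal homeomorphism $f\colon S'\to S$ in $\catD_{/\F_p}$ satisfies $g\circ f=\Phi_{S'}^n$ and $f\circ g=\Phi_S^n$ for some $n\ge0$ and some $g\colon S\to S'$ (because, writing $S'=\Spa B$, one has $B^{p^n}\subseteq A$ for $n\gg0$), so $g^*\circ f^*$ and $f^*\circ g^*$ are both equivalences and hence $f^*$ is one too. For $\catD$-spaces not lying over $\F_p$ one runs the analogous argument of \cite[\S2.9]{agv}.

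The step I expect to be the real obstacle is the propagation of the Frobenius computation from $\B^N_S$ to an arbitrary smooth $X$: the relative Frobenius of such an $X$ does not in general sit inside any evident $\B^1$-homotopy, and one must instead descend the statement along an \'etale cover $X\to\B^N_S$, carefully controlling the interaction of the Frobenius twist with \'etale morphisms and with topological invariance of the small \'etale site. This bookkeeping---together with the handling of universal homeomorphisms of residue characteristic $0$, for which the reduction to Frobenius is unavailable---is precisely what \cite[\S2.9]{agv} carries out; everything else above is formal, relying only on continuity, \'etale descent, and the elementary structure theory of integral morphisms.
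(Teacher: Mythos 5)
Your proposal is correct and takes essentially the same route as the paper, whose proof simply cites \cite[Corollary 2.9.10]{agv} for invariance under universal homeomorphisms and then deduces the perfection statement from \Cref{thm:cont} via the tower $S^{\Perf}\sim\varprojlim_{\Phi}S$, exactly as you do. Your extra affinoid reduction (affineness/integrality of a universal homeomorphism of adic spaces, admissibility of the intermediate $\Spa(B_j)$, and the presentation $S'\sim\varprojlim\Spa(B_j)$) is not actually justified in the adic setting, but it carries no weight in your argument since the substantive content is in any case delegated to \cite[Section 2.9]{agv}, just as in the paper.
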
 
\begin{proof}
After \cite[Corollary 2.9.10]{agv} only the last sentence needs to be proved, and that follows from \Cref{thm:cont}.%
\end{proof}

\begin{rmk}\label{rmk:stab-alg}
	The same is true for algebraic motives, provided that we consider their stable version. {{On the other hand, there is no need for any hypothesis on the Krull dimension of the base scheme.} See} \cite[Theorem 2.9.7]{agv} {and also \cite[Théorème 3.9]{ayoub-etale}, \cite{elmanto-kahn}.}
\end{rmk}
\begin{cor}
	\label{cor:stab}
	Let $S$ be in $\catD$ and 
let $f\colon X'\to X $ be a universal homeomorphism in $\RigSm/S$. The induced map of motives $\Q_S(X')\to\Q_S(X)$ is invertible in $\RigDA^{(\eff)}(S)$.%
\end{cor}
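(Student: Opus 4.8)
The plan is to reduce the statement to \Cref{thm:stab} by a formal manipulation of the $f_\sharp$-adjunctions attached to the smooth structure morphisms. Write $a\colon X\to S$ and $a'\colon X'\to S$ for the structure morphisms, so that $a'=a\circ f$ and, by (pseudo-)functoriality of the pullback (point (3) of \Cref{thm:basicprop}), $a'^{*}\cong f^{*}\circ a^{*}$. Since $a$ and $a'$ are smooth, point (4) of \Cref{thm:basicprop} provides left adjoints $a_\sharp\dashv a^{*}$ and $a'_\sharp\dashv a'^{*}$, computed on representable motives by composition of structure maps; in particular $\Q_S(X)\cong a_\sharp\Q_X(X)$, $\Q_S(X')\cong a'_\sharp\Q_{X'}(X')$, and, $f^{*}$ being symmetric monoidal, $f^{*}\Q_X(X)\cong\Q_{X'}(X')$.

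I would then rephrase the map $\Q_S(f)\colon\Q_S(X')\to\Q_S(X)$ through the functors it co-represents. For every $\mcF\in\RigDA^{(\eff)}(S)$ the adjunctions above give natural identifications
$$
\Map\big(\Q_S(X),\mcF\big)\cong\Map_X\big(\Q_X(X),a^{*}\mcF\big),\qquad
\Map\big(\Q_S(X'),\mcF\big)\cong\Map_{X'}\big(f^{*}\Q_X(X),\,f^{*}a^{*}\mcF\big),
$$
and the point to verify --- a routine compatibility of the $a_\sharp$- and $(-)^{*}$-formalisms, checkable on representable presheaves before $\B^{1}$- and \'etale-localization --- is that under these identifications the map induced by $\Q_S(f)$ is exactly the one obtained by applying $f^{*}\colon\RigDA^{(\eff)}(X)\to\RigDA^{(\eff)}(X')$. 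Granting this, \Cref{thm:stab} says $f^{*}$ is an equivalence of categories, so the displayed map is an equivalence for all $\mcF$, whence $\Q_S(f)$ is an equivalence by Yoneda; the same argument works verbatim for both $\RigDAeff$ and $\RigDA$.

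The only non-formal ingredient is \Cref{thm:stab} (where the $\B^{1}$-homotopy content is located); what needs care is the identification of the induced map with $f^{*}$ in the previous paragraph. Equivalently, in a purely functorial phrasing: because $f^{*}$ is an equivalence, its quasi-inverse $(f^{*})^{-1}$ is at once its left and its right adjoint, so passing to left adjoints in $a'^{*}\cong f^{*}\circ a^{*}$ gives $a'_\sharp\cong a_\sharp\circ(f^{*})^{-1}$, and hence $\Q_S(X')\cong a_\sharp(f^{*})^{-1}\Q_{X'}(X')\cong a_\sharp\Q_X(X)\cong\Q_S(X)$; one then checks that this composite isomorphism is the one induced by $f$.
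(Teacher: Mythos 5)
Your proof is correct and follows essentially the same route as the paper: both reduce to \Cref{thm:stab} applied to $f\colon X'\to X$ via the $\sharp$-adjunctions for the smooth structure maps, your Yoneda/mapping-space argument being the adjoint transpose of the paper's observation that $p'_\sharp\circ f^*$ and $p_\sharp$ are both left adjoint to $p^*$ and hence canonically equivalent. Your final ``purely functorial phrasing'' is literally the paper's proof.
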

\begin{proof}
Let $p$ resp. $p'$ be the structural smooth morphism $X\to S$ resp $X'\to S$. The map of motives in the statement can be written as $(p'_\sharp\circ f^*)(\Q_{X})\to p_\sharp\Q_X$. But $p'_\sharp\circ f^*$ is canonically equivalent to $p_\sharp$ as	they are both left adjoint functors to $p^*$ by \Cref{thm:stab}.
\end{proof}

\begin{cor}
	\label{thm:Frob=}
	Let $S$ be a perfectoid space over  {a perfectoid field $K$ of characteristic $p$}.  
	The base change along Frobenius defines an endofunctor $\varphi^*\colon\RigDA^{(\eff)}(S)\to\RigDA^{(\eff)}(S)$ and the relative Frobenius morphisms $X\to X^{(1)}\colonequals X\times_{S,\Frob}S$ induce a natural transformation $\id\Rightarrow\varphi^*$ which is an equivalence.
\end{cor}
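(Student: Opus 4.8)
The plan is to deduce everything from the universal homeomorphism statement of \Cref{thm:stab}. First, I would observe that for a perfectoid space $S$ over $K$ of characteristic $p$ (or, after tilting and \Cref{thm:stab} again, in general), the absolute Frobenius $\Frob\colon S\to S$ is a universal homeomorphism: it is a homeomorphism on underlying topological spaces, and stays so after any base change, since this is already true on affinoid pieces $\Spa(R,R^+)$ where $\Frob$ is the $p$-power map on a perfect(oid) ring. Hence by \Cref{thm:stab} the pullback $\Frob^*$ is an \emph{equivalence} of categories $\RigDA^{(\eff)}(S)\xrightarrow{\ \sim\ }\RigDA^{(\eff)}(S)$; in particular it is well-defined as an endofunctor, which disposes of the first assertion. (One should be slightly careful that $\Frob$ is a morphism in $\catD$: it is not a morphism of $K$-adic spaces, but it is a morphism of adic spaces over $\Z_p$, which is all that is needed for \Cref{thm:stab}, whose hypothesis only refers to $\catD$.)

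Next, for the natural transformation: given $X\in\Sm/S$ with structure map $p\colon X\to S$, the relative Frobenius $X\to X^{(1)}=X\times_{S,\Frob}S$ is the base change to $S$ (along $\Frob$) of... more precisely, $X^{(1)}$ is the object of $\Sm/S$ obtained by pulling back $X$ along $\Frob$, and the relative Frobenius is an $S$-morphism $X\to X^{(1)}$. Applying the functor $L\Q_S(-)$ and using that $f^*\Q_S(X)\cong\Q_S(X\times_{S,f}S')$ for the pullback functoriality, the map $\Q_S(X)\to\Q_S(X^{(1)})$ is precisely the unit-type map $\Q_S(X)\to\varphi^*\Q_S(X)$ (here $\varphi^*:=\Frob^*$). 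Since the objects $\Q_S(X)(n)$ with $X$ affinoid smooth generate $\RigDA^{(\eff)}(S)$ under colimits and shifts (\Cref{thm:basicprop}(1)), and both $\id$ and $\varphi^*$ are colimit-preserving (being left adjoints, by \Cref{thm:basicprop} and the fact that $\Frob^*$ has right adjoint $\Frob_*$), these maps assemble into a natural transformation $\id\Rightarrow\varphi^*$ of endofunctors of $\RigDA^{(\eff)}(S)$.

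It remains to check this natural transformation is an equivalence, i.e. that $\Q_S(X)\to\Q_S(X^{(1)})$ is invertible for all affinoid smooth $X/S$. But the relative Frobenius $X\to X^{(1)}$ is itself a universal homeomorphism: on affinoid charts it is a relative $p$-power Frobenius over a perfectoid base, hence radicial, surjective and integral, and these properties are stable under base change. Both $X$ and $X^{(1)}$ lie in $\RigSm/S$, so \Cref{cor:stab} applies verbatim and gives that $\Q_S(X)\to\Q_S(X^{(1)})$ is invertible in $\RigDA^{(\eff)}(S)$. By conservativity/generation this forces $\id\Rightarrow\varphi^*$ to be an equivalence, and the stabilized statement for $\RigDA(S)$ follows since $\varphi^*$ commutes with $-\otimes T_S$ (again as a pullback functor, $\varphi^*\Q_S(\T^1_S)\cong\Q_S(\T^1_S)$, and one passes to the colimit defining $\RigDA$ exactly as in \Cref{thm:stab}). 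The only genuinely delicate point is the bookkeeping around $\Frob$ not being a $K$-morphism; everything else is a formal consequence of \Cref{thm:stab} and \Cref{cor:stab}.
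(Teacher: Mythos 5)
Your proposal is correct and takes essentially the same route as the paper: the paper's proof likewise reduces to checking pointwise invertibility on the generators $\Q_S(X)(n)$ with $X\in\Sm/S$ and then invokes \Cref{cor:stab}, the relative Frobenius $X\to X^{(1)}$ being a universal homeomorphism in $\RigSm/S$. The additional details you spell out (well-definedness of $\varphi^*$ via \Cref{thm:stab}, assembling the transformation, and the passage to the stabilized category) are exactly the routine points the paper leaves implicit.
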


\begin{proof}
	We are left to prove that the transformation is pointwise invertible (in the homotopy category). It suffices to show this for the generators of the form $\Q_S(X)(n)$ with $p\colon X\to S$ in  $\Sm/S$ and this follows from \Cref{cor:stab}.%
\end{proof}

\begin{dfn}
	Let $\cat$ be a presentable  infinity-category and $F\colon\cat\to\cat$ an endofunctor with a right adjoint. 
	\begin{enumerate}
		\item 	The category of homotopically stable $F$-objects  $\cat^{hF}$ is the following  pullback.$$
		\xymatrix{
			\cat^{hF}\ar[r]\ar[d]&\cat\ar[d]^{\Gamma_F}\\
			\cat\ar[r]^{\Delta}&\cat\times\cat
		}
		$$
		More concretely, its objects are given by pairs $(X,\alpha)$ with $X$ in $\cat$ and $\alpha$ an equivalence $X\stackrel{\sim}{\to}FX$ (or, equivalently, an equivalence  $FX\stackrel{\sim}{\to}X$). 
		\item Suppose that $\cat$ is compactly generated and that $F$  preserves compact objects. The category $\cat^{hF}_\omega$ is the   pullback of the diagram above, computed in the category $\Prlo$.
		\item By means of \cite[Corollary 3.2.2.5]{lurie-ha} we may use the same notation when $\cat$ is a [compactly generated] symmetric monoidal presentable category, $F$ is also symmetric monoidal and the pullback is computed in $\Prlm$ [resp. in $\Prloo$].%
	\end{enumerate}
\end{dfn}

\begin{rmk}\label{rmk:hfp}
	Our notation is justified by the following remark:  $\cat^{hF}$ is the category of homotopically fixed points $\cat^{h\N}$ by letting  the monoid $\N$ act on $\cat$ via $F$. %
\end{rmk}
\begin{rmk}\label{rmk:hfo}
	Even if $\cat$ is compactly generated and $F$ preserves compact object, it may not be true that $\cat^{hF}$ is compactly generated. Nonetheless, by \cite[Lemma 5.4.5.7(2)]{lurie} its full subcategory generated (under filtered colimits) by compact objects is $\cat^{hF}_\omega$. In particular, whenever $\cat^{hF}$ is compactly generated, then the  {natural functor $\cat^{hF}_\omega\subset\cat^{hF}$ in $\Prl$ } is an equivalence. %
\end{rmk}

\begin{cor}\label{cor:tohF}
	Let $S$ be a perfectoid space {in   $\catD_{/\F_p}$} and $\varphi^*$ be the automorphism of $\RigDA^{(\eff)}(S)$ induced by pullback along Frobenius. There is a natural functor
	$$
	\RigDA^{(\eff)}(S)\to\RigDA^{(\eff)}(S)^{h\varphi^*}_\omega\cong\RigDA^{(\eff)}(S)^{h\varphi^{-1*}}_{\omega}
	$$
	sending each motive $M$ to the datum $M\stackrel{\sim}{\to}\varphi^* M $ given by the relative Frobenius functor. This gives rise to a natural transformation of \'etale hypersheaves with values in $\Prlm$
	$$
	\RigDA^{(\eff)*}\to(\RigDA^{(\eff)*})^{h\varphi^*}_{\omega}
	$$
	defined on the category of perfectoid spaces over $\F_p$.%
\end{cor}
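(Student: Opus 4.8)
The plan is to obtain the functor directly from the universal property of the pullback square defining $\cat^{h\varphi^*}$, by feeding it the natural equivalence $\id\Rightarrow\varphi^*$ produced in \Cref{thm:Frob=}, and then to check that this construction is compatible with compact objects, with the monoidal structure, and with base change.

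First I would fix a perfectoid space $S$ in $\catD$ and write $\cat=\RigDA^{(\eff)}(S)$. By \Cref{thm:Frob=} relative Frobenius supplies a natural transformation $\eta\colon\id\Rightarrow\varphi^*$ of endofunctors of $\cat$ which is an equivalence; equivalently, $\eta$ is a natural equivalence between the diagonal $\Delta\colon\cat\to\cat\times\cat$ and the graph $\Gamma_{\varphi^*}\colon M\mapsto(M,\varphi^*M)$, being the identity in the first coordinate and $\eta$ in the second. Together with the pair of identity functors $\cat\to\cat$ this is exactly the data demanded by the universal property of the pullback defining $\cat^{h\varphi^*}$, so it determines a functor $\cat\to\cat^{h\varphi^*}$, $M\mapsto(M,\eta_M)$. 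To land in $\cat^{h\varphi^*}_\omega$ and in $\Prlm$, I would rerun the argument with the pullback computed in $\Prloo$: $\cat$ is compactly generated (\Cref{thm:basicprop}(1)), $\varphi^*$ is an equivalence, hence preserves compact objects, and is symmetric monoidal, being a pullback functor; $\eta$ is a monoidal natural transformation because relative Frobenius is compatible with fibre products; and the identity functors, $\Delta$ and $\Gamma_{\varphi^*}$ all lie in $\Prloo$. By \cite[Corollary 3.2.2.5]{lurie-ha} the universal property then produces the desired morphism $\RigDA^{(\eff)}(S)\to\RigDA^{(\eff)}(S)^{h\varphi^*}_\omega$ in $\Prlm$; the displayed identification with $\RigDA^{(\eff)}(S)^{h\varphi^{-1*}}_\omega$ is the evident one, $(M,\alpha)\mapsto(M,\varphi^{-1*}(\alpha^{-1}))$.

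For the naturality statement I would use that, for a morphism $f\colon S'\to S$ of perfectoid spaces over $\F_p$, base change along $f$ commutes with both copies of $\varphi^*$ compatibly with $\eta$, since the relative Frobenius of $X\times_SS'$ over $S'$ is the $f$-pullback of the relative Frobenius of $X$ over $S$. Hence the input fed to the universal property above is natural in $S$, so that $S\mapsto\RigDA^{(\eff)}(S)^{h\varphi^*}_\omega$ is a functor on perfectoid spaces over $\F_p$ with values in $\Prlm$ and the pointwise functors glue into a natural transformation $\RigDA^{(\eff)}\to\RigDA^{(\eff)h\varphi^*}_\omega$. That both sides are \'etale hypersheaves follows from \Cref{thm:basicprop}(5) for the source, and for the target from the fact that homotopy fixed points, being a limit, commute with the limits appearing in \'etale hyperdescent, together with the compatibility of $\Prloo\hookrightarrow\Prlm$ with such limits.

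The step I expect to be the main obstacle is the coherence bookkeeping underpinning the previous paragraph: to make the naturality precise one should package the assignment $S\mapsto(\RigDA^{(\eff)}(S),\varphi^*_S,\eta_S)$ into a single functor into an auxiliary $\infty$-category of presentable (symmetric monoidal) categories equipped with an endofunctor together with an equivalence of it with the identity, so that a functorial homotopy-fixed-points construction yields the hypersheaf $\RigDA^{(\eff)h\varphi^*}_\omega$ and the natural transformation at one stroke. Everything else is a formal consequence of the universal property of the pullback and of the properties of $\varphi^*$ already recorded in \Cref{thm:Frob=} and \Cref{thm:basicprop}.
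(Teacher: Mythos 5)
Your proposal is correct and follows essentially the same route as the paper: the functor is obtained from the universal property of the defining pullback square (computed in $\Prloo$/$\Prlm$) fed with the relative-Frobenius equivalence $\id\Rightarrow\varphi^*$ of \Cref{thm:Frob=}, and functoriality in $S$ comes down to the compatibility of relative Frobenius with base change, which the paper verifies by testing the exchange map $\varphi^*_{S'}f^*\Rightarrow f^*\varphi^*_S$ on compact generators and invoking \Cref{thm:stab}. The coherence packaging you flag as the main obstacle is treated at a comparably informal level in the paper, so there is no substantive gap relative to its argument.
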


\begin{proof}
	For the first claim, it suffices to    consider the following diagram:
{	$$
	\xymatrix@=0.3cm{
		\RigDA(S)\ar@{=}[rrr]\ar@{=}[ddd]&&&\RigDA(S)\ar[ddd]^{\Gamma_{\varphi^*}}\\
	&&&	\\
		&\ar@{=>}[ur]^{\sim}\\
		\RigDA(S)\ar[rrr]_-{\Delta}
	 &&&\RigDA(S)\times\RigDA(S)
	}
	$$}
	where the natural transformation is defined by the relative Frobenius functor (see \Cref{thm:Frob=}).
	
	In order to prove functoriality with respect to $S$, we fix a morphism $f\colon S'\to S$ and denote by $\varphi_S$ [resp. $\varphi_{S'}$] the relative Frobenius functor over $S$ [resp. $S'$]. We first remark that the canonical natural transformation $\varphi^*_{S'}f^*\Rightarrow f^*\varphi^*_S$ is an equivalence:  when tested on compact generators of  the form $\Q_S(X)(n)$ with $X/S$ smooth, it corresponds to a  universal homeomorphism, hence it is invertible by means of  \Cref{thm:stab}. {With this remark, it is possible to define a lax functor from $\catD_{/\F_p}^{\op}\times \mathrm{\bf B}\N$ to relative categories which, by usual strictification techniques  (see e.g. \cite[Theorem 3.4]{may}) induces a functor  from $\catD_{/\F_p}^{\op}\times \mathrm{\bf B}\N$ to relative categories, and hence to infinity-categories (see \cite{BK}). This promotes $\varphi^*_S$ into an automorphism of the functors $\RigDA^{(\eff)*}$ and the natural transformation $\id\Rightarrow\varphi^*_S$ into a map between automorphisms of these functors, concluding the claim. Alternatively, to prove the functoriality of $\RigDA(-)^{h\varphi^*}$ one may use the explicit model-theoretical description of such categories  given in \cite{bergner}.}
\end{proof}

Perfectoid motives over a perfectoid field were introduced in \cite{vezz-fw}. We now easily extend their definitions and some properties to the relative setting.

\begin{dfn}
	We let $\Perf$ be the full subcategory of $\catD$ made of perfectoid spaces over some perfectoid field, and we let $S$ be  in $\Perf$.  We let $\PerfSm/S$ be the full sub-category of $\catD/S$  whose objects  are locally \'etale over $\widehat{\B}^n_S\colonequals S\times_{\Z_p}\Spa\Z_p\langle T_1^{1/p^\infty},\ldots, T_n^{1/p^\infty}\rangle$ (sometimes called \emph{geometrically smooth} perfectoid spaces over $S$). We let $\widehat
	{\T}^n_S$ be $ S\times_{\Z_p}\Spa\Z_p\langle{ T_1^{\pm1/p^\infty}},\ldots, T_n^{\pm1/p^\infty}\rangle$ and $\widehat{T}_S$ be the cokernel of the split inclusion of presheaves $\Q_S(S)\to\Q_S(\widehat{\T}^1_S)$ induced by the unit.  
		We let $\Psh(\PerfSm/S,\Q)$ be the infinity-category of presheaves on the category $\Perf\Sm/S $ taking values on the derived infinity-category of  $\Q$-modules, and we let $\PerfDA^{\eff}(S)$  be its full stable infinity-subcategory spanned by those objects $\mcF$  which are $\widehat{\B}^{1}$-invariant and with $\et$-descent. Finally,  we set $\PerfDA(S,\Q)=\PerfDA^{\eff}(S,\Q)[\widehat{T}_S^{-1}]$ in $\Prl$ (see \cite[Definition 2.6]{robalo}). These categories are endowed with a  symmetric monoidal structure for which $\Q_S(X)\otimes\Q_S(Y)\cong\Q_S(X\times_S Y)$.
\end{dfn}

\begin{rmk}
{The Krull dimension of an adic space $X$ (which is a spectral space) can be computed by the maximal height of the valuations at each point $x$ of $X$. As such (see for example \cite[Definition 2.8.10 and Example 2.8.11]{agv}) or \cite[Proposition 2.4.2]{sw}}  pro-\'etale maps {can only decrease}   the topological Krull dimension  and therefore any perfectoid space that is locally pro-\'etale above a rigid analytic variety lies in $\Perf$.
\end{rmk}

\begin{prop}
One can define contravariant functors $\PerfDA^{(\eff)*}$ on $\Perf$ with values in $\Prlm$ such that any morphism $f\colon S'\to S$ in $\Perf$ is mapped to the functor $\PerfDA^{(\eff)}(S)\to\PerfDA^{(\eff)}(S')$ induced by  pullback along $f$. They satisfy \'etale hyperdescent and their restrictions to $\Perf^{\qcqs}$ take values in $\Prloo$.%
\end{prop}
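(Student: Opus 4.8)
The plan is to establish this proposition as a near-verbatim repetition of the machinery used for $\RigDA^{(\eff)}$, simply transplanting it to the setting of geometrically smooth perfectoid spaces. First I would define the pullback functors: given $f\colon S'\to S$ in $\Perf$, the assignment $X\mapsto X\times_S S'$ sends $\PerfSm/S$ to $\PerfSm/S'$ (one must check that being locally étale over $\widehat{\B}^n_S$ is preserved by base change, which is immediate since $\widehat{\B}^n_{S'}=\widehat{\B}^n_S\times_S S'$ and étale maps are stable under pullback). This induces a left Kan extension / restriction adjunction on presheaves which, after localization, descends to a symmetric monoidal left adjoint $f^*\colon\PerfDA^{(\eff)}(S)\to\PerfDA^{(\eff)}(S')$ exactly as in \Cref{dfn:mot}. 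Functoriality (the cocycle condition making $S\mapsto\PerfDA^{(\eff)}(S)$ a genuine contravariant functor to $\Prlm$) is formal, coming from the pseudofunctoriality of pullback on sites together with the universal property of the localizations; I would invoke \cite[Proposition 4.8.1.17]{lurie-ha} and the $\infty$-categorical straightening as in the references already cited, rather than spelling out coherences.

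For étale hyperdescent, the strategy mirrors \Cref{thm:basicprop}(5): one observes that $\Sh_{\et}(\PerfSm/-)$ already satisfies étale hyperdescent at the level of sheaves of spaces/chain complexes — this is a general fact about small sites and their associated topoi, since an étale hypercover $\mcU\to S$ induces an equivalence of the associated étale topoi $\Sh_{\et}(\PerfSm/S)\cong\lim\Sh_{\et}(\PerfSm/\mcU)$ — and then notes that the $\widehat{\B}^1$-invariant, $\widehat{T}$-stable localization is compatible with this limit because the localizing endofunctors ($\widehat{\B}^1$-localization and $\underline{\Hom}(\widehat{T},-)$-stabilization) commute with the restriction functors $f^*$. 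The latter commutation is the same kind of base-change statement proved in Steps 7--8 of the proof of \Cref{cor:proetd}, resting ultimately on the smooth projection formula $\pi^*f_\sharp\cong f_\sharp\pi^*$ (the perfectoid analogue of \cite[Proposition 2.2.1]{agv}), which holds because $\widehat{\B}^1_{(-)}$ is itself a geometrically smooth object compatible with pullback.

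The last clause, that the restriction to $\Perf^{\qcqs}$ takes values in $\Prloo$, amounts to checking two things: that $\PerfDA^{(\eff)}(S)$ is compactly generated — with generators $\Q_S(X)(n)$ for $X$ affinoid in $\PerfSm/S$ — and that $f^*$ preserves compact objects when $f$ is qcqs. Compact generation follows from the general nonsense that a left Bousfield localization of a compactly generated category at a set of maps between compact objects (here the $\widehat{\B}^1$- and $\widehat{T}$-localizing maps) is again compactly generated, provided the relevant point-wise cohomological dimensions are bounded; this is where one needs the finite-Krull-dimension hypothesis baked into $\catD$ and the rational coefficients (as in \cite[Corollary 2.4.13]{agv}). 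That $f^*$ is compact-preserving for qcqs $f$ is then equivalent to $f_*$ commuting with filtered colimits, which one reduces — étale-locally on the base, exactly as in Step 6 of \Cref{cor:proetd} — to the case of a standard finite étale cover, where $f_*$ is visibly a finite product and hence colimit-preserving.

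The main obstacle I anticipate is not any single step but the verification that the perfectoid site $\PerfSm/S$ behaves formally like $\Sm/S$ — in particular that one has the full six-operations toolkit ($f_\sharp$ for geometrically smooth $f$, smooth base change, localization) at the level needed to run the descent argument. Since \cite{agv} and \cite{vezz-fw} have already set up this perfectoid formalism over a perfectoid \emph{field}, the real content here is the bookkeeping needed to make everything relative and functorial in $S$; I would state it as a routine adaptation and point to the absolute case, flagging only that the hypotheses on Krull dimension are what guarantee the relevant admissibility and compact generation.
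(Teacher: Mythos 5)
Your proposal is correct and matches the paper's approach: the paper's entire proof is the one-line remark that the proofs of \cite[Proposition 2.1.21, Theorem 2.3.4 and Proposition 2.4.22]{agv} adapt easily to the perfectoid context, and your sketch is precisely that adaptation (pullbacks by base change, hyperdescent via topos-level descent plus compatibility of the $\widehat{\B}^1$- and $\widehat{T}$-localizations with $f^*$, and compact generation with $f^*$ compact-preserving for qcqs $f$). No gaps beyond the bookkeeping you already flag.
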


\begin{proof}
The proofs of \cite[Proposition 2.1.21, Theorem 2.3.4 and Proposition 2.4.22]{agv}	can be  easily adapted to the perfectoid context.
\end{proof}

\begin{rmk}
 It is clear that $\PerfDA^{(\eff)}(P)\cong\PerfDA^{(\eff)}(P^\flat)$ for any perfectoid space $P$, functorially in $P$ by \cite{scholze}. %
\end{rmk}

\begin{thm}%
	\label{thm:rig=perf}
	Let $S$ be an object of $\Perf_{/\F_p}$. The functor induced by relative  perfection  $\Perf\colon\RigSm/S\to\PerfSm/S$ gives an equivalence $$\Perf^*\colon\RigDA^{(\eff)}(S)\stackrel{\sim}{\to}\PerfDA^{(\eff)}(S).$$ More generally, the relative perfection induces an equivalence of presheaves $\RigDA^{(\eff)*}\cong\PerfDA^{(\eff)*}$ on  $\Perf_{/\F_p}$ with values in $\Prlm$.
\end{thm}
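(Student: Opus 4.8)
The plan is to verify that $\Perf^*$ is a colimit‑preserving functor between compactly generated stable symmetric monoidal categories which carries a set of compact generators onto a set of compact generators and is fully faithful on those generators; any such functor is automatically an equivalence. Since relative perfection commutes with base change, the functors $\Perf^*$ are then natural in $S$ with no extra work, so I would fix $S\in\Perf_{/\F_p}$ and recover the statement about presheaves at the very end, from this naturality.

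First I would reduce to the effective categories. Relative perfection carries $\B^n_S$ to $\widehat{\B}^n_S$ and $\T^n_S$ to $\widehat{\T}^n_S$, and the canonical map $\Q_S(\T^1_S)\to\Q_S(\widehat{\T}^1_S)$, being a colimit of the tower of relative Frobenii of $\T^1_S$ — each of which is a universal homeomorphism — is an equivalence in $\RigDA^{\eff}(S)$ by \Cref{cor:stab}. Hence $\Perf^*T_S\cong\widehat{T}_S$ and $\Perf^*$ intertwines $\underline{\Hom}(T_S,-)$ with $\underline{\Hom}(\widehat{T}_S,-)$, so the case of $\RigDA$ follows formally from that of $\RigDA^{\eff}$ exactly as in \cite[Theorem 2.3.4]{agv}. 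From now on I work with the effective categories.

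For essential surjectivity I would show that any $Y\in\PerfSm/S$ is, étale locally, étale over some $\widehat{\B}^n_S=(\B^n_S)^{\Perf}$, and that by the topological invariance of the small étale site under the weak limit $\widehat{\B}^n_S\sim\varprojlim_{\Phi}\B^n_S$ such a piece is $X_0\times_{\B^n_S}\widehat{\B}^n_S=X_0^{\Perf}$ for some étale $X_0\to\B^n_S$, hence a perfection of a smooth rigid variety. Passing to Čech nerves of such covers, which are levelwise perfections of smooth rigid varieties, the étale hyperdescent of $\RigDA^{\eff}$ and $\PerfDA^{\eff}$ together with the colimit‑preservation of $\Perf^*$ show that $\Q_S(Y)$ lies in the essential image of $\Perf^*$ for every $Y$; taking $Y$ affinoid this gives a set of compact generators of $\PerfDA^{\eff}(S)$.

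The hard part will be full faithfulness. As $\Perf^*$ preserves compact objects it suffices to prove that
$$\Hom_{\RigDA^{\eff}(S)}\bigl(\Q_S(X),\Q_S(X')[i]\bigr)\longrightarrow\Hom_{\PerfDA^{\eff}(S)}\bigl(\Q_S(X^{\Perf}),\Q_S((X')^{\Perf})[i]\bigr)$$
is an isomorphism for all affinoid $X,X'\in\RigSm/S$ and all $i\in\Z$. Both sides are hypercohomology of the $\B^1$‑ (resp. $\widehat{\B}^1$‑) localized étale sheafification of $\Q_S(X')$ (resp. $\Q_S((X')^{\Perf})$) evaluated on $X$ (resp. $X^{\Perf}$), so I would deduce the comparison from two facts: (i) relative perfection induces an equivalence of small étale sites, matching étale $\Q$‑cohomology before localization; and (ii) it matches the $\B^1$‑homotopy structure with the $\widehat{\B}^1$‑homotopy structure, since $(\B^1_X)^{\Perf}=\widehat{\B}^1_{X^{\Perf}}$ and the coordinate Frobenius $T\mapsto T^p$ on the relative discs is invertible in $\RigDA^{\eff}(S)$ again by \Cref{cor:stab}. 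With these inputs the argument is the relative refinement of the one carried out over a perfectoid field in \cite{vezz-fw}, with \Cref{thm:cont}, \Cref{thm:stab} and \Cref{cor:stab} replacing the absolute statements used there. I expect the genuine difficulty to lie in organizing step (ii) relatively — controlling the $\widehat{\B}^1$‑localization on $\PerfSm/S$ in terms of the $\B^1$‑localization on $\RigSm/S$ without a global rigid model for arbitrary perfectoid test objects, which forces a systematic use of continuity (\Cref{thm:cont}) and, where a descent is needed, of the pro‑étale descent of \Cref{cor:proetd}. Finally, naturality of relative perfection in $S$ upgrades the pointwise equivalence to the asserted equivalence $\RigDA^{(\eff)*}\cong\PerfDA^{(\eff)*}$ of presheaves valued in $\Prlm$.
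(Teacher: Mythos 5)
Your proposal is correct and follows essentially the same route as the paper: the paper likewise defines the transformation as in \cite{robalo}, reduces by \'etale descent to an affinoid (perfectoid) base, and then runs the argument of \cite{vezz-fw} for a perfectoid field verbatim in the relative setting, with \Cref{cor:stab} replacing the Frobenius localization of loc.\ cit.\ — exactly the adaptation you describe. Your additional details (reduction of the stable case via $\Perf^*T_S\cong\widehat{T}_S$, generation by perfections of rigid affinoids via \Cref{thm:cont}, and full faithfulness on generators by the homotopy-approximation argument of \cite{vezz-fw}) are just the implicit content of that reduction, so there is no substantive divergence.
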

\begin{proof}
	The natural transformation of functors can be defined just as in \cite{robalo}. By \'etale {hyper}descent, it suffices to prove that $\Perf^*$ is an equivalence whenever $S$ is affinoid perfectoid. We remark that the case $S=\Spa(K,K^\circ)$ has been already proved in \cite{vezz-fw} and the same proof works for any affinoid base, {see \cite{vezz-bang}. }
\end{proof}
\begin{cor}\label{fsharp}
	Let  $f\colon S'\to S$ be a map  of {{ admissible}} diamonds  that, %
	pro-\'etale locally on $S$,   lies in $\PerfSm/S$. Then the functor $f^*\colon\RigDA^{(\eff)}(S)\to\RigDA^{(\eff)}(S')$ has a left adjoint  given by
	$$\RigDA^{(\eff)}(S')\cong\PerfDA^{(\eff)}(S')\stackrel{f_\sharp}{\to}\PerfDA^{(\eff)}(S)\cong\RigDA^{(\eff)}(S)
	$$
	with $f_\sharp$ defined %
	as the functor induced by $$\PerfSm/S'\to\PerfSm/S\quad (X\to S')\mapsto (X\to S'\to S).$$  
\end{cor}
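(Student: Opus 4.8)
The plan is to pass to perfectoid motives. Since a geometrically smooth map of perfectoid spaces is not smooth as a morphism of adic spaces, \Cref{thm:basicprop}(\ref{eq:sm}) does not apply directly to $f$, but it will after transport along the equivalence $\RigDA^{(\eff)*}\cong\PerfDA^{(\eff)*}$ of \Cref{thm:rig=perf} (which, like $\RigDA^{(\eff)}$, extends to admissible diamonds over $\F_p$ by pro-\'etale descent, cf.\ \Cref{cor:proetd} and \Cref{cor:DAdiam}). As that equivalence is natural in the base, $f^*$ on $\RigDA^{(\eff)}$ corresponds to the pullback $f^*\colon\PerfDA^{(\eff)}(S)\to\PerfDA^{(\eff)}(S')$, and it suffices to produce a left adjoint to the latter that on representables is the functor $\Q_{S'}(X)\mapsto\Q_S(X)$ attached to $(X\to S')\mapsto(X\to S'\to S)$; this is precisely the composite in the statement. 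To set up a d\'evissage, fix a pro-\'etale cover $U\to S$ over which $f$ becomes geometrically smooth, i.e.\ $f_U\colon S'\times_S U\to U$ lies in $\PerfSm/U$, and let $U^{\bullet}$ be its \v{C}ech nerve. Since geometrically smooth maps are stable under base change, $f_n\colon S'\times_S U^n\to U^n$ lies in $\PerfSm/U^n$ for every $n$; and by pro-\'etale descent (the cover being bounded, as everything has finite rational \'etale cohomological dimension) one has $\PerfDA^{(\eff)}(S)\cong\lim_{\bullet}\PerfDA^{(\eff)}(U^{\bullet})$ and $\PerfDA^{(\eff)}(S')\cong\lim_{\bullet}\PerfDA^{(\eff)}(S'\times_S U^{\bullet})$, compatibly with $f^*\cong\lim_{\bullet}(f_n)^*$.

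Next I would treat the case of a map $g$ that genuinely lies in $\PerfSm/S$. Then $(X\to S')\mapsto(X\to S)$ defines a functor $u\colon\PerfSm/S'\to\PerfSm/S$ (a composition of geometrically smooth maps is geometrically smooth), whose associated left Kan extension $g_\sharp\colon\Psh(\PerfSm/S',\Q)\to\Psh(\PerfSm/S,\Q)$ sends $\Q_{S'}(X)$ to $\Q_S(X)$. Its right adjoint is restriction along $u$, and this agrees with $g^*$: for $X\in\PerfSm/S'$ and $Y\in\PerfSm/S$ one has $\Hom_{S'}(X,Y\times_S S')=\Hom_S(X,Y)$, so restricting the representable $\Q_S(Y)$ along $u$ yields $\Q_{S'}(Y\times_S S')=g^*\Q_S(Y)$. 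Since $u^{\ast}$ visibly preserves $\widehat{\B}^1$-invariance and \'etale descent and commutes with $\uhom(\widehat{T},-)$, the adjunction $g_\sharp\dashv g^*$ passes to $\PerfDA^{(\eff)}(S)$. This is the perfectoid analogue of \Cref{thm:basicprop}(\ref{eq:sm}), i.e.\ of \cite[Proposition 2.2.1]{agv}, which adapts by the same formal arguments; in particular the resulting $g_\sharp$ satisfies smooth base change.

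Applying this to each $f_n$ gives adjunctions $(f_n)_\sharp\dashv (f_n)^*$, and the smooth base change property is exactly what makes the family $\{(f_n)_\sharp\}_n$ satisfy the Beck--Chevalley condition along the cosimplicial transition maps of $U^{\bullet}$ (each such square is cartesian, with vertical arrows the $f_n$ and horizontal arrows pro-\'etale). By the standard formalism of adjoint functors in a limit of presentable $\infty$-categories, $f^*\cong\lim_{\bullet}(f_n)^*$ then admits a left adjoint $f_\sharp$, computed levelwise by the $(f_n)_\sharp$; evaluating on representables identifies it with the forgetful functor of the statement. Transporting back along \Cref{thm:rig=perf} concludes.

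The main obstacle is this last gluing step. As recalled after \Cref{thm:cont}, the co-simplicial \v{C}ech diagrams occurring here cannot be truncated on the right, so one genuinely needs the adjointability/Beck--Chevalley machinery for limits of $\infty$-categories rather than an elementary patching; and one must be careful that the perfectoid transcription of \cite[Proposition 2.2.1]{agv} produces not merely the adjunction $g_\sharp\dashv g^*$ but full smooth base change, which is what feeds the Beck--Chevalley verification. Everything else is a routine transcription of the rigid-analytic arguments to the geometrically smooth perfectoid setting.
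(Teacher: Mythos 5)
Your proposal is correct and follows essentially the same route as the paper: first establish the adjunction $g_\sharp\dashv g^*$ (together with smooth base change) for a genuinely geometrically smooth map over a perfectoid base, by transcribing \cite[Proposition 2.2.1]{agv} to $\PerfSm$, and then glue over a pro-\'etale (hyper)cover using pro-\'etale descent (\Cref{cor:proetd}) and the adjointability criterion for limits of presentable infinity-categories (\cite[Proposition 4.7.4.19]{lurie-ha}), exactly as in the paper's proof. The only cosmetic difference is that you work with the \v{C}ech nerve of a chosen cover trivializing $f$ rather than an arbitrary perfectoid pro-\'etale hypercover, which changes nothing of substance.
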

\begin{proof}%
If $S$ is itself a perfectoid space, the proof is straightforward and similar to \Cref{thm:basicprop}\eqref{eq:sm}. 
We remark that in this case, by construction, whenever one has a cartesian diagram of perfectoid spaces
$$
\xymatrix{
T'\ar[r]^{g'}\ar[d]^{f'} & S'\ar[d]^f\\
T\ar[r]^g & S
}
$$
with  $f\in\PerfSm/S$, then $g^*f_\sharp\cong f'_\sharp g'^*$.

Let $\mcP\to S$ be a perfectoid pro-\'etale hypercover, and $\mcP'\to S'$ be the hypercover of $S$ induced by base change. By the previous part of the proof, there are  functors of diagrams $\RigDA^{(\eff)}(\mcP')\to\RigDA^{(\eff)}(\mcP)$ which are levelwise left adjoint to the base-change functors. They then induce a functor $f_\sharp$ between the two homotopy limits (computed by pro-\'etale descent, see \Cref{cor:proetd}) $\RigDA^{(\eff)}(S')\to\RigDA^{(\eff)}(S)$ which is a left adjoint to the base-change functor (see \cite[Proposition 4.7.4.19]{lurie-ha}) as wanted.
\end{proof}

\begin{dfn}
{We may and do extend the functor $\PerfDA^{(\eff)}(-)$ from $\Perf_{/\F_p}$ to diamonds, by considering its pro-\'etale sheafification. For any $S\in\catD$ we write $\PerfDA^{(\eff)}(S)$ for the	category $\PerfDA^{(\eff)}(S^\diamond)$. }By \Cref{thm:rig=perf}, it is canonically equivalent to $\RigDA^{(\eff)}(S^\diamond)$. 
\end{dfn}

\begin{rmk}\label{rmk:naif}
There is an alternative ``naive'' definition of $\PerfDA^{(\eff)}(S)$ in case $S\in\catD$ is not necessarily perfectoid: we may consider the category $\PerfSm_n/S$  ($n$ standing for naive) as being the full subcategory of $\catD_{/S}$ which are locally \'etale over some space $\widehat{ \B}^N\times S$, equip it with the \'etale topology and consider the induced category of (effective) motives $\PerfDA_n^{(\eff)}(S)$. This construction defines  functors $\PerfDA_n^{(\eff)}$ with values in $\Prlm$ which are equipped with natural transformations $\sigma\colon\PerfDA_n^{(\eff)}\to\PerfDA^{(\eff)}\cong\RigDA^{(\eff)}$. We note that $\sigma$ is invertible when restricted to the category of perfectoid spaces and it therefore exhibits $\PerfDA$ as the pro-\'etale sheaf associated to $\PerfDA^{(\eff)}_n$.
\end{rmk}

\section{Relative overconvergent  varieties and motives}\label{sec:oc}
We now introduce the category of overconvergent motives, generalizing the situation of \cite{vezz-mw}. To this aim, we first define the category of \emph{smooth dagger rigid analytic varieties} $\Sm^\dagger/S$ (or \emph{smooth varieties with an overconvergent structure})  over a base $S$ which is in $\catD_{/\Q_p}$. 

\subsection{Relative overconvergent rigid varieties}
Our definition is based on the absolute notion introduced by Gro\ss e-Kl\"onne \cite{gk-over} {(see also \cite[Appendix A]{vezz-mw} for an adic perspective)}. We remark that we do not put any overconvergent structure on the base $S$, so that $\Et^\dagger/S=\Et/S$ and that for any open $U$ of $S$  we have $\Sm^\dagger/U=(\Sm^\dagger/S)/U$.

\begin{dfn}
Let $U\to S$ be a morphism in $\catD$ which is locally qcqs and  topologically of finite type, and let $U\subset V$ be an open inclusion. We write $U\Subset_SV$ if the morphism $U\subset V$ extends to a morphism of adic spaces $U^{/S}\subset V$ where $U^{/S}$ is the universal compactification of $U/S$ (see \cite[Theorem 5.1.5]{huber}). In the affinoid setting, say for a map $f\colon (R,R^+)\to(R',R'^+)$ over $(A,A^+)$ this means that $f(R^+)$ is included in the algebraic closure of $A^++R'^{\circ\circ}$ in $R'$.\end{dfn}
\begin{rmk}
{Even though in \cite{huber} every adic space is assumed to be noetherian (in order to ensure the sheafyness property), this hypothesis is not used in the proof of \cite[Theorem 5.1.5]{huber}.}
\end{rmk}
	\begin{dfn}
Let %
 $S$ be in $\catD_{/\Q_p}$. We let $\Sm^\dagger/S$ %
be the  subcategory of $(\Sm/S)\times \Pro(\Sm/S)$ %
whose objects are given by pairs $(\widehat{X},\{X_h\})$ with $\widehat{X}\in\Sm/S$ and $\{X_h\}$ is a co-filtered system of  open inclusions $\widehat{ X}\Subset_V X_h\subset X_{h'}$ in $\Sm/S$  such that  %
 $\widehat{X}^{/V}\sim\varprojlim X_h$, where we let $V$ be  the open subvariety of $S$ given by $\Im(\widehat{ X}\to S)$. Morphisms are defined levelwise, and required to be compatible with the inclusions $\widehat{X}\subset X_h$. For an object  $X=(\widehat{X},\{X_h\})$ in $\Sm^\dagger/S$ we let $\mcO^\dagger(X)$  be $\varinjlim_{h}\mcO(X_h)${ and  $\mcO^{+\dagger}(X)$  be $\varinjlim_{h}\mcO^+(X_h)$.}%

 Fix a map  $(\widehat{X},\{X_h\})\to (\widehat{Y},\{Y_h\})$ in $\Sm^\dagger/S$. %
 We say it is an \emph{open immersion} [resp. \emph{\'etale}] if  the map of pro-objects has a strictification which is made of morphisms $X_h\to Y_h$ that are open immersions [resp. \'etale].  We remark that under these hypotheses, the map $\widehat{X}\to\widehat{Y}$ is automatically an open immersion [resp. \'etale]. A collection of morphisms $\{(\widehat{U}_i,\{U_{h_i}\})\to(\widehat{X},\{X_h\})\}$ is a \emph{cover}{ if for every $x\in \widehat{X}^{/V}$ there is some $i$ for which $x$ lies in the image of each $U_{h_i}$. }%
\end{dfn}

\begin{rmk}\label{rmk:thick}
A choice of a {strict inclusion} $\widehat{X}\Subset_VX_0$ of smooth rigid analytic varieties over 
  $S$ with $V=\Im(\widehat{X}\to S)$ defines an object of $\Sm^\dagger/S$ by taking the filtered diagram of open subsets of $X_0$ cointaining the closure of $\widehat{X}$. Any morphism [resp. open immersion, \'etale map] of {strict inclusions }$(\widehat{X}\to X_0)\to(\widehat{Y}\to Y_0)$ induces a map in $\Sm^\dagger/S$ [with the same properties]. Up to replacing $X_0$ with $X_0\times_SV$ one may assume that $V=\Im(X_0\to S)$. We can actually define $\Sm^\dagger/S$ to be the category of such {strict inclusions, up to refinement}, where maps are morphisms $\widehat{X}\to\widehat{Y}$ extending to $X_h\to Y_0$ for some strict neighborhood $X_h$ of $\widehat{X}$ in $X_0$ (i.e. containing its closure).%
\end{rmk}

\begin{rmk}
By \cite[Proposition 2.4.4]{huber} {(which holds even without the noetherianity hypothesis imposed in \cite{huber}, see e.g. \cite[Corollary 1.4.20]{agv})  if $\widehat{X}$ is qcqs, }any  \'etale cover of $(\widehat{X},\{X_h\})$ consisting of a finite number of \'etale maps can be refined by one of the form $\{(\widehat{ U_i},\{U_{ih}\})\}_{i=1,\ldots,N}$  such that all indices $h$ vary in the same category, that we can suppose to be directed, and each map of pro-objects comes from a map of diagrams,	with each $\{U_{ih}\to X_h\}$ being an \'etale cover. 
\end{rmk}

\begin{prop} 
	\label{prop:dagger}
	The big \'etale site on the category $\Sm^\dagger/S$ is equivalent to the %
		 site whose objects are pairs $X=(\widehat{X},\mcO^\dagger(X))$ with $\widehat{X}$ a smooth variety over $S$  of the form $$\Spa(\mcO(V)\langle \underline{x},\underline{y}\rangle/(p_1,\ldots,p_m),\mcO(V)\langle \underline{x},\underline{y}\rangle/(p_1,\ldots,p_m)^+))$$
		with $V$ being an affinoid subset of $S$ which is the image of $\widehat{X}$,  $\underline{x}$ and $\underline{y}$ some sets of variables $\underline{x}=(x_1,\ldots,x_n)$, $\underline{y}=(y_1,\ldots,y_m)$, $p_i$ are  in $ \mcO(V)[ \underline{x},\underline{y}]$ such that $\det(\del p_i/\del y_j)$ is invertible in $\mcO(\widehat{X})$
		 and  $\mcO^\dagger(X)$ is a  subring of $\mcO(\widehat{X})$ of the form:
		 $$
		 \mcO^\dagger(X)=\varinjlim \mcO(V)\langle \pi^{1/h}\underline{x},\pi^{1/h}\underline{y}\rangle/(p_1,\ldots,p_m).
		 $$ 
		Morphisms $X\to X'$  are defined as being the  maps $\widehat{X}\to\widehat{X}'$ sending $\mcO^\dagger(X')$ to $\mcO^\dagger(X)$ and \'etale covers are families $\{X_i\to X\}$ such that the maps $\widehat{X}_i\to\widehat{X}$ are \'etale and jointly surjective.
\end{prop}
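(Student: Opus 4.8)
The plan is to construct an explicit functor $F$ from the concrete category of the statement, call it $\mathcal{S}$, into $\Sm^\dagger/S$, and to show that $F$ is fully faithful, that every object of $\Sm^\dagger/S$ admits an \'etale cover by objects in the essential image of $F$, and that under $F$ the \'etale covers of $\mathcal{S}$ correspond to the \'etale covers between objects of the image. By the comparison lemma for sites these three facts give the asserted equivalence of topoi. The functor sends a pair $X=(\widehat X,\mcO^\dagger(X))$ presented over an affinoid $V\subset S$ to the thickening $(\widehat X,\{X^{\mathrm{std}}_h\})$, where $X^{\mathrm{std}}_h=\Spa(\mcO(V)\langle\pi^{1/h}\underline x,\pi^{1/h}\underline y\rangle/(p_1,\ldots,p_m),\,(\cdot)^+)$ is the relative polydisc of radius $|\pi|^{-1/h}>1$; these decrease to $\widehat X$ as $h\to\infty$, and $\varinjlim_h\mcO(X^{\mathrm{std}}_h)$ is by construction the ring $\mcO^\dagger(X)$ of the statement. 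A morphism $\widehat X\to\widehat Y$ sending $\mcO^\dagger(Y)$ into $\mcO^\dagger(X)$ restricts to a bounded map $\mcO(Y^{\mathrm{std}}_1)\to\mcO^\dagger(X)$ whose finitely many coordinate functions already land in some $\mcO(X^{\mathrm{std}}_h)$, hence it extends to a morphism of adic spaces $X^{\mathrm{std}}_h\to Y^{\mathrm{std}}_1$ and defines a morphism in $\Sm^\dagger/S$; so $F$ is a functor.

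First I would check that $F$ lands in $\Sm^\dagger/S$. Each $X^{\mathrm{std}}_h$ is smooth over $V$, hence over $S$: invertibility of $\det(\del p_i/\del y_j)$ on $\widehat X$ is an open condition, so it persists on the slightly larger polydisc $X^{\mathrm{std}}_h$ for $h\gg0$, making the presentation standard-\'etale over $\B^n_V$. One has $\widehat X\Subset_V X^{\mathrm{std}}_h$ because strictly shrinking the polyradii contracts the universal relative compactification, and $\widehat X^{/V}\sim\varprojlim_h X^{\mathrm{std}}_h$ is the standard description of the universal relative compactification of a polydisc as the weak limit of its strict enlargements (Huber, \cite[Theorem 5.1.5]{huber}), which reduces to the affinoid case and then to an explicit inspection of valuations. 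This in particular yields $\mcO^\dagger(X)=\varinjlim_h\mcO(X^{\mathrm{std}}_h)$, and full faithfulness: via the presentation of $\Sm^\dagger/S$ by thickenings, a morphism $FX\to FY$ is a map $\widehat X\to\widehat Y$ over $S$ extending to $X^{\mathrm{std}}_h\to Y^{\mathrm{std}}_0$ for some $h$, and strict neighbourhoods pull back to strict neighbourhoods under such an extension (again by the weak-limit property: the preimage of each $Y^{\mathrm{std}}_{h'}$ contains $\widehat X^{/V}$, hence some $X^{\mathrm{std}}_{h''}$), so such a map is the same datum as a map $\widehat X\to\widehat Y$ sending $\mcO^\dagger(Y)$ into $\mcO^\dagger(X)$. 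This step is the relative analogue of the corresponding statements for dagger spaces over a field in \cite{gk-over,vezz-mw}, and the argument is unchanged once the affinoid base $V$ is carried along.

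Next, density and comparison of topologies. Given an arbitrary $(\widehat X,\{X_h\})\in\Sm^\dagger/S$, one may work \'etale-locally on $\widehat X$ and, shrinking $S$ to an affinoid $V$ containing the image of $\widehat X$, factor $\widehat X\to V$ as an \'etale map $\widehat X\to\B^N_V$ followed by the projection; the standard local structure theorem for \'etale morphisms of affinoid adic spaces then presents $\widehat X$ in the form $\Spa(\mcO(V)\langle\underline x,\underline y\rangle/(p_1,\ldots,p_m),(\cdot)^+)$ with $\det(\del p_i/\del y_j)$ a unit on $\widehat X$. Spreading this chart out to an open neighbourhood of $\widehat X$ provides a common ambient adic space in which both the given system $\{X_h\}$ and the standard system $\{X^{\mathrm{std}}_h\}$ of this presentation are co-filtered systems of \'etale neighbourhoods of $\widehat X$ with the same weak limit $\widehat X^{/V}$; by quasi-compactness of $\widehat X^{/V}$ and uniqueness of weak limits the two systems are mutually cofinal, whence $(\widehat X,\{X_h\})\cong F(\widehat X,\varinjlim_h\mcO(X^{\mathrm{std}}_h))$ after an \'etale cover. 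Finally, a cover $\{(\widehat U_i,\{U_{h_i}\})\to X\}$ in $\Sm^\dagger/S$ between objects of $\mathcal{S}$ is by definition the condition that $\widehat X^{/V}$ lies in the union of the images of the $U_{h_i}$; shrinking the $U_{h_i}$ to strict neighbourhoods of the $\widehat U_i$ shows this is equivalent to $\{\widehat U_i\to\widehat X\}$ being an \'etale cover, so $F$ matches the two topologies, and the comparison lemma concludes.

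I expect the main obstacle to be the cofinality statement above: that an arbitrary overconvergent structure $\{X_h\}$ with $\widehat X^{/V}\sim\varprojlim X_h$ is cofinal with the explicit polyradius-shrinking system $\{X^{\mathrm{std}}_h\}$ attached to a chosen standard presentation. This is where Huber's description of the universal relative compactification as a weak limit and the theory of strict neighbourhoods are genuinely needed, and where one must take care to realise both systems inside a single ambient adic space (obtained by extending the \'etale chart to a neighbourhood of $\widehat X$) before comparing them. Once this cofinality is established, everything else is a routine relative upgrade of Grosse-Kl\"onne's absolute statements.
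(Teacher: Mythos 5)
There is a genuine gap, and it sits exactly where you flagged ``the main obstacle'': the claimed mutual cofinality of the given system $\{X_h\}$ and the standard system $\{X_h^{\mathrm{std}}\}$ does not hold, and no appeal to uniqueness of weak limits can produce it. To spread your standard chart out over the given system you would need the coordinate functions $s,t\in\mcO(\widehat{X})$ of the chosen presentation to extend to some $X_{h_0}$, i.e.\ to lie in the subring $\varinjlim_h\mcO(X_h)\subset\mcO(\widehat{X})$; an arbitrary presentation gives no control on this, since that subring is merely dense. Consequently the two dagger subrings $\varinjlim_h\mcO(X_h)$ and $\mcO^\dagger(X)=\varinjlim_h\mcO(X_h^{\mathrm{std}})$ of $\mcO(\widehat{X})$ are in general \emph{different} dense subrings, and the two pro-systems are not cofinal inside any common ambient space compatibly with the identity of $\widehat{X}$ (if they were, the subrings would coincide). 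The paper's proof has to do genuinely more work here: it approximates the coordinates $s$ by elements $\tilde{s}$ of $\varinjlim\mcO(X_h)\cap\mcO^+(\widehat{X})$, corrects $t$ to $F(\tilde{s})$ using the implicit-function-type power series of \cite[Corollary A.2]{vezz-fw}, proves that $F(\tilde{s})$ again lies in $\varinjlim\mcO(X_h)$ by spreading a section through the equivalence $\Et/\widehat{X}^{/V}\cong\varprojlim\Et/X_h$, and only then obtains an endomorphism $\psi$ of $\widehat{X}$ close to the identity (hence an automorphism) carrying $\mcO^\dagger(X)$ into $\varinjlim\mcO(X_h)$. So the object $(\widehat{X},\{X_h\})$ is identified with a standard object only up to this nontrivial automorphism of $\widehat{X}$, not by cofinality; your argument as written would prove the two dagger structures are equal, which is false.

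A secondary, smaller gap is in your full faithfulness step: knowing that the images of the coordinates land in $\mcO^\dagger(X)$, i.e.\ in some $\mcO(X_h^{\mathrm{std}})$, is not yet enough to define a morphism of adic spaces $X_h^{\mathrm{std}}\to Y_1^{\mathrm{std}}$; one also needs the boundedness statement that, after multiplying by $\pi$, these elements become power-bounded on $X_h^{\mathrm{std}}$ for $h\gg0$. This is the content of the paper's argument that $s\in\mcO^+(\widehat{X})\cap\mcO^\dagger(X)$ gives $\widehat{X}\subset U(s/1)\Subset_{X_0}U(\pi s/1)$, whence $X_h\subset U(\pi s/1)$ for $h\gg0$ and $\pi s\in\mcO^{+}(X_h)$; without it the asserted equivalence between maps of thickenings and ring maps $\mcO^\dagger(Y)\to\mcO^\dagger(X)$ is not established. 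Your treatment of covers and of the small \'etale site is essentially fine, but the two points above (especially the first) are the actual mathematical core of the proposition and need the approximation mechanism rather than a cofinality argument.
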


\begin{proof}
We first prove that the category above is a full subcategory of $\Sm^\dagger/S$. Let $X=(\widehat{X},\mcO^\dagger(X))$ as in the statement. We remark that since $d\colonequals \det(\del p_i/\del y_j)\in\mcO^\dagger(X)$ is invertible in $\mcO(\widehat{X})$ in which $\mcO^\dagger(X)$ is dense, and $\widehat{X}$ is quasi-compact, then $d $ is inverible in some ring $R_h\colonequals\mcO(V)\langle \pi^{1/h}\underline{x},\pi^{1/h}\underline{y}\rangle/(p_1,\ldots,p_m)$   %
 and $\widehat{X}\Subset_V\Spa R_h{\equalscolon X_h}$ defines then an object of $\Sm^\dagger/S$ (see \Cref{rmk:thick}).

We now show that morphisms ${X\to Y}$ computed in $\Sm^\dagger/S$ amount to morphisms $\widehat{X}\to\widehat{Y}$ such that the images $\underline{s},\underline{t}$ of $\underline{x},\underline{y}$ lie in $\mcO^\dagger(X)\cap\mcO^+(\widehat{X})$. It suffices to show that a $(R,R^+)$-morphism from $X^\dagger$ to  $\B^{1\dagger}_{\Spa(R,R^+)}=(\B^1_{\Spa(R,R^+)},R\langle x\rangle^\dagger)$ %
 amounts to a choice of an element  in $\mcO^+(\widehat{X})\cap\mcO^\dagger(X)$. Fix such an element $s$. We may suppose that it lies in $\mcO(X_0)$. But then we have $\widehat{X}\subset U(s/1)\Subset_{X_0}U(\pi s/1)$ which implies that $X_h\subset U(\pi s/1)$ for $h\gg0$ so that $\pi s\in\mcO^{+\dagger}(X)$ showing that the map $\widehat{X}\to\B^1$ extends to some map $X_h\to {\Spa} R\langle\pi x\rangle$ as wanted. Conversely, if the map $\widehat{X}\to\B^1_{(R,R^+)}$ defined by $s\in\mcO^+(\widehat{X})$ extends to $X_h\to\Spa  R\langle\pi x\rangle$ then $\pi s\in\mcO^+(X_h)$ so that $s\in\mcO^\dagger(X)\cap\mcO^+(\widehat{X})$.
	
	We now show that the subcategory of the statement is dense in $\Sm^\dagger/S$. This is analogous to \cite[Corollary 3.4]{vezz-mw}. 
	Indeed, locally with respect to the analytic topology, any object $X=(\widehat{ X}\Subset X_0)$ is such that $\widehat{ X}$ is of the form prescribed. We now show that there is an automorphism of $\widehat{ X}$ identifying the two (dense) subrings $\varinjlim{\mcO(X_h)}$ and $\mcO^\dagger(X)$ of the statement. 
	By \cite[Corollary A.2]{vezz-fw} we can find some power series $F$  in $\mcO(\widehat{ X})[\![\underline{\sigma}-\underline{x}]\!]$ {($\underline{\sigma}$ being some variable as in \cite[Corollary 3.4]{vezz-mw})} with a positive radius of convergence such that 
	{$(x,y)\mapsto (\tilde{s},F(\tilde{s}))$ }defines an endomorphism of $\widehat{ X}$ for every $\tilde{s}$ sufficiently close to $x$. By density,  we may take $\tilde{s}$ in $\varinjlim\mcO(X_h)\cap\mcO^+(\widehat{ X})$. We remark that under this hypothesis, then also $F(\tilde{s})$ lies in $\varinjlim\mcO(X_h)\cap\mcO^+(\widehat{ X})$. This follows from the equivalence $\Et/\widehat{ X}^{/V}\cong\varprojlim\Et/X_h$ { of \cite[Proposition 2.4.4]{huber}} by considering the \'etale morphism $\Spa \mcO(X_h)\langle\underline{\tau}\rangle/(p(\tilde{s},\tau))\to X_h$ {($\underline{\tau}$ being some variable)} that splits above $\widehat{ X}^{/V}$. This shows that there is an endomorphism $\psi$ of $\widehat{ X}$ which is close to the identity (in the sense that $||\psi(f)-f||\leq|\pi^2|$ whenever $||f||\leq1$ with respect to some Banach norm $||\cdot||$ of $\mcO(\widehat{ X})$) mapping $\mcO^\dagger(\widehat{ X})$ to $\varinjlim\mcO(X_h)$. Any endomorphism which is close to the identity is invertible, hence the claim. 

	We are left to prove that the small \'etale site over $X^\dagger=(\widehat{X}\Subset_VX_0)$ is equivalent to the small \'etale site on $\widehat{X}$ via the functor mapping $(\widehat{U}\Subset_{V_U}U_0)$ to $\widehat{U}$. Indeed, if $\widehat{U}\subset\widehat{X}$ is a rational open, we may lift it to $U=(\widehat{U}\Subset_{V_U}X_0)$ and if $\widehat{E}\to \widehat{X}$ is finite \'etale between affinoids, we may extend it to a finite \'etale map $\widehat{ E}^{/V}\to\widehat{X}^{/V}$ and hence to some finite \'etale map $E_h\to X_h $ with $\widehat{ E}\Subset_V E_h$. %
	This shows that any \'etale dagger space over $\widehat{X}$ has a cover made of objects descending to $X^\dagger$. Since $(\bigcup \widehat{ U}_i)^{/V}=\bigcup(\widehat{ U}_i^{/V_i})$ we also deduce that a family $\{\widehat{U}_i\Subset_{V_i}U_i\}$ of \'etale maps over $X^\dagger$ is a cover if and only if the family $\{\widehat{U}_i\}$ covers $\widehat{X}$ proving the claim.
\end{proof}

\subsection{Relative overconvergent motives}
It is straightforward to generalize the definition of motives to the dagger setting.
\begin{dfn}
	Let $S$ be an object of $\catD_{/\Q_p}$.   
	 We  let $\B^{1\dagger}_S$ [resp. $\T^{1\dagger}_S$] be the object of $\Sm^\dagger/S$ induced by the inclusions $\B^1_S\Subset_S\mathbb{P}^1_S$ [resp. $\T^1_S\Subset_S\mathbb{P}^1_S$]  %
	 and $T^\dagger_S$ be the quotient of the split inclusion  $\Q_S(S)\to\Q_S(\T^{1\dagger}_S)$ in $\Psh(\Sm^\dagger/S,\Q)$. %
	We let $\Psh(\Sm^\dagger/S,\Q)$ be the infinity-category of presheaves on the category $\Sm/S $ taking values on the derived infinity-category of  $\Q$-modules, and we let $\RigDA^{\eff\dagger}(S)$  be its full stable infinity-subcategory spanned by those objects $\mcF$  which are $\B^{1\dagger}$-invariant and with $\et$-descent. Finally,  we set $\RigDA^\dagger(S,\Q)=\RigDA^{\eff\dagger}(S,\Q)[T_S^{\dagger-1}]$ in $\Prl$ (see \cite[Definition 2.6]{robalo}). 
\end{dfn}
The following result is essentially formal, see \Cref{thm:basicprop}.
\begin{prop}There are contravariant functors $\RigDA^{(\eff)\dagger*}$ defined on $\catD_{/\Q_p}$ with values in $\Prlm$ such that  any map $f\colon S'\to S$ in $\catD_{/\Q_p}$ is sent to the functor $f^*\colon\RigDA^{(\eff)\dagger}(S)\to\RigDA^{(\eff)\dagger}(S')$ induced by pullback along $f$. They satisfy \'etale hyperdescent and their restrictions to $\catD_{/\Q_p}^{\qcqs}$ take values in $\Prloo$.
\end{prop}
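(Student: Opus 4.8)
The plan is to follow verbatim the strategy behind \Cref{thm:basicprop}, hence that of \cite[Propositions 2.1.21, 2.4.22 and Theorem 2.3.4]{agv}, transplanting it to the dagger sites $\Sm^\dagger/S$. First I would construct, for every $f\colon S'\to S$ in $\catD_{/\Q_p}$, a pullback functor $f^*\colon\Sm^\dagger/S\to\Sm^\dagger/S'$: given $X=(\widehat X,\{X_h\})$ with $V=\Im(\widehat X\to S)$, one sets $\widehat{X'}=\widehat X\times_S S'$, $X_h'=X_h\times_S S'$ and $V'=\Im(\widehat{X'}\to S')$. The properties of being locally qcqs, topologically of finite type, and an open immersion are all stable under base change; Huber's universal compactification commutes with base change \cite[Theorem 5.1.5]{huber}, so $\widehat{X'}^{/V'}=\widehat X^{/V}\times_S S'$; and weak limits of adic spaces are preserved by base change, so $\widehat{X'}^{/V'}\sim\varprojlim X_h'$ and $\widehat{X'}\Subset_{V'}X_h'$. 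Thus $f^*X\colonequals(\widehat{X'},\{X_h'\})$ lies in $\Sm^\dagger/S'$; the assignment is functorial, sends (\'etale) covers to (\'etale) covers, and commutes with relative products $X\times_S Y$. Left Kan extension along $f^*$ therefore yields a colimit-preserving symmetric monoidal functor on $\Q$-linear presheaves that preserves $\B^{1\dagger}$-invariant \'etale (hyper)sheaves; passing to the Bousfield localization $\RigDA^{\eff\dagger}$ and then inverting $T^\dagger_S$ in $\Prl$ \cite[Definition 2.6]{robalo} produces $f^*\colon\RigDA^{(\eff)\dagger}(S)\to\RigDA^{(\eff)\dagger}(S')$ in $\Prlm$. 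Assembling these into a single functor $\catD_{/\Q_p}^{\op}\to\Prlm$, together with the coherences $f^*g^*\simeq(gf)^*$, is carried out exactly as in \cite[Proposition 2.1.21]{agv} by running the construction over the big \'etale fibred site of dagger varieties and invoking \cite[Proposition 6.3.5.14]{lurie}; symmetric monoidality of each $f^*$ is immediate from $f^*\Q_S(X)\simeq\Q_{S'}(X\times_S S')$ and $(X\times_S S')\times_{S'}(Y\times_S S')\simeq(X\times_S Y)\times_S S'$.

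For \'etale hyperdescent I would invoke \Cref{prop:dagger}: the big \'etale site on $\Sm^\dagger/S$ is the one of explicit dagger presentations, and the small \'etale site over $X^\dagger=(\widehat X\Subset_V X_0)$ is equivalent to the small \'etale site over $\widehat X$. In particular every such object has finite rational \'etale cohomological dimension, since $\widehat X$ is locally of finite Krull dimension (\cite[Corollary 2.4.13]{agv}), so \'etale descent promotes automatically to hyperdescent, and the equivalence $\RigDA^{(\eff)\dagger}(S)\simeq\lim\RigDA^{(\eff)\dagger}(\mcU)$ for a levelwise representable \'etale hypercover $\mcU\to S$ follows as in \cite[Theorem 2.3.4]{agv}: first for the presheaf categories, then by checking that the $\B^{1\dagger}$-localization and $\underline{\Hom}(T^\dagger_S,-)$ are compatible with the limit. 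For the $\Prloo$ statement, when $S\in\catD_{/\Q_p}^{\qcqs}$ the objects $\Q_S(X)(n)$ with $X\in\Sm^\dagger/S$ affinoid dagger and $n\in\Z$ form a set of compact generators of $\RigDA^{(\eff)\dagger}(S)$ — generation because the representables generate $\Psh(\Sm^\dagger/S,\Q)$, compactness because it survives the accessible Bousfield localization and the $T^\dagger$-stabilization at play — they satisfy $\Q_S(X)\otimes\Q_S(Y)\simeq\Q_S(X\times_S Y)$, and for a qcqs $f$ the functor $f^*$ sends them to generators of the same shape and is hence compact-preserving, which is exactly the hypothesis of \cite[Proposition 2.4.22]{agv}.

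The hard part — in fact the only step that is not completely formal — is checking that $f^*$ is well defined on $\Sm^\dagger/S$, i.e. that the structure ``$\widehat X\Subset_V X_h$ with $\widehat X^{/V}\sim\varprojlim X_h$'' is stable under base change along an \emph{arbitrary} morphism of bases, not just a qcqs one. This reduces to two facts about Huber's formalism — that the universal compactification of a locally qcqs, topologically finite type morphism commutes with base change, and that weak limits of adic spaces are preserved by base change — together with the observation that the relation $\Subset$ is detected on the underlying adic spaces, so that enlarging the base leaves it unaffected. Everything downstream (Bousfield localization, $T^\dagger$-stabilization, descent, compact generation) is then a mechanical transcription of the corresponding arguments of \cite{agv}.
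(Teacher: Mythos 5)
Your proposal is correct and takes the route the paper itself intends: the paper offers no separate proof, declaring the result formal and pointing back to \Cref{thm:basicprop}, i.e.\ to the same statements of \cite{agv} (Propositions 2.1.21 and 2.4.22, Theorem 2.3.4) that you transplant to the site $\Sm^\dagger/S$, and your treatment of descent and compact generation is exactly that adaptation. The only caveat is bibliographic rather than mathematical: \cite[Theorem 5.1.5]{huber} gives the existence of universal compactifications, not their commutation with base change; for the well-definedness of $f^*$ on dagger objects it suffices to use the canonical comparison map $(\widehat{X}\times_S S')^{/S'}\to\widehat{X}^{/V}\times_S S'$ furnished by the universal property (partial properness being stable under base change) to preserve the relation $\Subset$, and then to verify the weak-limit condition affinoid-locally via density of $\varinjlim\mcO(X_h)\widehat{\otimes}$ in the base-changed rings, which is how your ``hard part'' is made precise.
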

\begin{proof}
{One can adapt the proofs of \cite[Propositions 2.1.21, 2.4.22, Theorem 2.3.4 and Remark 2.3.5]{agv} to the dagger setting.}
\end{proof}

The following theorem allows one to equip any motive with an overconvergent structure, if needed. It is a generalization of \cite{vezz-mw} to a base $S$ with no overconvergent structure. Once again, we crucially use some explicit homotopies in the proof of the statement.

\begin{thm}
	\label{thm:oc=}
	Let $S$ be  in $\catD_{/\Q_p}$. The  functor $l\colon X\mapsto\widehat{X}$ induces an equivalence
	$$
 l^*	\colon\RigDA^{\dagger(\eff)}(S)\cong\RigDA^{(\eff)}(S)
	$$
\end{thm}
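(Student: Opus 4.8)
The strategy is to show that the functor $l\colon\Sm^\dagger/S\to\Sm/S$, $X\mapsto\widehat X$, induces an equivalence on the associated categories of motives by exhibiting a quasi-inverse and checking that both composites are equivalent to the identity. The key technical input, already at hand from \Cref{prop:dagger}, is that the small \'etale site of an overconvergent variety $X^\dagger$ is equivalent to that of its underlying rigid variety $\widehat X$, and that (via the second description in \Cref{prop:dagger}) every smooth rigid variety locally admits an overconvergent thickening, unique up to a homotopy. The plan is therefore: first, reduce from $\RigDA$ to $\RigDAeff$ by the usual $T_S^\dagger$-stabilization argument (the functor $l^*$ commutes with $\underline{\Hom}(T,-)$ since $l^*T_S=T_S^\dagger$ and $l^*$ is compatible with $f_\sharp$ for the structure maps involved, exactly as in \cite[Theorem 2.3.4]{agv}); then work with effective motives.

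For effective motives I would argue as follows. Since both sides satisfy \'etale hyperdescent and the two sites have equivalent \'etale topoi locally, the forgetful functor $l$ induces an adjunction $\LL l^*\dashv\R l_*$ between $\RigDA^{\eff\dagger}(S)$ and $\RigDAeff(S)$; the content is that the unit and counit are equivalences. To see $\LL l^*$ is essentially surjective and fully faithful it suffices, by \'etale descent and the fact that the motives $\Q_S(\widehat X)$ generate, to handle generators of the special form in \Cref{prop:dagger} and to show two things: (a) for such an $X$, the natural map $\LL l^*\Q_S^\dagger(X^\dagger)\to\Q_S(\widehat X)$ is an equivalence --- this is essentially the definition, since $l(X^\dagger)=\widehat X$; and (b) the unit $\Q_S^\dagger(X^\dagger)\to\R l_*\LL l^* \Q_S^\dagger(X^\dagger)$ is an equivalence, equivalently that $\Q_S^\dagger(X^\dagger)$ is recovered from $\widehat X$. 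Step (b) is where the explicit $\B^1$-homotopies enter, exactly in the spirit of \cite{vezz-mw}: one shows that the inclusion of a strict neighborhood $X_h$ into the ``next'' one $X_{h'}$, and ultimately the map $\widehat X^{/V}\sim\varprojlim X_h$ into $\widehat X$, becomes invertible after $\B^1$-localization, by writing down an explicit homotopy (a dilation/contraction of the polydisc coordinates $\pi^{1/h}\underline x,\pi^{1/h}\underline y$) interpolating between the inclusion and a retraction. Concretely one builds, using \cite[Corollary A.2]{vezz-fw} as in the proof of \Cref{prop:dagger}, a map $\B^1_{X_h}\to X_{h'}$ restricting to the identity on $X_{h'}$ at one endpoint and factoring through $\widehat X$ at the other, which forces the transition maps to be isomorphisms in $\RigDAeff$.

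Assembling: once (a) and (b) hold on generators, $\LL l^*$ sends a generating set to a generating set and is fully faithful on it, hence is an equivalence; compatibility with the monoidal structures is immediate since $l$ preserves products. Then stabilize to get the statement for $\RigDA^\dagger(S)\cong\RigDA(S)$.

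\textbf{Main obstacle.} The routine parts are the \'etale-descent reduction and the stabilization step; the real work is step (b), i.e. proving that passing to strict neighborhoods is a $\B^1$-equivalence. The subtlety compared to the absolute case \cite{vezz-mw} is that $S$ carries \emph{no} overconvergent structure, so the homotopies must be constructed fiberwise over $S$ and shown to be well-defined over the base $\mcO(V)$ uniformly; one has to be careful that the dilation homotopies land in $\Sm/S$ (not merely pointwise) and that convergence of the relevant power series from \cite[Corollary A.2]{vezz-fw} is uniform on strict neighborhoods. I expect this to be the technical heart, handled by the same explicit-homotopy bookkeeping as in \Cref{prop:dagger}, now carried out relatively.
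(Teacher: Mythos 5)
Your set-up matches the paper's Step 1: reduce to effective motives, use the models provided by \Cref{prop:dagger}, note that $\RR l_*=l_*$ and that $\LL l^*$ sends generators to generators, so that everything comes down to showing the unit $\Q^\dagger_S(X^\dagger)\to \RR l_*\LL l^*\Q^\dagger_S(X^\dagger)$ is invertible. The gap is in your step (b). You propose to prove it by showing that the transition maps $X_{h'}\subset X_h$ (equivalently the inclusion $\widehat X\subset X_h$) become invertible in $\RigDA^{\eff}(S)$, via a ``dilation/contraction'' homotopy $\B^1_{X_h}\to X_{h'}$ retracting onto $\widehat X$. Such a homotopy does not exist in general: $\widehat X$ is only \emph{\'etale} over a polydisc, and the multiplicative contraction of the coordinates does not lift along an \'etale map, so it does not define an endomorphism of $X_h$, let alone a retraction onto $\widehat X$; already for a closed annulus there is no rigid-analytic radial retraction onto a thinner sub-annulus (such inclusions are motivic equivalences, but one sees this indirectly, e.g.\ by Mayer--Vietoris, not by a retraction). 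Moreover, even granting that the neighborhood inclusions were $\B^1$-equivalences, this is an object-level statement about the motives $\Q_S(X_h)$ and does not identify mapping spaces: it does not compare homotopy classes of arbitrary morphisms $\widehat X'\times_S\B^n_S\to\widehat X$ with homotopy classes of overconvergent ones, which is exactly what the invertibility of the unit requires. So the inference from your neighborhood claim to (b) is missing, and the claim itself is unproved (and your proposed proof of it fails).

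What the paper actually proves is an approximation-up-to-homotopy statement for \emph{morphisms}, not for neighborhoods: given any map $f\colon\widehat X'\to\widehat X$ of the underlying rigid spaces of two dagger generators, one constructs an explicit homotopy $H\colon\B^1_{\widehat X'}\to\widehat X$ with $H\circ i_0=f$ and $H\circ i_1$ overconvergent, by perturbing the coordinates of $f$ to nearby elements $\tilde s$ of the dense overconvergent subring and correcting via the implicit-function-theorem power series of \cite[Corollary A.2]{vezz-fw}; the delicate point (where the relative situation and the equivalence of \'etale topoi $\Et/\widehat X^{/V}\cong\varprojlim\Et/X_h$ enter) is checking that $F(\tilde s)$ again lies in the overconvergent structure. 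One then redoes this for finite families of maps $\widehat X'\times_S\B^n_S\to\widehat X$ compatibly with all cubical face maps $d_{r,\epsilon}$, so that the homotopies assemble into a quasi-isomorphism $(\Sing^{\B^{1\dagger}_S}\Q(X))(X')\to(\Sing^{\B^1_S}\Q_S(\widehat X))(\widehat X')$; this gives $\RR l_*\LL l^*\Q^\dagger_S(X^\dagger)\cong\Q^\dagger_S(X^\dagger)$, hence full faithfulness, and the theorem follows since generators go to generators. If you want to repair your proposal, replace the neighborhood-retraction idea by this coherent approximation of morphisms; the uniformity-over-$S$ issues you flag at the end are then handled exactly as in the proof of \Cref{prop:dagger}, but they are attached to morphisms into $\widehat X$, not to retractions of its strict neighborhoods.
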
 
\begin{proof}
	The proof will be divided into several steps, most of which follow closely the proof of \cite[Proposition 4.5]{vezz-fw} that we reproduce here for the convenience of the reader.\\
	\textit{Step 1:} It suffices to prove the claim for effective motives. By \Cref{prop:dagger} we may and do use as models for $\RigDA^{\dagger\eff}(S)$ [resp. $\RigDA^{\eff}(S)$] the category of spectra on the  $(\et,\B^1)$-localization  of complexes of \'etale presheaves on $\cat^\dagger$ [resp. $\cat$] which is the (dense) subcategory of $\RigSm^\dagger/S$ [resp. $\RigSm/S$] whose objects are of the form $X=(\widehat{ X},\mcO^\dagger(X))$ [resp. $l^*X$] described in \Cref{prop:dagger}. { The functor $l$ induces a Quillen pair $(l^*,l_*)$ between the these two model categories, hence a pair of (derived) functors $(\LL l^*,\RR l_*)$ between the associated infinity-categories.} Moreover $\RR l_*=l_*$ is exact as it commutes with \'etale sheafification and preserves $\B^1$-weak equivalences. We then remark that  it suffices to prove that the functor $\LL l^*$ between the  $\B^1$-localizations  $\Ch_{\B_S^{1\dagger}}\Psh(\cat^\dagger,\Q)$ and $\Ch_{\B^1_S}\Psh(\cat,\Q)$ is an  equivalence. Since it sends a class of compact generators to a class of compact generators, we are left to prove it is fully faithful.\\
	\textit{Step 2:} We now show the following claim: fix varieties $X=(\Spa(R,R^+),R^\dagger)$ and $ X'=(\Spa(R',R'^+),R'^\dagger)$ in $\cat^\dagger$ and a morphism ${f\colon}\widehat{X}'=\Spa(R',R'^+)\to\widehat{ X}=\Spa(R,R^+)$ over $S$. Then %
	there exists a map $H\colon \B^1_{\widehat{ X}'}\cong\Spa(R'\langle \chi\rangle,R'^+\langle \chi\rangle)\ra\widehat{ X}$ such that $H\circ i_0=f$ and $H\circ i_1$ lies in $\Hom(X,X')$. %
	Explicitly, if $f$ is induced by the map  $\sigma\mapsto s, \tau\mapsto t$ the map $H$ can be defined via
	\[
	(\sigma,\tau)\mapsto(s+(\tilde{s}-s)\chi,F(s+(\tilde{s}-s)\chi))
	\]
	where $F$ is the unique array of formal power series (implicit functions) with positive radius of convergence in $R'[[\sigma-s]]$ associated by  \cite[Corollary A.2]{vezz-fw} to the polynomials $p(\sigma,\tau)$ which are such that $F(s)=t$ and $p(\sigma,F(\sigma))=0$, and $\tilde{s}$ are elements in $R'^\dagger$ such that the radius of convergence of $F$ is larger than $||\tilde{s}-s||$ and $F(\tilde{s})$ lies in $R^+$.  
	As $R'^\dagger$ is dense in $R'^+$ we can find  elements $\tilde{s}_i\in R'_0\cap R'^+$ such that $||\tilde{s}-s||$ is smaller than the convergence radius of $F$.  %
	As $F$ is continuous and $R'^+$ is open, we can also assume that the elements $\tilde{t}_j\colonequals F_{j}(\tilde{s})$ 
	lie in $R'^+$.  
	We are left to prove that they actually lie in $R'^\dagger$.  
	We consider the $R'_{{0}}$-algebra $E$ defined as $E=R'_{{0}}\langle \tau\rangle/(p(\tilde{s},\tau))$  which is \'etale over $R'_{{h}}$, and over which the map $R'_{0} \to R'$ factors. In particular, the \'etale morphism   {}  $\Spa( E,E^+)\times_{X'_{\bar{0}}}\widehat{X}'\to \widehat{X}'$ splits. In light of the equivalence between the \'etale toposes on$ \widehat{X}'$ and on $X'$ (see the end of the proof of \Cref{prop:dagger}){, if we let $Y$ be the \'etale map in $\cat^\dagger_{/X'}$ induced by $(E,E^+)$, Yoneda ensures that   $Y\to X'$ splits as well,   proving that $\tilde{t}_j$ lies in $R'_h$ as wanted. }
	\\
	\textit{Step 3:}  We now show the following claim. 
	For a given finite set of maps $\{f_1,\ldots,f_N\}$ in $\Hom_S(\widehat{ X}'\times_S\B^n_S,\widehat{ X})$ we can find corresponding maps $\{H_1,\ldots,H_N\}$ in $\Hom_S(\widehat{ X}'\times_S\B^n_S\times_S\B^1_S, \widehat{ X})$ %
	 such that:
	\begin{enumerate}
		\item For all $1\leq k\leq N$ it holds $i_0^*H_k=f_k$ and $i_1^*H_k$ has a model in $\Hom(X'{\times_S\B^n_S},X)$; %
		\item if ${f}_k\circ d_{r,\epsilon}={f}_{k'}\circ d_{r,\epsilon}$ for some $1\leq k,k'\leq N$ and some $(r,\epsilon)\in\{1,\ldots,n\}\times\{0,1\}$ then $H_k\circ d_{r,\epsilon}=H_{k'}\circ d_{r,\epsilon}$;
		\item if for some $1\leq k\leq N$  the map $f_k\circ d_{1,1}\in \Hom(\widehat{ X}'\times_S\B_S^{n-1},\widehat{ X})$  has a model in $\Hom(X'\times_S\B_S^{(n-1)\dagger}{,X})$ %
		then the element $H_k\circ d_{1,1}$ of $\Hom_S(\widehat{ X}'\times_S\B_S^{n-1}\times_S\B_S^1,\widehat{ X})$ is constant on $\B^1_S$  equal to $f_k\circ d_{1,1}$;
	\end{enumerate}
{	where we denote by $d_{r,\varepsilon}$ the morphisms $\B^{n-1}\to\B^n$ induced by the evaluation of the $r$-th coordinate of $\B^n$ at $\varepsilon$. }
	We may suppose that each $f_k$ is induced by maps $(\sigma,\tau)\mapsto(s_k,t_k)$ from $R$ to $R'\langle\theta_1\ldots,\theta_n\rangle $ for some $m$-tuples $s_k$ and $n$-tuples $t_k$ in $R'\langle\underline{\theta}\rangle $. Moreover, by Step 2 there exists a sequence of power series $F_k=(F_{k1},\ldots,F_{km})$ associated to each $f_k$ such that
	\[
	(\sigma,\tau)\mapsto(s_k+(\tilde{s}_k-s_k)\chi,F_k(s_k+(\tilde{s}_k-s_k)\chi)\in R'\langle\underline{\theta},\chi\rangle%
	\]
	defines a map $H_k$ satisfying the first claim, for any choice of $\tilde{s}_k\in R'\langle\underline{\theta}\rangle ^\dagger$%
	 such that $\tilde{s}_k$ is in the convergence radius of $F_k$ and $F_k(\tilde{s}_k)$ is in $R'\langle\underline{\theta}\rangle^+$. 
	Let now $\varepsilon$ be a positive real number, smaller than all radii of convergence of the series $F_{kj}$ and such that $F(a)\in R'\langle\underline{\theta}\rangle^+$ for all $|a-s|<\varepsilon$. Denote by $\tilde{s}_{ki}$ the elements associated to $s_{ki}$ by applying \cite[Proposition A.5]{vezz-fw} with respect to the chosen $\varepsilon$. In particular, they induce a well defined map $H_k$  and the elements $\tilde{s}_{ki}$ lie in $R'\langle\underline{\theta}\rangle_{\bar{h}}$ for some index $\bar{h}$. We show that the maps $H_k$ induced by this choice also satisfy the second and third claims of the proposition.  
	Suppose that $f_k\circ d_{r,\epsilon}=f_{k'}\circ d_{r,\epsilon}$ for some $r\in\{1,\ldots,n\}$ and $\epsilon\in\{0,1\}$. This means that $\bar{s}\colonequals s_{k}|_{\theta_r=\epsilon}=s_{k'}|_{\theta_r=\epsilon}$ and $\bar{t}\colonequals t_{k}|_{\theta_r=\epsilon}=t_{k'}|_{\theta_r=\epsilon}$. 
	This implies that both $F_k|_{\theta_r=\epsilon}$ and $F_{k'}|_{\theta_r=\epsilon}$ are two $m$-tuples of formal power series $\bar{F}$ with coefficients in $\mcO(\widehat{ X}'\times\B^{n-1})$ converging around $\bar{s}$ and such that $p(\sigma,\bar{F}(\sigma))=0$, $\bar{F}(\bar{s})=\bar{t}$. By the uniqueness of such power series stated in \cite[Corollary A.2]{vezz-fw}, we conclude that they coincide. 
	Moreover, by our choice of the elements $\tilde{s}_k$ it follows that $\bar{\tilde{s}}\colonequals\tilde{s}_{k}|_{\theta_r=\epsilon}=\tilde{s}_{k'}|_{\theta_r=\epsilon}$. In particular  one has  
	\[
	F_k((\tilde{s}_{k}-s_k)\chi)|_{\theta_r=\epsilon}=\bar{F}((\bar{\tilde{s}}-\bar{s})\chi)=F_{k'}((\tilde{s}_{k'}-s_{k'})\chi)|_{\theta_r=\epsilon}
	\]
	and therefore $H_k\circ d_{r,\epsilon}=H_{k'}\circ d_{r,\epsilon}$ proving the second claim. 
	The third claim follows immediately since the elements $\tilde{s}_{ki}$ satisfy the condition (iv) of \cite[Proposition A.5]{vezz-fw}.
	\\
	\textit{Step 4:} We remark that ({see \cite[Proposition 4.22]{vezz-mw} or} \cite[Proposition 4.5]{vezz-fw}) the  claim proved in Step 3 admits the following interpretation:  the natural map
	\[
	\phi\colon(\Sing^{\B^{1\dagger}_{S}}\Q{(X)})(X')\ra( \Sing^{\B^1_S}\Q_S(\widehat{ X}))(\widehat{ X}')
	\]
	is a quasi-isomorphism, where for any complex of presheaves $\mcF$ we let $\Sing^{\B^{1\dagger}_S}\mcF$ be the singular complex associated to the cocubical complex $\uhom(\Q_S(\B^{\bullet\dagger}_S),\mcF)$  which is $\B^{1\dagger}$-equivalent to $\mcF$. {Indeed, the lifting property of Step 3 allows one to prove directly that the homology groups of the  normalized complexes associated to the cocubical complexes above are isomorphic; we refer to the proof of \cite[Proposition 4.22]{vezz-mw} for details. }This implies that, considering the Quillen adjunction $$
	\LL l^*\colon \Ch_{\B^{1\dagger}_S}\Psh(\cat^\dagger,\Q)\rightleftarrows\Ch_{\B^{1}_S}\Psh(\cat,\Q)\colon \RR l_*= l_*
	$$
	we have
	$$\RR l_*\LL l^*\Q_S(X)= l_*\Sing^{\B^1_S}\Q_S(\widehat{ X})\cong \Sing^{\B^1_S}\Q_S(X).$$
	{Since $ \Sing^{\B^1_S}\Q_S(X)\cong \Q_S(X)$ (see e.g. \cite[Proposition 4.10]{vezz-mw})} this proves that $\LL l^*$ is fully faithful, hence   the claim by Step 1.	%
\end{proof}

\section{The relative overconvergent de Rham cohomology}\label{sec:dR}
The aim of this section is to define the analog of the overconvergent de Rham cohomology in the relative setting. One of the main problems of its ``naive'' definition is that a nice category of quasi-coherent {sheaves} over an adic space wasn't available until very recently. {Clausen-Scholze's formalism of condensed mathematics \cite{scholze-cond, scholze-an} allows one to define such a category with a symmetric monoidal structure. Although this category is big, its dualizable objects are nothing but (classical) perfect complexes, as proved by Andreychev \cite{andreychev} in the case of interest to us. By upgrading the relative de Rham cohomology to the condensed level, we are then able to formulate and prove a base change formula and the K\"unneth formula for it. Combined with the above characterization of dualizable objects, this produces some finiteness statements for relative de Rham cohomology.}   

\subsection{The relative de Rham complex}
  We initially give the definition of the module of differentials of a smooth map in $\catD$, and prove its basic properties. As far as we know, the current literature treats mainly the case of a noetherian base (see \cite{huber} for example) and we make here some straightforward extensions of this case.

\begin{dfn}
	\label{definition-of-omega-1}
	Let $f: X \to S$ be a smooth morphism in $\catD$. Let $\mathcal{I}_{X/S} \subset \mathcal{O}_{X \times_S X}$ be the ideal sheaf of the diagonal $\Delta_f : X \to X \times_S X$. The \textit{sheaf of differentials of $X$ over $S$} is
	$$
	\Omega_{X/S}^1 \colonequals \mathcal{I}_{X/S}/\mathcal{I}_{X/S}^2, 
	$$
	seen as an $\mathcal{O}_X$-module through the identification $\mathcal{O}_X \simeq \mathcal{O}_{X \times_S X}/\mathcal{I}_{X/S}$. 
\end{dfn}

Note that by construction, $\Omega_{X/S}^1$ comes with an $\mathcal{O}_S$-linear derivation $d: \mathcal{O}_X \to \Omega_{X/S}^1$, sending a section $s$ to $1 {\otimes} s - s {\otimes} 1$. 

\begin{dfn}
	\label{dimension-of-a-smooth-morphism}
	Let $d\geq 0$. Let $f : X \to S$ be a smooth morphism in $\catD$. We say that $f$ is of \textit{dimension $d$} if locally on $X$ and $S$ the morphism factors as the composition of an \'etale morphism $X \to \mathbb{B}_S^d$ with the projection $\mathbb{B}_S^d \to S$. 
\end{dfn}

Since the the dimension of a smooth morphism $f: X \to S$ is locally constant on $X$, it is no loss of generality in practice to assume that $f$ is of fixed dimension. 

The following statement is proved in \cite{fargues-scholze}. We recall how the argument goes, in order to fix some notation. 
\begin{prop}
	\label{omega-1-is-a-vector-bundle}
	Let $f: X \to S$ be a smooth morphism in $\catD$. The $\mathcal{O}_X$-module $\Omega_{X/S}^1$ is a vector bundle. 
	If $f$ is of dimension $d$, it is of constant rank $d$.
\end{prop}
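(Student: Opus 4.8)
The plan is to reduce everything to the local model. Since the statement is local on $X$ and $S$, and since $\Omega^1_{X/S}$ is compatible with localization (rational localizations and finite étale maps commute with the formation of the ideal sheaf of the diagonal and hence with $\mathcal I_{X/S}/\mathcal I_{X/S}^2$), I may assume that $f$ factors as an étale morphism $g\colon X\to \B^d_S$ followed by the projection $\pi\colon \B^d_S\to S$. So the argument has two parts: first compute $\Omega^1_{\B^d_S/S}$, then handle the étale morphism $g$ via a conormal/cotangent exact sequence.

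First I would treat $\pi\colon \B^d_S=\Spa(\mcO(S)\langle T_1,\dots,T_d\rangle,\dots)\to S$. Here the diagonal ideal is generated by the elements $1\otimes T_i-T_i\otimes 1$, $i=1,\dots,d$, inside $\mcO(\B^d_S\times_S\B^d_S)$; one checks, exactly as in the noetherian case, that $\mathcal I/\mathcal I^2$ is free on the classes $dT_1,\dots,dT_d$, so $\Omega^1_{\B^d_S/S}$ is a free $\mcO_{\B^d_S}$-module of rank $d$. The only point requiring a little care over a general strongly stably uniform base is that the relevant completed tensor products behave as expected and that the Leibniz rule plus the universal property of Kähler differentials of the polynomial ring pass to the Tate-algebra completion; this is routine but I would cite \cite{huber} or \cite{fargues-scholze} for the precise bookkeeping.

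Next, for the étale morphism $g\colon X\to Y:=\B^d_S$, I would use the standard first exact sequence of differentials for the composition $X\to Y\to S$:
$$
g^*\Omega^1_{Y/S}\longrightarrow \Omega^1_{X/S}\longrightarrow \Omega^1_{X/Y}\longrightarrow 0,
$$
together with the fact that étale morphisms are formally unramified, so $\Omega^1_{X/Y}=0$, and in fact (étale $\Rightarrow$ formally étale) the first map is an isomorphism $g^*\Omega^1_{Y/S}\xrightarrow{\ \sim\ }\Omega^1_{X/S}$. This identification is again local and follows from the infinitesimal lifting criterion defining étale maps in this adic setting (open immersion composed with finite étale), each factor of which is visibly formally étale. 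Since $g^*$ of a free module of rank $d$ is a vector bundle of rank $d$, this gives the claim: $\Omega^1_{X/S}$ is a vector bundle, of constant rank $d$ when $f$ has dimension $d$. Finally, to see that "vector bundle" is a well-defined local-to-global notion here (i.e. that the locally free modules glue to an $\mcO_X$-module that is locally free), I would invoke that $X$ lies in $\catD$, so it is covered by affinoids, and on overlaps the rank-$d$ freeness is preserved, yielding a genuine vector bundle; the constancy of the rank on connected components follows since the dimension $d$ of $f$ is locally constant on $X$ (as already noted after \Cref{dimension-of-a-smooth-morphism}).

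The main obstacle I anticipate is not conceptual but foundational: verifying that the formalism of Kähler differentials — the two exact sequences, base change $\Omega^1_{X\times_S S'/S'}\cong (\text{pullback of }\Omega^1_{X/S})$, and the vanishing $\Omega^1=0$ for étale maps — genuinely holds for smooth morphisms of the (possibly non-noetherian, merely strongly stably uniform) adic spaces in $\catD$, rather than only in the classical rigid or noetherian adic setting treated in the existing literature. The excerpt already flags this ("we make here some straightforward extensions of this case" and "the current literature treats mainly the case of a noetherian base"), so I would isolate these compatibilities as preliminary lemmas — each reduced to the affinoid case and to a statement about completed tensor products of Tate Huber pairs — and then the proof of the proposition itself becomes the short two-step argument above.
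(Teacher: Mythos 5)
Your reduction and your treatment of the polydisc are the same as the paper's: both localize so that $f$ factors as an \'etale map $g\colon X\to Y=\B^d_S$ followed by the projection, and both ultimately rest on the same analytic input for $Y\to S$, namely that $T_1-T_1',\dots,T_d-T_d'$ form a regular sequence generating a \emph{closed} ideal in $B\widehat{\otimes}_AB$, which is exactly \cite[Proposition IV.4.12]{fargues-scholze} (this is the one point that is genuinely not ``as in the noetherian case'', so it deserves to be cited as the crux rather than as bookkeeping). Where you diverge is the \'etale factor: you invoke the first exact sequence $g^*\Omega^1_{Y/S}\to\Omega^1_{X/S}\to\Omega^1_{X/Y}\to 0$ together with formal \'etaleness to get $g^*\Omega^1_{Y/S}\cong\Omega^1_{X/S}$, whereas the paper observes that, $g$ being \'etale, the relative diagonal $\Delta_g\colon X\to X\times_YX$ is an open immersion, so that $\mathcal I_{X/S}$ is simply the pullback of $\mathcal I_{Y/S}$ along $X\times_SX\to Y\times_SY$; the isomorphism $\Omega^1_{X/S}\cong g^*\Omega^1_{Y/S}$ then drops out with no differential calculus at all.

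Be careful about the logical order if you keep your route: in the paper, the exact sequences and base-change statements (\Cref{properties-of-omega-1}) are proved \emph{after} this proposition via the universal property of \Cref{omega-1-gives-universal-derivation}, and that universal property is itself extracted from the proof of \Cref{omega-1-is-a-vector-bundle} (it needs $I$ closed and finitely generated, so that $I/I^2$ is complete and the continuous Leibniz formalism applies). So you cannot quote those properties here as black boxes; your ``preliminary lemmas'' would have to be established independently, and in this non-noetherian topological setting their proofs essentially come down either to the same closedness/regularity input or to the diagonal observation above. The cheapest fix is to replace the conormal-sequence-plus-formal-\'etaleness step by the diagonal argument, which is self-contained; with that substitution your proof is correct and coincides with the paper's.
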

\begin{proof}
	Since this is a local assertion, we can assume that $f$ is the composite of an \'etale morphism $g: X \to \mathbb{B}_S^d$ with the projection $h: \mathbb{B}_S^d \to S$. We can moreover assume that $S=\mathrm{Spa}(A,A^+)$ and $X=\mathrm{Spa}(B,B^+)$ are both affinoid. In this case, we will prove that $\Omega_{X/S}^1$ is in fact a free $\mathcal{O}_X$-module of rank $d$. For brevity, write{ $Y\colonequals\mathbb{B}_S^d$}. The diagonal map 
	$
	\Delta_f : X \to X\times_S X
	$
	can be decomposed as the composition of
	$$
 X\stackrel{\Delta_g}{ \longrightarrow} X\times_Y X= Y \times_{Y \times_S Y} (X \times_S X) \to X \times_S X
	$$
	where the second  map is 
	obtained by base changing  
	$
	\Delta_h : Y \to Y \times_S Y 
	$
	along $X \times_S X \to Y \times_S Y$. 
	Since $g$ is \'etale, the map $\Delta_g$ is an open immersion. Therefore, the $\mathcal{O}_{X\times_S X}$-module $\mathcal{I}_{X/S}$ is the pullback of the $\mathcal{O}_{Y \times_S Y}$-module $\mathcal{I}_{Y/S}$ along the map $X \times_S X \to Y \times_S Y$.
	
	The map $Y \to Y\times_S Y$ is of the form
	$$%
	\mathrm{Spa}(A\langle \underline{T} \rangle ,A^+\langle \underline{T} \rangle)\to%
	\mathrm{Spa}(A\langle \underline{T}, \underline{T}'\rangle,A^+\langle \underline{T},\underline{T}' \rangle)
	$$
	for some sets of variables $\underline{T}=(T_1,\ldots,T_d)$ and $\underline{T}'=(T'_1,\ldots,T'_d)$,  
	and $\mathcal{I}_{Y/S}$ is the ideal sheaf given by the ideal $(T_1-T'_1,\dots, T_d-T'_d)$. To conclude the proof, it suffices to check that $T_1-T'_1,\dots, T_N-T'_N$ define a regular sequence in $B\widehat{\otimes}_A B$ and that the ideal $(T_1-T'_1,\dots, T_d-T'_d)\cdot B \widehat{\otimes}_A B$ is closed in $B \widehat{\otimes}_A B$. This is the content of \cite[Proposition IV.4.12]{fargues-scholze}.    
\end{proof}

\begin{dfn}
	\label{definition-of-universal-derivation}
	Let $f: A \to B$ be morphism of complete Huber rings. A \textit{universal $A$-derivation of $B$} is a continuous $A$-derivation $d_{B/A}: B \to \Omega_{B/A}$ such that for any continuous $A$-derivation $d:B \to M$ from $B$ to a complete topological $B$-module $M$, there is a unique continuous $B$-linear map $B$-linear map $g: \Omega_{B/A} \to M$ such that $d=g \circ d_{B/A}$. 
\end{dfn}

\begin{prop}
	\label{omega-1-gives-universal-derivation}
	Let $f : X \to S$ be a smooth morphism in $\catD$. Locally on $X$, $X=\mathrm{Spa}(B,B^+)$, $S=\mathrm{Spa}(S,S^+)$ and $\Omega_{X/S}^1$ is the $\mathcal{O}_X$-module attached to the finite projective $B$-module $\Omega_{B/A}\colonequals I/I^2$, where $I$ is the kernel of the multiplication map $B \widehat{\otimes}_A B \to B$. Moreover, the map $d_{B/A} : B \to \Omega_{B/A}$, induced by the map $b \mapsto 1 {\otimes} b - b {\otimes} 1$, is a universal $A$-derivation of $B$. 
\end{prop}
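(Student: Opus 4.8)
The plan is to reduce everything to a computation with the completed tensor product $B\widehat{\otimes}_A B$. Since both assertions are local on $X$ and $S$, I would take $X=\Spa(B,B^+)$ and $S=\Spa(A,A^+)$ affinoid (reading the ``$\Spa(S,S^+)$'' in the statement as $\Spa(A,A^+)$), shrink further so that $f$ has a fixed dimension $d$ and $X\times_S X=\Spa(B\widehat{\otimes}_A B,C)$ is affinoid, and recall that, $f$ being smooth, $B$ is topologically of finite type over $A$; in particular $B$ is topologically generated over $A$ by finitely many elements $t_1,\dots,t_k$. The crucial input, already used in the proof of \Cref{omega-1-is-a-vector-bundle} via \cite[Proposition IV.4.12]{fargues-scholze}, is that the kernel $I$ of the multiplication map $B\widehat{\otimes}_A B\to B$ is a \emph{closed}, finitely generated ideal. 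Hence the diagonal $\Delta_f(X)$ is exactly the closed subspace cut out by $I$, so $\mathcal{I}_{X/S}$ is the coherent $\mathcal{O}_{X\times_S X}$-module attached to the finite $B\widehat{\otimes}_A B$-module $I$; using that on affinoids in $\catD$ the passage from finite modules to coherent sheaves is exact and commutes with rational localization and with products of submodules, one obtains $\Omega^1_{X/S}=\mathcal{I}_{X/S}/\mathcal{I}_{X/S}^2=\widetilde{I/I^2}$. Since the left-hand side is a vector bundle of rank $d$ by \Cref{omega-1-is-a-vector-bundle}, the module $\Omega_{B/A}\colonequals I/I^2$ is finite projective of rank $d$; being finite over $B$ it carries its canonical (unique, Hausdorff, complete) topology and $I^2$ is closed in $I$.

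Next I would verify directly that $d_{B/A}\colon B\to\Omega_{B/A}$, $b\mapsto\overline{1\otimes b-b\otimes 1}$, is a continuous $A$-derivation. Continuity is clear: $b\mapsto 1\otimes b-b\otimes 1$ is continuous from $B$ into $B\widehat{\otimes}_A B$ with image in $I$, and one composes with the continuous surjection $I\twoheadrightarrow I/I^2$. The Leibniz rule follows by reading the identity $1\otimes bb'-bb'\otimes 1=(1\otimes b-b\otimes 1)(1\otimes b')+(b\otimes 1)(1\otimes b'-b'\otimes 1)$ modulo $I^2$, and $A$-linearity is obvious.

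For universality I would run the classical square-zero extension argument. Given a continuous $A$-derivation $d\colon B\to M$ with $M$ a finite $B$-module, set $C_M\colonequals B\oplus M$ with the square-zero multiplication $(b,m)(b',m')=(bb',bm'+b'm)$ and the product topology; then $C_M$ is a complete Huber ring, and $b\mapsto(b,0)$ and $b\mapsto(b,db)$ are two continuous $A$-algebra homomorphisms $B\to C_M$. By the universal property of the completed tensor product they extend uniquely to a continuous $A$-algebra homomorphism $\theta\colon B\widehat{\otimes}_A B\to C_M$, which carries $I$ into the square-zero ideal $0\oplus M$, hence kills $I^2$, and therefore induces a continuous $B$-linear map $\Omega_{B/A}=I/I^2\to M$ whose composite with $d_{B/A}$ equals $d$. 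Uniqueness holds because $I$ is generated as an ideal by the elements $1\otimes t_j-t_j\otimes 1$ (this is how $I$ is produced in \Cref{omega-1-is-a-vector-bundle}), so $\Omega_{B/A}$ is generated over $B$ by the $d_{B/A}(t_j)$. To treat an arbitrary complete topological $B$-module $M$, as in \Cref{definition-of-universal-derivation}, I would reduce to the finite case: a continuous $A$-derivation $d\colon B\to M$ is determined by the values $d(t_1),\dots,d(t_k)$ (by the Leibniz rule and continuity, since the $A$-subalgebra generated by the $t_j$ is dense in $B$), so $d$ factors through the finite $B$-submodule of $M$ they generate.

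I expect the main technical content to sit in the first step — the identification $\mathcal{I}_{X/S}=\widetilde{I}$ and $\Omega^1_{X/S}=\widetilde{I/I^2}$ — which genuinely relies on the closedness and finiteness of $I$ proved in \cite[Proposition IV.4.12]{fargues-scholze}; once those are in hand, everything else is the standard affine computation, and the only point requiring a little care is the topological finiteness needed to make the square-zero extension a Huber ring, which is dealt with by the reduction to finite $M$ just described.
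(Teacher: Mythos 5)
Your outline has the right shape (identification of $\Omega^1_{X/S}$ with the module attached to $I/I^2$ via the closedness and finiteness of $I$ from \cite[Proposition IV.4.12]{fargues-scholze}, then the square-zero extension argument), but the two facts you treat as given are exactly the technical content of the paper's proof, and your justifications for them do not hold up. Your uniqueness step rests on the claim that $I$ is generated, as an ideal of $B\widehat{\otimes}_AB$, by the elements $1\otimes t_j-t_j\otimes 1$ for topological generators $t_j$ of $B$ over $A$, ``because this is how $I$ is produced in \Cref{omega-1-is-a-vector-bundle}''. What that proof actually produces is the pullback along $X\times_SX\to\B^d_S\times_S\B^d_S$ of the ideal generated by the $d$ coordinate differences: an ideal built from the \emph{\'etale coordinates} (not from topological generators of $B$ over $A$), which cuts out $X\times_{\B^d_S}X$ and agrees with the ideal of the diagonal only in a neighbourhood of the diagonal. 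It yields no statement that the kernel $I$ of the multiplication map of the \emph{completed} tensor product is algebraically generated by differences $1\otimes b-b\otimes 1$. Since elements of $B\widehat{\otimes}_AB$ are not finite sums of elementary tensors, the classical computation only shows that such differences generate a \emph{dense} submodule of $I$; upgrading density to equality is precisely what the paper does: $I$ is closed and finitely generated, hence complete for its natural topology by \cite[Corollary 1.1.12]{ked-sss}, the ideal $J$ generated by $1\otimes n-n\otimes 1$ for $n$ in a finite set $N$ with $A[N]$ dense in $B$ is dense in $I$ by (the proof of) \cite[Proposition 1.6.2(ii)]{huber}, and then $J=I$ by \cite[Lemma 1.1.13]{ked-sss}. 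This argument is missing from your proposal, so your uniqueness step, and with it the claim that $\Omega_{B/A}$ is generated by the $d_{B/A}(t_j)$, is unsupported.

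The reduction to finite $M$ is also a non sequitur: continuity and the Leibniz rule show that $d$ is \emph{determined} by $d(t_1),\dots,d(t_k)$ and that $d(B)$ lies in the \emph{closure} of the finite submodule $M'$ they generate, not in $M'$ itself, and finitely generated submodules of a complete topological $B$-module need not be closed (for instance the cyclic submodule $\Q_p\langle T\rangle\cdot 1\subset C^0(\Z_p,\Q_p)$, with $T$ acting as multiplication by the coordinate, is dense and proper). So you cannot replace $M$ by a finite module in this way, and your appeal to the universal property of the completed tensor product for the possibly non-Huber ring $B\oplus M$ is then left without justification; it would require a separate continuity and boundedness argument for the map out of $B\otimes_AB$ before completing. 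Both difficulties are what the paper's preparation is designed to remove: once $I=J$ is known, $\Omega_{B/A}$ is a finite module generated by the $d_{B/A}(n)$, $n\in N$, so uniqueness is immediate and continuity of any $B$-linear map out of it is automatic for the natural topology, which is why the paper can then conclude by ``the same proof as the usual algebraic proof''.
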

\begin{proof}
	The first part follows from the proof of \ref{omega-1-is-a-vector-bundle}. Moreover, this proof shows that the ideal $I$ is closed and finitely generated, therefore a complete $B$-module of finite type. Choose a finite subset $N$ of $B$ such that the subring $A[N]$ is dense in $B$. The proof of \cite[Proposition 1.6.2(ii)]{huber} shows that the ideal $J$ generated by the elements $1 {\otimes} n - n {\otimes} 1$, $n\in N$, is dense in $I$. Thus, by \cite[Lemma 1.1.13]{ked-sss}, we must have $J=I$ (note that the topology on $I$ induced by the topology on $B$ is necessarily the natural topology, by \cite[Corollary 1.1.12]{ked-sss}{)}. From there, the same proof as the usual algebraic proof shows that $\Omega_{B/A}$ is a universal $A$-derivation of $B$.  
\end{proof}     

This allows us to check that $\Omega_{X/S}^1$ has the expected properties listed in the following proposition.

\begin{prop}
\label{properties-of-omega-1}
 Let $f : X\to S$ be a smooth morphism in $\catD$. 
	\begin{enumerate}
		\item Let $g: S' \to S$ be a map in $\catD$, and let $f': X'\colonequals X \times_S S' \to S'$ be the base change of $f$, which is again smooth. Then $\Omega_{X'/S'}^1$ is the pullback of $\Omega_{X/S}^1$ along $g': X' \to X$. 
		\item Let $g: Y\to X$ be a smooth morphism. Then one has a short exact sequence
		$$
		0 \to g^* \Omega_{X/S}^1 \to \Omega_{Y/S}^1 \to \Omega_{Y/X}^1.
		$$
		\item Let $g\colon Y \to S$ be a smooth morphism. There is a natural isomorphism
		$$\Omega^1_{(X\times_SY)/S}\cong g'^\ast \Omega^1_{X/S} \oplus f'^\ast \Omega^1_{Y/S},$$
		where $g': X\times_S Y \to X$, $f': X\times_S Y \to Y$ denote the two projections.
	\end{enumerate}
\end{prop}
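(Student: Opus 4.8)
The plan is to deduce all three statements from the universal--derivation description of $\Omega^1$ provided by \Cref{omega-1-gives-universal-derivation}, after localizing on all the spaces involved so that $S=\Spa(A,A^+)$, $X=\Spa(B,B^+)$, etc., are affinoid. The guiding principle is that, once everything is phrased in terms of continuous derivations, each of (1)--(3) becomes the classical algebraic manipulation of Kähler differentials; the only extra work is topological. Crucially, every module occurring in the argument --- $\Omega_{B/A}$ and completed tensor products of such modules along maps of (strongly stably uniform) Huber pairs --- is finite projective over the relevant ring, hence automatically complete, so the topological input needed is only continuity and density. I would handle this by repeating the device from the proof of \Cref{omega-1-gives-universal-derivation}: pick a finite $N$ with $A[N]$ dense in $B$, observe that the algebraic tensor product is dense in the completed one, and invoke \cite[Lemma 1.1.13 and Corollary 1.1.12]{ked-sss} to promote algebraic identities to topological ones.

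For (1), writing $X'=\Spa(B',B'^+)$ with $B'=B\widehat\otimes_A A'$, I would check directly that $d_{B/A}\widehat\otimes 1\colon B'\to\Omega_{B/A}\widehat\otimes_B B'$ is a universal continuous $A'$--derivation of $B'$: an $A'$--derivation $B'\to M$ into a complete $B'$--module restricts along $B\to B'$ to a continuous $A$--derivation of $B$, hence factors uniquely through $d_{B/A}$, and the resulting $B$--linear map extends uniquely and continuously to the completed tensor product by density. By the uniqueness clause of \Cref{omega-1-gives-universal-derivation} this identifies $\Omega_{B'/A'}$ with $\Omega_{B/A}\widehat\otimes_B B'$, i.e.\ $\Omega^1_{X'/S'}\cong g'^{\,*}\Omega^1_{X/S}$. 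For (3), with $X,Y$ affinoid smooth over $S=\Spa(A,A^+)$ and $X\times_S Y=\Spa(B\widehat\otimes_A C,\,\cdot\,)$, the key observation is that a continuous $A$--derivation of $B\widehat\otimes_A C$ into a complete module $M$ is the same datum as a pair consisting of a continuous $A$--derivation of $B$ and one of $C$ into $M$ (restrict along the two coprojections; conversely glue by the Leibniz rule and extend by density). Hence $\bigl(d_{B/A}\widehat\otimes 1,\ 1\widehat\otimes d_{C/A}\bigr)$ is a universal $A$--derivation of $B\widehat\otimes_A C$ with values in $(\Omega_{B/A}\widehat\otimes_A C)\oplus(B\widehat\otimes_A\Omega_{C/A})$, and the target is exactly $g'^{\,*}\Omega^1_{X/S}\oplus f'^{\,*}\Omega^1_{Y/S}$.

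For (2), I would first establish the right--exact three--term sequence $g^*\Omega^1_{X/S}\to\Omega^1_{Y/S}\to\Omega^1_{Y/X}\to 0$: the first map comes from $\mathcal O_X\to\mathcal O_Y\xrightarrow{d}\Omega^1_{Y/S}$ being an $S$--derivation, the surjection identifies $\Omega^1_{Y/X}$ with the quotient of $\Omega^1_{Y/S}$ by the closure of the $\mathcal O_Y$--submodule generated by $d(\mathcal O_X)$, and exactness in the middle is then immediate from the universal property. The remaining point --- injectivity of the first map, so that the sequence is (locally split) short exact --- I would prove by localizing so that $g$ factors as an étale map $e\colon Y\to\B^m_X$ followed by the projection $q\colon\B^m_X\to X$. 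Since an étale morphism has dimension $0$, \Cref{omega-1-is-a-vector-bundle} gives $\Omega^1_{Y/\B^m_X}=0$; feeding this into the right--exact sequences for $Y\to\B^m_X\to S$ and $Y\to\B^m_X\to X$, and using (1) and (3) (applied to $\B^m_X=X\times_S\B^m_S$) to compute $\Omega^1_{\B^m_X/S}\cong q^*\Omega^1_{X/S}\oplus\Omega^1_{\B^m_X/X}$ with $\Omega^1_{\B^m_X/X}$ free of rank $m$, one finds that the surjections $e^*\Omega^1_{\B^m_X/S}\twoheadrightarrow\Omega^1_{Y/S}$ and $e^*\Omega^1_{\B^m_X/X}\twoheadrightarrow\Omega^1_{Y/X}$ are maps of vector bundles of equal rank, hence isomorphisms; under them $\Omega^1_{Y/S}\to\Omega^1_{Y/X}$ becomes the pullback of the projection off the $q^*\Omega^1_{X/S}$--summand, whose kernel is $g^*\Omega^1_{X/S}$, so the sequence is split exact. (Alternatively, since all three terms are vector bundles of ranks adding up correctly, it would suffice to check exactness of the complex after base change to every point, reducing via (1) to the classical statement over a non--archimedean field, and then invoke that a three--term complex of vector bundles exact on all fibres is exact.)

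The step I expect to be the main obstacle is not conceptual but bookkeeping: verifying that the various completed tensor products genuinely corepresent \emph{continuous} derivations, and that the density, extension and closure arguments go through cleanly. This is entirely parallel to the proof of \Cref{omega-1-gives-universal-derivation}, uses nothing beyond finite projectivity and the cited lemmas of \cite{ked-sss}, but is where the care lies; for (2) there is the additional minor subtlety of the rank count for étale maps (equivalently, the vanishing of $\Omega^1$ for dimension--$0$ smooth morphisms), which is however immediate from \Cref{omega-1-is-a-vector-bundle}.
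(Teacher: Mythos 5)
Your argument is correct, and in spirit it is what the paper does: the paper's entire proof is the remark that (1) and (2) follow ``as in the algebraic case'' from the universal property of \Cref{omega-1-gives-universal-derivation}, with (3) then deduced from (1) and (2). Where you differ is in the logical organization: you prove (1) and (3) directly by exhibiting universal continuous derivations on $B\widehat\otimes_A A'$ and $B\widehat\otimes_A C$ (using density of the algebraic tensor product, finite projectivity of $\Omega_{B/A}$, and the \cite{ked-sss} lemmas exactly as in the paper's proof of \Cref{omega-1-gives-universal-derivation}), and then use (1) and (3), together with the vanishing of $\Omega^1$ for \'etale (dimension-$0$ smooth) maps from \Cref{omega-1-is-a-vector-bundle} and a rank count, to get the left-exactness and local splitness in (2); the paper instead takes (2) for granted in its algebraic form and obtains (3) from (1) and (2). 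There is no circularity in your ordering since (3) is established independently of (2), and your treatment of (2) is a legitimate fleshing-out of what ``as in the algebraic case'' hides: in the algebraic setting the injectivity on the left for a smooth $g$ is likewise proved by the local factorization through an \'etale map to a relative ball (or by local retractions), so your route via $Y\to\B^m_X\to X$ and the identification of $\Omega^1_{\B^m_X/S}$ with $q^*\Omega^1_{X/S}\oplus\Omega^1_{\B^m_X/X}$ is the natural analytic counterpart. The only points demanding care are the ones you flag yourself: that the ``image'' occurring in the middle-exactness argument is a priori only dense in the kernel, which is resolved by the same dense-finitely-generated-submodule lemma of \cite{ked-sss} the paper already invokes, and the routine verification that your two surjections of equal-rank vector bundles intertwine $\Omega^1_{Y/S}\to\Omega^1_{Y/X}$ with the projection off the $q^*\Omega^1_{X/S}$-summand.
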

\begin{proof}
The proofs of (1) and (2) are the same as in the algebraic case, using the universal property, given \ref{omega-1-gives-universal-derivation}. {The assertion (3) follows from (1) and (2).}
\end{proof}

\begin{dfn}\label{dfn:omega}
	Let $f : X\to S$ be a smooth morphism in $\catD$, of dimension $d$. For each $i\geq 1$, write $\Omega_{X/S}^i = \wedge^i \Omega_{X/S}^1$. The derivation $d: \mathcal{O}_X \to \Omega_{X/S}^1$ extends naturally to a complex of sheaves of $\mathcal{O}_S$-modules on $X$ :
	$$
	\mathcal{O}_X \overset{d} \to \Omega_{X/S}^1 \overset{d} \to \dots \overset{d} \to \Omega_{X/S}^d,
	$$
	(with $\mathcal{O}_X$ sitting in degree $0$) called \textit{the de Rham complex of $X$ over $S$} and denoted by $\Omega_{X/S}^{\bullet}$. 
\end{dfn}

\subsection{Recollection on solid quasi-coherent sheaves} 
Clausen and Scholze have developed a formalism allowing one to attach to any analytic adic space $X$ an infinity-category $\QCoh(X)$ of \textit{solid quasi-coherent sheaves} on $X$, serving the same purposes as the category of quasi-coherent sheaves in algebraic category (and even more, since it allows one to build a full 6-functor formalism, see \cite{scholze-cond}). If $f: X \to S$ is a smooth (dagger) morphism in $\catD$, the (overconvergent) de Rham complex naturally defines an object of $\QCoh(S)$ and it will be important for us to adopt this point of view in the following. This is what we explain in this subsection. 
We start by recalling several properties of analytic rings attached to complete Huber pairs that we gather essentially from \cite{scholze-an} and \cite{andreychev} and that we summarize here for the convenience of the reader.

\begin{dfn}For the basic notation on condensed abelian groups we refer to \cite{scholze-cond}. We will typically consider them as abelian sheaves on the {site }  of extremally disconnected sets with covers given by finite collections of jointly surjective maps (cfr. \cite[Proposition 2.7]{scholze-cond}).
\begin{enumerate}
	\item If $A$ is a topological abelian group we denote by $\underline{A}$ the condensed abelian group {defined by} $\underline{A}(S)=\Hom(S,A)$ (the group of continuous maps) for any extremally disconnected set $S$. If $A$ has a topological ring structure, then $\underline{A}$ is a condensed ring.
	\item If $R$ is a condensed ring (for example, $R=\underline{A}$ for some topological ring $A$) and $S$ is an extremally disconnected set, we denote by $R[S]$ the condensed $R$-module representing the functor $M\mapsto M(S)$ {on condensed $R$-modules}.
	\item An \emph{analytic ring} is given by a condensed ring $R$, a functor ${M}_{R}$ taking an extremally disconnected set $S$ to some $R$-module ${M}_{R}[S]$ in  condensed  abelian groups, and a natural transformation $R[S]\to{M}_{R}[S]$ satisfying some extra properties (see \cite[Definition 6.12]{scholze-an}). The category of \emph{ $(R,{M}_R)$-modules} ${M}_R\Mod$ is the full abelian subcategory with products and sums inside condensed $R$-modules generated  by the objects ${M}_R[S]$. The natural transformation which is part of the definition  gives rise to a localization functor $R\Mod\to{M}_R\Mod$ that is denoted by $M\mapsto M\otimes_{R}(R,{M}_R)$ {and is the unique colimit-preserving extension of the functor $R[S] \to M_R[S]$}. More generally, any map of analytic rings (defined as in \cite[Lecture VII]{scholze-cond}) $f\colon (A,{M}_A)\to(B,{M}_B)$ induces a base-change functor $f^*\colon {M}_{A}\Mod\to{M}_{B}\Mod$, $M\mapsto M\otimes_{(A,{M}_A)}(B,{M}_{B})$ which is a left adjoint to the ``forgetful'' functor $f_*$. If $R$ is commutative, the category ${M}_{R}\Mod$ is { endowed with a symmetric monoidal tensor product $\otimes_{(R,M_R)}$ making} the functor $M\mapsto M\otimes_{R}(R,{M}_{R})$ is symmetric monoidal. One says $(R,M_R)$ is {\emph{complete} or} \emph{normalized} (cf. \cite[Definition 12.9]{scholze-an}) if $M_R[*]\cong R$.
\item
We recall that an \emph{animated analytic ring }is given by a condensed animated ring $\mcR$, a functor $\mathcal{M}_{\mcR}$ taking an extremally disconnected set $S$ to some $\mcR$-module $\mathcal{M}_{\mcR}[S]$ in  condensed animated abelian groups, and a natural transformation $\mcR[S]\to\mathcal{M}_{\mcR}[S]$ satisfying some extra properties (see \cite[Definition 12.1]{scholze-an}). The category $\mcD(\mcR,\mathcal{M}_\mcR)$ is the stable infinity-category generated under sifted colimits by the shifts of $\mathcal{M}_{\mcR}[S]$ in (unbounded) derived condensed $\mcR$-modules (see \cite[Definition 12.3 and Remark 12.5]{scholze-an}).
The natural transformation which is part of the definition  gives rise to a localization functor $\mcD(\mcR)\to\mcD(\mathcal{M}_\mcR)$ that is denoted by $M\mapsto M\otimes_{\mcR}(\mcR,\mathcal{M}_\mcR)$. More generally, any map of analytic rings (defined as in \cite[Lecture XII]{scholze-an}) $f\colon (\mcA,\mathcal{M}_A)\to(\mcB,\mathcal{M}_B)$ induces a base-change functor $f^*\colon \mcD(\mathcal{M}_{\mcA})\to\mcD(\mathcal{M}_{\mcB})$, $M\mapsto M\otimes_{(\mcA,\mathcal{M}_A)}(\mcB,\mathcal{M}_{\mcB})$ which is a left adjoint to the ``forgetful'' functor $f_*$. {If $\mathcal{R}$ is a condensed animated commutative ring, there is a unique symmetric monoidal structure $\otimes_{(\mathcal{R},\mathcal{M}_{\mathcal{R}})}$, making the functor $- \otimes_{\mathcal{R}} (\mathcal{R},\mathcal{M}_{\mathcal{R}})$ symmetric monoidal.}  
 Any analytic ring structure $(R,{M}_{R})$ {can be seen as} an animated ring structure $\mathcal{M}_{R}$ on $R[0]$. %
\end{enumerate}
\end{dfn}
{
\begin{rmk}
In \cite{andreychev} the adjective \emph{animated} is often dropped. What we call here \emph{analytic rings} are there called \emph{0-truncated} (animated) analytic rings.
\end{rmk}
}
\begin{rmk}
Beware  that the functor $- \otimes_{R[0]} (R[0],\mathcal{M}_{\mathcal{R}})$ may not   be the left derived functor of the functor $- \otimes_{R} (R,M_R)$ (see \cite[Warning 7.6]{scholze-cond}) but it is so in all the examples we are  interested in (see \Cref{derived-tens} below).
\end{rmk}

\begin{exm}\label{exam}
\begin{itemize}
\item If $\mcR$ is a condensed animated ring, the functor  $S\mapsto\mcR[S]$ defines a (``trivial'') analytic ring structure on $\mcR$, {which we  denote by $\mcR_{\rm triv}$}.
\item The pair $(\underline{\Z},{\Z_{\bs}})$ with $\Z_{\bs}[\varprojlim S_i]\colonequals \varprojlim\Z[S_i]$ defines an analytic ring structure on the condensed discrete ring $\underline{\Z}$ (see \cite[Theorem 5.8]{scholze-cond}). Similarly, if $R$ is a  finitely generated discrete ring, the datum $(\underline{R},R_\bs)$ with $R_\bs[S]\colonequals \varprojlim R[S_i]$ defines an analytic ring structure on $\underline{R}$ (see \cite[Theorem 8.1]{scholze-cond}). More generally, if $R$ is a (discrete, 0-truncated)   ring, the functor $S\mapsto R_\bs[S]\colonequals\varinjlim_{R'} R'_\bs[S]$ as $R'$ runs among finitely generated subrings of $R$, is an analytic ring structure on $\underline{R}$. From now on,  the analytic ring structure $(\underline{R},R_\bs)$ will  simply be denoted by $R_\bs$.
\end{itemize}
\end{exm}

All the analytic rings that we will consider lie above $\Z_{\bs}$. The following fact is therefore particularly convenient for us.

\begin{prop}[{\cite[Proposition 2.11 and Corollary 2.11.2]{andreychev}}]
	\label{derived-tens}
If $(R,M_R)$ is an analytic ring over $\Z_{\bs}$ then  $M_R[S]\otimes^{\LL}_{(R,M_R)}{M}_{R}[ T]$ is concentrated in degree zero for any pair of extremally disconnected sets $(S,T)$. In particular, the tensor product in $\mcD(\mathcal{M}_\mcR)$ coincides with the derived tensor product of ${M}_{R}\Mod$.
\end{prop}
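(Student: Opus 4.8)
The plan is to reduce the claim to the case of the base analytic ring $\Z_{\bs}=(\underline{\Z},\Z_{\bs})$ and then to invoke the structure theory of solid abelian groups from \cite{scholze-cond}. The reduction itself is formal once one is willing to manipulate the base-change functors attached to analytic rings; the non-formal input is that the free solid module on an extremally disconnected set is $\Z_{\bs}$-flat, and the one subtle verification is that this flatness survives the base change $\Z_{\bs}\to (R,M_R)$ in the relevant sense — the localization functors of analytic rings are only right $t$-exact in general, so a little care is needed here.

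First I would record that, since $(R,M_R)$ lies over $\Z_{\bs}$, the localization functor $-\otimes_{R}(R,M_R)$ factors as $-\otimes_{\underline{\Z}}(\underline{\Z},\Z_{\bs})$ followed by $-\otimes_{\Z_{\bs}}(R,M_R)$, so that for every extremally disconnected $S$ one has a canonical identification
\[
M_R[S]=R[S]\otimes_{R}(R,M_R)=\underline{\Z}[S]\otimes_{\underline{\Z}}(R,M_R)=\Z_{\bs}[S]\otimes_{\Z_{\bs}}(R,M_R),
\]
the middle equality using $R[S]=R\otimes_{\underline{\Z}}\underline{\Z}[S]$. Writing $\lambda\colon\mcD(\Z_{\bs})\to\mcD(M_R)$, $X\mapsto X\otimes_{\Z_{\bs}}(R,M_R)$, for the associated base-change functor — which is symmetric monoidal and colimit-preserving — and using $M_R[S]=\lambda(\Z_{\bs}[S])$, we obtain for $S,T$ extremally disconnected
\[
M_R[S]\otimes^{\LL}_{(R,M_R)}M_R[T]\;\cong\;\lambda\!\left(\Z_{\bs}[S]\otimes^{\LL}_{\Z_{\bs}}\Z_{\bs}[T]\right).
\]
Thus it remains to see that $\Z_{\bs}[S]\otimes^{\LL}_{\Z_{\bs}}\Z_{\bs}[T]$ is concentrated in degree $0$, and that $\lambda$ applied to it stays in degree $0$.

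For the first half I would quote the foundational solid computation (\cite{scholze-cond}): for $S$ extremally disconnected $\Z_{\bs}[S]$ is a flat $\Z_{\bs}$-module, and $\Z_{\bs}[S]\otimes^{\LL}_{\Z_{\bs}}\Z_{\bs}[T]\cong\Z_{\bs}[S\times T]$, which — being a tensor product of flat modules — is again $\Z_{\bs}$-flat. For the second half, the content is that $\lambda$ sends flat $\Z_{\bs}$-modules to objects concentrated in degree $0$; concretely, choosing a hypercover $P_{\bullet}\to S\times T$ by extremally disconnected sets, $\Z_{\bs}[P_{\bullet}]\xrightarrow{\ \sim\ }\Z_{\bs}[S\times T]$ is a resolution by flat $\Z_{\bs}$-modules, which $\lambda$ carries to the complex $M_R[P_{\bullet}]$ of discrete $(R,M_R)$-modules whose higher homology is $\mathrm{Tor}^{\Z_{\bs}}_{>0}(\Z_{\bs}[S\times T],(R,M_R))$ and hence vanishes because $\Z_{\bs}[S\times T]$ is $\Z_{\bs}$-flat and $(R,M_R)$ is $0$-truncated (we work with honest, not animated, analytic rings). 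This is exactly \cite[Proposition 2.11]{andreychev}, and it gives the first assertion: $M_R[S]\otimes^{\LL}_{(R,M_R)}M_R[T]\cong M_R[S\times T]$, in degree $0$.

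Finally, the ``in particular'' is formal once one also knows that $M_R[S]$ is flat over $(R,M_R)$ for $S$ extremally disconnected — this follows by the same base-change reduction, and these objects form a set of compact projective generators of $M_R\Mod$ — so that $\mcD(M_R)$ is the derived $\infty$-category of $M_R\Mod$ and its symmetric monoidal product is computed by resolving either argument by direct sums of the $M_R[S]$'s, i.e. it is the left-derived functor of $\otimes_{(R,M_R)}$ on $M_R\Mod$ (\cite[Corollary 2.11.2]{andreychev}). I expect the genuinely delicate step to be the second half of the previous paragraph, where $\lambda$ is analysed on the profinite, non-extremally-disconnected set $S\times T$: the purely formal reductions leave precisely that computation, and it rests on the solid structure theory — flatness of $\Z_{\bs}[S\times T]$ together with $0$-truncatedness of $(R,M_R)$ — rather than on any abstract $t$-exactness of $\lambda$.
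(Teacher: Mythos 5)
Your formal reduction is fine and is indeed the right skeleton: since the base change $\lambda\colon\mcD(\Z_\bs)\to\mcD((R,M_R))$ is symmetric monoidal and sends $\Z_\bs[S]$ to $M_R[S]$, one has $M_R[S]\otimes^{\LL}_{(R,M_R)}M_R[T]\cong\lambda\bigl(\Z_\bs[S]\otimes^{\LL}_{\Z_\bs}\Z_\bs[T]\bigr)\cong\lambda\bigl(\Z_\bs[S\times T]\bigr)$, using the solid computation $\Z_\bs[S]\otimes^{\LL}_{\Z_\bs}\Z_\bs[T]\cong\Z_\bs[S\times T]$. The gap is in the step you yourself flagged as delicate, and the justification you give for it is circular. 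Writing the higher homology of $M_R[P_\bullet]$ as ``$\mathrm{Tor}^{\Z_\bs}_{>0}(\Z_\bs[S\times T],(R,M_R))$'' and declaring it zero ``because $\Z_\bs[S\times T]$ is flat and $(R,M_R)$ is $0$-truncated'' treats the base change along $\Z_\bs\to(R,M_R)$ as if it were the derived tensor product against a fixed solid module. It is not: $\mcD((R,M_R))$ is \emph{not} the category of modules over the underlying solid ring of $R$ inside $\mcD(\Z_\bs)$ (that identification is exactly what fails for genuine analytic ring structures --- compare $(\Z[T],\Z)_\bs$ with $\Z[T]_\bs$, which have the same underlying condensed ring but different module categories), so flatness of $\Z_\bs[S\times T]$ over $\Z_\bs$ gives no control on $\lambda$. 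Nor does $0$-truncatedness: base change between $0$-truncated analytic rings is only right $t$-exact, and it can genuinely shift static modules (e.g.\ along $(\Z[T],\Z)_\bs\to\Z[T]_\bs$ the static module $T^{-1}\Z[\![T^{-1}]\!]\cong\prod_{\N}\Z$ is sent to a shift of $\Z[T]_\bs$). So the vanishing you need is precisely the content of the proposition and is not a consequence of flatness.

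The correct input --- and the one used in the cited reference, which the paper itself does not reprove --- is \emph{projectivity}, not flatness: by the structure theory of solid abelian groups, $\Z_\bs[S\times T]\cong\prod_I\Z$ for the profinite set $S\times T$, and such products are compact projective in solid abelian groups. Choosing a surjection $P\twoheadrightarrow S\times T$ with $P$ extremally disconnected, the induced surjection $\Z_\bs[P]\to\Z_\bs[S\times T]$ splits, so $\Z_\bs[S\times T]$ is a retract of $\Z_\bs[P]$; applying $\lambda$, the object $\lambda(\Z_\bs[S\times T])\cong M_R[S]\otimes^{\LL}_{(R,M_R)}M_R[T]$ is a retract of $M_R[P]$, which is concentrated in degree zero, and retracts of static objects are static. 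This one retract argument replaces your flatness claim (and also disposes of the auxiliary point that $\Z_\bs[P_\bullet]\to\Z_\bs[S\times T]$ is a resolution, which itself needs an argument since solidification is only right exact). For the ``in particular'', note that $M_R[S]$ is projective in $M_R\Mod$ by the very definition of an analytic ring --- no base-change reduction is needed there --- and then your deduction that the tensor product on $\mcD(\mathcal{M}_\mcR)$ is the left derived functor of $\otimes_{(R,M_R)}$ goes through.
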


There is a convenient way to produce animated analytic ring structures given in \cite{scholze-an}.

\begin{prop}[{\cite[Proposition 12.8]{scholze-an}}]\label{prop:pushout}
	Let $(\mcR,\mathcal{M}_\mcR)$ be an animated analytic ring and $\mcR\to\mcR'$ a map of condensed animated rings. The functor
	$$
	S\mapsto{\mcR'}[S]\otimes_{\mcR}(\mcR,\mathcal{M}_\mcR)
	$$
	defines an animated analytic ring structure on $\mcR'$, which is the pushout  {$(\mcR,\mathcal{M}_\mcR)\otimes_{\mcR_{\rm triv}}\mcR'_{\rm triv}$} in animated analytic rings.
\end{prop}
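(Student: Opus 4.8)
The plan is to write down the candidate module functor, verify directly that it satisfies the axioms of an animated analytic ring in the sense of \cite[Definition 12.1]{scholze-an}, and then read off the universal property of the pushout from an explicit description of the resulting category of modules.

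First I would lift the localization attached to $(\mcR,\mathcal{M}_\mcR)$ along the ring map $\mcR\to\mcR'$. Let $U\colon\mcD(\mcR')\to\mcD(\mcR)$ denote restriction of scalars; it is conservative, commutes with all limits and colimits, and has both adjoints. By hypothesis the inclusion $\mcD(\mathcal{M}_\mcR)\hookrightarrow\mcD(\mcR)$ is a Bousfield localization whose reflector is $L=-\otimes_\mcR(\mcR,\mathcal{M}_\mcR)$, and since $L$ is symmetric monoidal its kernel, the class of $L$-acyclic objects, is a $\otimes$-ideal of $\mcD(\mcR)$. A standard consequence is that the full subcategory $\mcC\colonequals\{N\in\mcD(\mcR'):UN\text{ is }L\text{-local}\}$ of $\mcD(\mcR')$ is again a Bousfield localization, with a reflector $L_{\mcR'}$ satisfying $U\circ L_{\mcR'}\simeq L\circ U$ (one checks that the coinduction functor $\RHom_\mcR(\mcR',-)$ carries $L$-local objects to $L$-local ones, so that restriction of scalars carries $L_{\mcR'}$-equivalences to $L$-equivalences). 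Setting $\mathcal{M}_{\mcR'}[S]\colonequals L_{\mcR'}(\mcR'[S])$ therefore produces a condensed $\mcR'$-module whose underlying $\mcR$-module is $L(\mcR'[S])=\mcR'[S]\otimes_\mcR(\mcR,\mathcal{M}_\mcR)$ — exactly the object appearing in the statement — and the localization unit supplies the required natural transformation $\mcR'[S]\to\mathcal{M}_{\mcR'}[S]$. Functoriality in $S$ and compatibility with finite disjoint unions are immediate because $\mcR'[-]$ enjoys them and $L_{\mcR'}$ is exact, so $(\mcR',\mathcal{M}_{\mcR'})$ is at least a pre-analytic ring.

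Next I would identify the category of modules and check the remaining (orthogonality) axiom. Each $\mathcal{M}_{\mcR'}[S]$ lies in $\mcC$, which is closed under colimits and shifts, so the localizing subcategory of $\mcD(\mcR')$ generated by the $\mathcal{M}_{\mcR'}[S]$ is contained in $\mcC$; conversely, any $N\in\mcC$ is $L_{\mcR'}$-local, and writing $N$ as a colimit of free modules $\mcR'[S_i]$ and applying the colimit-preserving functor $L_{\mcR'}$ realises $N$ as a colimit of the $\mathcal{M}_{\mcR'}[S_i]$. Hence $\mcD(\mathcal{M}_{\mcR'})=\mcC=\{N\in\mcD(\mcR'):UN\in\mcD(\mathcal{M}_\mcR)\}$. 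The orthogonality axiom — that $\RHom_{\mcR'}(\mathcal{M}_{\mcR'}[S],N)\to\RHom_{\mcR'}(\mcR'[S],N)=N(S)$ is an equivalence for every $N\in\mcD(\mathcal{M}_{\mcR'})$ — is then immediate, since $\mcR'[S]\to\mathcal{M}_{\mcR'}[S]$ is an $L_{\mcR'}$-equivalence, $N$ is $L_{\mcR'}$-local, and $\mcR'[S]$ represents $M\mapsto M(S)$. Thus $(\mcR',\mathcal{M}_{\mcR'})$ is an animated analytic ring.

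Finally I would extract the universal property. A ring map $h\colon\mcA\to\mcB$ underlies a morphism of analytic rings $(\mcA,\mathcal{M}_\mcA)\to(\mcB,\mathcal{M}_\mcB)$ precisely when restriction of scalars along $h$ sends $\mcD(\mathcal{M}_\mcB)$ into $\mcD(\mathcal{M}_\mcA)$; by the description of $\mcD(\mathcal{M}_{\mcR'})$ just obtained, $\mcR\to\mcR'$ has this property with respect to $(\mcR,\mathcal{M}_\mcR)$, and $\id_{\mcR'}$ trivially gives a map $\mcR'_{\rm triv}\to(\mcR',\mathcal{M}_{\mcR'})$, so together with the two maps out of $\mcR_{\rm triv}$ we obtain a commutative square. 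For any animated analytic ring $(\mcB,\mathcal{M}_\mcB)$, a morphism $(\mcR',\mathcal{M}_{\mcR'})\to(\mcB,\mathcal{M}_\mcB)$ is a ring map $g\colon\mcR'\to\mcB$ with $g_*N\in\mcD(\mathcal{M}_{\mcR'})$ for all $N\in\mcD(\mathcal{M}_\mcB)$, and by the same description this holds if and only if $U(g_*N)\in\mcD(\mathcal{M}_\mcR)$ for all such $N$, i.e.\ if and only if the composite $\mcR\to\mcR'\to\mcB$ underlies a morphism $(\mcR,\mathcal{M}_\mcR)\to(\mcB,\mathcal{M}_\mcB)$ — which is exactly the data of a cocone under the span $(\mcR,\mathcal{M}_\mcR)\leftarrow\mcR_{\rm triv}\to\mcR'_{\rm triv}$ with apex $(\mcB,\mathcal{M}_\mcB)$. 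This identifies $(\mcR',\mathcal{M}_{\mcR'})$ with the pushout $(\mcR,\mathcal{M}_\mcR)\otimes_{\mcR_{\rm triv}}\mcR'_{\rm triv}$. The one genuinely non-formal point is the compatibility $U\circ L_{\mcR'}\simeq L\circ U$ of the first step — that the localization cutting out the completed modules survives base change along an arbitrary ring map — which is where symmetric monoidality of $L$ is really used; everything after that is manipulation of reflective subcategories and (co)induction of scalars.
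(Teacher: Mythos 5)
Your argument is correct. Note that the paper does not prove this statement at all: it is quoted verbatim from Scholze's \emph{Lectures on Analytic Geometry} (Proposition 12.8), so there is no internal proof to compare against; what you have written is essentially a reconstruction of the argument from the cited source. Your route — first lifting the Bousfield localization $L=-\otimes_{\mcR}(\mcR,\mathcal{M}_\mcR)$ to $\mcD(\mcR')$ by showing that coinduction $\RHom_{\mcR}(\mcR',-)$ preserves $L$-local objects (hence restriction of scalars sends $L_{\mcR'}$-equivalences to $L$-equivalences, giving $U\circ L_{\mcR'}\simeq L\circ U$), then reading off the analytic-ring axiom and the identification $\mcD(\mathcal{M}_{\mcR'})=\{N:UN\in\mcD(\mathcal{M}_\mcR)\}$, from which the pushout universal property is immediate — is exactly the right mechanism, and you correctly isolate the one non-formal input, namely that $L$-acyclics form a tensor ideal (equivalently, that the localization is compatible with the symmetric monoidal structure), which is established in Scholze's notes prior to and independently of this proposition, so there is no circularity. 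Two small points you leave implicit: the animated axioms require $\mathcal{M}_{\mcR'}[S]$ to be connective, which holds because the localization is right $t$-exact (it sends $\mcR[S][n]$, $n\geq 0$, to connective objects and preserves colimits); and in the universal-property step one should identify mapping \emph{anima}, not merely morphisms, which works because the relevant conditions on a ring map are properties cutting out full subanima of the space of maps of condensed animated rings. Both are routine and do not affect the argument.
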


Under suitable hypotheses, the recipe above is internal to normalized analytic rings. The proof of the following fact is immediate.
\begin{prop}[{\cite[Proposition 2.16]{andreychev}}]\label{prop:pushout-ar}
Let $(R,{M}_R)$ be a normalized analytic ring. Let $R\to R'$ be a map of condensed rings such that $R'$ is a  ${(R,M_R)}$-module and such that  ${R'}[S]\otimes_{R}^{\LL}(R,{M}_R)$ lies in degree zero for any extremally disconnected set $S$. The functor $$S\mapsto {R'}[S]\otimes_{R}(R,{M}_R)$$
defines a structure of a normalized analytic ring on $R'$ above $(R,M_R)$ whose associated animated analytic ring structure is {$R'[0]_{\rm triv} \otimes_{R[0]_{\rm triv}}(R[0],\mathcal{M}_R)$}.
\end{prop}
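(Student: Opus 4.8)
The plan is to obtain this as a formal consequence of \Cref{prop:pushout} (that is, \cite[Proposition 12.8]{scholze-an}) together with the degree-zero hypothesis. First I would regard $(R,M_R)$ as the animated analytic ring $(R[0],\mathcal{M}_R)$ and apply \Cref{prop:pushout} to the morphism of condensed animated rings $R[0]\to R'[0]$. This produces an animated analytic ring structure on $R'[0]$ whose module functor sends an extremally disconnected set $S$ to $R'[S]\otimes_{R}^{\LL}(R,M_R)$, and which is, by that same proposition, the pushout $R'[0]_{\rm triv}\otimes_{R[0]_{\rm triv}}(R[0],\mathcal{M}_R)$ in animated analytic rings. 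This already gives the last assertion of the Proposition.

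Next I would invoke the hypothesis that $R'[S]\otimes_{R}^{\LL}(R,M_R)$ is concentrated in degree zero for every extremally disconnected $S$. Since these objects are precisely the generators of the stable category $\mcD(\mathcal{M}_{R'[0]})$ just constructed, this says the animated analytic ring is $0$-truncated, hence corresponds (via the identification of analytic rings with $0$-truncated animated analytic rings recalled above, cf.\ \cite[\S 12]{scholze-an}) to an ordinary analytic ring structure $(R',M_{R'})$ with $M_{R'}[S]=R'[S]\otimes_R(R,M_R)$, the tensor product computed inside the abelian category $M_R\Mod$ and agreeing with the derived one by the same hypothesis (cf.\ \Cref{derived-tens}). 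The axioms of \cite[Definition 6.12]{scholze-an} for $(R',M_{R'})$ are inherited from those of $(R,M_R)$ through the colimit-preserving extension of $S\mapsto R'[S]\otimes_R(R,M_R)$, using that $R'$ is an $M_R$-module, so that $R'\otimes_R(R,M_R)\cong R'$ is a (commutative) algebra object of $M_R\Mod$.

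It remains to check normalization and to identify the structure map. For normalization one computes $M_{R'}[\ast]=R'[\ast]\otimes_R(R,M_R)=R'\otimes_R(R,M_R)\cong R'$, the last isomorphism holding because $R'$ is already an $M_R$-module and the localization functor is the identity there. For the structure map, the ring homomorphism $R\to R'$ underlies a morphism of analytic rings $(R,M_R)\to(R',M_{R'})$, since both $R'$ and each $M_{R'}[S]=R'[S]\otimes_R(R,M_R)$ lie in $M_R\Mod$ by construction, which is exactly the condition for $(R',M_{R'})$ to be \emph{above} $(R,M_R)$. As the excerpt already indicates, there is essentially no obstacle here: the only point deserving care is the descent from the $0$-truncated animated analytic ring to an honest analytic ring, together with the verification that the underived tensor product $-\otimes_R(R,M_R)$ computes $M_{R'}$ on the generators $R'[S]$, and both of these are guaranteed precisely by the degree-zero assumption. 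I would organize the write-up as: (i) apply \Cref{prop:pushout}; (ii) use $0$-truncatedness to descend to analytic rings and transport the axioms; (iii) check normalization; (iv) identify the structure morphism and the associated animated structure.
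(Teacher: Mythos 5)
Your proposal is correct and takes essentially the route the paper intends: the paper gives no argument beyond calling the statement immediate and citing Andreychev, and your write-up — applying \Cref{prop:pushout} to $R[0]\to R'[0]$, using the degree-zero hypothesis to see the resulting animated structure is $0$-truncated and hence an ordinary normalized analytic ring with $M_{R'}[\ast]\cong R'$ by $M_R$-completeness of $R'$ — is exactly the expected filling-in of those details. One small nit: \Cref{derived-tens} is not the right reference for the agreement of the underived and derived tensor here (it concerns analytic rings over $\Z_\bs$); what you actually need is only that $H^0$ of the localization functor computes the underived localization, which together with the degree-zero hypothesis gives the identification directly.
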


We shall refer to the (animated) analytic structure introduced in the previous propositions as the one \emph{induced} by $\mathcal{M}_\mcR$ and the map ${\mcR}\to{\mcR'}$.
\begin{exm}The analytic ring structure induced by $\Z_{\bs}$ and the map (of discrete rings) $\Z\to\Z[T]$ will be denoted by $(\Z[T],\Z)_\bs$. 
\end{exm}
Another example of this situation, which is crucial to our setting,  has been studied by \cite{andreychev}: 
let $(A,A^+)$ be a complete Huber pair. Recall that  the discrete ring $A_{\rm disc}^+$ (the ring $A^+$ endowed with the discrete topology) is equipped with  a (normalized) analytic ring structure denoted by $({A}_{\rm disc}^+)_{\bs}$ (see \Cref{exam}). 

\begin{dfn}
	\label{analytic-ring-attached-to-an-huber-pair}
	Let $(A,A^+)$ be a complete Huber pair. We define $(A,A^+)_\bs$ as the animated ring structure given by $\underline{A}[0]_{\rm triv} \otimes_{\underline{A}_{\rm disc}^+[0]_{\rm triv}} (A_{\rm disc}^+)_{\bs}$.
\end{dfn}

\begin{prop}[{\cite[Lemma 3.24 and Lemma 3.25]{andreychev}}]
	The map $\underline{A}_{\rm disc}^+\to \underline{A}$ satisfies the hypotheses of \Cref{prop:pushout-ar}. In particular, there is an analytic ring structure on $\underline{A}$ associated to $(A,A^+)_\bs$. 
\end{prop}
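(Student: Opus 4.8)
The plan is to verify, in turn, the two hypotheses of \Cref{prop:pushout-ar} for the normalized analytic ring $(R,M_R)\colonequals(A^+_{\rm disc})_{\bs}$ and the morphism of condensed rings $R\to R'$ coming from the ring map $A^+\to A$ (which is continuous once $A^+$ carries the discrete topology, so that it does induce $\underline{A}_{\rm disc}^+\to\underline{A}$), and then to invoke \Cref{prop:pushout-ar}. The ``In particular'' clause is automatic, since the analytic ring structure produced by \Cref{prop:pushout-ar} has associated animated structure $\underline{A}[0]_{\rm triv}\otimes_{\underline{A}_{\rm disc}^+[0]_{\rm triv}}(A^+_{\rm disc})_{\bs}$, which is precisely $(A,A^+)_{\bs}$ of \Cref{analytic-ring-attached-to-an-huber-pair}.

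For the reductions, since $(A,A^+)$ is Tate I would pick a topologically nilpotent unit $\varpi\in A^{\circ\circ}\subseteq A^+$, so that $\underline{A}=\varinjlim_n\varpi^{-n}\underline{A^+}$ as condensed rings, where $\underline{A^+}$ is the condensed ring attached to $A^+$ with its subspace topology from $A$ (a complete linear topology, $A^+$ being closed in the complete ring $A$). The subcategory $M_R\Mod$ of condensed $R$-modules is stable under filtered colimits, both $-[S]$ and $-\otimes^{\LL}_R(R,M_R)$ are left adjoints (hence commute with filtered colimits), and concentration in degree zero passes to filtered colimits; so it is enough to check both hypotheses for $\underline{A^+}$. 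The first hypothesis --- that $\underline{A^+}$ is an $M_R$-module --- follows by writing $A^+=\varprojlim_\lambda A^+/I_\lambda$ over a cofinal system of open ideals, whence $\underline{A^+}=\varprojlim_\lambda\underline{A^+/I_\lambda}$ with each term discrete; each $\underline{A^+/I_\lambda}$ is an $(A^+/I_\lambda)_{\bs}$-module, hence, by restriction of scalars along the analytic-ring map $(A^+_{\rm disc})_{\bs}\to(A^+/I_\lambda)_{\bs}$, an $M_R$-module, and $M_R\Mod$ is closed under limits.

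The second hypothesis --- that $\underline{A}[S]\otimes^{\LL}_R(R,M_R)$ is concentrated in degree zero for every extremally disconnected $S$ --- is the main obstacle, since membership of $\underline{A}$ in $M_R\Mod$ does \emph{not} on its own kill the higher derived tensors (cf.\ \cite[Warning 7.6]{scholze-cond}). Using that $R[S]$ is flat over $R$ and that the localization $-\otimes^{\LL}_R(R,M_R)$ is symmetric monoidal, one rewrites this derived tensor product (after the Tate reduction) as $\underline{A^+}\otimes^{\LL}_R(A^+_{\rm disc})_{\bs}[S]$, and then, invoking the colimit description $(A^+_{\rm disc})_{\bs}[S]=\varinjlim_{R'}R'_{\bs}[S]$ over the finitely generated --- hence Noetherian --- subrings $R'\subseteq A^+$ together with base change along $\underline{R'}_{\rm disc}\to R$, presents it as a filtered colimit of terms built from $R'_{\bs}[S]$. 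The vanishing of the higher Tor then comes down to the good (``flat'') behaviour of $R'_{\bs}[S]$ over $\underline{R'}_{\rm disc}$ for Noetherian $R'$, one of the foundational computations of Clausen--Scholze's solid formalism (and where Noetherianity is genuinely used); this is exactly the content of \cite[Lemma 3.24 and Lemma 3.25]{andreychev}. Granting it, \Cref{prop:pushout-ar} yields the asserted analytic ring structure on $\underline{A}$.
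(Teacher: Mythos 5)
The paper itself offers no argument here: the statement is quoted from Andreychev, and (as the attribution indicates) his Lemma 3.24 and Lemma 3.25 \emph{are} precisely the two hypotheses of \Cref{prop:pushout-ar} for the map $\underline{A}_{\rm disc}^+\to\underline{A}$ — solidity of $\underline{A}$ over $(A_{\rm disc}^+)_\bs$, and degree-zero concentration of $\underline{A}[S]\otimes^{\LL}_{\underline{A}_{\rm disc}^+}(A_{\rm disc}^+)_\bs$. Your handling of the first hypothesis is essentially sound, with two caveats. First, you restrict to Tate pairs, while the statement (and Andreychev's lemmas) concerns arbitrary complete Huber pairs; this matters little for the paper's applications but is a loss of generality. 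Second, when $A^+$ is unbounded there is in general no cofinal system of open \emph{ideals} of $A^+$ (the ideals $I^nA^+$ are open but need not be contained in small neighbourhoods of $0$ unless $A^+$ is bounded), so ``$A^+=\varprojlim_\lambda A^+/I_\lambda$ over open ideals'' is not available in general; the standard fix is to choose a ring of definition $A_0\subseteq A^+$ with finitely generated ideal of definition $I$, write $\underline{A^+}=\varprojlim_n\underline{A^+/I^n}$ as a limit of discrete groups, and use that membership in $(A_{\rm disc}^+)_\bs\Mod$ can be tested on underlying condensed abelian groups; your Tate colimit trick then passes from $\underline{A^+}$ to $\underline{A}$.

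The genuine gap is in the second hypothesis, which you rightly single out as the main point but do not actually prove. The manipulations you indicate (projectivity of $R[S]$, monoidality of the solidification, the colimit $(A_{\rm disc}^+)_\bs[S]=\varinjlim_{R'}R'_\bs[S]$) do not reduce the degree-zero concentration to a statement about \emph{discrete} Noetherian rings: the entire difficulty lies in the completed, topological nature of $A$, i.e.\ one must show that the solid tensor interacts with the limit presentation $\underline{A^+}\cong\varprojlim_n\underline{A^+/I^n}$ without producing higher terms (a derived-limit/$\varprojlim^1$ argument), and only then feed in Clausen--Scholze's computations for finitely generated rings (such as $R'_\bs[S]\cong\prod_J R'$ and the flatness of such products in solid $R'$-modules). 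None of this is carried out in your sketch, and the vanishing you finally defer to ``Andreychev, Lemma 3.24 and Lemma 3.25'' is both misattributed and circular: those lemmas are not a flatness statement about finitely generated discrete rings — they are exactly the two hypotheses for the Huber pair $(A,A^+)$, i.e.\ the proposition under discussion. Citing them outright is perfectly legitimate (it is what the paper does), but then your intermediate reductions are superfluous; if instead you intend a genuine proof, the missing content is precisely Andreychev's argument for his Lemma 3.25, which your proposal does not supply.
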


We will use the same notation $(A,A^+)_\bs$ to refer both to the analytic ring structure on $A$ and the animated one. The $(A,A^+)_\bs$-modules are also called \emph{solid} $(A,A^+)_{}$-modules. 
We note that in particular one has, for any complete Huber pair $(A,A^+)$, an infinity-category
$$
\QCoh(\Spa(A,A^+)) \colonequals \mathcal{D}((A,A^+)_{\bs}),
$$
which is the infinity-category of (unbounded derived) solid $(A,A^+)_{}$-modules. {Whenever we write $\otimes_{(A,A^+)_\bs}$ or $f^\ast$, for a morphism $f: (A,A^+) \to (B,B^+)$ of complete Huber pairs,  we will always mean it in the animated sense.

One of the main results of Andreychev is the following theorem.

\begin{thm}[{\cite[Theorem 4.1]{andreychev}}]\label{thm:andreychev}
	Let $X$ be an analytic adic space. The functor $U\mapsto \QCoh(U)$ from rational open subsets of $X$ to infinity-categories has rational descent. %
\end{thm}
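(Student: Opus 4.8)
The plan is to reduce Theorem~\ref{thm:andreychev} to a Mayer--Vietoris statement for a cover by two rational opens, and then to settle that case by an explicit computation with solid modules; the only genuinely non-formal ingredient will be a derived (solid) refinement of Tate's acyclicity theorem.

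First I would run the classical reductions. Since being a (hyper)sheaf for a topology can be tested on a generating pretopology, and since descent along a cover follows from descent along a refinement together with descent along the induced covers of all intersections, it suffices to treat the generating covers. For an affinoid $X=\Spa(A,A^+)$ one reduces an arbitrary finite rational cover first to a standard rational cover $\{U(f_0/f_i,\dots,f_n/f_i)\}_i$ with $(f_0,\dots,f_n)$ open, and then --- exactly as in the proof of Tate's theorem --- factors such a cover as an iterated composition of binary covers $X=U(f/g)\cup U(g/f)$ and, after rescaling, of Laurent covers $\{|f|\le1\}\cup\{|f|\ge1\}$, all of whose pairwise intersections are rational opens of the same shape. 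Thus the goal becomes: for a binary rational cover $j_i\colon U_i\hookrightarrow X$ ($i=1,2$) with $U_{12}\colonequals U_1\cap U_2$ and $j_{12}\colon U_{12}\hookrightarrow X$, the functor
$$\QCoh(X)\longrightarrow \QCoh(U_1)\times_{\QCoh(U_{12})}\QCoh(U_2),\qquad M\longmapsto(M|_{U_1},M|_{U_2},\mathrm{can}),$$
is an equivalence.

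For the binary cover I would extract three properties of the restriction functors $j_i^\ast$ (the animated solid base change along $(A,A^+)\to(\mcO(U_i),\mcO^+(U_i))$), with right adjoints $j_{i\ast}$. \textbf{(a) Idempotence:} the map $A\to\mcO(U_i)$ exhibits $(\mcO(U_i),\mcO^+(U_i))_\bs$ as the pushout $(A,A^+)_\bs\otimes_{A_{\rm triv}}\mcO(U_i)_{\rm triv}$, which I would deduce from \Cref{prop:pushout-ar} once the solid flatness of a rational localization is in hand (part of Andreychev's analysis), so that $\mcO(U_i)\widehat{\otimes}_{(A,A^+)_\bs}\mcO(U_i)\xrightarrow{\ \sim\ }\mcO(U_i)$ and $j_{i\ast}$ is fully faithful. \textbf{(b) Base change:} for the same reason $\mcO(U_1)\widehat{\otimes}_{(A,A^+)_\bs}\mcO(U_2)\simeq\mcO(U_{12})$, hence $j_1^\ast j_{2\ast}\simeq k_\ast k'^\ast$ for the square of rational opens, where $k\colon U_{12}\hookrightarrow U_1$, $k'\colon U_{12}\hookrightarrow U_2$. \textbf{(c) Joint conservativity:} if $M|_{U_1}\simeq0\simeq M|_{U_2}$ then $M\simeq0$. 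Granting (a)--(c), the inverse sends a descent datum $(M_1,M_2,\alpha)$ to the pullback $j_{1\ast}M_1\times_{j_{12\ast}M_{12}}j_{2\ast}M_2$ in $\QCoh(X)$ (with $M_{12}\colonequals k'^\ast M_1\simeq k''^\ast M_2$), and the two round trips are identities by the standard gluing argument: one uses that $j_i^\ast$ is exact --- hence commutes with this finite limit --- the fully faithfulness of (a), the identification (b), and the conservativity (c). Equivalently, since the cover is finite and $j^\ast$ is exact the relevant totalizations are finite limits preserved by $j^\ast$, so by the comonadic form of Barr--Beck (\cite{lurie-ha}) the conservativity (c), together with (b) identifying the cobar construction of the comonad $j^\ast j_\ast$ for $j\colon U_1\sqcup U_2\to X$ with the \v Cech nerve of the cover, yields the equivalence.

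The hard part will be \textbf{(c)}, the solid Tate acyclicity --- and in fact the same computation delivers the full two-term equivalence directly. Concretely, for the Laurent cover $U_1=\{|f|\le1\}$, $U_2=\{|f|\ge1\}$ one has $\mcO(U_1)=A\langle T\rangle/(T-f)$, $\mcO(U_2)=A\langle S\rangle/(1-Sf)$ and $\mcO(U_{12})=A\langle T,T^{-1}\rangle/(T-f)$; after checking, as in the proof of \Cref{omega-1-is-a-vector-bundle} (regular sequence with closed image, via \cite{fargues-scholze} and \cite{ked-sss}), that $T-f$ and $1-Sf$ are non--zero-divisors, each of these solid $A$-modules is computed by a two-term Koszul complex whose terms are Tate algebras $A\langle\underline{T}\rangle$, which are flat over $(A,A^+)_\bs$. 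Splicing these into the \v Cech complex $\mcO(U_1)\oplus\mcO(U_2)\to\mcO(U_{12})$ gives a bounded complex $C^\bullet$ of flat solid $A$-modules, and the plan is to prove $H^\ast(C^\bullet)=A$ in degree $0$ by the same explicit power-series manipulations that establish classical Tate acyclicity, now carried out in the category of solid modules. Since the terms of $C^\bullet$ are flat, \Cref{derived-tens} ensures that $M\otimes^{\LL}_{(A,A^+)_\bs}C^\bullet$ is computed termwise with no higher $\mathrm{Tor}$, whence $M\xrightarrow{\ \sim\ }(M|_{U_1}\oplus M|_{U_2})\to M|_{U_{12}}$ and (c) follows. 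I expect the real obstacle to sit precisely here: proving the exactness of $C^\bullet$ and the requisite flatness and regularity of completed tensor powers of Tate algebras in the solid world; the descent assembly of the second paragraph is then purely formal bookkeeping.
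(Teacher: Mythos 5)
This statement is not proved in the paper at all: it is imported verbatim from Andreychev (\cite[Theorem 4.1]{andreychev}), so your proposal can only be measured against his argument. The overall architecture you propose --- reduce rational descent to binary Laurent-type covers, then establish idempotence of the localizations, a base-change formula and joint conservativity, and conclude by Barr--Beck--Lurie or explicit gluing --- does match the shape of the actual proof (Andreychev, building on Clausen--Scholze).

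There are, however, genuine gaps in the substantive steps. The most serious is your step (a): for a rational subset $U\subset X=\Spa(A,A^+)$, the analytic ring $(\mcO(U),\mcO^+(U))_\bs$ is \emph{not} the induced structure $(A,A^+)_\bs\otimes_{A_{\rm triv}}\mcO(U)_{\rm triv}$. Passing to $U$ also enlarges the ring of integral elements (one imposes $|f_i/g|\le 1$), which adds solidification conditions in the new functions beyond those of $(A,A^+)_\bs$; this is visible in the paper's own \Cref{adic-is-steady}, where the localization is factored as an induced base change followed by a further (ind-steady, nontrivial) localization, and in \Cref{prop:bcisbc}, where $(A\langle a/1\rangle,A\langle a/1\rangle^+)_\bs$ is identified with $(A,A^+)_\bs\otimes_{(\Z[T],\Z)_\bs}\Z[T]_\bs$, not with the structure induced along $A\to A\langle a/1\rangle$. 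With your identification, the restriction functors $j_i^*$ --- and hence the Koszul/flatness computation in your last paragraph --- compute the wrong functor; the heart of Andreychev's proof is precisely the analysis of this extra solidification through the universal localizations $(\Z[T],\Z)_\bs\to\Z[T]_\bs$ and $(\Z[T],\Z)_\bs\to(\Z[T^{\pm1}],\Z[T^{-1}])_\bs$, with Clausen--Scholze's basic flatness and descent results as input. Relatedly, your base-change step (b) silently uses that these localizations are steady (otherwise the pushout of analytic rings is not computed by the naive formula, cf. \cite[Proposition 12.14]{scholze-an}); steadiness is one of the things to be proven, not an ambient fact. Finally, the non-formal core --- flatness of $A\langle T\rangle$ over $(A,A^+)_\bs$, closedness of the relevant ideals and regularity of $T-f$, $1-Sf$ for an \emph{arbitrary} analytic adic space (your appeal to \Cref{omega-1-is-a-vector-bundle} and \cite{fargues-scholze} is out of scope, since in this paper those inputs are only available for admissible, strongly stably uniform spaces), and the resulting solid Tate acyclicity with arbitrary coefficients --- is either deferred or attributed to ``Andreychev's analysis'', which is circular here, since that analysis is the theorem being proven. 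Granting the Clausen--Scholze descent for the universal Laurent cover, the remaining bookkeeping is indeed formal, but as written your proposal postpones exactly the part that carries the mathematical content.
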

\begin{dfn}
	For any $X\in\catD$ we will denote by $\QCoh(X)$ the infinity-category obtained by rational descent from the functor $\QCoh$ defined on affinoid subspaces $U\subset X$. {It is endowed with a symmetric monoidal structure $\otimes_{\QCoh(X)}$.} 
	\end{dfn}

{
\begin{rmk}\label{rmk:t-s}
There is a natural $t$-structure on $\QCoh(X)$ when $X=\mathrm{Spa}(A,A^+)$, whose heart is the abelian category of solid $(A,A^+)$-modules, but there is no canonical $t$-structure on  $\QCoh(X)$  in general. 
\end{rmk}
}

	Some pushouts in {normalized} animated analytic rings were introduced in \Cref{prop:pushout} but actually, general pushouts in the category of {normalized (animated)} analytic rings exist, even though they are defined rather unexplicitly (see \cite[Proposition 12.12]{scholze-an}). However, there is a condition that turns them into something more tractable: we recall that a map of {normalized} analytic rings $f\colon(\mcA,\mathcal{M}_{\mcA})\to(\mcB,\mathcal{M}_{\mcB})$ is \emph{steady} (see \cite[Definition 12.13]{scholze-an}) if for any other  map $g\colon(\mcA,\mathcal{M}_{\mcA})\to(\mcC,\mathcal{M}_{\mcC})$ {of normalized analytic rings}, the pushout $(\mcB,\mathcal{M}_{\mcB})\otimes_{(\mcA,\mathcal{M}_{\mcA})}(\mcC,\mathcal{M}_{\mcC})$ is given by the 
{functor
	$$
	\mathcal{M}_{\mathcal{E}}[S] = \mathcal{M}_{\mathcal{C}}[S] \otimes_{(\mathcal{A},\mathcal{M}_{\mathcal{A}})} (\mathcal{B},\mathcal{M}_{\mathcal{B}})
	$$
	defining an analytic ring structure on the normalization $\mathcal{E}$ of $\mathcal{B} \otimes_{\mathcal{A}} \mathcal{C}$.}

The following fact is essentially proved in \cite{scholze-an}.
\begin{lemma}\label{adic-is-steady}
Let $(A,A^+)\to(B,B^+)$ be an adic map of Huber pairs. The induced map of analytic rings $(A,A^+)_\bs\to(B,B^+)_\bs$ is steady.
\end{lemma}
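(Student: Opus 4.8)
The plan is to reduce the statement to formal properties of steady maps established in \cite{scholze-an}, via the presentation of $(A,A^+)_\bs$ as the pushout $\underline A[0]_{\rm triv}\otimes_{\underline{A^+_{\rm disc}}[0]_{\rm triv}}(A^+_{\rm disc})_\bs$ from \Cref{analytic-ring-attached-to-an-huber-pair}. The two inputs I would rely on are: steady maps of normalized analytic rings are stable under composition and under base change; and a map $\mcA\to\mcB$ of normalized analytic rings is steady as soon as, for every extremally disconnected set $S$, the condensed $\mathcal{B}$-module $\mathcal{M}_{\mcA}[S]\otimes^{\LL}_{\mcA}\underline{\mcB}$ (formed after forgetting the analytic structures) already lies in $\mcD(\mcB)$ (this is the workable reformulation of \cite[Definition 12.13]{scholze-an}). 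Since all rings in sight lie over $\Z_\bs$, by \Cref{derived-tens} I may freely pass between derived and underived tensor products throughout.

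Concretely, I would factor the map $(A,A^+)_\bs\to(B,B^+)_\bs$ as
$$
(A,A^+)_\bs\ \longrightarrow\ (A,A^+)_\bs\otimes_{(A^+_{\rm disc})_\bs}(B^+_{\rm disc})_\bs\ \longrightarrow\ (B,B^+)_\bs ,
$$
using the universal property of the pushout (the two maps $(A,A^+)_\bs\to(B,B^+)_\bs$ and $(B^+_{\rm disc})_\bs\to(B,B^+)_\bs$ agree on $(A^+_{\rm disc})_\bs$ because $f$ is a morphism of Huber pairs). The first arrow is the base change, along $(A^+_{\rm disc})_\bs\to(A,A^+)_\bs$, of the map of solid \emph{discrete} rings induced by $A^+\to B^+$; by stability of steadiness under base change it suffices to check that $(A^+_{\rm disc})_\bs\to(B^+_{\rm disc})_\bs$ is steady, which I would do by reducing to finitely generated subrings as in \Cref{exam} and invoking the corresponding computations of \cite{scholze-cond,scholze-an}. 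For the second arrow, the pushout formula attached to the steady map $(A^+_{\rm disc})_\bs\to(B^+_{\rm disc})_\bs$ identifies its source $\mathcal{E}$ with a normalized analytic ring whose underlying condensed ring is the normalization of $\underline A\otimes_{\underline{A^+}}\underline{B^+}$ and whose analytic structure is induced by $(B^+_{\rm disc})_\bs$; the adicness of $f$ is exactly what forces $\underline B$ to be a filtered colimit of rational localizations of this ring. Combining this with \Cref{thm:andreychev} and \Cref{derived-tens} one checks that $\mathcal{M}_{(B,B^+)_\bs}[S]$ is the base change of $\mathcal{M}_{\mathcal{E}}[S]$ along the underlying ring map, so that the second arrow is steady. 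Steadiness of the composite then follows from stability under composition.

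The main obstacle is the ring-theoretic analysis feeding both arrows: identifying, compatibly with the solid structures, the underlying condensed ring of the pushout $(A,A^+)_\bs\otimes_{(A^+_{\rm disc})_\bs}(B^+_{\rm disc})_\bs$ --- a priori only the normalization of $\underline A\otimes_{\underline{A^+}}\underline{B^+}$ --- and then showing that the residual map to $\underline B$ is, at the level of solid modules, a localization to which Andreychev's and Scholze's results apply. This is precisely the place where one cannot dispense with the hypothesis that $f$ be \emph{adic} rather than merely continuous: adicness guarantees that, Zariski-locally, $B$ and $B^+$ are obtained from $A$ and $A^+$ by adjoining topologically bounded variables and inverting (the image of) an ideal of definition of $A$, so that no new completion appears on the $B$-side that is not already detected by $(B^+_{\rm disc})_\bs$; for a general continuous morphism $B$ need not be a localization of $A\otimes_{A^+}B^+$ and the argument breaks down.
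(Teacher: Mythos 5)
Your factorization through $\mathcal{E}:=(A,A^+)_\bs\otimes_{(A^+_{\rm disc})_\bs}(B^+_{\rm disc})_\bs$ is not the paper's, and its second arrow is where the argument has a genuine gap. By \Cref{analytic-ring-attached-to-an-huber-pair} and the pushout formula of \Cref{prop:pushout}, both $\mathcal{E}$ and $(B,B^+)_\bs$ carry the analytic structure induced from $(B^+_{\rm disc})_\bs$; in other words the ``integral'' constraints $|f|\le 1$ for $f\in B^+$ are already present on both sides, and no further inequalities of the form $|f|\le|g|$ remain to be imposed. What changes along $\mathcal{E}\to(B,B^+)_\bs$ is the \emph{underlying condensed ring}: the normalization of $\underline{A}\otimes_{\underline{A^+}}\underline{B^+}$ versus $\underline{B}$ with its genuine topology. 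Already for the adic map $(\Q_p,\Z_p)\to(\Q_p\langle T\rangle,\Z_p\langle T\rangle)$ one has $A\otimes_{A^+}B^+=B$ as abstract rings, so there is literally nothing to localize; the entire content is to compare the solidified tensor product with the condensed ring $\underline{B}$, i.e.\ to show that the comparison map is an isomorphism (or at least steady). Your stated mechanism --- ``adicness forces $\underline{B}$ to be a filtered colimit of rational localizations of this ring'' --- is unsubstantiated and does not describe this situation; moreover the justification you offer for it (that adicness gives a Zariski-local presentation of $B$ by adjoining bounded variables and inverting an ideal of definition) is not available: an adic morphism of Huber pairs need not be topologically of finite type in any sense, adicness only says that an ideal of definition of a ring of definition of $A$ generates one of $B$. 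Note also a structural warning sign: if your identification of the second arrow held without further input, your proof would never use adicness in an essential way and would establish steadiness for arbitrary continuous morphisms, which is precisely what one does not expect.

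The paper orders the two steps the other way, and this is what makes each of them accessible: it factors the map as $(A,A^+)_\bs\to(B,B_A^+)_\bs\to(B,B^+)_\bs$, where $B_A^+$ is the smallest ring of integral elements containing the image of $A^+$. The first map is then the \emph{induced} analytic structure on $\underline{B}$ along $A\to B$, and adicness enters exactly there: it guarantees that $\underline{B}$ is already a solid $(A,A^+)$-module, so that steadiness follows from \cite[Proposition 13.14 and p.~102]{scholze-an}. Only the second map is a localization, namely the ind-(Laurent) localization imposing $|f|\le 1$ for all $f\in B^+$, steady by \cite[Proposition 12.15 and Example 13.15(3)]{scholze-an}; then one composes. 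In your version the hard ``topological'' content has been pushed into the second arrow, where the localization formalism no longer applies, and the tool that would handle it (the induced-structure criterion of \cite[Proposition 13.14]{scholze-an}) never appears. A minor further point: steadiness of $(A^+_{\rm disc})_\bs\to(B^+_{\rm disc})_\bs$, which you assert by ``reducing to finitely generated subrings'', is true but itself requires the same two ingredients (induced structure plus an ind-steady localization to the full solid structure), so nothing is gained by the detour; the closest repair of your outline is simply to adopt the paper's factorization.
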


\begin{proof}
We may decompose the map into two maps $$(A,A^+)_\bs\to(B,B_A^+)_\bs\to(B,B^+)_\bs$$
with $B_A^+$ being the smallest ring of integers for $B$ containing the image of $A^+$. We remark that $(B,B_A^+)_\bs=(B,A^+)_\bs$ i.e. the analytic ring structure is the one induced by $(A,A^+)_\bs$ and the map $A\to B$. Since $A\to B$ is adic, we deduce that the map $(A,A^+)_\bs\to(B,B_A^+)_\bs$ is steady by \cite[{Proposition 13.14 and} Page 102]{scholze-an}. 

The map $(B,B_A^+)_\bs\to(B,B^+)_\bs$ is an ind-steady open immersion defined by putting $|f|\leq 1$ for all $f\in B^+$ and as such (see \cite[Proposition 12.15 and Example 13.15(3)]{scholze-an}) it is steady. 

We can then conclude as compositions of steady maps are steady by \cite[Proposition 12.15]{scholze-an}.
\end{proof}

The following proposition will be used freely in what follows, and shows some compatibility between base change maps of adic spaces, and base change maps of their relative analytic spaces. It relies on results of Andreychev \cite{andreychev}. We say that a rational open immersion $U\subset\Spa(A,A^+)$ is \emph{Laurent} if it is of the form $U=U(1/f)$ or $U=U(f/1)$ for some $f\in A$. We recall that any rational open immersion $U=U(\frac{f_1,\ldots,f_n}{g})\subset \Spa(A,A^+)$ of Tate algebras is a composition of Laurent open immersions (see for example \cite[Remark 2.8]{scholze}).

\begin{prop}\label{prop:bcisbc}
	Let $f\colon X=\Spa(B,B^+)\to S=\Spa(A,A^+)$ and $g\colon Y=\Spa(C,C^+)\to S=\Spa(A,A^+)$ be maps  in $\catD$ such that $f$ is  smooth and can be written as a composition of rational open immersions, finite \'etale maps and projections of the form $\B^d_T\to T$. The push-out of (animated) analytic rings $(B,B^+)_\bs\otimes_{(A,A^+)_\bs}(C,C^+)_\bs$ coincides with the analytic ring structure {$(B \widehat{\otimes}_A C, B^+ \widehat{\otimes}_{A^+} C^+)_\bs$ on the completed tensor product of Huber pairs}.%
\end{prop}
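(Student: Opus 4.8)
The plan is to induct on the length of a factorization of $f$ into elementary maps and then treat separately the three elementary cases: a rational open immersion, a finite \'etale map, and a relative disc $\B^d_T\to T$. The induction rests on two transitivity facts. On Huber pairs, the completed tensor product is transitive: for $(A,A^+)\to(B,B^+)\to(B',B'^+)$ one has $(B'\widehat\otimes_AC,\,B'^+\widehat\otimes_{A^+}C^+)\cong\big(B'\widehat\otimes_B(B\widehat\otimes_AC),\,B'^+\widehat\otimes_{B^+}(B^+\widehat\otimes_{A^+}C^+)\big)$. On analytic rings, each elementary map $A\to B$ is adic, so by \Cref{adic-is-steady} the map $(A,A^+)_\bs\to(B,B^+)_\bs$ is steady, and the same remains true after any base change; hence pushouts compose, $(B',B'^+)_\bs\otimes_{(A,A^+)_\bs}(C,C^+)_\bs\cong(B',B'^+)_\bs\otimes_{(B,B^+)_\bs}\big((B,B^+)_\bs\otimes_{(A,A^+)_\bs}(C,C^+)_\bs\big)$. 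Combining the two, once the proposition is known for each elementary map it follows for an arbitrary composition, provided we also note that at each stage the output $(C',C'^+)_\bs$ is again a steady (indeed adic) extension of $(C,C^+)_\bs$, so the next pushout is again governed by the steadiness formula of \cite{scholze-an}.

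\textbf{Relative disc.} Here $B=A\langle T_1,\dots,T_d\rangle$ and $B^+=A^+\langle T_1,\dots,T_d\rangle$, and on Huber pairs $B\widehat\otimes_AC=C\langle T_1,\dots,T_d\rangle$ with its evident ring of integers. One would identify $(A\langle\underline{T}\rangle,A^+\langle\underline{T}\rangle)_\bs$, following Andreychev's description of the solid analytic ring of a Tate algebra, as the analytic structure induced on $\underline{A\langle\underline{T}\rangle}$ by $(A,A^+)_\bs$ and the adic map $A\to A\langle\underline{T}\rangle$, in the sense of \Cref{prop:pushout-ar}; equivalently it is the pushout of $(A,A^+)_\bs$ with the polynomial structure $(\Z[\underline{T}],\Z)_\bs$ recalled after \Cref{prop:pushout-ar}, completed along the Gauss norm. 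Base changing this description along $(A,A^+)_\bs\to(C,C^+)_\bs$ simply replaces $A$ by $C$ everywhere, yielding $(C\langle\underline{T}\rangle,C^+\langle\underline{T}\rangle)_\bs$; here \Cref{derived-tens} is used to see that all the relevant tensor products of solid modules stay in degree zero, so that no derived correction appears and the induced analytic ring structure is genuinely the normalized one attached to the completed tensor product of Huber pairs.

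\textbf{Finite \'etale and rational open immersion cases.} For $f$ finite \'etale, $B$ is a finite projective $A$-module, so $B\widehat\otimes_AC=B\otimes_AC$ and $B^+\widehat\otimes_{A^+}C^+=B^+\otimes_{A^+}C^+$ with no completion; moreover $(B,B^+)_\bs=(B,B^+_A)_\bs$ is induced from $(A,A^+)_\bs$ along $A\to B$ (as in the proof of \Cref{adic-is-steady}), so $-\otimes_{(A,A^+)_\bs}(B,B^+)_\bs$ is just $-\otimes_{\underline A}\underline B$ on underlying condensed modules, and the pushout's module functor is visibly that of $(B\otimes_AC,B^+\otimes_{A^+}C^+)_\bs$, using once more the harmless independence of the analytic structure on the choice of ring of integers for a finite map. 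For a rational open immersion one reduces, by \cite[Remark 2.8]{scholze}, to Laurent immersions: $U(1/f)$ amounts to inverting $f$ and completing, which on analytic rings is a steady localization whose formation commutes with the base change $(A,A^+)_\bs\to(C,C^+)_\bs$ — this compatibility is precisely the content underlying Andreychev's rational descent \Cref{thm:andreychev} — and on Huber pairs gives $A\langle 1/f\rangle\widehat\otimes_AC=C\langle 1/f\rangle$; the case $U(f/1)$ is handled by combining the relative disc case ($B=A\langle T\rangle/\overline{(T-f)}$) with the further steady localization imposing $|f|\le 1$, which again base-changes correctly and matches $A\langle f/1\rangle\widehat\otimes_AC=C\langle f/1\rangle$. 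The main obstacle is the relative disc case: this is where one genuinely needs Andreychev's explicit computations rather than formal manipulations, since the solidity/completeness condition built into $(A\langle\underline{T}\rangle,A^+\langle\underline{T}\rangle)_\bs$ must be matched with the one obtained after base change; a secondary but pervasive annoyance is the bookkeeping needed to check, at each elementary step, that the analytic ring produced by the steadiness pushout formula is literally $(-\widehat\otimes-,\,-\widehat\otimes-)_\bs$ and not merely equivalent to it, which is where the identities $(B,B^+)_\bs=(B,B_A^+)_\bs$ and the normalization process are invoked.
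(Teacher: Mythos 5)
Your global strategy -- factor $f$ into elementary maps, treat each elementary case, and use adicness/steadiness (\Cref{adic-is-steady}, \cite[Propositions 12.14--12.18]{scholze-an}) to propagate the identification along compositions and base changes -- is the same as the paper's, and your finite \'etale case agrees with the paper's (there $(B,B^+)_\bs$ really is the structure induced from $(A,A^+)_\bs$ along the finite map $A\to B$). The genuine gap is in the case you yourself flag as the essential one, the relative disc. It is \emph{not} true that $(A\langle \underline{T}\rangle,A^+\langle \underline{T}\rangle)_\bs$ is the analytic ring structure induced by $(A,A^+)_\bs$ and the map $A\to A\langle\underline{T}\rangle$ in the sense of \Cref{prop:pushout-ar}. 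The induced structure only requires modules to be solid over $(A,A^+)_\bs$ (equivalently over $A^+_{\rm disc}$), so it is $(A\langle T\rangle,B^+_A)_\bs$ for the \emph{minimal} ring of integers $B^+_A$ containing the image of $A^+$; and $T\notin B^+_A$ (reduce a putative integral equation for $T$ over $A^+ + A\langle T\rangle^{\circ\circ}$ modulo the Gauss norm to get a contradiction). The structure attached to the ring of integers $A^+\langle T\rangle$ imposes, in addition, that $T$ itself acts solidly; concretely, by \cite[Lemma 4.7]{andreychev} it is the base change along $(\Z[T],\Z)_\bs\to\Z[T]_\bs$ (the ``localization at $|T|\le 1$'') of the structure induced along $A\to A[T]$, i.e.\ of $(A,A^+)_\bs\otimes_{\Z_\bs}(\Z[T],\Z)_\bs$. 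Your alternative phrase ``the pushout with $(\Z[\underline{T}],\Z)_\bs$, completed along the Gauss norm'' does not supply this: completion changes the underlying condensed ring, not the solidity condition in the $T$-direction, and $(\Z[T],\Z)_\bs$ precisely does not impose it.

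Because of this, your computation of the pushout in the disc case identifies $(B,B^+)_\bs\otimes_{(A,A^+)_\bs}(C,C^+)_\bs$ only with the structure on $C\langle\underline{T}\rangle$ having the minimal ring of integers over $C^+$, which is a strictly coarser analytic ring than $(C\langle\underline{T}\rangle,C^+\langle\underline{T}\rangle)_\bs$; so the disc case, and with it the whole induction, is not established. The same issue affects your treatment of the Laurent domain $U(f/1)$: the paper handles it by writing $(A\langle a/1\rangle,A\langle a/1\rangle^+)_\bs\cong(A,A^+)_\bs\otimes_{(\Z[T],\Z)_\bs}\Z[T]_\bs$ via \cite[Proposition 4.11]{andreychev}, and it is exactly the $\Z[T]_\bs$-factor -- not a formal ``invert and complete'' step, nor an appeal to rational descent -- that encodes the condition $|a|\le 1$ (similarly $(\Z[T^{\pm1}],\Z[T^{-1}])_\bs$ for $U(1/a)$) and makes commutation with the base change $(A,A^+)_\bs\to(C,C^+)_\bs$ a formal consequence of steadiness of $(\Z[T],\Z)_\bs\to\Z[T]_\bs$. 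To repair your argument you must insert these explicit identifications of Andreychev (Lemma 4.7 and Proposition 4.11) at the points where you currently invoke the ``induced structure'' and ``Gauss-norm completion''.
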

\begin{proof}
We may and do consider separately the cases in which $f$ is a Laurent  
 rational open immersion, $f$ is the projection of the unit disc and $f$ finite \'etale. In the first case, the result follows from the compatibility of (steady) localizations with base change (\cite[Proposition 12.18]{scholze-an}). More explicitly, if $B=A\langle a/1\rangle$ for some $a\in A$ then by \cite[Proposition 4.11]{andreychev} and \Cref{adic-is-steady} we can write %
 $$
 (A\langle a/1\rangle,A\langle a/1\rangle^+)_\bs\cong (A,A^+)_\bs\otimes_{(\Z[T],\Z)_\bs}\Z[T]_\bs
 $$
 where the map $(\Z[T],\Z)_\bs\to (A,A^+)$ is the one induced by $T\mapsto a$. We then deduce
$$\begin{aligned}
(C\langle a/1\rangle, C\langle a/1\rangle^+)&\cong(C,C^+)_\bs\otimes_{(\Z[T],\Z)_\bs}\Z[T]_\bs\\&\cong {(C,C^+)_\bs}\otimes_{(A,A^+)_\bs}( (A,A^+)_\bs\otimes_{(\Z[T],\Z)_\bs}\Z[T]_\bs)\\
&\cong {(C,C^+)_\bs}\otimes_{(A,A^+)_\bs}(A\langle a/1\rangle, A\langle a/1\rangle^+).
\end{aligned}$$
The case $B=A\langle 1/a\rangle$ is dealt with similarly, by writing: $$
	(A\langle 1/a\rangle,A\langle 1/a\rangle^+)_\bs\cong (A,A^+)_\bs\otimes_{(\Z[T],\Z)_\bs}(\Z[T^{\pm1}],\Z[T^{-1}])_\bs.
	$$ 
%
%
We now suppose $f$ is the projection $\B^1_S\to S$. By \cite[Lemma 4.7]{andreychev} we have that $(A\langle T\rangle,A^+\langle T\rangle)_\bs$ coincides with the (steady) rational localization at $|T|\leq1$ (see \Cref{prop:pushout-ar}) of the analytic structure $ (\underline{A}[T]\otimes_{\underline{A}}(A,A^+)_\bs)$ induced by the map of rings $A\to A[T]$  which is $(A,A^+)_\bs\otimes_{\Z_{\bs}}(\Z[T],\Z)_\bs$. By what shown in the first part, we then deduce that %
$$\begin{aligned}
(C\langle T\rangle, C^+\langle T\rangle)&\cong(C,C^+)_\bs\otimes_{{{\Z}_\bs}}\Z[T]_\bs\\&\cong {(C,C^+)_\bs}\otimes_{(A,A^+)_\bs}( (A,A^+)_\bs\otimes_{{{\Z}_\bs}}\Z[T]_\bs)\\
&\cong {(C,C^+)_\bs}\otimes_{(A,A^+)_\bs}(A\langle T\rangle, A^+\langle T\rangle)
\end{aligned}$$ as wanted. The case in which $f$ is finite \'etale is immediate, as in this case $(B,B^+)_\bs$ 
is again induced by some (finite) map $A\to B$. %
\end{proof}

{An important consequence for us of the previous fact is the following base change result. 
	
	\begin{cor}
		\label{affinoid-base-change-solid-quasicoherent-sheaves}
	Under the hypotheses of \Cref{prop:bcisbc}, we let 	%
	 $f': X\times_S Y \to Y$, $g': X\times_S Y \to X$ be the base change of the maps $f$ and $g$ in $\catD$.  
		For any   object $M$ of $\QCoh(X)$  the base change map
		$$
		g^\ast f_\ast M \to f'_\ast g'^{\ast} M
		$$
		is an isomorphism in $\QCoh(Y)$.
	\end{cor}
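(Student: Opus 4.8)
The plan is to translate the statement into a base-change statement for analytic rings and then invoke steadiness. First I would note that, under the hypotheses of \Cref{prop:bcisbc}, all four spaces $S=\Spa(A,A^+)$, $X=\Spa(B,B^+)$, $Y=\Spa(C,C^+)$ and $X\times_SY$ are affinoid, with $X\times_SY=\Spa(B\widehat{\otimes}_AC,B^+\widehat{\otimes}_{A^+}C^+)$. Writing $A_\bs$, $B_\bs$, $C_\bs$, $D_\bs$ for the associated solid analytic rings, we get $\QCoh(S)=\mcD(A_\bs)$, $\QCoh(X)=\mcD(B_\bs)$, $\QCoh(Y)=\mcD(C_\bs)$ and, by \Cref{prop:bcisbc}, $\QCoh(X\times_SY)=\mcD(D_\bs)$ with $D_\bs\cong B_\bs\otimes_{A_\bs}C_\bs$ (pushout of analytic rings). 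Under these identifications $f^*$, $g^*$, $(f')^*$, $(g')^*$ are the base-change functors along the evident maps of analytic rings, their right adjoints $f_*$, $g_*$, $(f')_*$, $(g')_*$ are the restriction-of-scalars functors, and the square of base-change functors with top-left corner $\mcD(A_\bs)$ and bottom-right corner $\mcD(D_\bs)$ commutes (both composites being base change along $A_\bs\to D_\bs$). The arrow appearing in the corollary is exactly the Beck--Chevalley transformation $g^*f_*\Rightarrow (f')_*(g')^*$ obtained as the mate of this commuting square along the adjunctions $f^*\dashv f_*$ and $(f')^*\dashv (f')_*$, so it suffices to show that this mate is an equivalence.

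Next I would establish that $f$ corresponds to a steady map of analytic rings. Each building block of $f$ — a rational open immersion, a finite \'etale map, or a unit-disc projection $\B^d_T\to T$ — induces an adic morphism of Huber pairs, hence so does their composition; therefore $A_\bs\to B_\bs$ is steady by \Cref{adic-is-steady}. Steadiness of $A_\bs\to B_\bs$ is exactly the condition that, for every map $A_\bs\to C_\bs$, the pushout $D_\bs=B_\bs\otimes_{A_\bs}C_\bs$ is computed by the naive formula on generators and the associated Beck--Chevalley transformation is an equivalence (see \cite[Definition 12.13 and Proposition 12.15]{scholze-an}). Concretely, on the standard generators of $\mcD(B_\bs)$ both $g^*f_*$ and $(f')_*(g')^*$ are computed by this naive formula and hence agree; since $g^*$ and $(g')^*$ preserve colimits and $f_*$, $(f')_*$ are colimit-preserving restriction functors, the comparison on generators propagates to all of $\mcD(B_\bs)$. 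This yields the desired map $g^*f_*M\xrightarrow{\sim}(f')_*(g')^*M$ in $\QCoh(Y)$ for every $M\in\QCoh(X)$.

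The only genuinely non-formal inputs are \Cref{prop:bcisbc}, identifying the pushout $B_\bs\otimes_{A_\bs}C_\bs$ with the completed tensor product of Huber pairs, and \Cref{adic-is-steady}, providing steadiness; everything else is the standard behaviour of base change along steady maps from \cite{scholze-an}. I expect the step requiring the most care to be the bookkeeping that matches the abstract Beck--Chevalley equivalence coming from steadiness with the concrete base-change map of solid sheaves in the statement — in particular, checking that the pushforwards $f_*$, $(f')_*$ of solid quasi-coherent sheaves really are restriction of scalars, and that the pushout square of analytic rings is the one produced by the adic fibre product $X\times_SY$. Granting those identifications, the corollary follows at once.
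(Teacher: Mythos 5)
Your overall strategy (identify $\QCoh(X\times_SY)$ with the pushout of solid analytic rings via \Cref{prop:bcisbc}, then use steadiness from \Cref{adic-is-steady} together with Scholze's base-change result) is the paper's strategy, but there is a genuine error in the key step: you invoke steadiness of the wrong leg of the square. In \cite[Definition 12.13, Proposition 12.14]{scholze-an}, steadiness of a map $u\colon(\mcA,\mathcal{M}_\mcA)\to(\mcB,\mathcal{M}_\mcB)$ guarantees that \emph{pullback along $u$} commutes with pushforward along an arbitrary map $v\colon(\mcA,\mathcal{M}_\mcA)\to(\mcA',\mathcal{M}_{\mcA'})$, i.e. $u^*v_*\cong v'_*u'^*$ on all of $\mcD(\mcA',\mathcal{M}_{\mcA'})$. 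The transformation in the corollary is $g^*f_*\Rightarrow f'_*g'^*$, so the steadiness you need is that of $g$, i.e. of $(A,A^+)_\bs\to(C,C^+)_\bs$; steadiness of $f$, which is what you establish, only yields the transposed square $f^*g_*\cong g'_*f'^*$. Concretely, your check ``on generators'' needs $\mathcal{M}_{D}[S]\cong \mathcal{M}_{B}[S]\otimes_{(A,A^+)_\bs}(C,C^+)_\bs$, and by the description of steadiness recalled just before \Cref{adic-is-steady} this is exactly the formula provided by steadiness of $g$, not of $f$.

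The repair is immediate and is precisely what the paper does: $g$ is a morphism of complete Tate--Huber pairs, hence adic, hence $(A,A^+)_\bs\to(C,C^+)_\bs$ is steady by \Cref{adic-is-steady}, and \cite[Proposition 12.14]{scholze-an} applied to the steady map $g$ and the arbitrary map $f$ gives $g^*f_*M\cong f'_*g'^*M$ with target $\mcD$ of the abstract pushout. Note also that the geometric hypotheses on $f$ (rational opens, finite \'etale maps, disc projections) play no role in producing steadiness --- adicness alone suffices for that; they are used only through \Cref{prop:bcisbc}, to identify the abstract pushout of analytic rings with $(B\widehat{\otimes}_AC,\,\cdot\,)_\bs$, so that the right-hand side really is $f'_*g'^*$ computed in $\QCoh(X\times_SY)$. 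With the roles of $f$ and $g$ corrected, your argument coincides with the paper's proof.
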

	\begin{proof}
		The morphism $g$ is adic, hence steady by \Cref{adic-is-steady}. Therefore, by \cite[Proposition 12.14]{scholze-an}, we know that
		$$
		(M_{|_A}) \otimes_{(A,A^+)_\bs} (C,C^+)_\bs \cong (M \otimes_{(B,B^+)_\bs} ((B,B^+)_\bs\otimes_{(A,A^+)_\bs}(C,C^+)_\bs))_{|_C}
		$$
		where on the right hand side, $(B,B^+)_\bs\otimes_{(A,A^+)_\bs}(C,C^+)_\bs$ denotes the analytic ring structure obtained by pushout. But for $f$ satisfying the geometric hypotheses of the proposition, we know by \Cref{prop:bcisbc} that this pushout is the same as 
		$
		(B \widehat{\otimes}_A C,E^+)_\bs
		$ with $E^+$ being the smallest ring of integers containing $ B^+ \widehat{\otimes}_{A^+} C^+$ 
		whence the claim.
	\end{proof}
}

{Let us spell out a corollary of this, which will be useful later.

\begin{cor}
	\label{smooth-morphisms-are-tor-independent}
Under the hypotheses on \Cref{prop:bcisbc}, the modules $\underline{B}$ and $\underline{C}$ are solid $(A,A^+)_{}$-modules, and {$\underline{B} \otimes_{(A,A^+)_{\bs}} \underline{C}$}   is isomorphic to $\underline{(B\widehat{\otimes}_AC)}[0]$ in $\QCoh(S)$.\qed
\end{cor}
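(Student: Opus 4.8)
The first assertion is a matter of unwinding definitions: any morphism of complete Huber pairs $(A,A^+)\to(D,D^+)$ induces a morphism of (normalized, by \cite{andreychev}) solid analytic rings $(A,A^+)_\bs\to(D,D^+)_\bs$, and the associated forgetful functor $\mcD((D,D^+)_\bs)\to\QCoh(S)$ carries the unit $\underline{D}$ to $\underline{D}$, regarded as a solid $(A,A^+)$-module. Applying this with $D=B$ and $D=C$ gives the first claim.

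For the isomorphism, I would first record that every morphism in $\catD$ is adic, the objects of $\catD$ being Tate (a pseudo-uniformizer of the source maps to a pseudo-uniformizer of the target); in particular both $f$ and $g$ are adic, so by \Cref{adic-is-steady} both $(A,A^+)_\bs\to(B,B^+)_\bs$ and $(A,A^+)_\bs\to(C,C^+)_\bs$ are steady. Next I would apply \Cref{affinoid-base-change-solid-quasicoherent-sheaves} to the unit object $M=\underline{B}$ of $\QCoh(X)$: as $f_\ast\underline{B}=\underline{B}$ and $g'^{\ast}\underline{B}$ is the unit $\underline{B\widehat{\otimes}_AC}$ of $\QCoh(X\times_SY)$, we obtain
\[
\underline{B}\otimes_{(A,A^+)_\bs}(C,C^+)_\bs \;=\; g^\ast f_\ast\underline{B}\;\cong\; f'_\ast g'^{\ast}\underline{B}\;=\;\underline{(B\widehat{\otimes}_AC)}[0]
\]
in $\QCoh(Y)$, the target lying in degree $0$ because the forgetful functor $f'_\ast$ is exact. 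Finally I would apply the forgetful functor $g_\ast$ together with the projection formula for the steady morphism $g$ (\cite[Proposition 12.14]{scholze-an}):
\[
\underline{B}\otimes_{(A,A^+)_\bs}\underline{C}\;=\;\underline{B}\otimes_{(A,A^+)_\bs}g_\ast\underline{C}\;\cong\;g_\ast\big(g^\ast\underline{B}\big)\;\cong\;g_\ast\,\underline{(B\widehat{\otimes}_AC)}[0]\;=\;\underline{(B\widehat{\otimes}_AC)}[0]
\]
in $\QCoh(S)$, using once more the exactness of $g_\ast$.

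I do not expect a serious obstacle here; the two places that need care are (i) checking that an arbitrary morphism in $\catD$ is adic, so that steadiness and the projection formula are in force, and (ii) the bookkeeping between $\QCoh(S)$, $\QCoh(X)$ and $\QCoh(X\times_SY)$, in particular making sure that the ``no higher $\mathrm{Tor}$'' statement encoded by the $[0]$ persists under the (exact) forgetful functors — this last point also relies on \Cref{derived-tens}. If one prefers not to invoke the projection formula for $g$, an equivalent route is to identify $f^\ast\underline{C}$ with $\underline{(B\widehat{\otimes}_AC)}[0]$ in $\QCoh(X)$ straight from the definition of steadiness of $f$ and \Cref{prop:bcisbc}, and then push forward along the exact functor $f_\ast$ using the projection formula for $f$.
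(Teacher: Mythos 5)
Your first claim, the adic-hence-steady observation, and the application of \Cref{affinoid-base-change-solid-quasicoherent-sheaves} to the unit $\underline{B}$ are all fine and coincide with the paper's main input. The gap is the last step, where you pass from the identity $g^\ast\underline{B}\cong\underline{(B\widehat{\otimes}_AC)}$ in $\QCoh(Y)$ to the assertion $\underline{B}\otimes_{(A,A^+)_\bs}\underline{C}\cong g_\ast(g^\ast\underline{B})$ in $\QCoh(S)$ by invoking "the projection formula for the steady morphism $g$" with reference to [Sch20a, Prop.\ 12.14]. That proposition is the base-change isomorphism for steady maps (it is exactly what is quoted in the proof of \Cref{affinoid-base-change-solid-quasicoherent-sheaves}); it is not a projection formula, and no projection formula for the forgetful functor along a map of analytic rings is available in this generality. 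Indeed it fails already for the steady map $(C,C_A^+)_\bs\to(C,C^+)_\bs$ (same ring, larger ring of integral elements): with $M$ the unit it would force $N\cong N\otimes_{(C,C_A^+)_\bs}(C,C^+)_\bs$ for every $N$, which is false whenever the two structures differ. It fails for your $g$ as well when tested on free modules: $(A,A^+)_\bs[S]\otimes_{(A,A^+)_\bs}\underline{C}$ is the free module of the induced structure $(C,C_A^+)_\bs$, not $(C,C^+)_\bs[S]$. Unwinding your formula for $N=\underline{B}$, it is precisely the statement that $\underline{B}\otimes_{(A,A^+)_\bs}\underline{C}$, with its natural $C$-action, is already $(C,C^+)_\bs$-complete — which is (essentially) the content of the corollary, so as written the step begs the question. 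Your fallback via "the projection formula for $f$" has the same defect, since $B_A^+\neq B^+$ for the maps under consideration.

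The paper closes exactly this gap without any projection formula: one may harmlessly replace $(C,C^+)$ by $(C,C_A^+)$ (this changes neither $\underline{C}$ nor $B\widehat{\otimes}_AC$), and for this pair the analytic ring structure is the one induced by $(A,A^+)_\bs$ and the ring map $A\to C$ (\Cref{prop:pushout-ar}), so that by construction the underlying solid $(A,A^+)$-module of $g^\ast M$ is $M\otimes_{(A,A^+)_\bs}\underline{C}$. Hence $\underline{B}\otimes_{(A,A^+)_\bs}\underline{C}=g^\ast f_\ast\underline{B}$ on the nose, and \Cref{affinoid-base-change-solid-quasicoherent-sheaves} gives $g^\ast f_\ast\underline{B}\cong f'_\ast g'^\ast\underline{B}=\underline{(B\widehat{\otimes}_AC)}[0]$ directly, with the degree-$0$ statement coming from \Cref{prop:bcisbc} and \Cref{derived-tens}. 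So your argument is salvageable, but the descent from $\QCoh(Y)$ to $\QCoh(S)$ must be replaced by this change-of-plus-ring/induced-structure argument (or an equivalent justification that the tensor product is already $(C,C^+)_\bs$-complete).
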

\begin{proof}
We may harmlessly replace $(C,C^+)$ with the Huber pair $(C,C_A^+)$ where $C_A^+$ denotes the smallest ring of integral elements containing $A^+$. In this case, the analytic structure  $(C,C_A^+)_\bs$ coincides with $(A,A^+)_\bs\otimes_{\underline{A}}\underline{C} $ i.e. to the one induced by $(A,A^+)_\bs$ and the continuous ring map $A\to C$.  %
In particular, the base change functor $g^*$  is given by the functor $M\mapsto M\otimes_{(A,A^+)_\bs}\underline{C}$.%

We may then rewrite the module $\underline{B}\otimes_{(A,A^+)_\bs}\underline{C}$ as $g^*f_*\underline{B}$ which by \Cref{affinoid-base-change-solid-quasicoherent-sheaves} is canonically isomorphic to $f'_*g'^*\underline{B}=\underline{B\widehat{ \otimes}_AC}$ as claimed.%
\end{proof}
}

\begin{rmk}\label{rmk:not-der}
From \Cref{smooth-morphisms-are-tor-independent} we obtain in particular that the complex $\underline{B}\otimes_{(A,A^+)_\bs}\underline{C}$ is concentrated in degree zero and as such, it coincides with the \emph{underived} tensor product { $\underline{B}\otimes_{(A,A^+)_\bs}^{\rm un}\underline{C}$} in solid $(A,A^+)$-modules (see \Cref{derived-tens}).
\end{rmk}

\subsection{The relative de Rham complex in the solid world}
We would like to upgrade the de Rham cohomology complex to a complex of solid quasi-coherent sheaves. In fact, we will strictly speaking do so only when everything in sight is affinoid and then glue using analytic descent. For most of this section we will then restrict to the following special smooth maps.

\begin{dfn}
	Let $S=\Spa(A,A^+)$ be an affinoid space in $\catD$. We say that a smooth map $X\to S $ is \emph{ smooth with good coordinates} 
	 if $X\to S $   can be factored into $X\stackrel{f}{\to}\B^d_S\stackrel{p}{\to} S$ with $d\in\N$,  $f$ being a composition of rational open immersions and finite \'etale maps, and with $p$ being the natural projection. We remark that in this case $\Omega^1_{X/S}$ is free. 
We denote by $\Sm^{\gc}/S$ the full subcategory of $\Sm/S$ whose objects are  smooth with good coordinates.
\end{dfn} 

Locally on $X$, any smooth map has  good coordinates so that the analytic/\'etale topos on $\Sm^{\gc}/S$ is equivalent to the one on $\Sm/S$.

\begin{dfn}
\label{def-underlined-usual-de-rham-complex}
		Let $S=\Spa(A,A^+)$ be affinoid and  $X\to S$ be smooth with good coordinates. We let {$\underline{\Omega}^{\bullet}(X/S)$} be the complex of solid $(A,A^+)$-modules obtained by level-wise underlining the complex of Banach $A$-modules given by global sections of the complex $\Omega_{X/S}^\bullet$ of \Cref{dfn:omega} {(note that since $\Omega_{X/S}^1$ is a finite free $\mathcal{O}_X$-module, $\Omega_{X/S}^i(X)$ has a natural structure of Banach $A$-module for each $i$)}. {We denote by $R\Gamma_{\rm dR}(X/S)_\bs$ the object of $\mathcal{D}((A,A^+)_\bs)=\QCoh(S)$ attached to {the complex} $\underline{\Omega}^{\bullet}(X/S)$.} 	
\end{dfn}

{The notation $\dRRs{X}{S}$ could a priori be confusing, at it may suggest that alternatively we see $\Omega_{X/S}^\bullet$ as a complex of sheaves valued in $\mathcal{D}((A,A^+)_\bs)$ (say, defined on $\Sm^{\gc}/S$) and compute its (hyper)cohomology on $X$. The following proposition shows that these two definitions agree, as a basic consequence of Tate's acyclicity.}

\begin{prop}
	\label{acyclicity-on-affinoid}
Let $S=\Spa(A,A^+)$ be in $\catD$. %
	The functor
	$$
	\dRRs{-}{S}\colon U\mapsto \dRRs{U}{S}
	$$
	from  $(\Sm^{\gc}/S)$  to $\QCoh(S)$ has \'etale descent. That is, if $\mcU\to X$ is an \'etale Cech-hypercover in $\Sm^{\gc}/S$ then
	$$
	\dRRs{X}{S}\cong\lim\dRRs{\mcU}{S}
	$$
	 in $\QCoh(S)$. 
\end{prop}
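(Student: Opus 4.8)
The plan is to reduce the whole statement to \'etale descent for the single functor $U\mapsto\underline{\mcO(U)}\colonequals\underline{\Omega}^0(U/S)$ from $\Sm^{\gc}/S$ to $\QCoh(S)$, and then bootstrap to the full de Rham complex using the good-coordinates hypothesis. Write the \v{C}ech hypercover as the \v{C}ech nerve $\mcU_\bullet=\check{C}(\mcU_0/X)$ of an \'etale cover $\mcU_0\to X$, so that each $\mcU_n\cong\mcU_0\times_X\cdots\times_X\mcU_0$ is \'etale over $X$ via a map $g_n$, and all cosimplicial face and degeneracy maps are $X$-morphisms. Since $\Omega^1_{\mcU_n/X}=0$, \Cref{properties-of-omega-1}(2) gives $\Omega^i_{\mcU_n/S}\cong g_n^\ast\Omega^i_{X/S}$. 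As $X\to S$ is smooth with good coordinates, $\Omega^1_{X/S}$ is free, hence $\Omega^i_{X/S}\cong\mcO_X^{\oplus\binom{d}{i}}$; fixing one such trivialization on $X$ and pulling it back along the $g_n$ yields isomorphisms $\underline{\Omega}^i(\mcU_n/S)\cong\underline{\mcO(\mcU_n)}^{\oplus\binom{d}{i}}$ compatible with the whole cosimplicial structure, and the de Rham differentials (being compatible with \'etale pullback by \Cref{properties-of-omega-1}(1)) become natural transformations assembled from the $\mcO_S$-linear derivation $\underline{\mcO(-)}\to\underline{\mcO(-)}^{\oplus d}$. Thus each $\underline{\Omega}^i(-/S)$ has \'etale descent as soon as $\underline{\mcO(-)}$ does, and since $\dRRs{-}{S}$ is obtained from the $\underline{\Omega}^i(-/S)$ ($0\le i\le d$) by a finite totalization and $\QCoh(S)$ is a stable presentable $\infty$-category (so $\lim_\Delta$ commutes with finite limits and finite direct sums), we get $\dRRs{X}{S}\cong\lim_\Delta\dRRs{\mcU_\bullet}{S}$.

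It remains to prove that $U\mapsto\underline{\mcO(U)}\in\QCoh(S)$ sends \'etale \v{C}ech nerves to limit diagrams. First, for every $U\in\Sm^{\gc}/S$ the object $\underline{\mcO(U)}$ is a genuine solid $(A,A^+)$-module in degree zero by \Cref{smooth-morphisms-are-tor-independent}, because $\Sm^{\gc}/S$ is stable under rational open immersions and finite \'etale maps; hence whenever an augmented \v{C}ech complex is exact, the associated limit is computed in the abelian category of solid modules (\Cref{rmk:not-der}). Since the \'etale topology on $X$ is generated by rational open covers and by finite \'etale covers, it suffices to treat these two classes. For a rational open cover, \Cref{thm:andreychev} gives $\QCoh(X)\cong\lim_\Delta\QCoh(\mcU_\bullet)$; as the transition functors are symmetric monoidal they preserve the unit, so $\mcO_X\cong\lim_\Delta\mcO_{\mcU_\bullet}$ in $\QCoh(X)$, and pushing forward along the adic map $f\colon X\to S$ (a right adjoint, hence limit-preserving) yields $\underline{\mcO(X)}\cong\lim_\Delta\underline{\mcO(\mcU_\bullet)}$ in $\QCoh(S)$; equivalently, one may invoke Tate acyclicity, whose strict exact \v{C}ech complex of Banach $A$-modules becomes exact after applying $\underline{(-)}$. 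For a finite \'etale cover $U=\Spa(B',B'^+)\to X=\Spa(B,B^+)$, the algebra $B'/B$ is finite projective and faithfully flat, so the Amitsur complex $B\to B'\to B'\otimes_BB'\to\cdots$ is strict exact with $B$-linear differentials between finite projective (Banach) $B$-modules; underlining it stays exact, giving $\underline{\mcO(X)}\cong\lim_\Delta\underline{\mcO(\mcU_\bullet)}$ in $\QCoh(S)$.

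The point demanding the most care is the interface between the analytic (Banach) and the solid worlds: one needs that $\underline{(-)}$, viewed from Banach $A$-modules to solid $(A,A^+)$-modules, is exact on the strict exact complexes produced by Tate acyclicity and by the finite-\'etale Amitsur complex, and that all modules occurring are honestly solid $(A,A^+)$-modules concentrated in degree zero, so that these classical exact sequences really compute the $\infty$-categorical limits over $\Delta$. Both are furnished by the foundational results of Clausen--Scholze on $\underline{(-)}$ together with Andreychev's analysis of $(A,A^+)_\bs$ (\Cref{derived-tens}, \Cref{smooth-morphisms-are-tor-independent}, \Cref{thm:andreychev}); the geometric input is only the mild observation that the good-coordinates hypothesis forces the de Rham terms to be free and to be preserved by \'etale localization, which is exactly what licenses the reduction to the single functor $\underline{\mcO(-)}$.
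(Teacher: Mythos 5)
Your proof is correct and follows essentially the same route as the paper's: using the freeness of the $\Omega^i_{X/S}$ guaranteed by good coordinates, you reduce descent for $\dRRs{-}{S}$ to \v{C}ech-acyclicity of $\underline{\mcO}$, and then transfer the classical (strict) exactness of the \v{C}ech complex to solid $(A,A^+)$-modules via the exactness of the underlining functor (\Cref{lemma:guido}), resp.\ Andreychev's descent (\Cref{thm:andreychev}). The only cosmetic difference is that you treat rational covers and finite \'etale covers (Amitsur complex) separately, whereas the paper handles the full \'etale \v{C}ech complex at once by citing Tate acyclicity together with the open mapping theorem.
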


\begin{proof}
	We shall prove that the statement follows from Tate's acyclicity. The proof will be divided into some intermediate steps. 
		\\
	{\it Step 1: }%
For any Cech hypercover $\mcU\to X$  in $\Sm^{\gc}/S$, the map ${\colim}\Z(\mcU)\to\Z(X)$ is an $\et$-local equivalence in $\mcD(\Psh(\Sm^{\gc}/S),\Z)$ (see for example \cite[Th\'eor\`eme V.7.3.2]{SGAIV2}) hence also the analogous map between the two induced free  presheaves of solid $(A,A^+)$-modules is. It therefore suffices to show that $\dRRs{-}{S}$ is $\et$-local in the category $\mcD(\Psh(\Sm^{\gc}/S,\QCoh(S)))$ i.e. that the homology groups $H^i\Gamma(X,\dRRs{-}{S})$ coincide with the hypercohomology groups $\HH^i_{\et}(X,\dRRs{-}{S})$. 	To this aim, we may show that  $\dRRs{-}{S}$ is a bounded complex of Cech-acyclic sheaves (of  solid $(A,A^+)$-modules) that is, that each  $\underline{\Omega}^{ i}_{-/S}$ is a Cech-acyclic sheaf.\\
	{\it Step 2:} Since $\Omega_{X/S}^1$ is free for any $X\in \Sm^{\gc}/S$ and  $\underline{\mcO}(U)$ is a  solid $(A,A^+)$-module, it suffices  to show that $\underline{\mcO}$ is a Cech-acyclic \'etale sheaf of condensed $\mcO(S)$-modules in $\Sm^{\gc}/S$. We fix an \'etale cover $\mcU=\{U_i\to X\}_{i=1,\ldots,n}$   in this site.  We are left to show that the following (bounded) complex
	$$
	0\to \underline{\mcO}(X)\to\bigoplus\underline{\mcO}(U_i)\to \bigoplus\underline{\mcO}(U_{ij})\to\cdots
	$$
	is exact. By the classical Tate acyclicity theorem and the Banach open mapping theorem, we know that  the sequence
	$$
	0\to {\mcO}(X)\to\bigoplus{\mcO}(U_i)\to \bigoplus{\mcO}(U_{ij})\to\cdots.
	$$
	is a strict exact complex of Banach $A$-modules, so the claim follows from \Cref{lemma:guido}.
\end{proof}

We learnt the following fact, which was used in the previous proof, from Guido Bosco.

\begin{lemma}\label{lemma:guido}Let $S=\Spa(A,A^+)$ be in $\catD$. The  functor $M\mapsto \underline{M}$ from the (exact) category of Banach $A$-modules and continuous maps to the category of condensed $A$-modules, is exact.
\end{lemma}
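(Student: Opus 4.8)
The plan is to treat left and right exactness separately: the first is formal, and the second is the only point that genuinely needs an argument. Here ``exact'' refers to the quasi-abelian exact structure on Banach $A$-modules whose admissible short exact sequences are the algebraically exact sequences $0\to M'\xrightarrow{i}M\xrightarrow{p}M''\to 0$ with $i$ a closed embedding and $p$ strict; by the Banach open mapping theorem (already invoked in the proof of \Cref{acyclicity-on-affinoid}) one checks that for Banach $A$-modules these are exactly the algebraically exact sequences of continuous maps, and that strictness of $p$ furnishes a constant $C\geq 1$ such that every $y\in M''$ admits a preimage $x\in M$ with $\lVert x\rVert\leq C\lVert y\rVert$.

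For left exactness I would use that the functor $T\mapsto\underline{T}$ from topological spaces to condensed sets preserves all small limits, simply because for an extremally disconnected $S$ a continuous map $S\to\lim_\alpha T_\alpha$ is the same datum as a compatible family of continuous maps $S\to T_\alpha$, the target carrying the subspace-of-product topology. Applying this to $\ker p=i(M')$, endowed with the subspace topology (which by hypothesis is the topology of $M'$), gives $\underline{M'}=\ker(\underline{p}\colon\underline{M}\to\underline{M''})$ and shows $\underline{i}$ is a monomorphism. Hence $0\to\underline{M'}\to\underline{M}\to\underline{M''}$ is exact in condensed $A$-modules.

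The real content is to show that $\underline{p}$ is an epimorphism. Since the cokernel of a map of sheaves is the sheafification of the presheaf cokernel, it suffices to prove that $\underline{M}(S)\to\underline{M''}(S)$ is surjective for every extremally disconnected (or more generally profinite) $S$; that is, that every continuous map $g\colon S\to M''$ lifts along $p$ to a continuous map $S\to M$. I would construct such a lift by a telescoping argument. Because $S$ is compact and totally disconnected and $M''$ is metric, $g$ is a uniform limit of locally constant maps; and a locally constant map $S\to M''$ takes only finitely many values, each of which lifts through $p$ using the constant $C$, so it lifts to a locally constant map $S\to M$ of comparable norm. Concretely, choosing $\delta_n=2^{-n}$, I would build locally constant maps $h_n\colon S\to M$ inductively so that $r_n\colonequals g-p(h_0+\dots+h_n)$ has sup-norm at most $\delta_n$ and $\lVert h_{n+1}\rVert_\infty\leq 2C\delta_n$: given $r_n$, approximate it uniformly within $\delta_{n+1}$ by a locally constant map of sup-norm at most $2\delta_n$, and lift its finitely many values through $p$. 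Since $\lVert h_n\rVert_\infty\to 0$ geometrically, the series $h\colonequals\sum_n h_n$ converges in the Banach space of continuous maps $S\to M$, so $h$ is continuous; and a telescoping computation gives $p(h_0+\dots+h_n)=g-r_n\to g$ uniformly while $p$ is continuous, so $p\circ h=g$. This gives the required lift, hence the vanishing of the presheaf cokernel of $\underline{p}$ over extremally disconnected sets, hence that $\underline{p}$ is epic.

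Combining the two halves shows that $\underline{(-)}$ carries admissible short exact sequences of Banach $A$-modules to short exact sequences of condensed $A$-modules, which is the assertion. The only non-formal step is the telescoping lift; I expect its one delicate point to be keeping track of the norm estimates (the factor $2C$ against the geometric $\delta_n$) so that the series converges and its image under $p$ is exactly $g$, and I note that this uses nothing about $A$ beyond the definition of a Banach $A$-module and the strictness of $p$.
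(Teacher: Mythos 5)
Your proof is correct, but it takes a genuinely different route to the only nontrivial point. Both arguments reduce the lemma to the same statement: left exactness is formal, and the content is that a continuous map $g\colon S\to M''$ from an extremally disconnected set lifts along a continuous surjection $p\colon M\to M''$ of Banach $A$-modules. The paper produces the lift abstractly: by \cite[Lemma 45.1]{treves} (applicable here thanks to the open mapping theorem of \cite[Theorem 1.1.9]{ked-sss}) the compact image $g(S)$ is of the form $p(K)$ for a compact subset $K\subseteq M$, and then Gleason's theorem \cite[Theorem 2.5]{gleason} that extremally disconnected sets are projective in compact Hausdorff spaces gives a continuous lift $S\to K\subseteq M$. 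You instead build the lift by successive approximation: uniform approximation of $g$ by locally constant maps (using only that $S$ is profinite and $M''$ is normed), lifting the finitely many values with the norm bound supplied by strictness/open mapping, and summing a geometrically convergent telescoping series in the complete space $C(S,M)$; your bookkeeping of the estimates is correct, and the passage from sectionwise surjectivity on extremally disconnected sets to epimorphy of $\underline{p}$ is standard. What your approach buys is self-containedness and a slightly stronger statement — it replaces the two external inputs (Tr\`eves and Gleason) by the open mapping theorem alone, and the lift exists for every profinite $S$, not only extremally disconnected ones; what the paper's approach buys is brevity. The one point to phrase carefully in your version is the constant $C$: in the non-archimedean setting one cannot rescale by arbitrary reals, so the bound ``every $y$ has a preimage $x$ with $\lVert x\rVert\leq C\lVert y\rVert$'' is obtained from openness of $p$ by rescaling with powers of a pseudo-uniformizer of the Tate ring $A$ (changing the constant but nothing else), again relying on the open mapping theorem in the generality of \cite[Theorem 1.1.9]{ked-sss}.
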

\begin{proof}
	Since the ``underlining'' functor is left exact, it is enough to prove that if 
	$
	f\colon M' \to M
	$ 
	is a surjective map between two Banach $A$-modules, the map
	$
\underline{f}\colon \underline{M}' \to \underline{M}
	$
	remains surjective; in other words, that whenever $S$ is an extremally disconnected set and $g : S \to M$ is a continuous map, there is a continuous map $g': S \to M'$ lifting $g$. But the image $g(S)$ is compact, and thus by \cite[Lemma 45.1]{treves} (which we can apply, thanks to \cite[Theorem 1.1.9]{ked-sss}) it is the image $f(K)$ of a compact subset $K$ of $M'$. This concludes the claim, since extremally disconnected sets are projective objects in the category of compact Hausdorff spaces \cite[Theorem 2.5]{gleason}.
\end{proof}

\begin{prop}\label{prop:omega-bc}
Let $f\colon X\to S=\Spa(A,A^+)$ be a  smooth map with good coordinates and let $g\colon Y=\Spa(C,C^+)\to S$ be a map  in {$\catD$}.  %
\begin{enumerate}
\item 	%
 There is a canonical equivalence $ g^*\dRRs{X}{S}\cong \dRRs{X\times_SY}{Y}$. %
\item Suppose that $g$ is also smooth with good coordinates. %
Then there is a canonical equivalence $\dRRs{X}{S}\otimes^{}_{(A,A^+)_\bs}\dRRs{Y}{S}\cong\dRRs{X\times_SY}{S}$.
\end{enumerate}
\end{prop}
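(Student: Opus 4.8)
The plan is to reduce both assertions to the corresponding statements about the modules $\Omega^i_{X/S}$ collected in \Cref{properties-of-omega-1}, exploiting that for a smooth map with good coordinates every term of the de Rham complex is a finite free module, together with the Tor-vanishing inputs proved above, namely \Cref{smooth-morphisms-are-tor-independent} (and the base change \Cref{affinoid-base-change-solid-quasicoherent-sheaves}) and \Cref{rmk:not-der}. Everything in sight is affinoid, so $\QCoh$ of each space carries a $t$-structure and the phrase ``concentrated in degree zero'' is meaningful.

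For (1), first note that $f'\colon X'\colonequals X\times_SY\to Y$ is again smooth with good coordinates: a factorization $X\xrightarrow{f}\B^d_S\xrightarrow{p}S$ with $f$ a composition of rational open immersions and finite \'etale maps base-changes along $g$ to a factorization of the same shape over $Y$. By \Cref{properties-of-omega-1}(1) the complex $\Omega^\bullet_{X'/Y}$ is the pullback of $\Omega^\bullet_{X/S}$ along $g'\colon X'\to X$; since all terms are finite free over $\mcO(X)$ (resp.\ over $\mcO(X')=\mcO(X)\widehat\otimes_A C$), passing to global sections gives a canonical isomorphism of complexes of Banach modules $\mcO(X')\otimes_{\mcO(X)}\Omega^\bullet(X/S)\xrightarrow{\sim}\Omega^\bullet(X'/Y)$, the differentials matching by functoriality of $d$ under base change. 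After underlining and localizing to $\QCoh(Y)$, it remains to identify $g^\ast$ of $\underline{\Omega}^\bullet(X/S)$ with the image of the left-hand side; this holds because $g^\ast$ is exact on these stable infinity-categories and each $\underline{\Omega}^i(X/S)$, being a finite direct sum of copies of $\underline{\mcO(X)}$, satisfies $g^\ast\underline{\Omega}^i(X/S)\cong\underline{\Omega^i(X/S)\widehat\otimes_A C}$ concentrated in degree zero by \Cref{smooth-morphisms-are-tor-independent} / \Cref{affinoid-base-change-solid-quasicoherent-sheaves}, so that $g^\ast$ may be computed term by term on the bounded complex. This yields $g^\ast\dRRs{X}{S}\cong\dRRs{X'}{Y}$.

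For (2), with $g$ now also smooth with good coordinates, the product $X\times_SY\to S$ factors as $X\times_SY\to\B^{d_X}_Y\to\B^{d_X+d_Y}_S\to S$, a composition of (base changes of) rational open immersions and finite \'etale maps followed by a unit-disc projection, so it too is smooth with good coordinates and $\dRRs{X\times_SY}{S}$ is defined. \Cref{properties-of-omega-1}(3) gives $\Omega^1_{(X\times_SY)/S}\cong g'^\ast\Omega^1_{X/S}\oplus f'^\ast\Omega^1_{Y/S}$, which — exactly as in the algebraic case, upon taking exterior algebras and using that $d$ is a derivation — upgrades to an isomorphism of complexes $\Omega^\bullet_{(X\times_SY)/S}\cong g'^\ast\Omega^\bullet_{X/S}\otimes_{\mcO_{X\times_SY}}f'^\ast\Omega^\bullet_{Y/S}$ with the sign-twisted total differential. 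Taking global sections identifies its $(a,b)$-term with the Banach module $\Omega^a(X/S)\widehat\otimes_A\Omega^b(Y/S)$; underlining and applying \Cref{smooth-morphisms-are-tor-independent} (extended from $\underline{\mcO(X)},\underline{\mcO(Y)}$ to finite free modules) identifies this with $\underline{\Omega}^a(X/S)\otimes_{(A,A^+)_\bs}\underline{\Omega}^b(Y/S)$, which by \Cref{rmk:not-der} and \Cref{derived-tens} is concentrated in degree zero and hence also computes the derived tensor product. Assembling the bicomplex, $\underline{\Omega}^\bullet((X\times_SY)/S)$ is the total complex of $\underline{\Omega}^\bullet(X/S)\otimes_{(A,A^+)_\bs}\underline{\Omega}^\bullet(Y/S)$, and passing to $\QCoh(S)$ gives $\dRRs{X\times_SY}{S}\cong\dRRs{X}{S}\otimes_{(A,A^+)_\bs}\dRRs{Y}{S}$.

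The main, and essentially the only, obstacle is controlling the interaction between the bounded complexes $\underline{\Omega}^\bullet$ and the a priori derived operations $g^\ast$ and $\otimes_{(A,A^+)_\bs}$: both of which could produce spurious terms in negative cohomological degrees, and the whole reduction to the finite free de Rham terms rests on \Cref{smooth-morphisms-are-tor-independent} and \Cref{rmk:not-der} ruling this out. The residual bookkeeping — that the term-wise isomorphisms are compatible with the de Rham differentials, so that they assemble into isomorphisms of complexes — is routine and is obtained from the universal property of $\Omega^1$ (\Cref{omega-1-gives-universal-derivation}) rather than from the non-canonical trivializations used to do the counting.
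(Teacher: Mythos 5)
Your proposal is correct and follows essentially the same route as the paper: part (1) is reduced to the term-wise identification of the pulled-back de Rham terms via \Cref{affinoid-base-change-solid-quasicoherent-sheaves} and \Cref{properties-of-omega-1}(1), and part (2) combines \Cref{properties-of-omega-1}(3) with the Tor-independence statements \Cref{smooth-morphisms-are-tor-independent} and \Cref{rmk:not-der} to identify the underlined completed tensor product of the two de Rham complexes with the (underived $=$ derived) solid tensor product and then totalize. Your explicit remarks that base change preserves ``good coordinates'' and that degree-zero concentration lets one compute $g^*$ and $\otimes_{(A,A^+)_\bs}$ level-wise are exactly the points the paper leaves implicit.
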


\begin{proof}
We consider the first statement. We let $f'$ [resp. $g'$] be the map $ X\times_SY\to Y$ [resp. $X\times_SY\to X$] obtained by pullback. It suffices to prove that level-wise one has $ g^* f_*\uOmega^d_{X/S}\cong  f'_*\uOmega^d_{X\times_SY/Y}$. {This follows from \Cref{affinoid-base-change-solid-quasicoherent-sheaves} together with \Cref{properties-of-omega-1} (1).}

Now we move to the second statement. 
By \Cref{properties-of-omega-1}
(3), we deduce the following equivalence of complexes of topological $A$-modules
$$
\Gamma(X\times_SY,\Omega_{X\times_SY/S}^\bullet) \cong \mathrm{Tot}(
(\Gamma(X,\Omega_{X/S}^\bullet) \otimes_B (B \widehat{\otimes}_A C)) \otimes_{B
	\widehat{\otimes}_A C} ((B \widehat{\otimes}_A C) \otimes_C 
\Gamma(Y,\Omega_{Y/S}^\bullet)))
$$
The right hand side can be simplified and we get
$$
\Gamma(X\times_SY,\Omega_{X\times_SY/S}^\bullet) \cong \mathrm{Tot}(
\Gamma(X,\Omega_{X/S}^\bullet) \widehat{\otimes}_A   \Gamma(Y,\Omega_{Y/S}^\bullet)).
$$
Underlining both sides, we deduce {(using the notation of \Cref{def-underlined-usual-de-rham-complex})}
$$
{\underline{\Omega}^{\bullet}(X\times_S Y/S)}\cong
\mathrm{Tot}(
\underline{\Gamma(X,\Omega_{X/S}^\bullet) \widehat{\otimes}_A   \Gamma(Y,\Omega_{Y/S}^\bullet)}).
$$
Since the terms of the complexes {$\underline{\Omega}^{\bullet}(X/S)=\underline{\Gamma(X,\Omega_{X/S}^\bullet)}$ and $\underline{\Omega}^{\bullet}(Y/S)=\underline{\Gamma(Y,\Omega_{Y/S}^\bullet)}$}  are finite locally free $B$-modules,
resp. finite locally free $C$-modules, we deduce from
\Cref{smooth-morphisms-are-tor-independent} (see also \Cref{rmk:not-der}) that
$$
\mathrm{Tot}(
\underline{\Gamma(X,\Omega_{X/S}^\bullet) \widehat{\otimes}_A   \Gamma(Y,\Omega_{Y/S}^\bullet)}) \cong {\mathrm{Tot}(
\underline{\Omega}^{\bullet}(X/S) \otimes_{(A,A^+)_\bs}^{\rm un}   
\underline{\Omega}^{\bullet}(Y/S))}
$$
where the tensor product on the right is the \textit{underived} tensor product of solid $(A,A^+)$-modules, and that moreover (cfr. \cite[Proposition 6.3.2]{EGAIII2}):
$$
{\mathrm{Tot}(
\underline{\Omega}^{\bullet}(X/S) \otimes_{(A,A^+)_\bs}^{\rm un}   
\underline{\Omega}^{\bullet}(Y/S)) \cong
\dRRs{X}{S} \otimes_{(A,A^+)_\bs}   
\dRRs{Y}{S}}
$$
proving the claim.
\end{proof}

The results above allow us to extend the definition of $\dRRs{X}{S}$ to arbitrary smooth maps $X\to S$. %

\begin{dfn}Let $X\to S$ be a smooth map in $\catD$.
	\begin{enumerate}
		\item Let $S$ be affinoid. We define $\dRRs{X}{S}$ to be the object in $\QCoh(S)$ defined by rational descent (see \Cref{acyclicity-on-affinoid}) from the functor $\dRRs{-}{S}\colon({\Sm^{\gc}/S})_{/X}\to\QCoh(S)^{\op}$.
		\item In the general case, we can define $\dRRs{X}{S}$ by rational descent of the category $\QCoh(S)$ i.e. we may chose {an} affinoid rational  hypercover  $S_\bullet\to S$, and let $\dRRs{X}{S}$ be the object of $\QCoh(S)\cong\lim\QCoh(S_\bullet)$ induced by the objects $\dRRs{X_n}{S_n}$. %
		The compatibility is ensured by \Cref{prop:omega-bc}.
	\end{enumerate}
\end{dfn}

\begin{rmk}\label{rmk:ext1}
	Infinity-categorically, one may rephrase the definition above as follows: if $S$ is affinoid, by rational descent of $\dRRs{-}{S} $ we can extend  it to  a functor of infinity-categories $\mcD_{\an}(\Sm/S)\cong\mcD_{\an}(\Sm^{\gc}/S)\to\QCoh(S)^{\op}$. By letting $S$ vary,  the compatibility with pullbacks along open immersions translates into  a natural transformation between analytic sheaves of infinity-categories (see \cite[Proposition 2.3.7]{agv} and \Cref{thm:andreychev})  $\mcD_{\an}(\Sm/-)\to\QCoh(-)$ on affinoid spaces open in $S$  that can then be extended to $S$. %
\end{rmk}

We deduce formally from \Cref{prop:omega-bc} the following extension.

\begin{cor}	\label{base-change-and-kunneth}
Let $f\colon X\to S$, $g\colon S'\to S$ be maps in {$\catD$} with $f$ smooth.
\begin{enumerate}
	\item Let $\mcU\to X$ be an \'etale Cech hypercover. Then $\dRRs{X}{S}\cong\lim\dRRs{\mcU}{S}$.
	\item 	If $g$ is an open immersion, there is a canonical equivalence ${g^*}\dRRs{X}{S}\cong\dRRs{X'}{S'}$ where $X'=X\times_SS'$.
	\item If {$f$ is qcqs}, there is a canonical equivalence ${g^*}\dRRs{X}{S}\cong\dRRs{X'}{S'}$ where $X'=X\times_SS'$.
	\item Suppose that $f,g$ are both smooth and qcqs. Then
	$$\dRRs{X}{S}{\otimes_{\QCoh(S)}}\dRRs{S'}{S}\cong\dRRs{X\times_S S'}{S}.$$
\end{enumerate}
\end{cor}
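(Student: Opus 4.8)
The plan is to deduce all four assertions from \Cref{prop:omega-bc}, \Cref{acyclicity-on-affinoid} and the way $\dRRs{-}{-}$ was defined, transporting statements through the two descent procedures in play: rational descent on the base, and analytic (equivalently étale) descent in the source.

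For (1), when $S=\Spa(A,A^+)$ is affinoid and $X$ is smooth with good coordinates the claim is exactly \Cref{acyclicity-on-affinoid}; since the analytic, hence étale, topos of $\Sm^{\gc}/S$ agrees with that of $\Sm/S$, the functor $\dRRs{-}{S}$ is automatically an étale hypersheaf on all of $\Sm/S$, and for a general base one writes $\dRRs{-}{S}$ through a rational affinoid hypercover $S_\bullet\to S$ as an object of $\lim\QCoh(S_\bullet)$ and passes to the limit, using that $\mcU\times_S S_\bullet\to X\times_S S_\bullet$ is again an étale hypercover. For (2), I would invoke \Cref{rmk:ext1}: the construction is, over the affinoid opens of $S$, a morphism of analytic sheaves of $\infty$-categories $\mcD_{\an}(\Sm/-)\to\QCoh(-)$, whose very existence relies on the good-coordinate base change in \Cref{prop:omega-bc}(1); evaluating its naturality square at the open immersion $g$ and the object $X$ is the desired isomorphism $g^\ast\dRRs{X}{S}\cong\dRRs{X'}{S'}$ for $S$ affinoid, and the general case follows by comparing the presentations of $\dRRs{X}{S}$ and $\dRRs{X'}{S'}$ attached to a rational affinoid hypercover $S_\bullet\to S$ and its restriction $S_\bullet\times_S S'\to S'$.

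For (3) and (4), I would first reduce, using (2) and rational descent (\Cref{thm:andreychev}), to the case where $S$ (and in (3) also $S'$) is affinoid. The point is then that the qcqs hypothesis makes the relevant homotopy limits \emph{finite}: $X$ (resp.\ $X$ and $S'$) has a finite cover by affinoids smooth with good coordinates over $S$ with quasi-compact iterated intersections, so the Čech complex computing $\dRRs{X}{S}$ by (1) is a finite homotopy limit of objects $\dRRs{U}{S}$ with $U\in\Sm^{\gc}/S$. In (3), the functor $g^\ast\colon\QCoh(S)\to\QCoh(S')$ is a colimit-preserving functor of stable $\infty$-categories, hence exact, hence commutes with this finite limit; applying \Cref{prop:omega-bc}(1) termwise and reassembling over the finite cover $\{U_i\times_S S'\}$ of $X'$ gives the claim. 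In (4), one chooses finite good-coordinate affinoid covers $\{U_i\}$ of $X$ and $\{V_j\}$ of $S'$, notes that $\{U_i\times_S V_j\}$ is a finite good-coordinate affinoid cover of $X\times_S S'$ whose double Čech nerve computes $\dRRs{X\times_S S'}{S}$ by (1), and uses that $\otimes_{\QCoh(S)}$ preserves colimits in each variable, hence (as $\QCoh(S)$ is stable) is exact in each variable and commutes with the two finite Čech limits; \Cref{prop:omega-bc}(2) then identifies $\dRRs{U_{i_\bullet}}{S}\otimes_{\QCoh(S)}\dRRs{V_{j_\bullet}}{S}$ with $\dRRs{U_{i_\bullet}\times_S V_{j_\bullet}}{S}$, and reassembling finishes the proof.

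The main obstacle is exactly this finiteness bookkeeping in (3) and (4): one has to be certain that imposing qcqs turns the defining homotopy limits into \emph{finite} ones, so that bare exactness of $g^\ast$ (resp.\ of each variable of $\otimes_{\QCoh(S)}$) suffices to commute past them --- without qcqs these are genuinely infinite homotopy limits and the argument breaks down, which is precisely why (2), restricted to open immersions, is extracted from the functoriality of the construction as a map of analytic sheaves rather than from any exactness. A routine point left implicit above is that smoothness with good coordinates is stable under the base changes and fibre products used, which is immediate from the definition together with \Cref{properties-of-omega-1}.
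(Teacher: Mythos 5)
Your treatment of (1), (3) and (4) is essentially the paper's own argument: reduce to $S$ affinoid, use the qcqs hypothesis to write $\dRRs{X}{S}$ (and in (4) also $\dRRs{S'}{S}$) as a \emph{finite} limit of good-coordinate affinoid pieces, and commute $g^*$ resp.\ $\otimes_{\QCoh(S)}$ past these finite limits before applying \Cref{prop:omega-bc}. That part is fine.

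Part (2), however, has a genuine gap. You derive it from \Cref{rmk:ext1}, i.e.\ from the existence of a natural transformation of analytic sheaves $\mcD_{\an}(\Sm/-)\to\QCoh(-)^{\op}$, but this is circular: what \Cref{prop:omega-bc}(1) gives you is only the compatibility $g^*\dRRs{U}{S}\cong\dRRs{U\times_SS'}{S'}$ for $U\in\Sm^{\gc}/S$, i.e.\ a natural equivalence on the \emph{generators}. To promote this to a commuting square of the extended functors on all of $\mcD_{\an}(\Sm/-)$ — equivalently, to know that the canonical comparison map between $g^*\dRRs{X}{S}$ and $\dRRs{X'}{S'}$ is an equivalence for an arbitrary (non-quasicompact) smooth $X$ — one must know that $g^*$ commutes with the possibly \emph{infinite} limits by which $\dRRs{X}{S}$ is built out of good-coordinate pieces. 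Since $g^*$ is a priori only a left adjoint (colimit-preserving), this is not automatic, and it is exactly the point of statement (2); the "naturality" you invoke in \Cref{rmk:ext1} is a restatement of this compatibility, not an independent proof of it. The paper's proof spends almost all of its effort here: it shows that for an open immersion the pullback commutes with \emph{arbitrary} limits, by reducing via \cite[Propositions 4.11 and 4.12(ii)]{andreychev} to the two basic Laurent localizations $(\Z[T],\Z)_\bs\to\Z[T]_\bs$ and $(\Z[T],\Z)_\bs\to(\Z[T^{\pm1}],\Z[T^{-1}])_\bs$, and then exhibiting the relevant $j^*$ either as admitting a left adjoint ($j_!$, $\alpha_!$ from the condensed-mathematics lectures) or, after a shift, as itself being a right adjoint (the computation identifying $\iota^*[-1]$ with $\RHom_B(C,-)$). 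Without an argument of this kind — or some other proof that open-immersion pullback in $\QCoh$ is limit-preserving — your proof of (2) (and with it the gluing step you use to pass from affinoid $S$ to general $S$ in the other parts when $X$ is not quasi-compact) does not go through.
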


\begin{proof}
The first point comes directly from the definition. All points are local on $S$ so we can assume that $S$ is affinoid. By (1), if $f$ is qcqs we can write $\dRRs{X}{S}$ as a finite limit of objects $\dRRs{U}{S}$ with $U$ affinoid. We then deduce (3) and (4) from the affinoid case treated in \Cref{prop:omega-bc}, and the commutation of $g^*$ and $\otimes$ with finite limits. 
In case $g$ is an open immersion, {we claim that $g^*$ commutes with arbitrary limits, which will give us the compatibility with pullbacks along open immersions in full generality.} {To justify this, we note that using \cite[Propositions 4.11 and  4.12(ii)]{andreychev} (and the fact that forgetful functors are conservative and commute with limits) the claim can be deduced from the commutation with limits of the functor $j^*$, where $j$ is a localization of analytic rings which is either $j\colon (\Z[T],\Z)_\bs\to \Z[T]_\bs$ or  $j\colon(\Z[T],\Z)_\bs\to(\Z[T^{\pm1}],\Z[T^{-1}])_\bs$.

Assume first that $j$ is $ (\Z[T],\Z)_\bs\to \Z[T]_\bs$. In \cite[Theorem 8.1]{scholze-cond} a left adjoint $j_!$ to $j^\ast$ is constructed. In particular, $j^\ast$ commutes with limits. 
Next, assume that $j$ is $(\Z[T],\Z)_\bs\to(\Z[T^{\pm1}],\Z[T^{-1}])_\bs$. We decompose $j$ into 
$$
(\Z[T],\Z)\stackrel{\alpha}{\to}(\Z[T,U],\Z[U])\stackrel{\iota}{\to}(\Z[T,U]/(TU-1),\Z[U]).
$$
To keep notation simple, we will write $A=\Z[U], B=\Z[T,U], C=\Z[T,U]/(TU-1)$ in what follows.  Then $j^\ast = \iota^\ast \circ \alpha^\ast = \iota^\ast[-1] \circ \alpha^\ast[1]$, and the statement will be proved if we can prove that both $\alpha^\ast[1]$ and $\iota^\ast[-1]$ commute with limits. For $\iota$, note that the forgetful functor $\iota_\ast$ {commutes with colimits and hence has a right adjoint which by the Hom-tensor adjunction is given by
$
R\underline{\mathrm{Hom}}_B(C, -)
$ (which is solid).} We claim that the natural map
$$
{R\underline{\mathrm{Hom}}_B(C, B)} \otimes_{(C,A)_\bs} \iota^\ast (-) \to {R\underline{\mathrm{Hom}}_B(C, -)} 
$$
is an equivalence. We may and do check this in the category $\QCoh((B,A)_\bs)$.  %
Using that $C \cong (B \overset{TU-1} \longrightarrow B)$ we then deduce
$$\begin{aligned}
{R\underline{\mathrm{Hom}}_B(C, B)} \otimes_{(C,A)_\bs} \iota^\ast (-)&\cong
C[-1] \otimes_{(C,A)_\bs} (C,A)_\bs \otimes_{(B,A)_\bs} (-)\\& \cong C[-1]\otimes_{(B,A)_\bs}(-)\\&\cong {R\underline{\mathrm{Hom}}_B(C,-)}
\end{aligned}
$$
whence our claim. Therefore, we see that $\iota^\ast [-1]$ agrees with the right adjoint of $\iota_\ast$, and thus commutes with limits. 

Finally, let us turn to $\alpha$. The map $\alpha$ is the base change along $\Z_\bs \to (\Z[T],\Z)_\bs$ of the map $\alpha^\prime: \Z_\bs \to \Z[U]_\bs$. Using {base change as above (which holds here: to see it, argue as in the proof of \Cref{affinoid-base-change-solid-quasicoherent-sheaves} using that $\alpha^\prime$ is steady and that we can compute the pushout of analytic rings by \Cref{prop:bcisbc}, since $\alpha^\prime$ is smooth)}, we reduce to showing that $(\alpha^\prime)^\ast[1]$ commutes with limits. But \cite[Pages 57-58]{scholze-cond} shows that $(\alpha^\prime)^\ast[1]$ has a left adjoint $\alpha_!$ defined there, and thus commutes with limits, as desired.
}
\end{proof}

\subsection{Overconvergent version and extension to rigid-analytic motives}
\label{sec:overconvergent-version-and-extension-to-motives}
It is straightforward now to give an overconvergent version of $\dRRs{X}{S}$ for dagger varieties over $S$ {in $\catD_{/\Q_p}$}.

\begin{dfn}
	Let $S$ be affinoid in $\catD_{/\Q_p}$. We let $\Sm^{\gc\dagger}/S$ be the full subcategory of $\Sm^\dagger/S$ of those objects $(\widehat{ X},X_h)$ with $\widehat{ X},X_h$ in $\Sm^{\gc}/S$. %
	For any $X=(\widehat{ X},X_h)$  in $\Aff\Sm^\dagger/S$. We let
	$
	\dRRo{X}{S}
	$
	be the object of $\QCoh(S)$ defined as
	$
	\colim \dRRs{X_h}{S}.
	$
\end{dfn}

\begin{rmk}
Filtered colimits of solid modules are solid, and filtered colimits are exact in condensed $\mathcal{O}(S)$-modules. Therefore $\dRRo{X}{S}$ is a bounded complex whose terms are $\varinjlim f_{h*}\underline{\Omega}^d_{X_h/S}$ ($f_h$ being the smooth map $X_h\to S$).
\end{rmk}

\begin{prop}\label{prop:dR}
	Let $S$ be affinoid in $\catD_{/\Q_p}$ and $X$ be in $\Sm^{\gc\dagger}/S$. 
	\begin{enumerate}
		\item Let $\mcU\to X$ be an \'etale Cech hypercover in $\Aff\Sm^\dagger/S$. Then $\dRRo{X}{S}\cong\lim\dRRo{\mcU}{S}$.
		\item 	Let $g\colon S'\to S$ be a map of affinoid spaces in $\catD$. There is a canonical equivalence ${g^*}\dRRo{X}{S}\cong \dRRo{X'}{S'}$ where $X'=X\times_SS'$.
		\item Let $g\colon Y\to S$ be another object of  $\Sm^{\gc\dagger}/S$. Then
		$$\dRRo{X}{S}{\otimes_{\QCoh(S)}}\dRRo{Y}{S}\cong\dRRo{X\times_SY}{S}.$$
	\end{enumerate}
\end{prop}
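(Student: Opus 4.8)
The plan is to deduce all three assertions from their non-overconvergent counterparts (\Cref{acyclicity-on-affinoid}, \Cref{prop:omega-bc} and \Cref{base-change-and-kunneth}) by passing to the filtered colimit in the defining formula $\dRRo{X}{S}=\colim_h\dRRs{X_h}{S}$. The formal inputs used repeatedly are: filtered colimits of solid $(A,A^+)$-modules are exact; the pullback functors $g^\ast$ are left adjoints, hence preserve all colimits; and $\otimes_{\QCoh(S)}$ preserves colimits separately in each variable.

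For (2), I would first observe that base change of the thickening $\widehat X\Subset_V X_h$ along $g\colon S'\to S$ produces $\widehat X\times_S S'\Subset_{V'}X_h\times_S S'$, that being smooth with good coordinates is preserved under base change (rational open immersions, finite \'etale maps and disc projections all base change to maps of the same type), and that the universal relative compactification and the relation $\sim\varprojlim$ are stable under base change (\cite{huber}); hence $X\times_S S'=(\widehat X\times_S S',\{X_h\times_S S'\})$ in $\Sm^{\gc\dagger}/S'$ and $\dRRo{X\times_S S'}{S'}=\colim_h\dRRs{X_h\times_S S'}{S'}$. Then $g^\ast\dRRo{X}{S}=\colim_h g^\ast\dRRs{X_h}{S}\cong\colim_h\dRRs{X_h\times_S S'}{S'}$ by \Cref{prop:omega-bc}(1) applied to each (qcqs, good-coordinates) map $X_h\to S$, which is the desired object.

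For (1), I would rerun the argument of \Cref{acyclicity-on-affinoid}: it suffices to show that $\dRRo{-}{S}$ is a bounded complex of \v{C}ech-acyclic \'etale sheaves on $\Aff\Sm^{\gc\dagger}/S$. Its degree-$i$ term is $\varinjlim_h f_{h\ast}\uOmega^i_{X_h/S}$; each $\uOmega^i_{X_h/S}$ is \v{C}ech-acyclic, being free over the \v{C}ech-acyclic sheaf $\underline{\mcO}$ (by the proof of \Cref{acyclicity-on-affinoid}), and since the alternating \v{C}ech complex of a finite cover is bounded, exactness of filtered colimits shows the colimit is again \v{C}ech-acyclic; one may reduce to finite covers because objects of $\Sm^{\gc\dagger}/S$ are affinoid, hence quasi-compact. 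The one point requiring care is that an \'etale cover $\{U_i^\dagger\to X^\dagger\}$ in $\Sm^{\gc\dagger}/S$ yields, for each fixed $h$, not a cover of $X_h$ but an \'etale cover $\{U_{ih}\to W_h\}$ of the open $W_h\subseteq X_h$ equal to the union of the images of the $U_{ih}$; as $W_h\supseteq\widehat X^{/V}$, the system $\{W_h\}$ is cofinal in $\{X_h\}$, so $\colim_h\dRRs{X_h}{S}=\colim_h\dRRs{W_h}{S}$, and after this harmless replacement \Cref{acyclicity-on-affinoid} applies levelwise.

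For (3), since $\otimes_{\QCoh(S)}$ commutes with colimits in each variable and $X_h,Y_{h'}\in\Sm^{\gc}/S$, \Cref{prop:omega-bc}(2) gives
\[
\dRRo{X}{S}\otimes_{\QCoh(S)}\dRRo{Y}{S}\;\cong\;\colim_{(h,h')}\bigl(\dRRs{X_h}{S}\otimes_{\QCoh(S)}\dRRs{Y_{h'}}{S}\bigr)\;\cong\;\colim_{(h,h')}\dRRs{X_h\times_S Y_{h'}}{S}.
\]
It then remains to identify the right-hand side with $\dRRo{X\times_S Y}{S}$, i.e. to check that $(\widehat X\times_S\widehat Y,\{X_h\times_S Y_{h'}\})$ is the object $X\times_S Y$ of $\Sm^{\gc\dagger}/S$, equivalently that $\{X_h\times_S Y_{h'}\}_{(h,h')}$ is a cofinal pro-system of strict neighbourhoods of $(\widehat X\times_S\widehat Y)^{/(V\times_S W)}$ inside a fixed ambient $X_0\times_S Y_0$. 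I expect this to be the main obstacle: it reduces to the compatibility of the universal relative compactification with fibre products over $S$ (so that $(\widehat X\times_S\widehat Y)^{/(V\times_S W)}\cong\widehat X^{/V}\times_S\widehat Y^{/W}$, whence it is $\sim\varprojlim X_h\times_S Y_{h'}$) together with the stability of $\Subset_S$ under fibre products, which one reads off the integral-closure characterisation of $\Subset_S$ recalled at the beginning of \Cref{sec:oc}.
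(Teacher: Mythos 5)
Your proposal is correct and takes essentially the same route as the paper: both express $\dRRo{X}{S}$ as $\varinjlim_h\dRRs{X_h}{S}$ and deduce (1)--(3) from \Cref{acyclicity-on-affinoid} and \Cref{prop:omega-bc} applied levelwise, using that filtered colimits of solid modules are exact and commute with the bounded \v{C}ech limits, with tensor products and with base change. The only (cosmetic) divergence is in (1), where the paper refines the hypercover so that it arises from an \'etale cover of $X_0$, giving genuine covers of each $X_h$, whereas you pass to the cofinal system of opens $W_h$ covered by the $U_{ih}$ --- both devices resolve the same levelwise-cover issue, and your extra checks on the dagger structures of $X\times_S S'$ and $X\times_S Y$ are routine verifications the paper leaves implicit.
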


\begin{proof}
	Just like in the proof of \Cref{acyclicity-on-affinoid}, it suffices to show that the sheaf of solid modules $\uOmega^{i\dagger}$ is Cech-acyclic. We let $\mcU$ be a Cech \'etale hypercover of $X$ that we may assume to be arising from an \'etale cover of $X_0$. We let $\mcU_h$ be the corresponding Cech hypercover on each $X_h$. But then $\Gamma(\mcU,\underline{\Omega}^{\dagger i})\cong\varinjlim \Gamma(\mcU_h,\underline{\Omega}^i)$. As filtered colimits commute with finite limits in $\QCoh(S)$, the claim follows from the acyclicity of $\underline{\Omega}^i$.   %
	Properties (2) and (3) follow from \Cref{prop:omega-bc} and  the commutation of filtered colimits with tensor products and base change functors. 
	\end{proof}

\begin{cor}\label{cor:dRoc}
The functor $X\mapsto \dRRo{X}{S}$ can be uniquely extended into a functor $\dRRo{-}{S}$ from $\RigSm^\dagger/S$ to $\QCoh(S)$  for any $S\in\catD_{/\Q_p}$ in a way that:
\begin{enumerate}
\item  for any $\mcU\to X$  \'etale Cech hypercover in $\Aff\Sm^\dagger/S$ one has  $\dRRo{X}{S}\cong\lim\dRRs{\mcU}{S}$;
\item 	 	for any open immersion $j\colon U\to S$   in $\catD$ there is a canonical equivalence 
$ j^*\dRRo{X}{S}\cong \dRRs{X\times_SU}{U}^\dagger$.
\end{enumerate}
Moreover, it satisfies the following properties.
\begin{enumerate}\setcounter{enumi}{2}
\item If $ X$ is qcqs in $\RigSm^\dagger/S$ and if $g\colon S'\to S$ is {map} in $\catD$, then $ g^*\dRRo{X}{S}\cong \dRRs{X'}{S'}^\dagger$ where $X'=X\times_SS'$.
\item If $f\colon X\to S$ and $g\colon Y\to S$ are {qcqs} in $\Sm^\dagger/S$ then
$$
\dRRo{X}{S}{\otimes_{\QCoh(S)}}\dRRo{Y}{S}\cong\dRRo{X\times_SY}{S}.
$$
\item The natural projection induces an equivalence  $\dRRo{\B^{1\dagger}_X}{S}\cong\dRRo{X}{S}$.
\item One has  $\dRRo{\T^{1\dagger}_S}{S}\cong 1\oplus 1[-1]$ where $1$ is the unit of the monoidal structure on $\QCoh(S)$. %
\end{enumerate}
\end{cor}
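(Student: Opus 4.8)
The plan is to transcribe, almost verbatim, the construction of $\dRRs{-}{S}$ for arbitrary $S\in\catD$ carried out just above (together with the proof of \Cref{base-change-and-kunneth}), replacing \Cref{prop:omega-bc} and \Cref{acyclicity-on-affinoid} by their overconvergent counterparts in \Cref{prop:dR}, and then to isolate the two genuinely new inputs, which are exactly items (5) and (6). These are the only places where overconvergence is used in an essential way, since the analogous statements are \emph{false} for the non‑overconvergent complex $\dRRs{-}{S}$ (already $\dRRs{\B^1_S}{S}\not\simeq 1$ because $p$‑adic integration does not preserve $\mcO(\B^1_S)$), so they cannot be imported from the rigid‑analytic side.

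For the extension and items (1)--(2): locally every object of $\RigSm^\dagger/S$ lies in $\Sm^{\gc\dagger}/S$, so $\Sm^{\gc\dagger}/S$ and $\Sm^\dagger/S$ carry equivalent analytic (and \'etale) topoi, exactly as for $\Sm^{\gc}/S$ and $\Sm/S$. When $S$ is affinoid in $\catD_{/\Q_p}$, \Cref{prop:dR}(1) shows that $X\mapsto\dRRo{X}{S}$ has \'etale hyperdescent on $\Sm^{\gc\dagger}/S$, hence extends uniquely, by right Kan extension along hypercovers (precisely as in \Cref{rmk:ext1}), to a functor on $\RigSm^\dagger/S$ satisfying (1); item (2) for an affinoid open immersion $U\hookrightarrow S$ is then \Cref{prop:dR}(2). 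For general $S$ I would choose an affinoid rational hypercover $S_\bullet\to S$ and define $\dRRo{X}{S}$ as the object of $\QCoh(S)\cong\lim\QCoh(S_\bullet)$ (\Cref{thm:andreychev}) assembled from the $\dRRo{X\times_S S_n}{S_n}$, the descent datum being furnished by (2); uniqueness and (1)--(2) in general then follow by rational descent of $\QCoh$.

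Items (3) and (4) are local on $S$ and on $S'$, so one reduces to $S,S'$ affinoid. When $X$ (resp.\ $X$ and $Y$) is qcqs it admits a finite \'etale hypercover by objects of $\Sm^{\gc\dagger}/S$ (using that $\Sm^{\gc\dagger}/S$ is stable under the base changes that occur), so $\dRRo{X}{S}$ is a finite limit of the $\dRRo{-}{S}$ of the pieces; since $g^\ast$ is exact and $\otimes_{\QCoh(S)}$ commutes with finite limits, (3) and (4) follow from the affinoid good‑coordinate cases \Cref{prop:dR}(2)--(3), in complete parallel with the proof of \Cref{base-change-and-kunneth}(3)--(4).

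Finally, for (5)--(6). By the K\"unneth isomorphism (4), for $X$ qcqs one has $\dRRo{\B^{1\dagger}_X}{S}\cong\dRRo{X}{S}\otimes_{\QCoh(S)}\dRRo{\B^{1\dagger}_S}{S}$, and by (1) general $X$ reduces to the qcqs case, so (5) amounts to the identity $\dRRo{\B^{1\dagger}_S}{S}\cong 1$. For $S=\Spa(A,A^+)$ affinoid this object is represented by the two‑term complex $\underline{\mcO^\dagger(\B^{1\dagger}_S)}\xrightarrow{\partial_t}\underline{\mcO^\dagger(\B^{1\dagger}_S)}$ with $\mcO^\dagger(\B^{1\dagger}_S)=\varinjlim_h A\langle\pi^{1/h}t\rangle$, and the point I would spell out is the overconvergent Poincar\'e lemma: termwise antidifferentiation $\sum a_n t^n\mapsto\sum\tfrac{a_n}{n+1}t^{n+1}$ maps $A\langle\pi^{1/h}t\rangle$ into $A\langle\pi^{1/h'}t\rangle$ for every $h'>h$, because the slightly larger radius absorbs the $p$‑adic denominators $|n+1|_p^{-1}\le n+1$ (exponential growth of $|\pi|^{-n(1/h-1/h')}$ beats the polynomial factor). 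Hence $\partial_t$ is strictly surjective on the colimit with kernel the constants $A$, and \Cref{lemma:guido} gives $\dRRo{\B^{1\dagger}_S}{S}\cong\underline A=1$; descent along $S_\bullet\to S$ then handles arbitrary $S$. For (6) the same computation on the overconvergent annulus, with $\tfrac{dt}{t}$ as a basis of $\Omega^1_{\T^{1\dagger}_S/S}$, shows that $\dRRo{\T^{1\dagger}_S}{S}$ has cohomology $\underline A$ in degrees $0$ and $1$, the degree‑$0$ part being the constants and the degree‑$1$ part being generated by $[\tfrac{dt}{t}]$; the section $S\hookrightarrow\T^{1\dagger}_S$ at $t=1$ splits the structural projection and induces a retraction of $\dRRo{\T^{1\dagger}_S}{S}$ onto $1$ which is an isomorphism on $H^0$, so $\dRRo{\T^{1\dagger}_S}{S}\cong 1\oplus N$ with $N$ concentrated in degree $1$ and $H^1(N)\cong 1$, i.e.\ $N\simeq 1[-1]$; since the splitting is defined over $S$ it descends to general $S$. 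The main obstacle, and the only non‑formal step, is precisely this overconvergent Poincar\'e lemma for $\B^{1\dagger}$ and $\T^{1\dagger}$; everything else is a mechanical rerun of arguments already established in the section.
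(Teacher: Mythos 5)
Your proposal is correct, and for the extension and items (1)--(4) it runs exactly as the paper does: everything is transcribed from the non-dagger construction and from the proof of \Cref{base-change-and-kunneth}, with \Cref{prop:dR} replacing \Cref{prop:omega-bc} and \Cref{acyclicity-on-affinoid} (including, implicitly, the commutation of $j^*$ with arbitrary limits for open immersions established there). Where you genuinely diverge is in the computational core (5)--(6). The paper first uses (2)--(3) to reduce to $S=\Spa(\Q_p)$, rewrites $\dRRo{\B^{1\dagger}}{\Q_p}$ and $\dRRo{\T^{1\dagger}}{\Q_p}$ by cofinality as colimits over Fr\'echet algebras of functions on open discs/annuli of radius tending to $1$, and then promotes the classical Monsky--Washnitzer/Gro\ss e-Kl\"onne computations to solid vector spaces using strictness of the differential on smooth Stein spaces together with a Fr\'echet version of \Cref{lemma:guido}. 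You instead stay at the Banach levels $A\langle\pi^{1/h}t\rangle$ over an arbitrary affinoid base and prove the overconvergent Poincar\'e lemma by hand: antidifferentiation is a bounded map from level $h$ to any level $h'>h$ (the exponential gain in radius absorbs the $p$-adic denominators), so the transition maps kill the condensed $H^1$ of the disc, resp.\ collapse it onto the residue line for the annulus, and the splitting in (6) comes from the unit section $t=1$. Your route is more elementary and self-contained (no Stein/Fr\'echet input, no reduction to $\Q_p$, works uniformly over any affinoid $A$), while the paper's buys brevity by citing known overconvergent de Rham computations. One presentational nit: invoking \Cref{lemma:guido} for ``strict surjectivity on the colimit'' is not quite the right packaging, since $\dRRo{X}{S}$ is by definition the colimit of the underlined Banach-level complexes; the clean statement is that your antidifferentiation maps are bounded, hence maps of condensed modules at each finite level, so the transition maps on $H^1$ vanish (resp.\ factor through the residue) and exactness of filtered colimits in $\QCoh(S)$ concludes --- but the norm estimate you give is precisely what this requires, so this is a matter of phrasing, not a gap.
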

\begin{proof}As any smooth dagger space over $S$ is locally in $\Sm^{\gc\dagger}/S$, the first four claims follow formally from \Cref{prop:dR} as in the proof of \Cref{base-change-and-kunneth}. We now move to the last two. Using (2)-(3), it is enough to compute $\dRRo{X}{S}$ when $S=\mathrm{Spa}(\Q_p)$ and $X=\B^{1 \dagger}_{\Q_p}$ [resp.  $X=\T^{1 \dagger}_{\Q_p}$]. {We note that the classical computations show that the underlying $\Q_p$-vector spaces are the expected ones, and we now have to promote these computations to solid $\Q_p$-{vector spaces}. To this aim, we will use once again \Cref{lemma:guido}.}
		
By cofinality, we may re-write the complex 	$\dRRo{X}{S}$ as follows:
$$
\varinjlim \underline{\mcO(X^\circ_\varepsilon)}\to\varinjlim \underline{\mcO(X^\circ_\varepsilon)}dT
$$
where $\mcO(X^\circ_\varepsilon)$ is the Fr\'echet algebra of functions on the open disc [resp. annulus] of radius $1+\varepsilon$ [and $1-\varepsilon$] with $\sqrt{|\Q_p|}\ni \varepsilon\to0$ inside $\Spa\Q_p\langle pT\rangle$.  We need to show that its cohomology in degree 1 is trivial [resp. isomorphic to $\underline{\Q}_p$]. We may and do show that the $H^1$ of each complex $\underline{\mcO(X^\circ_\varepsilon)}\to \underline{\mcO(X^\circ_\varepsilon)}dT$ is trivial [resp. $\underline{\Q_p}$]. 

Noting that \Cref{lemma:guido} also holds for Fr\'echet spaces (since the open mapping theorem holds for them as well{, cf. \cite[Proposition 8.6]{schneider}}) and that the differential map is {strict\footnote{Recall that a morphism $f: V \to W$ of topological vector spaces is \textit{strict} if the quotient topology on $\mathrm{im}(f)$ induced from $V$ coincides with the subspace topology induced from $W$.}} (it is so for any smooth Stein space over a finite extension of $\Q_p$, cf. \cite[Lemma 4.7]{gk-over}) we conclude that the solid {vector space} $H^1$ coincides with $\underline{ \mcO(X^\circ_\varepsilon)dT/d\mcO(X^\circ_\varepsilon)}$ which is zero [resp. $\underline{\Q_p}$] by the standard computations of the (overconvergent) de Rham cohomology of such Stein spaces \cite{mw-fc1,gk-dR}.%
\end{proof}

\begin{dfn}
{We let $\RigDA(S)^{\mathrm{ct}}$ (ct standing for \emph{ constructible}) be the full  {idempotent complete }subcategory of $\RigDA(S)$ stable under shifts and finite colimits generated by the objects $\Q_S(X)(n)$ with $X\to S$ smooth and qcqs, and $n\in\Z$. It coincides with the category of compact objects $\RigDA(S)^{\omega} $ if $S $ is itself quasi-compact and quasi-separated (see \Cref{thm:basicprop}(1))  and it is stable under tensor products and pullbacks. }
\end{dfn} 

 The infinity-categorical translation of the corollary above is the following (compare with \Cref{rmk:ext1}).

\begin{cor}\label{cor:dR} 
	Let $S$ be in $\catD_{/\Q_p}$. \begin{enumerate}
\item 	There is a {unique}  functor $$\dR_S\colon\RigDA(S)\cong \RigDA^\dagger(S)	\to \QCoh(S)^{\op}$$ associating to each motive  $\Q_S(X)$ with $X\in\RigSm^\dagger/S$ the complex $\dRRo{X}{S}$.
\item The functor above is compatible with $j^*$ for any open immersion  $j\colon U\to S$.
\item  {The restriction to   constructible objects 
$$
\RigDA(S)^{\mathrm{ct}}\to\QCoh(S)^{\op}
$$
is symmetric monoidal and compatible with $f^*$ for any {morphism}{ $f\colon S'\to S$, }
giving rise to a natural transformation $$
\dR \colon \RigDA(-)^{\mathrm{ct}}\rightarrow \QCoh(-)^{\op}
$$between contravariant functors from  {$\catD_{/\Q_p}$} with values in symmetric monoidal infinity-categories.}%
	\end{enumerate}
\end{cor}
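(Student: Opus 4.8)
The strategy is to invoke the universal property of $\RigDA^{\dagger}(S)$ recorded in \Cref{rmk:up}, applied to the functor $X \mapsto \dRRo{X}{S}$ from \Cref{cor:dRoc}, and then to promote the resulting functor to a natural transformation by checking compatibility with pullbacks. First I would fix $S$ affinoid (the general case will follow at the end by gluing along a rational hypercover, using \Cref{thm:andreychev}). By \Cref{thm:oc=} we have $\RigDA(S)\cong\RigDA^{\dagger}(S)$, so it suffices to produce the functor out of $\RigDA^{\dagger}(S)$. The functor $X\mapsto \dRRo{X}{S}$ of \Cref{cor:dRoc} is defined on $\RigSm^{\dagger}/S$ and takes values in the presentable $\infty$-category $\QCoh(S)^{\op}$; by \Cref{cor:dRoc}(1) it satisfies \'etale (hyper)descent, by \Cref{cor:dRoc}(5) it is $\B^{1\dagger}$-invariant, and by \Cref{cor:dRoc}(6) it sends $T^{\dagger}_S$ to an invertible object (since $\dRRo{\T^{1\dagger}_S}{S}\cong \mathbf{1}\oplus\mathbf{1}[-1]$, the split cokernel $T^{\dagger}_S$ maps to $\mathbf{1}[-1]$, whose tensor inverse is $\mathbf{1}[1]$). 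Thus the universal property of \Cref{rmk:up} — in the form valid for an arbitrary target $\infty$-category with small colimits, cf.\ \cite[Theorem 2.30]{robalo} — yields a unique colimit-preserving functor $\dR_S\colon \RigDA^{\dagger}(S)\to\QCoh(S)^{\op}$ with $\dR_S(\Q_S(X))\cong\dRRo{X}{S}$, proving (1).

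For (2), I would observe that both $\dR_S$ and $j^*\circ\dR_U$ (for $j\colon U\to S$ an open immersion) are colimit-preserving functors $\RigSm^{\dagger}/S\to\QCoh(S)^{\op}$ agreeing on the generators $\Q_S(X)$ by \Cref{cor:dRoc}(2); by the uniqueness clause of the universal property they agree. More precisely, one checks first that $j^*\dRRo{X}{S}\cong\dRRo{X\times_S U}{U}$ naturally in $X$, which is exactly \Cref{cor:dRoc}(2), and then that $j^*$ is itself colimit-preserving on $\QCoh(-)$ for open immersions (this is part of the content of \Cref{base-change-and-kunneth}/\Cref{prop:omega-bc} and the discussion of localizations of analytic rings in \Cref{prop:bcisbc}), so that the comparison descends to motives.

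For (3), the point is that the monoidal structure and the compatibility with $f^*$ for an \emph{arbitrary} morphism $f\colon S'\to S$ only hold on the constructible (equivalently, on affinoid $S$, compact) subcategory, because \Cref{cor:dRoc}(3)--(4) require qcqs hypotheses. I would argue as follows. The subcategory $\RigDA(S)^{\mathrm{ct}}$ is generated under shifts and finite colimits (and retracts) by the $\Q_S(X)(n)$ with $X\to S$ smooth qcqs; since $\QCoh(S)^{\op}$ is stable, the restriction of $\dR_S$ to this subcategory is determined by its values on these generators. Symmetric monoidality then follows from \Cref{cor:dRoc}(4), i.e.\ the K\"unneth equivalence $\dRRo{X}{S}\otimes_{\QCoh(S)}\dRRo{Y}{S}\cong\dRRo{X\times_S Y}{S}$, together with the fact that $\Q_S(X)\otimes\Q_S(Y)\cong\Q_S(X\times_S Y)$ (\Cref{thm:basicprop}(1)) and that finite colimits and retracts are preserved by the monoidal product in both sides. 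Compatibility with $f^*$ on constructible objects follows from \Cref{cor:dRoc}(3): for $X\to S$ smooth qcqs one has $f^*\dRRo{X}{S}\cong\dRRo{X\times_S S'}{S'}$, matching $f^*\Q_S(X)\cong\Q_{S'}(X\times_S S')$; by $\infty$-categorical naturality of these base-change isomorphisms (which I would extract from \Cref{prop:bcisbc} and \Cref{affinoid-base-change-solid-quasicoherent-sheaves}, exactly as in \Cref{base-change-and-kunneth}) this assembles into the asserted natural transformation $\dR\colon\RigDA(-)^{\mathrm{ct}}\Rightarrow\QCoh(-)^{\op}$ of functors on $\catD_{/\Q_p}$.

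The main obstacle is not any single step but the bookkeeping of \emph{coherence}: the universal property of \Cref{rmk:up} hands us the functor $\dR_S$ for each $S$ essentially for free, but upgrading the pointwise base-change equivalences $f^*\dRRo{X}{S}\cong\dRRo{X'}{S'}$ into a genuine natural transformation of $\infty$-functors — compatibly with composition of morphisms in $\catD_{/\Q_p}$ and with the symmetric monoidal structures — requires one to work with the functors $\mcD_{\an}(\Sm/-)\to\QCoh(-)$ of analytic sheaves of $\infty$-categories (\Cref{rmk:ext1}) rather than with objects one at a time. I expect this to be handled, as elsewhere in the paper, by first establishing the comparison on the level of presheaves on $\Sm^{\gc\dagger}/-$ (where everything is given by explicit formulas and \Cref{prop:omega-bc} applies) and then transporting it through the localizations defining $\RigDA^{\dagger}$ and through rational descent, invoking the relevant universal properties (\cite[Proposition 2.3.7]{agv}, \cite[Theorem 2.30]{robalo}) to avoid writing down higher coherences by hand; the restriction to qcqs/constructible objects is precisely what makes the base-change and K\"unneth inputs available.
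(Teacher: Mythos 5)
Your proposal is correct and follows essentially the same route as the paper: point (1) is obtained from \Cref{thm:oc=} together with the universal property of \Cref{rmk:up}, using that \Cref{cor:dRoc} supplies \'etale descent, $\B^{1\dagger}$-invariance and invertibility of the image of $T^\dagger_S$, while (2) and (3) are deduced from the base-change and K\"unneth statements of \Cref{cor:dRoc} on the constructible subcategory. The only difference is your initial reduction to affinoid $S$, which is harmless but unnecessary since \Cref{cor:dRoc} is already stated for arbitrary $S\in\catD_{/\Q_p}$.
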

\begin{proof}For the first point, 
	in light of \Cref{thm:oc=}, by the universal property of $\RigDA^\dagger(S)$ (see \Cref{rmk:up}) it suffices to prove that the functor $\Q_S(X)\mapsto \dRRo{X}{S}$ 
	is  $\B^{1\dagger}_S$-invariant, has \'etale descent and sends the motive $T^\dagger_S$ to an invertible one. All these properties were proved in \Cref{cor:dRoc}. %
	\Cref{cor:dRoc} also implies that $\dR_S$ is symmetric monoidal and compatible with pullbacks on the full pseudo-abelian stable subcategory of $\RigDA(S)$ generated under finite colimits by the objects $\Q(X)(d)$ with $X$ affinoid and $d\in\Z$, which is precisely  $\RigDA(S)^{\mathrm{ct}}$.
\end{proof}

\begin{dfn}
	Under the hypotheses of \Cref{cor:dR} we call the functor
	$$
	{\dR_S} :\RigDA(S)\to \QCoh(S)^{\op}
	$$
	the \emph{(relative) overconvergent de Rham realization}. {When $M$ is the motive $M=\Q_S(X)$ of a smooth variety $X$ over $S$,  or more generally if $M=p_!p^!\Q_S$ for some map $p\colon X\to S$ which is locally of finite type (see \cite[Corollary 4.3.18]{agv}), we will often write $\dR_S(X)$ instead of $\dR_S(M)$.} %
\end{dfn}

	\begin{rmk}
We point out that the equivalence $\RigDA(S)\cong\RigDA^\dagger(S)$ and the fact that $\dR_S$ is motivic imply in particular that the overconvergent de Rham complex $\dRRo{X}{S}$ doesn't depend on the choice of a dagger structure on $X$.
\end{rmk}

\begin{rmk}
In case $S$ is affinoid, then we may take the cohomology groups ${H^{i}_{\dR}(M/S)^{\dagger}} \colonequals H^i(\dR_S(M))$ with respect to the $t$-structure of \Cref{rmk:t-s} and call them  the\emph{ $i$-th overconvergent de Rham cohomology group of $M$ over $S$}. In case $M=p_!p^!\Q_S$ for a  map $p\colon X\to S$ which is locally of finite type, we may abbreviate them as ${H^{i}_{\dR}(X/S)^{\dagger}}$.
\end{rmk}

Just like in the absolute case, there is no need of an overconvergent structure  for smooth  proper varieties.

\begin{prop}\label{prop:superf}
Let $X\to S$ be a smooth proper map in $\catD_{/\Q_p}$. The complex $\dRRo{X}{S}$ is equivalent to the complex $\dRRs{X}{S}$.%
\end{prop}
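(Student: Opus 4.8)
The plan is to reduce to an affinoid base $S$ and then to exploit that, when $X$ is proper, the overconvergent strict neighbourhoods appearing on each member of a finite affinoid cover of $X$ can themselves be chosen so as to still cover $X$; the \v Cech complex of such a cover already computes $\dRRs{X}{S}$, and survives unchanged the passage to the overconvergent colimit.

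\emph{Reduction and choice of a dagger structure.} Both $S\mapsto\dRRs{X}{S}$ and $S\mapsto\dRRo{X}{S}$ are compatible with pullback along open immersions of the base (\Cref{base-change-and-kunneth} and \Cref{cor:dRoc}), $\QCoh(-)$ satisfies analytic descent (\Cref{thm:andreychev}), and properness is stable under base change; so I may assume $S=\Spa(A,A^+)$ affinoid. Since $\dR_S$ is motivic, $\dRRo{X}{S}$ equals $\dR_S(\Q_S(X))$ and is therefore independent of the dagger structure we put on $X$: it suffices to exhibit one convenient such structure and compute with it. As $X\to S$ is proper it is in particular qcqs, hence $X$ has a finite cover by affinoid opens; by a standard argument using quasi-compactness I may choose two finite affinoid covers $\{\widehat U_i\}_{i=1,\dots,N}$ and $\{\widehat U'_i\}_{i=1,\dots,N}$ of $X$ with $\widehat U'_i\Subset_S\widehat U_i$ for all $i$ (note that, $X$ being proper over $S$, the universal compactification $(\widehat U'_i)^{/S}$ is just the closure of $\widehat U'_i$ inside $X$, cf. \cite[Theorem 5.1.5]{huber}, so this condition means $\overline{\widehat U'_i}\subset\widehat U_i$). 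The thickenings $\widehat U'_i\Subset_S\widehat U_i$ define affinoid dagger varieties over $S$ which, by \'etale descent, glue to a dagger structure $X^\dagger$ on $X$; over $\widehat U'_i$ this structure is $(\widehat U'_i,\{U'_{ih}\}_h)$, where $\{U'_{ih}\}_h$ is a cofinal system of affinoid strict neighbourhoods of $\overline{\widehat U'_i}$ inside $\widehat U_i$ (in particular each $U'_{ih}$ is an affinoid open of $X$ containing $\widehat U'_i$).

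\emph{Computation.} I would compute $\dRRo{X}{S}=\dRRo{X^\dagger}{S}$ by \v Cech descent (\Cref{cor:dRoc}) along the dagger cover $\{(\widehat U'_i,\{U'_{ih}\}_h)\}_i$. Since this cover has $N$ members, the associated normalised \v Cech complex is bounded, with term in cohomological degree $p$ equal to
$$
\bigoplus_{i_0<\dots<i_p}\dRRo{\big(\widehat U'_{i_0}\cap\dots\cap\widehat U'_{i_p},\ \{U'_{i_0h}\cap\dots\cap U'_{i_ph}\}_h\big)}{S}\ \cong\ \bigoplus_{i_0<\dots<i_p}\colim_h\dRRs{U'_{i_0h}\cap\dots\cap U'_{i_ph}}{S},
$$
the intersections being affinoid as $X$ is quasi-separated. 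Thus $\dRRo{X^\dagger}{S}$ is a finite totalisation of objects of the form $\colim_h(-)$; because filtered colimits are exact in $\QCoh(S)$, hence commute with finite limits, I may pull the colimit out and get $\dRRo{X}{S}\cong\colim_h\big(\lim_\Delta\dRRs{(\mcU_h)^\bullet}{S}\big)$, where $\mcU_h=\{U'_{ih}\}_i$ and $(\mcU_h)^\bullet$ denotes its \v Cech nerve. For every $h$ the family $\mcU_h$ is a finite affinoid cover of $X$ (it contains the cover $\{\widehat U'_i\}_i$), so by the analytic descent of $\dRRs{-}{S}$ (\Cref{acyclicity-on-affinoid}, \Cref{base-change-and-kunneth}) each inner cosimplicial totalisation is canonically identified with $\dRRs{X}{S}$; these identifications are natural in the cover, so the transition maps of the colimit — restriction along the refinements $\mcU_{h'}\to\mcU_h$ for $h'\ge h$ — become the identity of $\dRRs{X}{S}$. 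A filtered colimit of a system isomorphic to the constant one is that constant value, whence $\dRRo{X}{S}\cong\dRRs{X}{S}$.

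\emph{Expected difficulty.} The conceptual point — that for proper $X$ the overconvergent neighbourhoods can be shrunk within a cover without ceasing to cover — is simple; the work is bookkeeping: verifying the existence of the pair of shrinkable finite affinoid covers with the $\Subset_S$-relation (which is exactly where properness of $X\to S$ enters, via Huber's universal compactification), checking that the intersections of the chosen strict neighbourhoods again form cofinal affinoid strict neighbourhoods of the corresponding intersection, and carefully commuting the filtered colimit in $h$ past the \v Cech totalisation so that it collapses onto $\dRRs{X}{S}$. No homotopy-theoretic input is needed beyond the descent properties of $\dRRs{-}{S}$, $\dRRo{-}{S}$ and $\QCoh(-)$ recorded above.
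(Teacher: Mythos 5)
Your proposal is correct and is essentially the paper's own argument: choose a finite cover of $X$ whose overconvergent thickenings (available because $X$ is proper over $S$) still cover $X$, compute $\dRRo{X}{S}$ by \v Cech descent, commute the filtered colimit in $h$ with the finite totalisation, and identify each inner limit with $\dRRs{X}{S}$ by descent of $\dRRs{-}{S}$. The only difference is cosmetic: the paper takes strict neighbourhoods $U_{ih}$ of the $U_i$ inside $X$ itself (using $U_i\Subset_S X$), rather than inside a second auxiliary cover.
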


\begin{proof}
We may and do assume $S$ is affinoid. Let $\{U_0,\ldots, U_N\}$ be a finite open cover of $X$ made of objects in $\Sm^{\gc}/S$. The inclusions $U_i\Subset_S X$ induce overconvergent structures $V_i=(U_i,U_{ih})$ which are such that $\{U_{1h},\ldots,U_{Nh}\}$ is again an open cover of $X$. But then we get
$$\begin{aligned}\dRRo{X}{S}&\cong\lim\dRRo{V_\bullet}{S}\\&\cong\lim\varinjlim_h\dRRs{U_{\bullet h}}{S}\\&\cong \varinjlim_h\lim\dRRs{U_{\bullet h}}{S} \\&\cong\dRRs{X}{S}
\end{aligned}$$
where we used the commutation of filtered colimits with finite limits and descent of $\dRRs{-}{S}$ (see \Cref{base-change-and-kunneth}).
\end{proof}

\begin{rmk}
Even if the overconvergent setting is ``superfluous'' when dealing with smooth proper maps $X/S$, we stress that it is crucial in order to have a realization $\dR_S$ on   motives $\RigDA(S)$ (and not just ``pure'' ones). This allows one to use the motivic six-functor formalism and its consequences, which give non-trivial results even when applied to ``pure'' motives (see for example \Cref{cor:dRm}).
\end{rmk}

\subsection{Finiteness}
We would like to conclude the same finiteness results for the relative rigid de Rham cohomology as the relative \emph{algebraic} de Rham  cohomology (see for example \cite{hartshorne-dR}) that is: the fact that it defines vector bundles on the base in case $X/S$ is proper and smooth, or whenever $S$ is a field.
\begin{dfn}\label{dfn:dual}
	Let $\mcC$ be a symmetric monoidal infinity-category. We denote by $\mcC^{\fd}$ the full subcategory of $\mcC$ whose objects are (fully) dualizable in the sense of \cite[Definition 4.6.1.7]{lurie-ha}. 
\end{dfn}

We now prove the main theorem of this section.
\begin{thm}\label{thm:dRm}
	Let $S$ be an adic space in $\catD_{/\Q_p}$. The relative overconvergent de Rham realization$$
\dR_S \colon	\RigDA(S)\to\QCoh(S)^{\op}
	$$
	sends dualizable motives to split perfect complexes. In particular, if $M$ is a dualizable motive, then the {cohomology groups of $\dR_S(M)$ (for the $t$-structure on the derived category of perfect complexes {induced} by the natural $t$-structure on the derived category of $\mathcal{O}_S$-modules)} are vector bundles on $S$ and equal to $0$ if $|i|\gg0$.%
\end{thm}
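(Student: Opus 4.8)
The plan is to deduce the statement, by working locally on $S$, from the symmetric monoidality of $\dR_S$ on constructible motives (\Cref{cor:dR}) together with Andreychev's description of the dualizable objects of $\QCoh(S)$ as split perfect complexes. The one point requiring care is that $\dR_S$ is a priori only symmetric monoidal on $\RigDA(S)^{\mathrm{ct}}$, so the first task is to bring a dualizable motive into this subcategory after passing to an affinoid base.

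First I would record that every dualizable object $M$ of $\RigDA(S)$ is compact: the unit $\Q_S$ is compact, compact objects are stable under tensor products by \Cref{thm:basicprop}(1), and for $M$ dualizable with dual $M^{\vee}$ one has $\Map(M,-)\cong\Map(\Q_S,M^{\vee}\otimes-)$, which therefore preserves filtered colimits. Next I would observe that both the hypothesis and the conclusion are local on $S$: the formation of $\dR_S(M)$ commutes with restriction along an open immersion $j\colon U\to S$ by \Cref{cor:dR}(2), the restriction $j^{*}M$ of a dualizable motive is again dualizable since $j^{*}$ is symmetric monoidal (\Cref{thm:basicprop}(2)), and being a split perfect complex with vector-bundle cohomology sheaves can be tested on an affinoid cover, using that the full subcategory of dualizable objects of $\QCoh(-)$ inherits the rational descent of \Cref{thm:andreychev}. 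Hence I may assume $S=\Spa(A,A^{+})$ affinoid, in particular quasi-compact and quasi-separated; then $M$, being compact, lies in $\RigDA(S)^{\mathrm{ct}}$.

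On $\RigDA(S)^{\mathrm{ct}}$ the realization $\dR_S\colon\RigDA(S)^{\mathrm{ct}}\to\QCoh(S)^{\op}$ is symmetric monoidal by \Cref{cor:dR}(3), and a symmetric monoidal functor carries dualizable objects to dualizable objects; since an object is dualizable in a symmetric monoidal infinity-category if and only if it is dualizable in the opposite one, $\dR_S(M)$ is a dualizable object of $\QCoh(S)$. By Andreychev's characterization of the dualizable objects in $\QCoh(S)$ (\cite{andreychev}; see also \cite{scholze-an}), $\dR_S(M)$ is then a split perfect complex, that is, it is equivalent to a finite direct sum of shifted vector bundles on $S$; in particular its cohomology groups are vector bundles and vanish for $|i|\gg 0$. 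Undoing the reduction to the affinoid case (the local pieces being split perfect, their cohomology sheaves glue to vector bundles on $S$, which vanish outside a bounded range of degrees) yields the statement.

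I expect the only genuinely delicate point to be the descent bookkeeping in the reduction to affinoid $S$ — namely that ``split perfect'' is a local condition, which follows from \Cref{thm:andreychev} once one knows that Andreychev's characterization identifies it with dualizability in $\QCoh(-)$ — together with the initial observation that $\dR_S$ is only known to be monoidal on constructible (equivalently, over a qcqs base, compact) objects, so that one must first run the compactness argument. Beyond these points the argument is essentially formal, all the substance being contained in \Cref{cor:dR} and in Andreychev's theorem.
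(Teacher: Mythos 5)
Your reduction to affinoid $S$, the compactness of dualizable motives, and the conclusion that $\dR_S(M)$ is a dualizable object of $\QCoh(S)$ all agree with Step 1 of the paper's proof and are correct. The genuine gap is the next step: the theorem of Andreychev invoked here identifies the dualizable objects of $\QCoh(\Spa(A,A^+))$ with the \emph{perfect} complexes, not with the \emph{split} perfect complexes. Over a general affinoid base these notions differ: for instance the complex $(\mcO\xrightarrow{\;T\;}\mcO)$ on $\Spa\Q_p\langle T\rangle$ is perfect, hence dualizable, but its $H^0$ is a skyscraper and not a vector bundle. So monoidality of $\dR_S$ on constructible objects plus Andreychev only gives that $\dR_S(M)$ is a perfect complex; the assertion that its cohomology groups are vector bundles --- which is the actual content of the theorem --- does not follow formally and is not addressed in your argument.

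The paper closes exactly this gap with a spreading-out argument (Steps 2--4 of its proof). Using \Cref{thm:cont} one writes an affinoid $S$ as a weak limit of affinoids topologically of finite type over $\Q_p$, so that the dualizable $M$ descends to a dualizable model over some such $\Spa(A,A^+)$, and compatibility of $\dR$ with pullback (\Cref{cor:dR}) together with stability of split perfect complexes under base change reduces to that case. Then, for each closed point $s$ of $\Spa A$ (after a finite \'etale cover arranging $k(s)=K$), a second application of continuity, $\varinjlim_{s\in U}\RigDA(U)\cong\RigDA(K)$, shows that on some rational neighborhood $U$ of $s$ the motive $M_U$ is pulled back from $\Spa K$, whence $\dR_U(M_U)\cong\dR_K(M_s)\otimes_K\mcO(U)$ is split (over a field every perfect complex is split); a flatness argument (passing to the completed local ring at $s$, using faithful flatness and flatness of $A\to\mcO(U)$) then gives that each $H^i\dR_S(M)$ is finite projective over $A$, and a perfect complex with projective cohomology is split. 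Without an argument of this type, your proof establishes only perfectness of $\dR_S(M)$, not the vector-bundle statement.
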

\begin{proof}
{We may and do assume that $S$ is affinoid. } We divide the proof into various steps.\\
	{\it Step 1:} {As the unit object in $\RigDA(S)$ is compact, any dualizable object is compact. As the functor $\dR_S$ is symmetric monoidal when restricted to compact objects by \Cref{cor:dR}(3), it sends dualizable objects to dualizable objects.} Since dualizable objects in $\QCoh(S)$ are perfect complexes by \cite[Theorem 5.9 and Corollary 5.51.1]{andreychev}, we deduce that $\dR$ restricts to a functor $\RigDA(S)^{\fd}\to\mcP(S)^{\op}$ where we let {$\mcP(S)$} be the full subcategory of perfect complexes in $\QCoh(S)$.\\
	{\it Step 2:} 
	Let $f\colon S\to T$ be a morphism of affinoid spaces in $\catD_{/\Q_p}$ and suppose that a dualizable motive $M\in\RigDA(S)$ has a dualizable model $N\in\RigDA(T)$ {(i.e. $N$ is dualizable and $f^*N\cong M$).}  We then deduce from \Cref{cor:dR} the following commutative diagram
	$$
	\xymatrix{
		\RigDA(T)^{\fd}\ar[r]\ar[d] & \mcP(\mcO(T))^{\op}\ar[d]\\
		\RigDA(S)^{\fd}\ar[r]& \mcP(\mcO(S))^{\op}
	}$$
	and hence that {$\dR_S(M)\cong f^\ast \dR_T(N)$}. As split perfect complexes are stable under base change, if we know the statement holds for $N$, we can deduce it for $M$ as well.%
	\\
	{\it Step 3:} %
	 Since {$\mcO(S)$} is a uniform Tate-Huber ring, {$\mcO(S)^+$} is a ring of definition and has the $p$-adic topology. Write {$\mcO(S)^+$} as the union of its finitely generated $\Z_p$-subalgebras $R$. Since {$\mcO(S)^+$} is $p$-adically complete, we therefore get a presentation of {$(\mcO(S),\mcO(S)^+)$} as the filtered colimit of the complete affinoid rings $(\widehat{R}[1/p],\widehat{R})$, for $R$ as before. Applying \cite[Proposition 2.4.2]{sw}   (with ideals of definition generated by $p$), we deduce that $S\sim\varprojlim\Spa (A,A^+)$, with $A=\widehat{ R}[1/p]$ being a Tate algebra of topologically finite type over $\Q_p$. 
	By \Cref{thm:cont} we deduce that $\RigDA(S)\cong\varinjlim\RigDA({\Spa (A,A^+)})$ so that any dualizable motive $M$ has a model $N_A\in\RigDA({\Spa (A,A^+)})^{\fd}$ for some $A$. By Step 2, it suffices to prove the statement in case  {$S=\Spa( A,A^+)$} with $A$ an affinoid Tate algebra  of topologically finite type over a finite extension $K$ of $\Q_p$. 
	\\
	{\it Step 4:}  Any perfect complex of $A$-modules with projective cohomology groups is split. As $\dR_S(M)$ is a perfect complex, and each cohomology group $H^i\dR_S(M)$ is a finite type module over {$A$, we are left to prove that  they are free after base change to each stalk $\mcO_{\mathrm{Spec}(A),s}$ with $s$ being a closed point of $\mathrm{Spec}(A)$, corresponding to a maximal ideal $\mathfrak{m}$ of $A$. Fix such an $s$. Since $\mcO_{\mathrm{Spec}(A),s}$ is noetherian, it suffices in fact to do so after base change to the $\mathfrak{m}$-adic completion $\widehat{\mcO}_{\mathrm{Spec}(A),s}$ of $\mcO_{\mathrm{Spec}(A),s}$, as the map $\mcO_{\mathrm{Spec}(A),s} \to \widehat{\mcO}_{\mathrm{Spec}(A),s}$ is faithfully flat. The completion $\widehat{\mcO}_{\mathrm{Spec}(A),s}$ agrees with the completion of the local ring $\mcO_{S,s}$ of the adic space $S$ at $s$ (now seen as a point of $S$). In particular, it suffices to show that for each integer $i$, there exists some rational domain $U$ over $s$ such that $H^i \dR_S(M) \otimes_A \mathcal{O}(U)$ is projective. Since $A$ is an affinoid algebra of finite type, the natural map $A \to \mathcal{O}(U)$ is flat for any such $U$, and therefore $H^i \dR_S(M) \otimes_A \mathcal{O}(U)$ is nothing but $H^i \dR(M_U)$.} Up to taking a finite \'etale cover of $\Spa A$ and enlarging $K$ we may assume that $k(s)=K$.  By means of \Cref{thm:cont}  we have $\varinjlim_{s\in U}\RigDA(U)\cong\RigDA(K)$ where $U$ runs among affinoid neighborhood of $x$. We remark that in this case, the functor from right to left is induced by pullback $\Pi^*$ over the structure morphisms $\Pi\colon U\to\Spa K$. We  deduce that for some open neighborhood $U$ of $s$ the motive $M_U$ is isomorphic to $\Pi^*M_s$ with $M_s$ in $\RigDA(K)$ which implies by Step 2 that the complex $\dR_S (M)\otimes_{A}\mcO(U)\cong\dR_U(M_U)$ is quasi-isomorphic to $\dR_s(M_s)\otimes_K\mcO(U)$ which is split, proving the claim.
\end{proof}

It is well known that the relative de Rham cohomology groups $H^i_{\rm dR}(X/S)$ of a map $f\colon X\to S$ of algebraic varieties in characteristic 0 are vector bundles on the base, whenever $f$ is smooth and proper. We can prove the analogous statement for the overconvergent de Rham cohomology of adic spaces.

\begin{cor}\label{cor:dRm}
	Let $f\colon X\to S$ be a smooth and proper map in $\catD_{/\Q_p}$. {Then $\dR_S(X)$ is a perfect complex and its cohomology groups (cf. \Cref{thm:dRm}) are vector bundles on $S$}, and equal to zero if $i\gg0$.
\end{cor}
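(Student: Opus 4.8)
The plan is to obtain this as an essentially formal consequence of \Cref{thm:dRm}. By definition $\dR_S(X)=\dR_S(\Q_S(X))$, so it suffices to check that the motive $\Q_S(X)=f_\sharp\Q_X\in\RigDA(S)$ is a dualizable object: granting this, \Cref{thm:dRm} applied to $M=\Q_S(X)$ tells us that $\dR_S(X)$ is a split perfect complex of $\mcO_S$-modules whose cohomology groups, for the $t$-structure on perfect complexes coming from the natural one on $\mcO_S$-modules, are vector bundles on $S$ and vanish for $|i|\gg0$; in particular they vanish for $i\gg0$, which is what is claimed. So there is nothing to do on the de Rham side, and the whole point is the dualizability of $\Q_S(X)$.

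To prove that dualizability I would invoke the six-operation formalism for rigid analytic motives developed in \cite{agv}. As $f$ is smooth its relative dimension is locally constant, so we may assume it is a fixed integer $d$. Since $f$ is proper one has $f_!\simeq f_*$; since $f$ is smooth of dimension $d$, relative purity provides an equivalence $f^!\Q_S\simeq\Q_X(d)[2d]$ and a functorial equivalence $f_\sharp(M)\simeq f_!(M)(d)[2d]$. Combining these, $\Q_S(X)=f_\sharp\Q_X\simeq f_!f^!\Q_S$, and the standard formal manipulations with the six functors --- the projection formula, proper base change and the compatibilities of internal Hom with $f_!$ and $f_*$ --- show that such an object is strongly dualizable, with strong dual $f_*\Q_X\simeq\Q_S(X)(-d)[-2d]$. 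This is the rigid analytic avatar of motivic Atiyah--Poincar\'e duality, which is part of the formalism of \cite{agv}. (Being proper, $f$ is in particular qcqs, so $\Q_S(X)$ moreover lies in $\RigDA(S)^{\mathrm{ct}}$ and \Cref{cor:dR}(3) applies; but this is already used inside the proof of \Cref{thm:dRm}.)

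The only non-formal ingredient, and hence the point to be careful about, is this dualizability statement: it rests on the full strength of the six operations for $\RigDA$ --- in particular purity for smooth morphisms and proper base change --- and one should make sure the relevant results of \cite{agv} are available for every $S\in\catD_{/\Q_p}$ and for the notion of properness used here. If one preferred to avoid citing Atiyah duality, an alternative would be to use \Cref{prop:superf} to replace $\dRRo{X}{S}$ by $\dRRs{X}{S}$ and then combine Kiehl's finiteness theorem for proper maps with cohomology-and-base-change to see that $\dRRs{X}{S}$ is a perfect complex; however, upgrading ``perfect complex'' to ``cohomology groups are vector bundles'' still seems to require a Hodge--de Rham degeneration input, so going through \Cref{thm:dRm} is the efficient route.
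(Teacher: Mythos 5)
Your proposal is correct and follows essentially the same route as the paper: the paper's proof consists exactly of noting that $\Q_S(X)=f_!f^!\Q_S$ is dualizable (with dual $f_*f^*\Q_S$) by the six-functor formalism of \cite[Corollary 4.1.8]{agv}, and then invoking \Cref{thm:dRm}. The smooth-purity/projection-formula argument you sketch is precisely the content of that cited corollary, so no further verification is needed.
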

\begin{proof}
	By the six-functor formalism, the motive $f_!f^!\Q=\Q_S(X)$ is dualizable in $\RigDA(S)$ with dual $f_*f^*\Q$ as shown in \cite[Corollary 4.1.8]{agv}.
\end{proof}

\begin{rmk}
	We also remark that 	\Cref{thm:dRm} generalizes \cite{vezz-mw} as any compact motive in $\RigDA(K)$ with $K$ a complete non-archimedean field  is   dualizable: this can be seen by \cite[Proposition 2.31]{ayoub-new} and \cite{riou-dual}.
\end{rmk}

\begin{rmk}
We point out that \Cref{thm:dRm} and \Cref{cor:dRm} hold for any motivic realization which is compatible with tensor products and pullbacks, taking values in solid quasi-coherent sheaves.
\end{rmk}

\section{A rigid analytic Fargues-Fontaine construction}\label{sec:dwork}

In this section we construct a functorial motivic realization from \emph{rigid analytic} motives over a base in characteristic $p$ with values in motives over the corresponding adic Fargues-Fontaine curve (in characteristic $0$). This is akin to the usual \emph{perfectoid} constructions of Fargues-Fontaine and Scholze, that we de-perfectoidify using {homotopies}, i.e. via the motivic results shown in \Cref{sec:mot}. 

\subsection{Motives on Fargues-Fontaine curves}
We first apply the formalism of motives for a special kind of adic spaces, namely Fargues-Fontaine curves associated to  perfectoid spaces. We briefly recall how they are constructed.

\begin{dfn}Let  $S$ be a   perfectoid space in characteristic $p$ with some pseudo-uniformizer $\pi\in\mcO^\times(S)$. We let $\mcY_{[0,\infty)}(S)$ [resp. $\mcY_{(0,\infty)}(S)$] be the adic space $S\oset{\bullet}{\times}\Spa\Z_p$ [resp. $S\oset{\bullet}{\times}\Spa\Q_p$] using the notation of \cite[Section 11.2]{berkeley}. In case $S$ is affinoid $S=\Spa(R,R^+)$, it coincides with the open locus $\{|\pi|\neq0\}$ [resp. $\{|p\pi|\neq0\}$] in the spectrum  $\Spa(W(R^+),W(R^+))$ and is obtained by gluing along affinoids in the general case. For any $r=(a/b)\in\Q_{>0}$ we also let $\B_{[0,r]}(S)$ [resp. $\B_{(0,r]}(S)$] be the open locus of $\mcY_{[0,\infty)}(S)$ [resp. of $\mcY_{(0,\infty)}(S)$] defined by $|p|^b\leq|\pi|^a$ [resp. $0<|p|^b\leq|\pi|^a$].
	
	The (invertible) Frobenius endomorphism $\mcO_S^+\to \mcO_S^+$ induces an automorphism $$\varphi\colon\mcY_{[0,\infty)}(S)\stackrel{\sim}{\to}\mcY_{[0,\infty)}(S)$$
	which restricts to the Frobenius automorphism on the $\varphi$-stable closed subspace  $S\cong\{p=0\}\subset\mcY_{[0,\infty)}(S)$. One has $\varphi(\B_{[0,r]}(S))=\B_{[0,pr]}(S)$ (see for example \cite[Page 136]{berkeley}) so that 
	the action on $\mcY_{(0,\infty)}(S)$ is properly discontinuous, hence it makes sense to define the quotient adic space  $\mcX(S)\colonequals\mcY_{(0,\infty)}(S)/\varphi^\Z$ which is \emph{the relative Fargues-Fontaine curve over $S$}.
\end{dfn}

\begin{rmk}\label{adm}
We point out that {{if $S$ lies in $\catD$ (i.e. it is admissible) then also }}the spaces {$\mcY_{[0,\infty)}(S), \mcY_{(0,\infty)}(S), \mcX(S)$} {are admissible. Indeed, they are {stably strongly}  uniform, as} they are sous-perfectoid (see the proof of \cite[Proposition 11.2.1]{berkeley}). { {We are left to prove the condition on the Krull dimension. To this aim, we may suppose that $S$ has global Krull dimension $d$ and show that the Krull dimension of $\mcY_{[0,\infty)}(S)$ is bounded. As this condition translates into a condition on the maximal height of the  valuations at the residue fields, we may consider separately the closed space $S$ (of dimension $d$) and its open complementary $\mcY_{(0,\infty)}(S)$. For the latter, we can replace it by a pro-\'etale cover, since this does not alter the Krull dimension, and consider $\mcY_{(0,\infty)}(S) \times_{\mathrm{Spa}(\Q_p)} \mathrm{Spa}(\Q_p^{\rm cyc})$. This is a perfectoid space, and its tilt is isomorphic to the perfectoid punctured open unit disk over $S$. Since tilting and perfection do not change the (topological!) Krull dimension, this space has the same dimension as the open disk over $S$, which is finite by assumption on $S$.}}
\end{rmk}

We let $U$ be an open neighborhood of $S$ in $\mcY_{[0,\infty)}(S)$ of the form $U=\B_{[0,r]}(S)$ with $r\in\Z[1/p]_{>0}$. %
 The natural inclusion $j\colon U\subset \varphi(U)$ and the map $\varphi\colon U\stackrel{\sim}{\to}\varphi(U)$  %
induce a triple of endofunctors  (see Theorem \ref{thm:basicprop})  $j_\sharp,j^*,j_*$ on $\RigDA _{\et}(U,\Q)$ defined as follows
$$
\begin{aligned}
j_\sharp\colon \RigDA^{(\eff)}(U)\stackrel{j_\sharp}{\ra}\RigDA^{(\eff)}(\varphi(U)) \stackrel[\sim]{\varphi^{*}}{\ra}\RigDA ^{(\eff)}(U)\\
j^*\colon \RigDA^{(\eff)}(U)\stackrel{j^*}{\ra}\RigDA^{(\eff)}(\varphi^{-1}(U)) \stackrel[\sim]{\varphi^{-1*}}{\ra}\RigDA^{(\eff)}(U)\\
j_*\colon \RigDA ^{(\eff)}(U)\stackrel{j_*}{\ra}\RigDA^{(\eff)}(\varphi(U)) \stackrel[\sim]{\varphi^{*}}{\ra}\RigDA ^{(\eff)}(U)\\
\end{aligned}
$$
and from the canonical equivalence $\varphi^*j^*\cong j^*\varphi^*$ we deduce that they form a triple of adjoint functors $(j_\sharp,j^*,j_*)$ such that $j^*j_\sharp\cong \id $ and $j^*j_*\cong\id$.

In the following proposition, we specialize some of the general motivic results of \Cref{sec:mot} to the setting of the subspaces of the relative Fargues-Fontaine curves introduced above.

\begin{prop}
	\label{prop:dw}
	Let $S$ be a perfectoid space in $\catD_{/\F_p}$ and let $U$ be an open neighborhood of $S$ in $\mcY_{[0,\infty)}(S)$ of the form $U=\B_{[0,r]} (S)$ for some $r\in\Z[1/p]_{>0}$. 
	\begin{enumerate}
		\item\label{1} The pullback to $S$ induces an equivalence in $\Prloo$:
		$$\varinjlim_{j^*}\RigDA^{(\eff)}(U )\cong \RigDA^{(\eff)}(S )$$
		Under the equivalence above, the endofunctor $j^*$ on the left hand side corresponds to the endofunctor $\varphi^{-1*}$ on the right hand side.
		\item\label{2} The pullbacks induce an equivalence in $\Prlm$: $$\varprojlim_{j^*}\RigDA^{(\eff)}(U )\cong\RigDA^{(\eff)}(\mcY_{[0,\infty)}(S) )$$
		Under the equivalence above, the endofunctor $j^*$ on the left hand side corresponds to the endofunctor $\varphi^{-1*}$ on the right hand side.
		\item \label{3} The canonical functors induce the following equivalences in $\Prlm$:
		$$
		\RigDA^{(\eff)}(S)^{h\varphi^*}_\omega\cong(\varinjlim_{j^*}\RigDA^{(\eff)}(U))^{hj^*}_\omega\cong\RigDA^{(\eff)}(U)^{hj^*}_{\omega}$$
		$$ \RigDA^{(\eff)}(\mcY_{[0,\infty)}(S) )^{h\varphi^*}\cong (\varprojlim_{j^*}\RigDA^{(\eff)}(U))^{hj^*}\cong \RigDA^{(\eff)}(U)^{hj^*}.
		$$
		\item \label{4}If we let $\iota$ be the closed inclusion $S\subset\mcY_{[0,\infty)}(S)$, the functor $\iota^*$ induces an equivalence in $\Prloo$: $$\RigDA^{(\eff)}(\mcY_{[0,\infty)}(S))_{\omega}^{h\varphi^*}\cong \RigDA^{(\eff)}(S)^{h\varphi^*}_{\omega}$$%
		\item \label{5} The pullback functor defines the following equivalences in $\Prlm$:
		$$
		\RigDA^{(\eff)}(\mcX(S)) \cong\RigDA^{(\eff)}(\mcY_{(0,\infty)}(S))^{h\varphi^*}\cong\RigDA^{(\eff)}(\mcY_{(0,\infty)}(S))^{h\varphi^*}_{\omega}
		$$
	\end{enumerate}
\end{prop}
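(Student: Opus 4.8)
The plan is to identify the three adic spaces $S$, $\mcY_{[0,\infty)}(S)$ and $\mcX(S)$ with suitable (co)limits built out of $U$ and of $\mcY_{(0,\infty)}(S)$, and then feed these identifications into the general (co)limit and homotopy-fixed-point statements recalled in \Cref{sec:mot}.

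\emph{Parts (1) and (2).} Since $U\subset\varphi(U)$, the open subspaces $\varphi^n(U)=\B_{[0,p^nr]}(S)$ form an increasing exhaustion $\mcY_{[0,\infty)}(S)=\bigcup_{n\geq0}\varphi^n(U)$ by quasi-compact opens, while $\{\varphi^{-n}(U)\}_{n\geq0}$ is a decreasing system of affinoids whose weak limit in the sense of Huber is the closed subspace $S=\{p=0\}$ (this is exactly the Dwork-style relation $S\sim\varprojlim_{\varphi}U$ mentioned in the introduction and already used in \cite{scholze}). Using the automorphism $\varphi^n$ of $\mcY_{[0,\infty)}(S)$ to identify each $\varphi^n(U)$ with $U$, a direct check shows that under these identifications every transition functor $\RigDA^{(\eff)}(\varphi^n(U))\to\RigDA^{(\eff)}(\varphi^{n-1}(U))$ induced by the inclusion becomes the endofunctor $j^*$ of the statement: indeed $\varphi^{-1}$ carries the open immersion $\varphi^{n-1}(U)\subset\varphi^n(U)$ to $\varphi^{-1}(U)\subset U$, so the transition functor is $\varphi^{-1*}$ precomposed with the restriction along $\varphi^{-1}(U)\subset U$, which is the very definition of $j^*$. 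Part (1) then follows from continuity (\Cref{thm:cont}) together with \Cref{rmk:cont}, since the maps $S\to\varphi^{-n}(U)$ are closed immersions, hence qcqs, so the colimit is computed in $\Prloo$. Part (2) follows in the same way from the fact that $\RigDA^{(\eff)}$ sends the increasing union $\mcY_{[0,\infty)}(S)=\bigcup_n\varphi^n(U)$ to $\varprojlim_n\RigDA^{(\eff)}(\varphi^n(U))$ in $\Prlm$ (a quasi-compact smooth variety over $\mcY_{[0,\infty)}(S)$ factors through some $\varphi^n(U)$, so one argues exactly as in the proof of \Cref{thm:cont}). Finally, the pro-object $\{\varphi^{-n}(U)\}$ and the ind-object $\{\varphi^n(U)\}$ are $\varphi$-equivariant, so the index shift corresponds to $\varphi$ on $S$, respectively on $\mcY_{[0,\infty)}(S)$, and hence the endofunctor $j^*$ corresponds to $\varphi^{-1*}$, as claimed.

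\emph{Parts (3) and (4).} For perfectoid $S$ the functor $\varphi^*$ is an equivalence on $\RigDA^{(\eff)}(S)$ (\Cref{thm:Frob=}), and likewise $\varphi^*$ is invertible on $\RigDA^{(\eff)}(\mcY_{[0,\infty)}(S))$ and $j^*_\infty$ is invertible on the telescopes; hence the various $(-)^{h\varphi^*}$ and $(-)^{h\varphi^{-1*}}$ (resp.\ $(-)^{hj^*}$) agree, and the first equivalence in each line of (3) is obtained by applying the functorial construction $\mcC\mapsto\mcC^{hF}$ (and its compactly generated variant) to the equivalences of (1) and (2). The second equivalence in each line asserts that, for an endofunctor $F$ with right adjoint, the canonical functor $\mcC^{hF}\to(\varinjlim_F\mcC)^{hF}$ — and dually $\mcC^{hF}\to(\varprojlim_F\mcC)^{hF}$ — is an equivalence, compatibly with compact objects; this is precisely the homotopy-fixed-point formalism for the Frobenius worked out in \cite[Section 2.9]{agv}, and it is the one genuinely non-formal input here. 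Part (4) is then formal: by the second line of (3), $\RigDA^{(\eff)}(\mcY_{[0,\infty)}(S))^{h\varphi^*}\cong\RigDA^{(\eff)}(U)^{hj^*}$ as presentable categories, so by \Cref{rmk:hfo} (the $\omega$-subcategory is the one generated under filtered colimits by the compact objects, which are preserved by any equivalence) the $\omega$-versions agree; combined with the first line of (3) this identifies both $\RigDA^{(\eff)}(\mcY_{[0,\infty)}(S))^{h\varphi^*}_\omega$ and $\RigDA^{(\eff)}(S)^{h\varphi^*}_\omega$ with $\RigDA^{(\eff)}(U)^{hj^*}_\omega$, and the resulting equivalence is induced by $\iota^*$ because all the functors in sight are pullbacks and $\iota\colon S\hookrightarrow\mcY_{[0,\infty)}(S)$ factors through every $\varphi^n(U)$ compatibly with $\varphi$.

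\emph{Part (5).} Since the $\varphi$-action on $\mcY_{(0,\infty)}(S)$ is free and properly discontinuous, the quotient map $q\colon\mcY_{(0,\infty)}(S)\to\mcX(S)$ is a surjective \'etale cover which is a torsor under $\varphi^{\Z}\cong\Z$; its \v{C}ech nerve is therefore the levelwise representable \'etale hypercover $[n]\mapsto\coprod_{\Z^n}\mcY_{(0,\infty)}(S)$, whose terms all lie in $\catD$. By \'etale hyperdescent (\Cref{thm:basicprop}(5)),
$$
\RigDA^{(\eff)}(\mcX(S))\cong\lim_{[n]\in\Delta}\ \prod_{\Z^n}\RigDA^{(\eff)}(\mcY_{(0,\infty)}(S)),
$$
and under the torsor identification the cosimplicial object on the right is the cobar complex computing the homotopy fixed points of the $\Z$-action on $\RigDA^{(\eff)}(\mcY_{(0,\infty)}(S))$ generated by $\varphi^*$; as $\varphi$ is an automorphism of $\mcY_{(0,\infty)}(S)$ this action is invertible, so this limit is $\RigDA^{(\eff)}(\mcY_{(0,\infty)}(S))^{h\varphi^*}$ (\Cref{rmk:hfp}), giving the first equivalence. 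For the second, note that $\mcX(S)$ lies in $\catD$ (by the remark following its definition), so $\RigDA^{(\eff)}(\mcX(S))$ is compactly generated by \Cref{thm:basicprop}(1); hence by the first equivalence $\RigDA^{(\eff)}(\mcY_{(0,\infty)}(S))^{h\varphi^*}$ is compactly generated, and \Cref{rmk:hfo} gives that the canonical functor $\RigDA^{(\eff)}(\mcY_{(0,\infty)}(S))^{h\varphi^*}_\omega\to\RigDA^{(\eff)}(\mcY_{(0,\infty)}(S))^{h\varphi^*}$ is an equivalence. The main obstacle, then, is not descent or continuity (which are already available) but the compatibility between homotopy fixed points and the telescopic colimit and limit along $j^*$ used in (3); once that is granted, all of (3)--(5) are formal consequences of (1)--(2) and of \Cref{rmk:hfo}, with only the bookkeeping of the identifications in (1)--(2) requiring care.
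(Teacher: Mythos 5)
Your treatment of (1), (4) and (5) follows the paper's own route (continuity plus \Cref{rmk:cont} for (1); \'etale descent along the $\varphi^{\Z}$-torsor $\mcY_{(0,\infty)}(S)\to\mcX(S)$, \Cref{rmk:hfp}, and compact generation of $\RigDA(\mcX(S))$ plus \Cref{rmk:hfo} for (5)). The genuine gap is in (3), and you flag it yourself: the second equivalence in each row --- that $(\varinjlim_{j^*}\RigDA(U))^{hj^*}_\omega\cong\RigDA(U)^{hj^*}_\omega$ and $(\varprojlim_{j^*}\RigDA(U))^{hj^*}\cong\RigDA(U)^{hj^*}$ --- is asserted with a citation of \cite[Section 2.9]{agv}, but that section is about Frobenius-invariance of motives under universal homeomorphisms (what this paper uses through \Cref{thm:Frob=} and \Cref{cor:tohF}); it contains no statement about commuting homotopy fixed points with telescopic (co)limits of categories. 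The paper proves this step directly: $(-)^{hF}$ is by definition a finite limit (a pullback) of presentable categories, so for the $\omega$-decorated row one uses that filtered colimits commute with finite limits in $\Prlo$ (justified there by compact generation of $\Prlo$ and the analogous fact for spaces, colimits in $\Prlo$ agreeing with those in $\Prl$), and for the undecorated row that limits commute with limits; in both cases the resulting telescope of the categories $\RigDA(U)^{hj^*}$ collapses because the extension of $j^*$ to $\RigDA(U)^{hj^*}$ is an equivalence. Without this argument (or a substitute), (3) is not established, and your deduction of (4) from it inherits the gap.

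A smaller point concerns (2). Your route --- descent along the increasing exhaustion $\mcY_{[0,\infty)}(S)=\bigcup_n\varphi^n(U)$ --- is legitimate, and the paper's remark right after the proposition confirms that (2) can be seen as an instance of analytic descent; but your justification ``argue exactly as in the proof of \Cref{thm:cont}'' is off-target, since continuity produces a \emph{colimit} of categories from a cofiltered limit of spaces, whereas here you need a \emph{limit} of categories, i.e. that the \v{C}ech limit for the filtered open cover reduces to the tower along the restriction functors. The paper instead passes to the left adjoints: it identifies $\varprojlim_{j^*}\RigDA^{(\eff)}(U)$ with $\varinjlim_{j_\sharp}\RigDA^{(\eff)}(U)$ in $\Prl$, and shows the latter is $\RigDA^{(\eff)}(\mcY_{[0,\infty)}(S))$ because the full subcategories $j_\sharp\RigDA^{(\eff)}(\varphi^n(U))$ exhaust a set of compact generators --- which is precisely where your parenthetical observation that a quasi-compact smooth variety over $\mcY_{[0,\infty)}(S)$ factors through some $\varphi^n(U)$ actually belongs. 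Either fix the justification along these lines or spell out the reduction of the \v{C}ech limit to the tower.
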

\begin{proof}
The forgetful functors $\Prlm\to\Prl$, $\Prloo\to\Prlo$ (see \cite[Lemma 3.2.26]{lurie-ha}) are conservative and detect filtered colimits and limits (see \cite[Corollaries 3.2.2.5 and 3.2.3.2]{lurie-ha}). Hence, as all the functors involved are monoidal, we may  prove all statements by ignoring the monoidal structure.	We first prove \eqref{1}. 
	The diagram $$\RigDA^{(\eff)}(U)\stackrel{j^*}{\to}\RigDA^{(\eff)}(U)\stackrel{j^*}{\to}\RigDA^{(\eff)}(U)\stackrel{j^*}{\to}\ldots$$ is equivalent to the diagram $$\RigDA^{(\eff)}(U)\stackrel{j^*}{\to}\RigDA^{(\eff)}(\varphi^{-1}(U))\stackrel{j^*}{\to}\RigDA^{(\eff)}(\varphi^{-2}(U))\stackrel{j^*}{\to}\ldots$$
	Since $|S|=\bigcap|U_{[0,r/p^n]}|$ the first claim follows from \Cref{thm:cont} and \Cref{rmk:cont}.	The second claim follows from the definition and the fact that $\varphi$ on $\mcY(S)$ restricts to $\varphi$ on $S$.%
	
	We also remark that, dually, the diagram $$ \RigDA^{(\eff)}(U)\stackrel{j_\sharp}{\to}\RigDA^{(\eff)}(U)\stackrel{j_\sharp}{\to}\RigDA^{(\eff)}(U)\stackrel{j_\sharp}{\to}\ldots$$ is equivalent to the diagram of inclusions of full subcategories of $\RigDA^{(\eff)}(\mcY_{[0,\infty)}(Y))$: $$\RigDA^{(\eff)}(U)\stackrel{j_\sharp}{\to}\RigDA^{(\eff)}(\varphi(U))\stackrel{j_\sharp}{\to}\RigDA^{(\eff)}(\varphi^{2}(U))\stackrel{j_\sharp}{\to}\ldots$$
	We point out that its union contains a set of compact generators of $\RigDA^{(\eff)}(\mcY_{[0,\infty)}(Y))$ since $\mcY_{[0,\infty)}=\bigcup\varphi^n(U)$. We then deduce 
	$
	\varinjlim_{j_\sharp}\RigDA^{(\eff)}(U)\cong \RigDA^{(\eff)}(\mcY_{[0,\infty)}(Y))
	$ in  $\Prl$. On the other hand, since $j_\sharp$ is the left adjoint to $j^*$ and limits in $\Prl$ as well as in $\Prr$ are computed in infinity-categories (see \cite[Proposition 5.5.3.13 and Theorem 5.5.3.18]{lurie}) we {may rewrite $ \varprojlim_{j^*}\RigDA^{(\eff)}(U)$ as $ \varinjlim_{j_\sharp}\RigDA^{(\eff)}(U)$ in $\Prl$. The latter is a colimit of fully faithful inclusions (since $j^*j_\sharp\cong\id$) which is $\RigDA^{(\eff)}(\mcY_{[0,\infty)}(S))$ as indeed any compact object here is defined over some $\varphi^{n}(U)$. We can then deduce the equivalence in \eqref{2}. By definition, the functor $j_\sharp$ corresponds to $\varphi^*$ hence the final claim. }%
	
	We now move to \eqref{3} and we start by the first row. We remark that the functors involved are monoidal, so it suffices to prove the statement in $\Prl$, and that colimits computed in $\Prl$ coincide with those computed in $\Prlo$ by \cite[Lemma 5.3.2.9]{lurie-ha}. The first equivalence follows immediately from \eqref{1}. As  $\Prlo$  is compactly generated {(for a proof of this folklore fact, see e.g. }\cite[Proposition 2.8.4]{agv})  finite  {limits} commute with filtered  {colimits}   (since it is the case for spaces). %
	We then deduce
	$$
	(\varinjlim_{j^*}\RigDA^{(\eff)}(U))^{hj^*}_\omega\cong \varinjlim_{j^*}(\RigDA^{(\eff)}(U)^{hj^*}_ \omega)\cong  \RigDA^{(\eff)}(U)^{hj^*}_\omega $$
	where the last equivalence follows from the fact that  the extension of $j^*$ to $\RigDA^{\eff}(U)^{hj^*} $ is an equivalence.
	
	Similarly, for the second row, we point out that the first equivalence follows from \eqref{2} and for the second we may use the commutation of limits in $\Prl$ and conclude
	$$
	(\varprojlim_{j^*}\RigDA^{(\eff)}(U))^{hj^*}\cong \varprojlim_{j^*}(\RigDA^{(\eff)}(U)^{hj^*} )\cong  \RigDA^{(\eff)}(U)^{hj^*}\!\!\!\!\!. $$

	By means of \Cref{rmk:hfo}, the category $\RigDA^{(\eff)}(U)^{hj^*}_\omega$ is the presentable subcategory of $\RigDA^{(\eff)}(U)^{hj^*}$ generated by compact objects. Using \eqref{3} we then deduce that $\RigDA^{(\eff)}(S)^{h\varphi^*}_\omega$ is equivalent to the presentable subcategory of $\RigDA^{(\eff)}(\mcY_{[0,\infty)}(S))^{h\varphi^*}$ generated by compact objects, which in turn coincides with $\RigDA^{(\eff)}(\mcY_{[0,\infty)}(S))^{h\varphi^*}_\omega$ (using \Cref{rmk:hfo} once again) and this proves \eqref{4}.

	We are left to prove \eqref{5}. By \'etale descent for $\RigDA$ applied to the cover $\mcY_{(0,\infty)}(S)\to \mcX(S)=\mcY_{(0,\infty)}(S)/\varphi^{\Z}$ we deduce (we denote here $\mcY_{(0,\infty)}(S)$ by $\mcY$, for brevity):
	$$\RigDA(\mcX(S))\cong\lim\left(
	\xymatrix@=1em{
		\RigDA(\mcY)\ar@<0.25ex>[r]\ar@<-0.25ex>[r]&
			\RigDA(\mcY)\times\Z\ar[r]\ar@<0.5ex>[r]\ar@<-0.5ex>[r]&
				\RigDA(\mcY)\times\Z^2\ar@<0.25ex>[r]\ar@<.75ex>[r]\ar@<-0.25ex>[r]\ar@<-.75ex>[r]&\cdots
}\right)
$$
	which computes $\RigDA(\mcY_{(0,\infty)}(S))^{h\Z}$. %
	This category, using Remarks \ref{rmk:hfp} and \ref{rmk:hfo}, coincides with $\RigDA(\mcY_{(0,\infty)}(S))^{h\varphi^*}_\omega$.
\end{proof}

\begin{rmk}The homotopy limit appearing in  \eqref{2} coincides with the homotopy limit of the Cech hypercover generated by the cover $\{\varphi^N(U)\} $ of $\mcY_{[0,\infty)}(S)$. In particular, \eqref{2} is also a  special instance of analytic descent.
\end{rmk}

\subsection{A motivic Dwork's trick}\label{sec:D}
We now give another interpretation of Proposition \ref{prop:dw} giving rise to a method to associate a motive over $S$ to a motive over the (relative) Fargues-Fontaine curve $\mcX(S)$. This is reminiscent of the so-called Dwork's trick and produces a ``universal''  way to transform a rigid space in equi-characteristic $p$ to a mixed characteristic space (up to homotopy). 
{We now give the formal, precise definition of the functor $\mcD$ already mentioned in the introduction.}

\begin{cor}\label{cor:dw}Let $S$ be in $\catD_{/\F_p}$. There is a functor $$\mcD(S)\colon\RigDA^{(\eff)}(S)\to\RigDA^{(\eff)}(\mcX(S^{\Perf}))$$ defined as follows:
		$$
		\resizebox{\textwidth}{!}{		
		\xymatrix@C=1em{
		\RigDA^{(\eff)}(S)\ar@{=}[r]^-{\sim}&\RigDA^{(\eff)}(S^{\Perf})\ar[d]\\
		& \RigDA^{(\eff)}(S^{\Perf})^{h\varphi^*}_{\omega}\ar@{=}[r]^-{\sim} &\RigDA^{(\eff)}(\mcY_{[0,\infty)}(S^{\Perf}))^{h\varphi^*}_{\omega}
		\ar@{^{(}->}[d]\\
		&&	\RigDA^{(\eff)}(\mcY_{[0,\infty)}(S^{\Perf}))^{h\varphi^*}\ar[d]^{j^*}\\
		&&\RigDA^{(\eff)}(\mcY_{(0,\infty)}(S^{\Perf}))^{h\varphi^*}_{}\ar@{=}[r]^-{\sim}&\RigDA^{(\eff)}(\mcX(S^{\Perf})).
	}
}
		$$
It is compatible with tensor products and pullbacks, inducing a functor $$\mcD\colon \RigDA^{(\eff)}\to\RigDA^{(\eff)}(\mcX(-))$$ between 	\'etale hypersheaves  on $\Perf_{/\F_p}$ with values in $\Prlm$.
\end{cor}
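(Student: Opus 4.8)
The plan is to check that every arrow in the displayed diagram is a symmetric monoidal left adjoint functor, so that the composite $\mcD(S)$ belongs to $\Prlm$ --- in particular it is compatible with tensor products --- and then to verify that each arrow is natural in $S$, which promotes $\mcD$ first to a morphism of presheaves and finally to one of \'etale hypersheaves.

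First I would go through the factorization. The identification $\RigDA^{(\eff)}(S)\cong\RigDA^{(\eff)}(S^{\Perf})$ is \Cref{thm:stab}; the functor $\RigDA^{(\eff)}(S^{\Perf})\to\RigDA^{(\eff)}(S^{\Perf})^{h\varphi^*}_{\omega}$ is the one constructed in \Cref{cor:tohF}; this is followed by the equivalence $\RigDA^{(\eff)}(S^{\Perf})^{h\varphi^*}_{\omega}\cong\RigDA^{(\eff)}(\mcY_{[0,\infty)}(S^{\Perf}))^{h\varphi^*}_{\omega}$ of \Cref{prop:dw}(4), then the canonical fully faithful embedding $\RigDA^{(\eff)}(\mcY_{[0,\infty)}(S^{\Perf}))^{h\varphi^*}_{\omega}\hookrightarrow\RigDA^{(\eff)}(\mcY_{[0,\infty)}(S^{\Perf}))^{h\varphi^*}$ of \Cref{rmk:hfo}, then the restriction functor $j^*$, which is pullback along the $\varphi$-equivariant open immersion $\mcY_{(0,\infty)}(S^{\Perf})\subset\mcY_{[0,\infty)}(S^{\Perf})$ and descends to homotopy $\varphi^*$-fixed points because $\varphi^*$ commutes with restriction along open immersions, and finally the equivalence $\RigDA^{(\eff)}(\mcY_{(0,\infty)}(S^{\Perf}))^{h\varphi^*}\cong\RigDA^{(\eff)}(\mcX(S^{\Perf}))$ of \Cref{prop:dw}(5). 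Each of these is symmetric monoidal and a left adjoint --- for the embedding this is immediate, for $j^*$ it is \Cref{thm:basicprop}(2), and for the equivalences it is \Cref{prop:dw} --- hence so is their composite, which gives the compatibility with tensor products; note also that the auxiliary radius $r$ entering \Cref{prop:dw} has been contracted away, so $\mcD(S)$ depends on no choice.

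Next I would establish naturality in $S$. For $f\colon S'\to S$ in $\Perf_{/\F_p}$ one obtains $f^{\Perf}\colon {S'}^{\Perf}\to S^{\Perf}$ and, after applying $\mcY_{[0,\infty)}(-)$, $\mcY_{(0,\infty)}(-)$ and $\mcX(-)$, a ladder of adic spaces in which all the open and closed immersions occurring above sit in Cartesian squares. \Cref{thm:stab} gives the compatibility of the first identification with $f^*$; \Cref{cor:tohF} already states that $\RigDA^{(\eff)}\to\RigDA^{(\eff)h\varphi^*}_{\omega}$ is a natural transformation of \'etale hypersheaves on $\Perf_{/\F_p}$; the equivalences of \Cref{prop:dw} are induced by pullback functors, hence are automatically compatible with further pullback; and base change commutes with restriction along open immersions, which takes care of $j^*$. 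The one point deserving a word is that $f^*$ carries $\RigDA^{(\eff)}(\mcY_{[0,\infty)}(S^{\Perf}))^{h\varphi^*}_{\omega}$ into the corresponding category over $S'$, so that the embedding of \Cref{rmk:hfo} is natural; this holds because $f^*$ is colimit-preserving and sends the compact generators of \Cref{thm:basicprop}(1) to compact objects, hence preserves the subcategory generated under filtered colimits by compacts. Assembling the resulting naturality squares yields the natural transformation $\mcD$ of presheaves on $\Perf_{/\F_p}$ with values in $\Prlm$.

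Finally, $\RigDA^{(\eff)}$ is an \'etale hypersheaf on $\Perf_{/\F_p}$ by \Cref{thm:basicprop}(5), and $S\mapsto\RigDA^{(\eff)}(\mcX(S^{\Perf}))$ is one too: the assignment $S\mapsto\mcX(S^{\Perf})$ sends \'etale covers, and levelwise representable \'etale hypercovers, to \'etale covers and hypercovers --- each of $(-)^{\Perf}$, $\mcY_{(0,\infty)}(-)$ and the quotient by $\varphi^{\Z}$ does so, and the resulting spaces lie in $\catD$ --- so the claim reduces to hyperdescent for $\RigDA^{(\eff)}$ on the relative Fargues-Fontaine curves, again \Cref{thm:basicprop}(5), and the map $\mcD$ built above is then a morphism of hypersheaves. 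I expect the only genuine work to be the bookkeeping of the third paragraph --- tracking the compatibility of the homotopy-fixed-point categories $(-)^{h\varphi^*}$ and $(-)^{h\varphi^*}_{\omega}$ simultaneously with base change in $S$ and with the passage from $\mcY_{[0,\infty)}$ to $\mcY_{(0,\infty)}$ --- everything else being a formal consequence of \Cref{prop:dw} and the results of \Cref{sec:mot}.
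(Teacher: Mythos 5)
Your proposal is correct and follows essentially the same route as the paper's own proof: the same chain of functors (\Cref{thm:stab}, \Cref{cor:tohF}, the equivalences of \Cref{prop:dw} for $\mcY_{[0,\infty)}$ and $\mcX$, the inclusion of \Cref{rmk:hfo}, and the $\varphi$-equivariant $j^*$), with monoidality checked arrow by arrow and compatibility with pullbacks deduced from \Cref{cor:tohF} and the commutation of $j^*$ with base change. Your extra bookkeeping on naturality and on hyperdescent of the target is consistent with (and slightly more detailed than) the paper's brief argument.
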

\begin{proof}
We can define a functor $\RigDA^{(\eff)}(S)\to\RigDA^{(\eff)}(\mcX(S^{\Perf}))$ as in the statement, where the first equivalence follows from  \Cref{thm:stab}, the first vertical map is defined in \Cref{cor:tohF}, the second equivalence follows from Proposition \ref{prop:dw}{\eqref{4}}, the second vertical map is the natural inclusion (see \Cref{rmk:hfo}), and the third is simply given by $j^*$ with $j\colon\mcY_{(0,\infty)}(S^{\Perf})\subset\mcY_{[0,\infty)}(S^{\Perf})$ being the $\varphi$-equivariant open inclusion, while the last equivalence follows from Proposition \ref{prop:dw}{\eqref{5}}. All these maps are monoidal.

Compatibility with pullbacks %
follows from   \Cref{cor:tohF} and the commutativity of $j^*$ with pullbacks. %
\end{proof}

\begin{rmk}
	The recipe sketched above uses the specific formal properties of the categories of (adic) motives  in various instances. It is impossible to follow a similar strategy directly on the category of smooth spaces over $S$ in general (even the first step would not hold, see \cite{lb}). As a consequence, even when the motive $\bar{M}$ is the motive of a smooth rigid variety over $S$, we can not claim the motive $M_{\mcX}$ to be attached to a smooth rigid variety over $\mcX(S)$ in general (but, see Proposition \ref{prop:gr}).
\end{rmk}

\begin{rmk}
	Consider now a Tate curve $E=\G_m^{\an}/\varphi$ over a non-archimedean field $K$ with $\varphi$ being the automorphism $x\mapsto q\cdot x$ of $\A^1_K$ with $0\neq q\in K^{\circ\circ}$. Following the proof of the previous corollary, one can also construct a functor
	$$
	\RigDA^{(\eff)}(K)\to\RigDA^{(\eff)}(K)^{h\id}\cong\RigDA^{(\eff)}(\A^{1\an}_K)^{h\varphi^*}\to\RigDA^{(\eff)}(E)
	$$
	In this situation, this composition coincides with the pullback $p^*$ along the projection $p\colon E\to\Spa K$ since $\iota^*p^*=\id$. We may then interpret the functor $\mcD(S)$ as playing the same role as the functor $p^*$ with $p$ being the (non-existent) map $p\colon\mcX(S)\dashrightarrow S$. We will make this more precise in \Cref{Dalt}.
\end{rmk}

\begin{rmk}
There is a perfectoid version of the previous constructions. We remark that in this case, the functor obtained by Dwork's trick 
$$
\PerfDA(P)\stackrel{\mcD(P)}{\longrightarrow}\PerfDA(\mcX(P))\cong\RigDA(\mcX(P)^\diamond)
$$	
(the category on the right is defined by pro-\'etale descent, see \Cref{cor:DAdiam}) coincides canonically with the functor induced by the relative Fargues-Fontaine curve construction $X\mapsto \mcX(X)$. This can be seen from the fact that $\Q_S(\mcX(X))$ is naturally an object on $\PerfDA_n(\mcX(S))$ (see \Cref{rmk:naif}) using \cite[Lemma 8.7.15]{kl:f} %
  and that  $X\mapsto\mcY_{[0,\infty)}(X)$ defines an inverse to $\iota^*$. This is compatible with the  idea that $\mcD(S)$ must be seen as a rigid-analytic model of the relative Fargues-Fontaine construction, as we will prove in \Cref{Dalt}.
\end{rmk}

\begin{rmk}
There is a more direct way to define a  map from $\RigDA(S)$ to $\RigDA(\mcY_{[0,\infty)})^{h\varphi^*}$ namely, by using the functor $\iota_*$ (the right adjoint to the pullback functor). On the other hand, we remark that the composition 
$$
\RigDA(S)^{h\varphi^*}\stackrel{\iota_*}{\to}\RigDA(\mcY_{[0,\infty)}(S))^{h\varphi^*}\stackrel{j^*}{\to}\RigDA(\mcY_{(0,\infty)}(S))^{h\varphi^*}\cong\RigDA(\mcX(S))
$$
is trivial, since the objects $\iota_*M$ are concentrated on $S$ and hence are in the kernel of $j^*$. The functor $\mcD(S)$ defined above is %
 far from being trivial.  Indeed, as it is a monoidal functor, it sends $1=\Q_S(S)$ to $1=\Q_{\mcX(S)}(\mcX(S))$.
\end{rmk}

 We can even be more precise by computing the image under $\mcD$ of motives of ``good reduction''. We recall some basic facts on formal motives.
\begin{dfn}
 As in \cite[Remark 3.1.5(2)]{agv}, whenever $\mfS$ is a formal scheme, we denote by $\FDA(\mfS,\Q)=\FDA(\mfS)$ the infinity-category of (unbounded, derived, $\Q$-linear, \'etale) \emph{formal motives} over $\mfS$ i.e. the infinity-category arising as in \Cref{dfn:mot} from the \'etale site on smooth formal schemes over $\mfS$ with coefficients in the ring $\Q$ (typically omitted) by imposing homotopy invariance, and invertibility of the Tate twist. Suppose now that $\mfS_\eta$ is an adic space.
 \end{dfn}
 The special fiber functor $\mfX\mapsto \mfX_\sigma$ resp. the generic fiber functor $\mfX\mapsto\mfX_\eta$ (see \cite[Notations 1.1.6 and 1.1.8]{agv}) induces a natural map $\sigma^*\colon \FDA(\mfS)\to\DA(\mfS_\sigma)$ resp. $\eta^*\colon\FDA(\mfS)\to\RigDA(\mfS_\eta)$ and the former is even an equivalence (see \cite[Theorem 3.1.10]{agv}).
 
  In particular, whenever $S=\Spa(R,R^+)$ is a perfectoid affinoid in $\Perf{/\F_p}$ with pseudo-uniformizer $\pi$, then we have $\FDA(\Spf W(R^+))\cong\FDA(\Spf R^+)\cong\DA(\Spec R^+/\pi)$. By \Cref{rmk:stab-alg}, the Frobenius endomorphism $\varphi$ defines an invertible automorphism of $\FDA(\Spf W(R^+))$ and,  arguing as in \Cref{cor:tohF}, we obtain a functor $\FDA(\Spf W(R^+))\to \FDA(\Spf W(R^+))^{h\varphi^*}_{}$ that we can compose with $\eta^*$ and the pullback along the inclusion $\mcY_{(0,\infty)}(S)\subset\mcY_{[0,\infty]}(S)=\Spf W(R^+)_\eta$ getting the following composition 
{  {(one may  temporarily lift any condition on Krull dimensions, as we do not use   compact generators in this construction)}}
  $$
\xymatrix@=1.3em{
\FDA(R^+)\ar@{=}^-{\sim}[d]&&&\RigDA(\mcX(S))\\
\FDA( W(R^+))\ar[r]
	& \FDA( W(R^+))^{h\varphi^*}_{}\ar[r]^-{\eta^*}
	& \RigDA( W(R^+)_\eta)^{h\varphi^*}_{}\
\ar^-{j^*}[r]
&
\RigDA
(\mcY_{(0,\infty)}(S))^{h\varphi^*}\ar@{=}[u]^-{\sim}
}$$
thus producing a functor $\widetilde{\mcD}(R^+)\colon\FDA(R^+)\to\RigDA(\mcX(S))$.
\begin{prop}\label{prop:gr}
Let $S=\Spa(R,R^+)$ be a perfectoid affinoid in $\Perf_{/\F_p}$  %
 and let $M$ be a motive of $\FDA(R^+)$.  Then $M$ can be defined over $W(R^+)$ and the image of $M_\eta$ in $\RigDA(\mcY_{(0,\infty)}(S))$ via $\mcD(S)$ is canonically isomorphic to $M\times_{W(R^+)}\mcY_{(0,\infty)}$. 
 
 More precisely, the following diagram commutes up to a natural invertible transformation.
$$\xymatrix{
	\FDA(R^+)\ar[d]_{\eta^*}\ar[dr]^{\widetilde{\mcD}(R^+)}\\
	\RigDA(S)\ar[r]^-{\mcD(S)}&\RigDA(\mcX(S))
}
$$
\end{prop}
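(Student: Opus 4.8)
The plan is to reduce to a compact motive and then to match the two functors one step before the final passage to $\mcX(S)$, i.e. inside $\RigDA(\mcY_{[0,\infty)}(S))^{h\varphi^*}$, the comparison there being detected after restriction to the closed Fargues--Fontaine point $S$.

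\emph{The model over $W(R^+)$.} The special fibre functor gives equivalences $\FDA(R^+)\cong\DA(\Spec R^+/\varpi)\cong\FDA(W(R^+))$ by \cite[Theorem 3.1.10]{agv}, where $W(R^+)$ carries the $(p,[\varpi])$-adic topology, so that $(\Spf W(R^+))_\eta$ is the space $\mcY_{[0,\infty)}(S)$ appearing in \Cref{prop:dw}, and the closed formal subscheme $\{p=0\}\subset\Spf W(R^+)$ equals $\Spf R^+$, with generic fibre the closed immersion $\iota\colon S\hookrightarrow\mcY_{[0,\infty)}(S)$. Under these equivalences the pull-back $\iota^*_{\mathrm f}\colon\FDA(W(R^+))\to\FDA(R^+)$ along the closed formal immersion $\Spf R^+\hookrightarrow\Spf W(R^+)$ becomes the identity of $\DA(\Spec R^+/\varpi)$; hence there is a motive $\widetilde M\in\FDA(W(R^+))$, unique up to equivalence, with $\iota^*_{\mathrm f}\widetilde M\cong M$, which is the assertion that $M$ can be defined over $W(R^+)$, and $\widetilde M$ is compact when $M$ is. By \Cref{rmk:stab-alg} and its proof all these equivalences are compatible with the Witt--Frobenius automorphisms and with the relative-Frobenius natural transformations.

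\emph{Compatibilities of the generic-fibre functor, and reduction.} Both $\mcD(S)\circ\eta^*$ and $\widetilde{\mcD}(R^+)$ are composites of colimit-preserving (indeed $\Prlm$-) functors by \Cref{cor:dw}, hence preserve colimits, so it suffices to treat $M=\Q_{R^+}(\mfX)(n)$ with $\mfX$ smooth affine over $R^+$; then $\widetilde M$ is compact and $\eta^*\widetilde M$ is compact over $\mcY_{[0,\infty)}(S)$, as $\eta^*$ sends compact generators to compact generators. I record two structural properties of the generic-fibre functor: first, $g_\eta^*\circ\eta^*\cong\eta^*\circ g^*$ for every map $g$ of formal schemes (naturality of $\eta^*$, \cite[Notations 1.1.8]{agv}), which I apply to $g=\iota_{\mathrm f}$ and to the Witt--Frobenius; second, $\eta^*$ carries the relative-Frobenius natural transformation on $\FDA$ to the one furnished by \Cref{thm:Frob=} on $\RigDA$, because the generic fibre of a relative Frobenius is a relative Frobenius. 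Together these show that $\eta^*$ induces compatible functors $\FDA(W(R^+))^{h\varphi^*}\to\RigDA(\mcY_{[0,\infty)}(S))^{h\varphi^*}$ and $\FDA(R^+)^{h\varphi^*}\to\RigDA(S)^{h\varphi^*}$ with $\iota^*\circ\eta^*\cong\eta^*\circ\iota^*_{\mathrm f}$ on the homotopy-fixed-point categories.

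\emph{The comparison.} By \Cref{cor:dw}, $\mcD(S)(M_\eta)$ is obtained by applying $j^*$ and the equivalence $\RigDA(\mcY_{(0,\infty)}(S))^{h\varphi^*}\cong\RigDA(\mcX(S))$ of \Cref{prop:dw}\eqref{5} to $\kappa(M_\eta)\in\RigDA(\mcY_{[0,\infty)}(S))^{h\varphi^*}_\omega$, where $\kappa$ inverts the equivalence $\iota^*$ of \Cref{prop:dw}\eqref{4} and $M_\eta$ carries its relative-Frobenius structure $\mathrm{Fr}$; likewise $\widetilde{\mcD}(R^+)(M)$ is obtained by applying the same $j^*$ and equivalence to $(\eta^*\widetilde M,\eta^*\mathrm{Fr})\in\RigDA(\mcY_{[0,\infty)}(S))^{h\varphi^*}$. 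By Step~2 together with $\iota^*_{\mathrm f}\widetilde M\cong M$ one gets $\iota^*(\eta^*\widetilde M,\eta^*\mathrm{Fr})\cong(\eta^*\iota^*_{\mathrm f}\widetilde M,\eta^*\iota^*_{\mathrm f}\mathrm{Fr})\cong(M_\eta,\mathrm{Fr})$. Since $\varphi^*$ is an equivalence of $\RigDA(\mcY_{[0,\infty)}(S))$, the objects of $\RigDA(\mcY_{[0,\infty)}(S))^{h\varphi^*}$ with compact underlying motive are precisely its compact objects; hence $(\eta^*\widetilde M,\eta^*\mathrm{Fr})$ is compact and so lies in $\RigDA(\mcY_{[0,\infty)}(S))^{h\varphi^*}_\omega$ by \Cref{rmk:hfo}. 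Thus $(\eta^*\widetilde M,\eta^*\mathrm{Fr})$ and $\kappa(M_\eta)$ both lie in that subcategory and have the same image $(M_\eta,\mathrm{Fr})$ under the equivalence $\iota^*$ of \Cref{prop:dw}\eqref{4}, so they are equivalent. Applying $j^*$ and \Cref{prop:dw}\eqref{5} yields $\mcD(S)(M_\eta)\cong\widetilde{\mcD}(R^+)(M)$, naturally in $M$; and forgetting the Frobenius structure identifies this common object with $j^*\eta^*\widetilde M=M\times_{W(R^+)}\mcY_{(0,\infty)}$, which is the last assertion.

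\emph{Main obstacle.} The delicate point is Step~2: one must verify carefully that the generic-fibre functor is a morphism of \emph{Frobenius-equivariant} motivic formalisms --- i.e. that it intertwines not merely the Frobenius pull-backs but the relative-Frobenius \emph{natural transformations}, and hence the induced functors on homotopy-fixed-point $\infty$-categories --- and that the base-change identity $\iota^*\eta^*\cong\eta^*\iota^*_{\mathrm f}$ holds also on the structure maps, not just on objects. Correctly pinning down the topology on $W(R^+)$ so that $(\Spf W(R^+))_\eta=\mcY_{[0,\infty)}(S)$ and identifying which closed formal subscheme restricts to $S$ is the other place requiring care.
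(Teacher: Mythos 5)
Your overall strategy is the same as the paper's: compare the two constructions one step before passing to $\mcX(S)$, inside the Frobenius-equivariant categories, using that every functor in sight is a pullback and hence that the comparison is detected after restriction (the paper phrases this as the commutativity of the $\varphi^*$-equivariant, compact-preserving triangle of pullback functors $\FDA(W(R^+))\to\RigDA(\B_{[0,r]}(S))\to\RigDA(S)$). However, there is one concrete error in your comparison step: the claim that ``$\eta^*$ sends compact generators to compact generators'' and hence that $\eta^*\widetilde M$ is compact in $\RigDA(\mcY_{[0,\infty)}(S))$ is false. The space $\mcY_{[0,\infty)}(S)$ is not quasi-compact (it is the increasing union of the affinoids $\varphi^n(\B_{[0,r]}(S))$), so already the unit of $\RigDA(\mcY_{[0,\infty)}(S))$, which is $\eta^*$ of a compact generator, fails to be compact: mapping out of it computes global sections over a non-qcqs space, which does not commute with filtered colimits. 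This is precisely why the paper states \Cref{prop:dw}\eqref{2} only in $\Prlm$ (not $\Prloo$), and why the construction of $\widetilde{\mcD}(R^+)$ explicitly avoids any use of compact generators. Since your argument hinges on placing $(\eta^*\widetilde M,\eta^*\mathrm{Fr})$ in $\RigDA(\mcY_{[0,\infty)}(S))^{h\varphi^*}_\omega$ in order to invoke the equivalence of \Cref{prop:dw}\eqref{4} (and $\iota^*$ on the full fixed-point category is certainly not conservative, as objects supported on $\mcY_{(0,\infty)}(S)$ die), this is a genuine gap as written.

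The gap is repairable, and the repair is essentially the paper's proof: do not test compactness over $\mcY_{[0,\infty)}(S)$ but over the affinoid $U=\B_{[0,r]}(S)$. Under the equivalence $\RigDA(\mcY_{[0,\infty)}(S))^{h\varphi^*}\cong\RigDA(U)^{hj^*}$ of \Cref{prop:dw}\eqref{3}, the object $(\eta^*\widetilde M,\eta^*\mathrm{Fr})$ corresponds to the restriction of $\eta^*\widetilde M$ to $U$, which \emph{is} compact in $\RigDA(U)$ (the generic fibre of a smooth affine formal scheme is qcqs over the affinoid $U$), so the pair is a compact object of the fixed-point category and lies in the $\omega$-subcategory by \Cref{rmk:hfo}; one is then reduced to the commutativity of the triangle $\FDA(W(R^+))\to\RigDA(\B_{[0,r]}(S))\to\RigDA(S)$ of $\varphi^*$-equivariant, compact-preserving pullback functors, which is the paper's one-line argument. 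Your Step 2 (that $\eta^*$ intertwines the relative-Frobenius natural transformations and that $\iota^*\eta^*\cong\eta^*\iota^*_{\mathrm f}$ holds equivariantly) is correctly identified as the point to check, and is unproblematic since all functors are induced by pullback of spaces.
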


\begin{proof}
From the equivalence $\FDA(R^+)\cong\FDA(W(R^+))$ we know that $M$ has a model over $W(R^+)$. In order to prove the final claim, it suffices to prove the commutativity of the following diagram:
$$\xymatrix{
	\FDA(W(R^+))\ar[r]\ar[d] & 	\FDA(W(R^+))_{\omega}^{h\varphi^*}\ar[d]\ar[dr]%
\\
\RigDA(S)\ar[r] & \RigDA(S)_{\omega}^{h\varphi^*}\ar@{-}[r]^{\sim}&\RigDA(U)_{\omega}^{hj^*}
}
$$
which in turns follows from the commutativity of the following $\varphi^*$-equivariant, compact-preserving diagram, whose sides are all defined by pullback:
$$\xymatrix{
	\FDA(W(R^+))\ar[dr]\ar[d]%
	\\
\RigDA(U)\ar[r]&	\RigDA(S)
}$$
which is straightforward.%
\end{proof}

\begin{rmk}
We recall that $\RigDA(S)$ is generated by motives which are of good reduction over some \'etale extension $S'\to S$ by \cite[Corollary 3.7.19]{agv}. \Cref{prop:gr} allows then to have an explicit description of $\mcD(S)(M)$ for any compact motive $M\in\RigDA(S)$ up to some \'etale extension of the base.
\end{rmk}

\subsection{(De-)perfectoidification and rigid-analytic tilting}
We now quickly show  that the construction of the functor $\mcD(S)$ given above allows one to ``globalize'' the motivic rigid-analytic tilting equivalence given in \cite{vezz-fw} that is, to prove that $\RigDA(S)\cong\RigDA(S^\diamond)$ for any space $S\in\catD_{/\Q_p}$. This allows one to give, a posteriori, another construction of $\mcD$ in terms of the relative Fargues-Fontaine curve, paired up with motivic (de-)perfectoidification. 

\begin{thm}
	\label{thm:R=P2}
	There are equivalences of presheaves on $\catD_{/\Q_p}$ with values in $\Prlm$:
	$$
	\RigDA(-)\cong\RigDA((-)^\diamond)\cong\PerfDA((-)^\diamond)\cong\PerfDA(-).
	$$
\end{thm}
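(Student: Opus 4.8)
The statement to prove is the chain of equivalences $\RigDA(-)\cong\RigDA((-)^\diamond)\cong\PerfDA((-)^\diamond)\cong\PerfDA(-)$ of presheaves on $\catD_{/\Q_p}$ with values in $\Prlm$. The last two equivalences are essentially bookkeeping: $\PerfDA((-)^\diamond)$ is by definition the pro-\'etale sheafification appearing in the definition of $\PerfDA$ over $\catD$, and $\RigDA((-)^\diamond)\cong\PerfDA((-)^\diamond)$ is \Cref{thm:rig=perf} applied to diamonds (both sides being pro-\'etale sheaves on diamonds, and the equivalence holding on perfectoid spaces where it is the relative perfection equivalence, hence extending by pro-\'etale descent via \Cref{cor:proetd} and \Cref{cor:DAdiam}). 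So the real content is the first equivalence $\RigDA(S)\cong\RigDA(S^\diamond)$ functorially in $S\in\catD_{/\Q_p}$, and the plan is to reduce it to the known equi-characteristic case and the motivic Fargues-Fontaine machinery built in this section.

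First I would reduce to the affinoid case by \'etale (or even analytic) descent: both $\RigDA$ and $\RigDA((-)^\diamond)$ are \'etale hypersheaves (by \Cref{thm:basicprop}(5), together with the fact that the diamond functor sends \'etale covers to \'etale covers), so it suffices to construct the equivalence on affinoid $S=\Spa(A,A^+)$ in $\catD_{/\Q_p}$, compatibly with pullbacks. Next I would use a pro-\'etale descent argument to further reduce to perfectoid $S$: any affinoid $S$ in $\catD_{/\Q_p}$ admits a pro-\'etale perfectoid cover $\widetilde{S}\to S$ (e.g. a perfectoid torus cover, or more precisely by adjoining $p$-power roots of coordinates and of $p$), and both functors satisfy pro-\'etale descent --- the left-hand side by \Cref{cor:proetd}, the right-hand side because $S^\diamond$ has the same pro-\'etale topos as the perfectoid cover. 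So it suffices to produce the equivalence for $S$ itself perfectoid (of characteristic $0$), functorially. But for a characteristic-$0$ perfectoid space $P$ one has $P^\diamond=(P^\flat)^\diamond$, and $\RigDA(P^\diamond)=\PerfDA(P^\diamond)\cong\PerfDA(P^\flat)\cong\RigDA(P^\flat)$ by \Cref{thm:rig=perf}; so the claim becomes $\RigDA(P)\cong\RigDA(P^\flat)$ functorially in the perfectoid space $P$. This is precisely the motivic tilting equivalence, which is where the functor $\mcD$ enters: the idea (as sketched in the introduction) is that $\mcD$ provides, for $P^\flat$ of characteristic $p$, a functor $\RigDA(P^\flat)\to\RigDA(\mcX(P^\flat))$ which becomes an equivalence onto the $h\varphi^*$-fixed points localized at the untilt $P\hookrightarrow\mcX(P^\flat)$; specializing along $P^\sharp=P$ recovers $\RigDA(P^\flat)\cong\RigDA(P)$. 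I would make this precise by using \Cref{prop:dw}\eqref{5} together with \Cref{cor:tohF} and the localization of $\mcX(P^\flat)$ at the classical point $P$, checking functoriality in $P$ from the functoriality statements already recorded in \Cref{cor:tohF} and \Cref{cor:dw}.

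The main obstacle, and the step I would spend the most care on, is the \emph{functoriality} of the tilting equivalence $\RigDA(P)\cong\RigDA(P^\flat)$ in the perfectoid space $P$ (not just its existence, which for perfectoid fields is \cite{vezz-fw} and is extended to affinoid bases in \Cref{thm:rig=perf}): one must check that the composite equivalence obtained by going through $\mcX(P^\flat)$, taking homotopy fixed points, and specializing at the untilt is compatible with arbitrary morphisms $P'\to P$ of perfectoid spaces. Since $\mcD$ is already a morphism of presheaves on $\Perf_{/\F_p}$ with values in $\Prlm$ by \Cref{cor:dw}, and since the untilt data and the inclusions $P^\sharp\to\mcX(P^\flat)$ are functorial, this should follow formally, but the diagram chase assembling all the pieces (descent reductions, $h\varphi^*$-fixed points, the equivalences of \Cref{prop:dw}) into a single natural transformation of presheaves on $\catD_{/\Q_p}$ is the technical heart. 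A secondary point requiring attention is the compatibility of the pro-\'etale descent reduction with the passage to $(-)^\diamond$, i.e. that the Cech nerve of $\widetilde{S}\to S$ tilts/diamondifies correctly; this is handled by the same arguments as in \Cref{cor:DAdiam} and the identification of pro-\'etale topoi.
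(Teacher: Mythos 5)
Your overall skeleton reproduces the paper's Step 1: reduce everything to the first equivalence via \Cref{thm:rig=perf}, use (pro-)\'etale descent to restrict to perfectoid $P$, and take as candidate map the composition $F\colon\RigDA(P^\flat)\to\RigDA(\mcX(P^\flat))\to\RigDA(P)$ given by $\mcD(P^\flat)$ followed by specialization $\infty^*$ at the untilt, with functoriality supplied by \Cref{cor:dw}. The genuine gap is that you never prove $F$ is an equivalence. You treat the pointwise statement $\RigDA(P)\cong\RigDA(P^\flat)$ as already known (``for perfectoid fields is \cite{vezz-fw} and is extended to affinoid bases in \Cref{thm:rig=perf}''), but \Cref{thm:rig=perf} is a purely characteristic-$p$ statement ($\RigDA(S)\cong\PerfDA(S)$ for $S\in\Perf_{/\F_p}$): it gives $\RigDA(P^\flat)\cong\PerfDA(P^\flat)\cong\PerfDA(P)$, and the missing comparison $\PerfDA(P)\cong\RigDA(P)$ in characteristic $0$ is exactly what is being proved. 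Likewise \Cref{prop:dw}(5) and \Cref{cor:tohF} only construct the passage through homotopy $\varphi^*$-fixed points; they do not show that the composite with $\infty^*$ is invertible --- in the paper this invertibility appears only as \Cref{cor:fw'}, a \emph{consequence} of the proof of \Cref{thm:R=P2}, not an input to it.

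What the paper does after your Step 1, and what is absent from your plan, is the actual invertibility argument: both sides are analytic hypersheaves on the affinoid analytic site of $P$, so by \Cref{thm:cont} one checks $F$ on analytic stalks $P=\Spa(K,K^+)$ with $K$ a complete (and, after a further pro-\'etale descent, algebraically closed) perfectoid field, where $K^+$ need not equal $K^\circ$; this is precisely why the duality argument of \cite[Theorem 7.11]{vezz-fw} cannot simply be quoted even pointwise. The replacement is the comparison with formal motives: using the adjunctions $\xi\colon\FDA(K^+)\rightleftarrows\RigDA(\Spa(K,K^+))\colon\eta$ and the tilted analogue, \Cref{prop:gr} gives $F\xi\cong\xi^\flat$; then \cite[Theorem 3.7.21]{agv} identifies $\RigDA(\Spa(K,K^+))$ and $\RigDA(\Spa(K^\flat,K^{\flat+}))$ with modules in $\FDA(K^+)$ over the algebra objects $\chi1$ and $\chi^\flat1$, so that $F$ becomes base change along a map $\chi^\flat1\to\chi1$, whose invertibility is finally checked by writing $(K,K^+)$ as a completed filtered union of perfectoid subfields $K_\alpha$ with finitely generated value groups and using continuity of $\FDA(-,\chi1)$ together with the explicit computation $\chi_\alpha1\cong(1\oplus1(-1)[-1])^{\otimes n}$. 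Without this reduction to stalks with general $K^+$ and the $\chi1$-computation (or some substitute for them), the central claim of your proposal is unsupported.
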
 

\begin{proof}The proof is divided into various steps.\\
	{\it Step 1: }
	By \Cref{thm:rig=perf} it suffices to produce the first equivalence. By pro-\'etale descent we may restrict to $\Perf_{/\C_p}^{\qcqs}$ and show $\RigDA(P)\cong\RigDA(P^\flat)$ in $\Prloo$ functorially on $P$.  We can produce a natural transformation between the  functors {$\RigDA(-)$ and $\RigDA(-^\flat)$}  by means of the composition $$
	F\colon\RigDA(P^\flat)\stackrel{\mcD(P^\flat)}{\longrightarrow}\RigDA(\mcX(P^\flat))\stackrel{\infty^*}{\to}\RigDA(P).
	$$
	We now restrict the two functors on the {} affinoid analytic site of $P$ where they are analytic (hyper)sheaves with values in $\Prlo$ {(see \Cref{thm:basicprop})}. To show they are equivalent, it suffices then to show that $F$ is invertible on analytic stalks (see \cite[Lemma 2.8.4]{agv}) that is on a fixed perfectoid space of the form $P=\Spa(K,K^+)$ with {$K$} a complete field (by \Cref{thm:cont}, see also \cite[Theorem 2.8.5]{agv}). By pro-\'etale descent, we may then actually suppose that $K$ is   algebraically closed. We remark that we are  almost in the same setting as in \cite{vezz-fw}, with the difference that $K^+$ may not be equal to $K^\circ$. In particular, we can't use   duality as it is done in \cite[Theorem 7.11]{vezz-fw}. We will replace this ingredient with \cite[Theorem 3.7.21]{agv}.\\ %
	{\it Step 2: }		We consider the following adjoint pairs
	$$
	\xi\colon\FDA(K^+)\rightleftarrows\RigDA(\Spa(K,K^+))\colon\eta\quad	\xi^\flat\colon\FDA(K^{+})\rightleftarrows\RigDA(\Spa(K^\flat,K^{\flat+}))\colon\eta^\flat
	$$
	We remark that, by means of \Cref{prop:gr} we have $F\xi\cong\xi^\flat$. Using \cite[Theorem 3.7.21]{agv} we may replace the categories $\RigDA(\Spa(K,K^+))$ and $\RigDA(\Spa(K^\flat,K^{\flat+}))$ with $\FDA(\Spf K^+,\chi1)$ and $\FDA(\Spf K^+,\chi^\flat1)$ respectively, which denote the categories of modules in formal motives over the commutative algebra object $\chi1$ resp. $\chi^\flat1$ (see \cite[Section 3.4]{agv}). Accordingly, we may replace the functor $F$ with the base change along the map $\chi^\flat1\to\chi 1$ which is induced by 
	$F\xi\cong\xi^\flat$.  {The fact that this morphism is invertible can be deduced 
	if we prove $G1\cong 1$, if we denote by $G$ the right adjoint to $F$. Equivalently, we are left to prove that for any compact $M\in\FDA(W(K^+))$, there is a canonical equivalence $\Map_{\RigDA(K,K^+)}(M_{(K,K^+)},1)\cong \Map_{\RigDA(K,K^+)}(M_{(K^\flat,K^{\flat+})},1)$.  From the equivalence $\Map_{\RigDA(K,K^+)}(M,1)\cong\varinjlim \Map_{\RigDA^{\eff}(K,K^+)}(M(n),1(n))$ and since $\Q(1)$ is a direct summand of $\Q(\T^1)$, it suffices to show an equivalence  $\Map_{\RigDA^{\eff}(K,K^+)}(M_{(K,K^+)},\Q(\T^n))\cong \Map_{\RigDA^{\eff}(K,K^+)}(M_{(K^\flat,K^{\flat+})},\Q(\T^n))$ for any $M$ ranging among a class of compact generators of $\FDA^{\eff}(W(K^{+}))$. Since universal homeomorphisms become invertible in $\FDA(W(K^+))$ (see \cite[Theorems 2.9.7 and 3.1.10]{agv}) hence in $\RigDA(K,K^+)$, we may and do invert formally on $\RigDA^{\eff}(K,K^+)$ universal homeomorphisms of formal schemes over $K^+$ without changing the stable category $\RigDA(K,K^+)$.}
\\
	{\it Step 3: }
{	We can now use the results of \cite{vezz-fw} which do not use the hypothesis $K^+=K^\circ$ to conclude. Assume $M$ to be the motive of a variety $X$ which is \'etale over the some affine space over $W(K^+)$. We may use these coordinates to define a perfectoid pro-\'etale cover $\widehat{X}_{(K,K^+)}\sim\varprojlim X_h$ of $X_{(K,K^+)}$ and a  perfectoid pro-\'etale cover $\widehat{X}_{(K^\flat,K^{\flat+})}$ of $X_{(K^\flat,K^{\flat+})}$ which coincides with its perfection.    \cite[Proposition 4.5]{vezz-fw} we have $\Map(\Q(\widehat{ X}_{(K,K^+)}),\Q(\T^n))\cong\varinjlim_h \Map(\Q({ X}_{h}),\Q(\T^n))$.  As the maps $X_h\to X_{(K,K^+)}$ are  invertible in $\RigDA^{\eff}(K,K^+)$ by construction, we deduce  that 
	$
	\Map(\Q(X),\Q(\T^n))\cong\Map(\Q(\widehat{ X}_{(K,K^+)}),\Q(\T^n))
	$. On the other hand, by \Cref{thm:rig=perf}  we have $$\Map(\Q({ X}_{(K^\flat,K^{\flat+})}),\Q(\T^n))\cong \Map(\Q(\widehat{ X}_{(K^\flat,K^{\flat+})}),\Q(\widehat{\T}^n))=\Map(\Q(\widehat{ X}_{(K,K^+)},\Q(\widehat{\T}^n)).$$
	The equivalence  $\Map(\Q(\widehat{ X}_{(K,K^+)}),\Q(\T^n))\cong  \Map(\Q(\widehat{ X}_{(K,K^+)},\Q(\widehat{\T}^n))$ proved in \cite[Propositions 7.5-7.6]{vezz-fw} then gives the desired equivalence. } 
\end{proof}
{
\begin{rmk}
One could replace Step 3 of the previous proof with the explicit description of the algebras $\chi1$ and $\chi^\flat1$ given in \cite[Section 3.8]{agv}: when evaluated on each point $v$ of $\Spf\mcO_C$ (corresponding to some valuation ring $K_v^+$ containing $K^+$) they can be shown to be both isomorphic to $(1\oplus 1(-1)[-1])^{\otimes n}$ with $n$ being the rank over $\Q$ of the valuation group $\Gamma_v$ of the valuation $(K,K_v^+)$ resp.  $(K^\flat,K_v^{\flat+})$, via a map induced by the choice of some generators $|\varpi_1|,\ldots,|\varpi_n|$ of $\Gamma$. The morphism  $\chi^\flat1\to \chi1$ corresponds to the one induced by $\varpi\mapsto\varpi^\sharp$ which fixes the $\Q$-basis $|\varpi_i|$, and is then invertible.
\end{rmk}

\begin{rmk}The result above is stated only for stable motives (as seen in the proof we made use of this hypothesis). On the other hand, over points of the form $(K,K^\circ)$ it holds even effectively, using \cite[Theorem 7.10]{vezz-fw} together with \cite[Remark 2.9.12]{agv}.
\end{rmk}
}

The proof of \Cref{thm:R=P2} also shows the following.
\begin{cor}\label{cor:fw'}
Let $K$ be a perfectoid field  of characteristic $p$ and $P$ be in $\Perf_{/K}$. %
For any closed point $x^\sharp$ of $\mcX(K)$ associated to an un-tilt $K^\sharp$ of $K$ the composition
$$
\RigDA^{}(P)\stackrel{\mcD(P)}{\longrightarrow}\RigDA^{}(\mcX(P))\stackrel{x^{\sharp*}}{\to}\RigDA(P^\sharp)
$$
is an equivalence, and recovers the equivalence of \cite{vezz-fw} in case $P=\Spa(K)$. \qed
\end{cor}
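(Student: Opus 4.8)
The plan is to read off Corollary~\ref{cor:fw'} directly from the internal structure of the proof of Theorem~\ref{thm:R=P2}, without any new construction. The key observation is that the composite $x^{\sharp*}\circ\mcD(P)$ is, by definition, nothing but the functor $F$ appearing in Step~1 of that proof, with the role of $\infty^*$ (specialization at the distinguished point) replaced by $x^{\sharp*}$ (specialization at an arbitrary classical point $x^\sharp$ of $\mcX(P)$ corresponding to the untilt $P^\sharp\to\mcX(P)$). So the first thing I would do is record that, just as $\infty^*\colon\RigDA(\mcX(P))\to\RigDA(P)$ sits in the chain of \Cref{prop:dw} and recovers $\RigDA(\mcX(P))^{h\varphi^*}\cong\RigDA(P)^{h\varphi^*}$ followed by the forgetful functor, the pullback $x^{\sharp*}$ along \emph{any} untilt is, up to the $\varphi$-equivariant identifications, an instance of the same "specialization of a $\varphi$-module along a closed point" operation; this is exactly the remark in the introduction that $\mcD$ "gives a globalization of the motivic tilting equivalence $\RigDA(C)\cong\RigDA(C^\sharp)$ at the level of each classical point". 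Hence $F_{x^\sharp}\colonequals x^{\sharp*}\circ\mcD(P)\colon\RigDA(P)\to\RigDA(P^\sharp)$ is a monoidal, colimit-preserving functor of the same shape as $F$.

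Next I would reduce to the case of a geometric point exactly as in Steps~1--2 of the proof of \Cref{thm:R=P2}: by pro-\'etale descent and the analytic-sheaf property (with values in $\Prlo$, using \Cref{cor:proetd}, the étale/analytic descent of \Cref{thm:basicprop}(5), and \Cref{fsharp}), it suffices to check that $F_{x^\sharp}$ is invertible on analytic stalks, i.e. when $P=\Spa(K,K^+)$ with $K$ complete algebraically closed; and by a further pro-\'etale reduction we may take $K^+=K^\circ$, i.e. $P=\Spa(K)$ a geometric point in the usual sense. At that stage $P^\sharp=\Spa(K^\sharp)$ for the untilt $K^\sharp$ of $K$ determined by $x^\sharp$, and the claim becomes: the composite $\RigDA(K)\to\RigDA(K^\sharp)$ given by $x^{\sharp*}\circ\mcD(K)$ is an equivalence. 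Here I would invoke \Cref{prop:gr}: for motives of good reduction $M_\eta$ with $M\in\FDA(K^\circ)$ one has $\mcD(K)(M_\eta)\cong M\times_{W(K^\circ)}\mcY_{(0,\infty)}(K)$, and pulling this back along $x^\sharp$ — which by construction of the Fargues--Fontaine curve is the untilt whose associated point of $\mcY$ has residue field $K^\sharp$ and whose "formal model" is $\Spf W(K^\circ)\to\Spf\mcO_{K^\sharp}$ — yields $M\times_{W(K^\circ)}\mcO_{K^\sharp}$, i.e. $\xi^\sharp$ applied to $M$, where $\xi^\sharp\colon\FDA(K^\circ)\to\RigDA(\Spa K^\sharp)$ is the good-reduction functor for the untilt. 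In other words $F_{x^\sharp}\circ\xi^\flat\cong\xi^\sharp$ (reading $K$ as $K^\flat$), which is the precise analogue of the identity $F\xi\cong\xi^\flat$ used in Step~2 of the proof of \Cref{thm:R=P2}.

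The conclusion then follows by the same device as Step~2--3 of \Cref{thm:R=P2}: using \cite[Theorem 3.7.21]{agv} we replace $\RigDA(\Spa K)$ and $\RigDA(\Spa K^\sharp)$ by the categories $\FDA(\Spf K^\circ,\chi^\flat 1)$ and $\FDA(\Spf\mcO_{K^\sharp},\chi^\sharp 1)$ of modules over the relevant commutative algebra objects in formal motives, and $F_{x^\sharp}$ becomes base change along the induced map $\chi^\flat 1\to \chi^\sharp 1$; then one checks this map is an equivalence via the continuity of $\FDA(-,\chi 1)$ (\cite[Theorem 3.5.3]{agv}) and the explicit description of $\chi^\flat_\alpha 1$ and $\chi^\sharp_\alpha 1$ as $(1\oplus 1(-1)[-1])^{\otimes n}$ from \cite[Theorem 3.8.1 and Corollary 3.8.31]{agv}, exactly as before — nothing in that computation used that the untilt was the distinguished one. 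For the last sentence of the corollary, one observes that when $P=\Spa(K)$ the functor $F_{x^\sharp}$ is by the above identification literally the functor denoted $F$ (for that choice of untilt) in \cite{vezz-fw}, hence recovers the equivalence $\RigDA(K)\cong\RigDA(K^\sharp)$ proved there. The main obstacle is the bookkeeping in the second paragraph: making precise, in terms of the adic space $\mcY_{(0,\infty)}(P)$ and its formal model $\Spf W(\mcO_P)$, that pullback along the classical point $x^\sharp$ of $\mcX(P)$ corresponds on good-reduction motives to base change $W(\mcO_P)\to\mcO_{P^\sharp}$ along Fontaine's map $\theta$ — i.e.\ identifying $x^{\sharp*}\mcD(P)$ with the untilt's good-reduction functor — so that \Cref{prop:gr} can be applied cleanly; once that identification is in hand the rest is a verbatim rerun of the proof of \Cref{thm:R=P2}.
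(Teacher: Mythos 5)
Your route is the paper's own: the printed justification of \Cref{cor:fw'} is nothing more than the remark that the proof of \Cref{thm:R=P2} already establishes it, and your reading is the right one --- the composite $x^{\sharp*}\circ\mcD(P)$ is precisely the functor $F$ of that proof applied to the untilt $P^\sharp$ of $P$ determined by $K^\sharp$ (so one could even skip the renewed descent bookkeeping and quote the theorem's proof for the single space $P^\sharp$). Your identification $F_{x^\sharp}\circ\xi^\flat\cong\xi^\sharp$ via \Cref{prop:gr}, the passage to module categories over $\chi1$ via \cite[Theorem 3.7.21]{agv}, continuity, and the explicit computation of $\chi_\alpha 1$ are exactly the paper's Steps 2--3, and, as you note, nothing there used that the untilt was the distinguished one; the same applies to the final sentence about recovering the equivalence of \cite{vezz-fw} when $P=\Spa(K)$.

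One step is wrong as stated, though it is excisable: after reducing to an analytic stalk $\Spa(K,K^+)$ with $K$ algebraically closed, you invoke a ``further pro-\'etale reduction'' to $K^+=K^\circ$. No such reduction exists: $\Spa(K,K^\circ)\to\Spa(K,K^+)$ is a non-surjective open immersion missing the higher-rank specializations, and since every open subset of $\Spa(K,K^+)$ containing the closed point is the whole space, no analytic, \'etale or pro-\'etale cover lets you trade $K^+$ for $K^\circ$. This is exactly the point flagged in Step 1 of the proof of \Cref{thm:R=P2} (``$K^+$ may not be equal to $K^\circ$''), and it is the reason the duality argument of \cite[Theorem 7.11]{vezz-fw} is replaced there by \cite[Theorem 3.7.21]{agv}. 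Fortunately you do not need the reduction: your subsequent argument runs verbatim over $(K,K^+)$ once $K^\circ$ and $\mcO_{K^\sharp}$ are replaced by $K^+$ and its untilt, both categories of formal motives being identified with those over the common reduction, as in the adjoint pairs displayed in Step 2 of the paper's proof. With that correction your proposal coincides with the paper's argument.
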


We end this section by linking the functor $\mcD$ to the base change along $\mcX(S)^\diamond\to S^\diamond$.

\begin{prop}\label{Dalt}
	Let $P$ be a perfectoid space in $\Perf_{/\F_p}$. 
\begin{enumerate}
\item The relative Fargues-Fontaine curve functor $X\in\PerfSm/P\mapsto\mcX(X)$ induces a functor
$$
\mcX\colon\PerfDA^{}(P)\to\PerfDA(\mcX(P))
$$
and the following diagram, with vertical maps given by \Cref{thm:R=P2}, is  commutative (up to a canonical invertible transformation):
$$\xymatrix{
	\RigDA(P)\ar[r]^-{\mcD{(P)}}\ar[d]^{\sim}%
	&\RigDA(\mcX(P))%
	\ar[d]^{\sim}\\
	\PerfDA(P)\ar[r]^-{\mcX}%
	&\PerfDA(\mcX(P))%
}$$
In particular, one can define $\mcD{(P)}$ as the functor induced by the relative Fargues-Fontaine curve construction and motivic (de-)perfectoidification.
\item\label{Daltpb} The pullback along $\Pi\colon\mcY_{(0,\infty)}(P)^\diamond\to P^\diamond $ induces a functor
$$
\Pi^*\colon\RigDA^{}(P^\diamond)\to\RigDA(\mcY_{(0,\infty)}(P)^\diamond)
$$
and the following diagram, with vertical maps given by \Cref{thm:R=P2}, is commutative (up to a canonical invertible transformation):
$$\xymatrix{
	\RigDA(P)\ar[r]^-{\mcD{(P)}}\ar[d]^{\sim}%
&\RigDA(\mcX(P))\ar[r]	&\RigDA(\mcY_{(0,\infty)}(P))%
	\ar[d]^{\sim}\\
\RigDA(P^\diamond)\ar[rr]^{\Pi^*}&&\RigDA(\mcY_{(0,\infty)}(P)^\diamond)
}$$
In particular,  one can define the functor $\mcD(P)$ by means of the pullback along the diamond map $\mcY_{(0,\infty)}(P)^\diamond\to P^\diamond$ and motivic (de-)diamondification.
\end{enumerate}
\end{prop}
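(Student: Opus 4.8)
The plan is, in each of the two diagrams, to compare two colimit-preserving functors by exhibiting a natural transformation and checking it is an equivalence on a generating set of compact motives. Since $\RigDA(P)\cong\PerfDA(P)$ (\Cref{thm:rig=perf}) and, by \cite[Corollary 3.7.19]{agv}, $\RigDA(S)$ is generated under colimits, shifts and finite \'etale push-forward by motives of good reduction (see the remark after \Cref{prop:gr}), it suffices to treat generators of the form $\eta^{*}M$ with $M$ in $\FDA(R^{+})$ for $\Spa(R,R^{+})$ an affinoid perfectoid \'etale over an open of $P$; writing $M=\Q_{R^{+}}(\mfX)$ for $\mfX/R^{+}$ smooth formal, such a generator is $\Q_{S}(\mfX_{\eta})$ with $S=\Spa(R,R^{+})$, and corresponds under $\RigDA(P)\cong\PerfDA(P)$ to $\Q_{P}(Z)$ with $Z=\mfX_{\eta}^{\Perf}\in\PerfSm/P$. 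All the functors in the two diagrams ($\mcD$ by \Cref{cor:dw}, the pull-backs by \Cref{thm:basicprop} and \Cref{cor:DAdiam}, the equivalences of \Cref{thm:rig=perf} and \Cref{thm:R=P2}) are colimit-preserving, so this reduction is legitimate.

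On such a generator, \Cref{prop:gr} identifies $\mcD(P)(\eta^{*}M)$ with $\widetilde{\mcD}(R^{+})(M)$, namely with the image in $\RigDA(\mcY_{(0,\infty)}(S))$ of the generic fibre of $\mfX\times_{R^{+}}W(R^{+})$ restricted along $\mcY_{(0,\infty)}(S)\subset\mcY_{[0,\infty)}(S)$. For item~(1) I would then invoke the geometric identity $\mcX(\mfX_{\eta}^{\Perf})=\mcY_{(0,\infty)}(\mfX_{\eta}^{\Perf})/\varphi^{\Z}$, equivalently $\mcX(\mfX_{\eta}^{\Perf})^{\diamond}\cong\bigl((\mfX\times_{R^{+}}W(R^{+}))_{\eta}|_{\mcY_{(0,\infty)}(S)}\bigr)^{\diamond}/\varphi^{\Z}$, which is exactly the content of the phrase ``$X\mapsto\mcY_{[0,\infty)}(X)$ is an inverse to $\iota^{*}$'' recalled in the text, together with \cite[Lemma 8.7.15]{kl:f} to know that the resulting object is a compact motive in $\PerfDA(\mcX(P))=\PerfDA(\mcX(P)^{\diamond})$ in the sense of \Cref{rmk:naif}. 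It follows that $\mcD(P)(\eta^{*}M)\cong\Q_{\mcX(P)}(\mcX(Z))$, naturally in $M$ and in $P$ (using \Cref{cor:tohF}). Granting this, the two presheaves $Z\mapsto\mcD(P)(\Q_{P}(Z))$ and $Z\mapsto\Q_{\mcX(P)}(\mcX(Z))$ on the good-reduction objects of $\PerfSm/P$ are naturally equivalent; since the first is $\widehat{\B}^{1}$-invariant with \'etale descent, so is the second, whence $X\mapsto\Q_{\mcX(P)}(\mcX(X))$ extends by the universal property to a colimit-preserving $\mcX\colon\PerfDA(P)\to\PerfDA(\mcX(P))$ and the square of~(1) commutes by construction.

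For item~(2) I would post-compose $\mcD(P)$ with the pull-back along the \'etale cover $\mcY_{(0,\infty)}(P)\to\mcX(P)$ and with the diamondification equivalence $\RigDA(\mcY_{(0,\infty)}(P))\cong\RigDA(\mcY_{(0,\infty)}(P)^{\diamond})$ of \Cref{thm:R=P2}. On the generator $\eta^{*}M$ the composite becomes the diamond attached to $(\mfX\times_{R^{+}}W(R^{+}))_{\eta}$ restricted over $\mcY_{(0,\infty)}(S)$, while $\Pi^{*}$ applied to $(\mfX_{\eta})^{\diamond}$ is $(\mfX_{\eta})^{\diamond}\times_{P^{\diamond}}\mcY_{(0,\infty)}(P)^{\diamond}$. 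These agree because on diamonds $\mcY_{[0,\infty)}(-)^{\diamond}$ carries a retraction to the identity over $(-)^{\diamond}$ — the diamond avatar of the equality $\mcY_{[0,\infty)}(Z)\times_{\mcY_{[0,\infty)}(P)}P=Z$ of fibres over $\{p=0\}$, and exactly the map $\Pi$ once restricted to $\mcY_{(0,\infty)}(P)^{\diamond}$ (see \cite{berkeley}) — so that $\bigl(\mfX\times_{R^{+}}W(R^{+})\bigr)_{\eta}^{\diamond}\cong(\mfX_{\eta})^{\diamond}\times_{S^{\diamond}}\mcY_{[0,\infty)}(S)^{\diamond}$ by compatibility of $\mfX\mapsto\mfX_{\eta}$ with base change. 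Hence the two functors agree on generators, so coincide, naturally in $M$ and in $P$. The two ``In particular'' clauses are then immediate reformulations: they only read the equivalences of \Cref{thm:rig=perf} and \Cref{thm:R=P2} as (de-)perfectoidification and (de-)diamondification, together with the recovery of $\RigDA(\mcX(P))$ from $\RigDA(\mcY_{(0,\infty)}(P))$ by $\varphi$-descent along $\mcY_{(0,\infty)}(P)\to\mcX(P)$.

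The main obstacle is the handful of geometric identities packaged into the second and third paragraphs, and their naturality: that the relative Fargues--Fontaine curve of $\mfX_{\eta}^{\Perf}$ is computed by base-changing the formal scheme $\mfX$ along $R^{+}\to W(R^{+})$ and passing to the generic fibre over $\mcY_{(0,\infty)}$; that on diamonds $\mcY_{[0,\infty)}(-)^{\diamond}$ retracts onto $(-)^{\diamond}$ with the retraction restricting to $\Pi$; and that the objects so produced genuinely lie in the pro-\'etale-sheafified categories $\PerfDA$, via \cite[Lemma 8.7.15]{kl:f} and \Cref{rmk:naif}. None of these is deep — the first is essentially \Cref{prop:gr} together with \cite[Lemma 8.7.15]{kl:f}, the second is standard from \cite{berkeley}, and naturality is bookkeeping — but carefully assembling them, and matching the homotopy $\varphi^{*}$-fixed-point structures throughout, is where the real content of the proof lies.
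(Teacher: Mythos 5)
Your strategy --- check agreement on good-reduction generators via \Cref{prop:gr} and then extend --- has two genuine gaps. First, in part (2) the geometric identity you invoke, namely $\bigl(\mfX\times_{R^{+}}W(R^{+})\bigr)_{\eta}^{\diamond}\cong(\mfX_{\eta})^{\diamond}\times_{S^{\diamond}}\mcY_{[0,\infty)}(S)^{\diamond}$, is false at the level of diamonds: for $\mfX=\widehat{\A}^{1}_{R^{+}}$ the left-hand side is the diamond of the rigid closed ball over $\mcY_{[0,\infty)}(S)$, while the right-hand side is the diamond of the \emph{perfectoid} ball over it, and these are not isomorphic (diamondification does not identify $\B^{1}$ with $\widehat{\B}^{1}$). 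The two objects only become equivalent after applying the motivic (de-)perfectoidification/(de-)diamondification equivalences, so the statement you need is itself a motivic comparison of exactly the kind the proposition is asserting, not a standard geometric fact from \cite{berkeley}. Second, in part (1) you never actually construct the natural transformation between the two colimit-preserving functors out of $\PerfDA(P)$: \Cref{prop:gr} gives, for each fixed affinoid $S$ \'etale over $P$, an objectwise identification on good-reduction motives, but assembling these into a coherent equivalence of functors --- compatible with the \'etale $f_\sharp$'s implicit in the generation statement, with the homotopy $\varphi^{*}$-fixed-point structures, and with the universal-property extension --- is precisely the hard part, and ``naturality is bookkeeping'' does not discharge it. Relatedly, to apply the universal property you need $\widehat{\B}^{1}$-invariance and \'etale descent of $Z\mapsto\Q_{\mcX(P)}(\mcX(Z))$ on all of $\PerfSm/P$; these cannot be transferred from $\mcD$ via an agreement established only on good-reduction objects (they could be checked directly, but you do not do so).

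The paper avoids both problems by a different route: it first notes that (1) and (2) are equivalent because $\Pi^{*}$ on $\PerfDA$ is the functor induced by $X\mapsto\mcY_{(0,\infty)}(X)$, then uses pro-\'etale descent along the perfectoid cover $Q=\mcY_{(0,\infty)}(P)_{\C_p}\to\mcY_{(0,\infty)}(P)$ to reduce to comparing $e^{*}\circ\widetilde{\mcD}$ with $\widetilde{\Pi}^{*}$ followed by the tilting equivalence $\RigDA(Q^{\flat})\cong\RigDA(Q)$; since that equivalence was constructed in \Cref{thm:R=P2} precisely as $\infty^{*}\circ\mcD$, the desired commutativity follows from the functoriality of $\mcD$ (\Cref{cor:dw}) together with the Witt vectors/tilting adjunction --- no generator-by-generator check and no geometric identification of spaces over the curve is needed. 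If you want to salvage your approach for (1), you would essentially be rewriting the paper's remark following \Cref{cor:dw}, and you would still need to supply the coherence argument that your proposal defers.
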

\begin{proof}
Since the functor $\Pi^*\colon\PerfDA(P)\to\PerfDA(\mcY_{(0,\infty)}(P))$ obtained by pullback  coincides with the one induced by $X\mapsto\mcY_{(0,\infty)}(X)$, we easily see that the two claims are actually equivalent. We recall that, if we put $Q\colonequals \mcY_{(0,\infty)}(P)_{\C_p}$, the map  $e\colon Q\to \mcY_{(0,\infty)}(P)$ is a pro-\'etale perfectoid cover and hence, by pro-\'etale descent, it suffices to  construct a Galois-equivariant invertible natural transformation between the functors
$
e^*\circ\widetilde{\mcD}\colon\RigDA(P){\to}%
\RigDA(Q)
$
and 
$
\widetilde{\Pi}\colon\RigDA(P)%
{\to}\RigDA(Q^\flat)
$
where we put $\widetilde{\mcD}$ to be the composition of $\mcD$ with $ (\mcY_{(0,\infty)}(P)_{}\to\mcX{(P)})^*$ and $\widetilde{\Pi}$ to be $Q^\diamond\to P$. %

This follows from the functoriality of $\mcD$ and the construction of the equivalence $\RigDA(Q)\cong\RigDA(Q^\flat)$ showed in \Cref{thm:R=P2}, which give  the following commutative diagram
$$
\xymatrix{
\RigDA(P)\ar[r]^-{\widetilde{\Pi}^*}\ar[d]^{\widetilde{\mcD}} &  \RigDA(Q^\flat)\ar[d]^{\widetilde{ \mcD}}\ar@/^15pt/[dr]^{\sim}\\
\RigDA(\mcY_{(0,\infty)}(P))\ar[r]^-{\mcY(\widetilde{\Pi})^*}\ar[r]\ar@/_25pt/[rr]_{e^*}&\RigDA(\mcY_{(0,\infty)}(Q^\flat))\ar[r]^-{\infty_{\C_p}^*}&\RigDA(Q)\\
}
$$
thus proving the statement {(the commutativity of the lower part of the diagram is simply expressing the adjunction between Witt vectors and tilting)}. For the final claim, we remark that one could then define $\mcD$ using the following composition:
$$
\RigDA(P)\to\RigDA(P)^{h\varphi^*}\stackrel{\Pi^*}{\to}\RigDA(\mcY_{(0,\infty)}(P)^\diamond)^{h\varphi^*}\!\cong\!\RigDA(\mcX(P)^\diamond)\!\cong\!\RigDA(\mcX(P))
$$
where the first map is induced by \Cref{cor:tohF}.
\end{proof}

\section{The de Rham-Fargues-Fontaine cohomology}\label{sec:fin}
In this final section, we combine the results above, by merging the Fargues-Fontaine realization $\mcD $ with the overconvergent de Rham realization, giving rise to a de Rham-like cohomology theory for analytic spaces in positive characteristic with values in modules over the associated Fargues-Fontaine curves.
 
\subsection{Definition and properties}
\label{sec:dRFF}
We can juxtapose \Cref{cor:dR} and  \Cref{cor:dw} as follows.
\begin{dfn}
	Let $S$ be an adic space in $\catD_{/\F_p}$. The composition of the functors$$
{\dR_S^{\rm FF}} \colon\xymatrix@=4em{	\RigDA(S)\ar[r]^-{\mcD(S^{\Perf})}& {\RigDA(\mcX({S^{\Perf}}))} \ar[r]^-{\dR_{\mcX(S^{\Perf})}}&\QCoh(\mcX(S^{\Perf}))^{\op}
}$$
	will be called the  \emph{de Rham-Fargues-Fontaine realization}.
	
	{In case $M=\Q_S(X)$ for some smooth map $X\to S$, or more generally if $M=p_!p^!\Q_S$ for some map $p\colon X\to S$ which is locally of finite type (see \cite[Corollary 4.3.18]{agv}), then we alternatively write $\dR_S^{\rm FF}(X)$ instead of $\dR_S^{\rm FF}(M)$.}
\end{dfn}

	\begin{rmk}
		In case $S$ is affinoid, then we may take the cohomology groups $H^i_{\rm FF}(M/\mcX(S))\colonequals H^i(\dR_S^{\mathrm FF}(M))$ with respect to the $t$-structure of \Cref{rmk:t-s} and call them  the\emph{ $i$-th  de Rham-Fargues-Fontaine cohomology group of $M$ over $\mcX(S)$}. In case $M=p_!p^!\Q_S$ for a  map $p\colon X\to S$ which is locally of finite type, we may even use the symbol $H^{ i}_{\rm FF}(X/\mcX(S))$.
	\end{rmk}

We recall that we denote by $\RigDA(S)^{\fd}$ the full subcategory of dualizable motives (see \Cref{dfn:dual}), and by $\mcP(S)$ the full subcategory of perfect complexes in $\QCoh(S)$.

\begin{thm}
	\label{thm:main}
	Let $S$ be  in $\catD_{/\F_p}$. 
The de Rham-Fargues-Fontaine realization {$\dR_S^{\rm FF}$} restricts to a symmetric monoidal functor {compatible with pullbacks:}
$$
{\dR_S^{\rm FF}:} \RigDA(S)^{\fd}\to\mcP(\mcX(S^{\Perf}))^{\op}\!\!\!\!\!.
$$%
Moreover, for any $M$ in $\RigDA(S)^{\fd}$, {$\dR_S^{\rm FF}(M)$ is a split perfect complex of $\mathcal{O}_{\mcX(S^{\Perf})}$-modules over} the relative Fargues-Fontaine curve $\mcX(S^{\Perf})$. In particular, its cohomology groups  are vector bundles on $S$ and equal to $0$ if $|i|\gg0$. 
\end{thm}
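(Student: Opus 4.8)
The plan is to obtain the statement as a formal consequence of \Cref{cor:dw}, \Cref{cor:dR} and, crucially, the finiteness result \Cref{thm:dRm}; the latter is the only substantive input, everything else being a juxtaposition of functors whose relevant properties have already been established.

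First I would record the hypotheses needed to run \Cref{thm:dRm} on the Fargues-Fontaine curve. Since $S$ is strongly stably uniform, $S^{\Perf}$ is a perfectoid space in $\catD_{/\F_p}$, and the relative Fargues-Fontaine curve $\mcX(S^{\Perf})$ lies in $\catD_{/\Q_p}$ by the remark following its definition. Hence \Cref{thm:dRm} and \Cref{cor:dR} are available with base $\mcX(S^{\Perf})$. Next, the functor $\mcD(S^{\Perf})\colon\RigDA(S)\cong\RigDA(S^{\Perf})\to\RigDA(\mcX(S^{\Perf}))$ is symmetric monoidal by \Cref{cor:dw}, so it preserves dualizable objects and therefore restricts to $\RigDA(S)^{\fd}\to\RigDA(\mcX(S^{\Perf}))^{\fd}$. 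Moreover, since the unit of $\RigDA(\mcX(S^{\Perf}))$ is compact, every dualizable motive over the curve is compact, hence constructible: the compact generators $\Q(X)(n)$ with $X$ affinoid are in particular qcqs, so $\RigDA(\mcX(S^{\Perf}))^{\omega}\subset\RigDA(\mcX(S^{\Perf}))^{\mathrm{ct}}$, the latter being idempotent-complete and closed under finite colimits and shifts. Thus
$$
\dR_S^{\rm FF}=\dR_{\mcX(S^{\Perf})}\circ\mcD(S^{\Perf})
$$
is well defined on $\RigDA(S)^{\fd}$, and \Cref{thm:dRm} applied to $\mcX(S^{\Perf})$ shows that it sends every dualizable $M$ to a split perfect complex of $\mathcal{O}_{\mcX(S^{\Perf})}$-modules. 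The remaining assertions are then immediate: a split perfect complex is bounded with finite projective cohomology modules, so the cohomology sheaves $H^i(\dR_S^{\rm FF}(M))$ (for the $t$-structure on perfect complexes used in \Cref{thm:dRm}) are vector bundles on $\mcX(S^{\Perf})$ and vanish for $|i|\gg0$.

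Finally, symmetric monoidality of $\dR_S^{\rm FF}$ on $\RigDA(S)^{\fd}$ follows by composing the symmetric monoidality of $\mcD(S^{\Perf})$ (\Cref{cor:dw}) with that of $\dR_{\mcX(S^{\Perf})}$ on constructible objects (\Cref{cor:dR}(3)), this composition being licit because, as just noted, $\mcD(S^{\Perf})$ carries $\RigDA(S)^{\fd}$ into $\RigDA(\mcX(S^{\Perf}))^{\mathrm{ct}}$. Likewise, a morphism $f\colon S'\to S$ in $\catD_{/\F_p}$ induces a morphism $\mcX(f^{\Perf})\colon\mcX(S'^{\Perf})\to\mcX(S^{\Perf})$ in $\catD_{/\Q_p}$, and the natural equivalence $\mcX(f^{\Perf})^{*}\circ\dR_S^{\rm FF}\cong\dR_{S'}^{\rm FF}\circ f^{*}$ is obtained by pasting the compatibility of $\mcD$ with pullbacks (\Cref{cor:dw}) with the compatibility of $\dR$ with pullbacks (\Cref{cor:dR}(3)). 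I do not anticipate any real difficulty: the entire content of the theorem has been placed in \Cref{thm:dRm} and in the construction of $\mcD$, and what is left is bookkeeping; the only points worth a line of justification are that $\mcX(S^{\Perf})$ satisfies the hypotheses of \Cref{thm:dRm} and that dualizable motives over the curve are constructible, so that \Cref{cor:dR}(3) may be invoked.
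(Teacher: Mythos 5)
Your proposal is correct and follows essentially the same route as the paper: the paper's proof simply observes that $\mcD(S)$, being monoidal, preserves dualizable objects and then invokes \Cref{thm:dRm} over $\mcX(S^{\Perf})$, which is exactly your argument. The extra bookkeeping you supply (that $\mcX(S^{\Perf})$ lies in $\catD_{/\Q_p}$, and that dualizable objects are compact hence constructible so \Cref{cor:dR}(3) applies) is the same justification the paper relies on implicitly via its remark on the curve and Step~1 of the proof of \Cref{thm:dRm}.
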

\begin{proof}
The functor $\mcD(S)$ being monoidal, it preserves dualizable objects. The claim then follows from \Cref{thm:dRm}.
\end{proof}

One of the key features of the relative de Rham cohomology for algebraic varieties is that it defines a vector bundle on the base whenever the map $f\colon X\to S$ is proper and smooth. The analogous statement holds for the de Rham-Fargues-Fontaine cohomology:

\begin{cor} If $X\to S$ be a smooth proper morphism in $\catD_{/\F_p}$, $\dR_S^{\rm FF}(X)$ { is a split perfect complex of $\mathcal{O}_{\mcX(S^{\Perf})}$-modules over} the relative Fargues-Fontaine curve $\mcX(S^{\Perf})$. In particular, its cohomology groups  are vector bundles on $S$ and equal to $0$ if $|i|\gg0$. 
\end{cor}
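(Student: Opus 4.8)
The statement is the de Rham–Fargues–Fontaine analogue of \Cref{cor:dRm}, and the plan is to deduce it from \Cref{thm:main} in exactly the same way that \Cref{cor:dRm} was deduced from \Cref{thm:dRm}. The only point to check is that the motive $\Q_S(X)$ is dualizable in $\RigDA(S)$ whenever $f\colon X\to S$ is smooth and proper. First I would invoke the motivic six-functor formalism for rigid analytic motives: since $f$ is proper, $f_!\cong f_*$, and since $f$ is smooth of relative dimension $d$ one has the purity identification $f^!\cong f^*(d)[2d]$, so that $\Q_S(X)=f_\sharp f^*\Q_S\cong f_!f^!\Q_S(-d)[-2d]$. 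By \cite[Corollary 4.1.8]{agv} the object $f_!f^!\Q_S$ is dualizable in $\RigDA(S)$ with dual $f_*f^*\Q_S$; twisting and shifting preserves dualizability, so $\Q_S(X)\in\RigDA(S)^{\fd}$.

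Once dualizability is in hand, I would simply apply \Cref{thm:main}: the de Rham–Fargues–Fontaine realization $\dR_S^{\rm FF}$ sends $\RigDA(S)^{\fd}$ to split perfect complexes of $\mcO_{\mcX(S^{\Perf})}$-modules, whose cohomology sheaves are vector bundles on $\mcX(S^{\Perf})$ and vanish for $|i|\gg 0$. Applying this to $M=\Q_S(X)$ gives the claim, including the vanishing range (which one can also read off from the fact that $\dR_S^{\rm FF}$ factors through $\mcD(S^{\Perf})$, a symmetric monoidal functor, followed by $\dR_{\mcX(S^{\Perf})}$, and the cohomological amplitude of the de Rham complex is controlled by $2d$).

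There is essentially no obstacle here: the work has all been done in \Cref{thm:main} (finiteness, splitness) and in \Cref{cor:dw}/\Cref{thm:dRm} upstream, and the only input specific to the corollary is the standard fact that smooth proper maps produce dualizable motives, which is quoted from \cite{agv}. If one wants to be slightly more careful, the one thing worth spelling out is the identification $f_\sharp f^*\cong f_!f^!(-d)[-2d]$ via relative purity for $\RigDA$, but this is also part of the formalism recalled in \cite{agv} and requires no new argument.
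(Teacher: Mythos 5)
Your proposal is correct and matches the paper's own argument: the paper likewise deduces the corollary from \Cref{thm:main} by observing that $\Q_S(X)=f_!f^!\Q_S$ (up to twist and shift) is dualizable for $f$ smooth and proper, citing \cite[Corollary 4.1.8]{agv}. Your extra remarks on purity and the twist $(-d)[-2d]$ are harmless elaborations of the same step, since twists and shifts preserve dualizability.
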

\begin{proof}
It suffices to point out that the motive $\Q_S(X)$ is   dualizable, and this follows from \cite[Corollary 4.1.8]{agv}.
\end{proof}

It is also well known that the absolute de Rham cohomology for algebraic varieties over a field (of characteristic $0$) is finite, for any sort of variety $X$. Once again, the same result holds for the de Rham-Fargues-Fontaine cohomology, as the next corollary shows. 
\begin{cor}\label{cor:KK}
Let $K$ be a perfectoid field of characteristic $p$. If $M$ is a compact motive {(e.g., the motive attached to a smooth quasi-compact rigid variety over $K$)} in $\RigDA(K)$, {then $\dR_K^{\rm FF}(X)$ is a split perfect complex of $\mathcal{O}_{\mcX(K)}$-modules over} the relative Fargues-Fontaine curve $\mcX(K)$.  
\end{cor}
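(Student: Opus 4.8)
The plan is to reduce the statement to \Cref{thm:main}. That theorem, applied with $S=K$ (viewed as the adic space $\Spa(K,K^\circ)$, which lies in $\catD_{/\F_p}$ and satisfies $K^{\Perf}=K$ since a perfectoid field is already perfect), says that the de Rham--Fargues--Fontaine realization carries dualizable motives over $K$ to split perfect complexes of $\mathcal{O}_{\mcX(K)}$-modules. So the only genuine work is to show that over a perfectoid (in particular complete non-archimedean) field $K$ every compact motive is dualizable, i.e. that $\RigDA(K)^{\omega}=\RigDA(K)^{\mathrm{ct}}\subseteq\RigDA(K)^{\fd}$; here the identification $\RigDA(K)^{\omega}=\RigDA(K)^{\mathrm{ct}}$ is available because $K$ is quasi-compact and quasi-separated.

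For this I would argue as follows. The subcategory $\RigDA(K)^{\fd}$ of dualizable objects is an idempotent-complete stable subcategory of $\RigDA(K)$ which is closed under shifts, finite colimits and tensor products, and which contains every $\otimes$-invertible object. By \Cref{thm:basicprop}(1), $\RigDA(K)^{\mathrm{ct}}$ is the smallest subcategory with these closure properties containing the generators $\Q_K(X)(n)$ with $X\in\Sm/K$ affinoid and $n\in\Z$. Since the Tate twists $\Q_K(K)(n)$ are $\otimes$-invertible, hence dualizable, it remains only to see that $\Q_K(X)$ is dualizable for every smooth affinoid rigid variety $X$ over $K$.

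This last point is exactly the rigid-analytic incarnation of Ayoub's dualizability theorem, and it is the crux of the argument: I would invoke \cite[Proposition 2.31]{ayoub-new} together with Riou's dualizability criterion \cite{riou-dual} to conclude that the motive of a smooth affinoid rigid variety over a complete non-archimedean field is dualizable. Note that there is no properness hypothesis here; the substantive input is rather the availability of suitable resolutions/compactifications in the rigid-analytic setting, which is where the difficulty lies. Granting this, one gets $\RigDA(K)^{\omega}\subseteq\RigDA(K)^{\fd}$, and \Cref{thm:main} then yields the corollary. Finally, for the parenthetical example I would remark that the motive of a smooth quasi-compact rigid variety over $K$ is indeed compact: everything in $\catD$ being quasi-separated and of finite (rational \'etale) cohomological dimension, such a motive is obtained from finitely many motives of affinoid opens by iterated Mayer--Vietoris, and hence lies in $\RigDA(K)^{\omega}$.
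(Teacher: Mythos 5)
Your proof is correct and follows essentially the same route as the paper: reduce to \Cref{thm:main} and invoke \cite[Proposition 2.31]{ayoub-new} together with \cite{riou-dual} for the fact that over a complete non-archimedean field every compact motive in $\RigDA(K)$ is dualizable. The extra reduction you spell out (to the affinoid generators $\Q_K(X)(n)$) is fine but not needed beyond what those references already provide, which is exactly how the paper argues.
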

\begin{proof}
	Whenever the base {is the spectrum of  a field $K$, any compact motive in  $\DA(K)$ is dualizable, as proved in \cite{riou-dual} {(we use the fact that we have rational coefficients)}. Since the image of the (monoidal)  functor  $\DA(K)\to\RigDA(K)$ induced by  analytification generates the target category  ({again, since we have rational coefficients: }see \cite[Proposition 2.31]{ayoub-new}) we deduce that also in $\RigDA(K)$ any compact motive  is dualizable.}%
\end{proof}

\begin{rmk}
	We stress  that there is no ``smoothness'' nor ``properness'' condition on the motive $M$  above: for example, any (eventually singular, or non-proper) algebraic variety $p\colon X\to K$ has an attached  (homological) motive $p_!p^!\Q(K)$ which is   dualizable in $\DA(K)$ (by \cite[Th\'eor\`eme 8.10]{ayoub-etale}) hence in $\RigDA(K)$, after analytification. It coincides with the homological motive of the analytified variety by \cite[Th\'eor\`eme 1.4.40]{ayoub-rig}. 
\end{rmk}

\begin{rmk}\label{rmk:alg}
By pre-composing $\mcD$ with other symmetric monoidal functors, we can deduce further cohomology theories. For example, if $S=\Spa(A,A^+)$ is affinoid, we may consider the analytification functor (see \cite[Proposition 2.2.13]{agv}):
$$
\operatorname{An}^*\colon\DA(\Spec A)\to\RigDA(S),
$$
getting a de Rham-Fargues-Fontaine realization for \emph{algebraic} varieties over $A$.
\end{rmk}

\subsection{Comparison with the $B^+_{\dR}$-cohomology of \cite{bms1}}\label{sec:Bdr}
To conclude this text, we would like to briefly discuss the relation between the de Rham-Fargues-Fontaine realization and some other cohomology theories.

{Let $K$ be a perfectoid field of characteristic $p$.} From \Cref{cor:fw'} one deduces that, under the hypotheses of \Cref{cor:KK}, the specialization of $\dR_K^{\rm FF}(M)$ at  some un-tilt $K^\sharp$ of $K$  is isomorphic to the $K^\sharp$-overconvergent de Rham cohomology  $R\Gamma_{\dR}(M,K^\sharp)$ defined in \cite[Definition 4.2]{vezz-tilt4rig}. Therefore, $\dR_K^{\rm FF}(M)$ is a perfect complex on the Fargues-Fontaine curve interpolating between the overconvergent de Rham cohomologies of $M$ at various untilts of $K$, which are parametrized by rigid points of the curve.

\begin{rmk}
Using the above notation, if $X$ is the analytification of a smooth algebraic qcqs variety over
$K^\sharp$, resp. a smooth proper rigid analytic variety over $K^\sharp$, the $K^\sharp$-overconvergent de Rham cohomology
$R\Gamma_{\dR}(\Q_{K^\sharp}(X),K^\sharp)$   coincides with the  algebraic de Rham cohomology  over
$K^\sharp$, resp. with the analytic de Rham cohomology of $X$ over $K^\sharp$ (see \cite[Proposition 5.12]{vezz-mw}), but we stress that the Hodge filtration on the latter {is not expected to be }
recovered by this rigid-analytic motivic construction.
\end{rmk}
	
Suppose now that $C$ is a perfectoid field of characteristic $0$ (or, more generally, an {{admissible}} perfectoid space over it). We notice that the oveconvergent de Rham cohomology over $C$ extends to a cohomology with values over $\QCoh(\mcX(C))$ via the composition:
$$
	\RigDA(C)\cong\RigDA(C^\flat)\stackrel{{\dR^{\mathrm{FF}}}}{\longrightarrow}\QCoh(\mcX(C^\flat))^{\op}\!\!\!\!\!.
$$
We now consider the particular case where $C$ is algebraically closed. Let $k$ be its residue  field, and $B^+_{\dR}$ be Fontaine's pro-infinitesimal thickening
$$
B_{\rm dR}^+:=W(\mathcal{O}_C^\flat)[1/p]^{\wedge_\xi}  \overset{\theta} \to C 
$$
with $\xi$ denoting a generator of the kernel of the map $\theta\colon W(\mathcal{O}_C^\flat) \to \mathcal{O}_C$. We also pick a section of $\mcO_C/p\to k$ giving rise to a splitting $k\to \mcO_{C^\flat}$. The overconvergent de Rham cohomology over $C$ can be extended over $B^+_{\dR}$ as follows:
$$
	\RigDA(C)^{\fd}\cong\RigDA(C^\flat)^{\fd}\stackrel{{\dR^{\mathrm{FF}}}}{\longrightarrow}\mcP(\mcX(C^\flat))^{\op}\to\mcP(B_{\dR}^+)^{\op}
$$
where the last arrow is induced {by the section at $\infty$ of the Fargues-Fontaine curve, and the} identification $\widehat{\mathcal{O}}_{\mcX(C^\flat),\infty} \cong B_{\rm dR}^+$. 
We note that by \Cref{cor:fw'}, this is equivalent to considering a spreading out from $C$ to its open neighborhoods on the curve as follows:
\begin{equation}\tag{${+}$}\label{+}
\RigDA(C)^{\fd}\cong\varinjlim_{\infty\in U} \RigDA(\mcO(U))^{\fd}\stackrel{\dR}{\longrightarrow}\varinjlim_{\infty \in U}\mcP(\mcO(U))^{\op}\to\mcP(B_{\dR}^+)^{\op}\!\!\!\!\!.
\end{equation}

	In \cite[Section 13]{bms1} Bhatt, Morrow and Scholze also constructed, for proper smooth rigid varieties over $C$, a deformation of de Rham cohomology along $B_{\dR}^+$ using a different spreading out argument that we now recall in order to set some notation. By de Jong's theorem (see the proof of \cite[Lemma 13.7]{bms1}) we have $\Spa(C)\sim\varprojlim_{S,\eta} S$ where $S$ runs among affinoid {spaces  }that are  \emph{smooth}  over  the discrete valued field $K
\colonequals W(k)[1/p]$ equipped with a $C$-rational point $\eta\colon\Spa C\to S$. By eventually taking an open neighborhood of $\eta$, we may also assume that $S\to \Spa K$ factors as $S\stackrel{e}{\to}\B^N_K\to \Spa K$ for some $N\in\N$ and some \'etale map $e$.{ If we let $A$ be $\mcO(S)$, }we remark that $\eta\colon A\to C$ has a (non-unique) lift $\ell\colon A\to B^+_{\dR}$ over $C$, by the smoothness of $A/K$. More precisely, we have the following.
\begin{prop}\label{lift}
With the notation above, there is an affinoid open neighborhood $U$ of $\infty$ and a map $f\colon U\to S$ such that $\eta$ factors as
$
\Spa C\stackrel{\infty}{\to }U\stackrel{f}{\to }S.
$
\end{prop}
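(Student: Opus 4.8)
The plan is to realise the (formal) lift $\ell\colon A\to B_{\dR}^+$ geometrically, using the identification $\widehat{\mcO}_{\mcX(C^\flat),\infty}\cong B_{\dR}^+$ together with the facts that this is a complete (hence Henselian) discrete valuation ring with residue field $C$ and that $A$ is \'etale over a polydisc. Write $R\colonequals K\langle T_1,\ldots,T_N\rangle$ and view $e$ as an \'etale ring map $R\to A$, so that the $T_i$ become power-bounded elements of $A$. First I would lift the power-bounded elements $\eta(T_1),\dots,\eta(T_N)\in\mcO_C$ to power-bounded elements $\tilde b_i$ of the local ring $\mcO_{\mcX(C^\flat),\infty}=\varinjlim_{\infty\in U}\mcO(U)$ along $\mcO_{\mcX(C^\flat),\infty}\to C$; these $\tilde b_i$ lie in $\mcO(U_0)$ for a small affinoid neighbourhood $U_0$ of $\infty$, and after replacing $U_0$ by the rational subset $\{\,|\tilde b_i|\le 1\ \forall i\,\}$ (still a neighbourhood of $\infty$ since $|\eta(T_i)|\le 1$) and passing to a connected affinoid neighbourhood inside it, we may assume $\tilde b_i\in\mcO(U_0)^\circ$. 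Then $T_i\mapsto\tilde b_i$ defines a morphism $\tilde e\colon U_0\to\B^N_K$ with $\tilde e\circ\infty=e\circ\eta$.

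Next I would use Henselianity to produce a section over the formal neighbourhood of $\infty$. Set $Y\colonequals S\times_{\B^N_K,\tilde e}U_0$, which is \'etale over $U_0$. Since $B_{\dR}^+=\widehat{\mcO}_{U_0,\infty}$ is Henselian and $\tilde e\circ\infty=e\circ\eta$, the evident $C$-point of the fibre $Y\times_{U_0}\Spa C=S\times_{\B^N_K,e\circ\eta}\Spa C$ determined by $\eta$ lifts uniquely to a section $\sigma\colon\Spa B_{\dR}^+\to Y\times_{U_0}\Spa B_{\dR}^+$; equivalently, one may (and, up to changing the chosen lift, must) take $\ell$ to be the lift of $\eta$ with $\ell|_R=(T_i\mapsto\tilde b_i)$. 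The main — and only delicate — step is then to spread $\sigma$ out to an honest neighbourhood. Let $y_\infty\in Y$ be the image of the closed point of $\Spa B_{\dR}^+$; it lies over $\infty$ and has residue field $C$. Near $y_\infty$ the morphism $Y\to U_0$ is an open immersion followed by a finite \'etale map, so the fibre $Y\times_{U_0}\Spa C$ is locally a finite disjoint union of copies of $\Spa C$; hence $\{y_\infty\}$ is open and closed there, and I can choose an open $W\subset Y$ with $W\ni y_\infty$ and $W\times_{U_0}\Spa C=\{y_\infty\}$. Shrinking $U_0$ to a connected affinoid $U\ni\infty$ over which $W\to U$ becomes finite \'etale (Huber's theory of \'etale maps of adic spaces), the degree of $W\times_{U_0}U\to U$ is the constant value $1$ read off from the fibre over $\infty$, so this map is an isomorphism $W\times_{U_0}U\xrightarrow{\sim}U$. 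I then define $f$ as the composite $U\xrightarrow{\sim}W\times_{U_0}U\hookrightarrow Y\to S$.

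Finally I would check the compatibility $f\circ\infty=\eta$: the canonical map $\Spa B_{\dR}^+\to U$ coming from $\widehat{\mcO}_{\mcX(C^\flat),\infty}\cong B_{\dR}^+$ sends the closed point to $\infty$, and $\sigma$ factors through $W\times_{U_0}U$, so $f$ precomposed with $\Spa B_{\dR}^+\to U$ equals the map $\Spa B_{\dR}^+\to S$ induced by $\ell$; restricting along the closed point $\Spa C\hookrightarrow\Spa B_{\dR}^+$ gives $f\circ\infty=\eta$ because $\ell$ lifts $\eta$. I expect the one genuinely nontrivial point to be the second paragraph — passing from the completed local picture at $\infty$ to an actual analytic neighbourhood — which is exactly where the Henselian property of $B_{\dR}^+$ and the local structure of \'etale morphisms of adic spaces are used; the rest (the reductions to the \'etale presentation, the connectedness of small annular neighbourhoods $\B_{[s,r]}(C^\flat)/\varphi^\Z$ of $\infty$, and the bookkeeping with fibres) is routine.
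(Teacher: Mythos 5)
Your first step (lifting the coordinates $\eta(T_i)\in\mcO_C$ to a map $\tilde e\colon U_0\to\B^N_K$ with $\tilde e\circ\infty=e\circ\eta$) is fine and is exactly the paper's choice of a lift $\alpha\colon U\to\B^N_K$ of $e\circ\eta$. The gap is in the second paragraph, which you yourself flag as the only delicate point, and it is carried entirely by the sentence ``Shrinking $U_0$ to a connected affinoid $U\ni\infty$ over which $W\to U$ becomes finite \'etale''. This is not a citable fact, and for the class of opens $W$ you allow (any open with $W\times_{U_0}\Spa C=\{y_\infty\}$) it is false: take $Y=U_0\sqcup U_0$ the trivial degree-$2$ cover and $W=Y\setminus\{y'\}$ with $y'$ the second (classical, hence closed) point over $\infty$; then $W$ satisfies your conditions, but $W\times_{U_0}U$ is $Y\times_{U_0}U$ minus a classical point, hence never quasi-compact and never finite over $U$, no matter how small the affinoid $U\ni\infty$ is. Moreover the tool you lean on, Henselianity of $B^+_{\dR}=\widehat{\mcO}_{\mcX(C^\flat),\infty}$, cannot supply what is missing: it only produces a section over $\Spa B^+_{\dR}$, i.e.\ over the \emph{completed} stalk, and passing from such a formal section to a section over an honest open neighborhood is precisely the content of the proposition; a section over the completion does not spread out by itself.

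What is actually needed is a spreading-out input at the level of genuine neighborhoods of $\infty$: either the Henselianity of the \emph{uncompleted} stalk $\mcO_{\mcX(C^\flat),\infty}=\varinjlim_{\infty\in U}\mcO(U)$, which splits off a degree-one factor of the finite \'etale chart $Z\to V$ through $y_\infty$ and then descends to some finite stage $U$ of the colimit (after which your degree/connectedness argument does finish the job), or, as the paper does, the continuity of \'etale sites along the weak limit $\infty\sim\varprojlim_{\infty\in U}U$ in Huber's sense: the section $\eta$ of the \'etale map $S\times_{\B^N_K}\Spa C\to\Spa C$ extends, after shrinking $U$, to a section $\eta_U$ of $S\times_{\B^N_K}U\to U$, and $f$ is its composition with the projection to $S$. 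So your argument follows the same skeleton as the paper's but, as written, replaces its one genuinely substantive ingredient by an assertion that is unjustified and, in the generality stated, wrong; to repair it you should delete the detour through $\Spa B^+_{\dR}$ and invoke Henselianity of the stalk (or Huber's continuity theorem) at the step where the section is spread out.
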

\begin{proof}
Choose a lift $\alpha\colon U\to\B^N_K$ of the map $e\circ\eta$ and consider the \'etale map $e_U\colon S\times_{\B^N_K}U\to U$. We note that $\eta$ defines a section of the map $e_C\colon S\times_{\B^N_K}\Spa C\to \Spa C$.  Since $\infty\sim\varprojlim_{\infty\in U} U$ we deduce that, up to shrinking $U$, there is also a section $\eta_U$ to the map $e_U$ and hence a map $f\colon U\to S$ with the required property.
\end{proof}
Let $X/C$ be a smooth and proper variety. By \cite[Corollary 13.16]{bms1} there exists $(S,\eta)$ as above and a smooth and proper variety  $\widetilde{X}/S$ such that $\widetilde{X}\times_{S,\eta}C\cong X$. The $B^+_{\dR}$-cohomology  is then given by:
$$
\mathrm{R}\Gamma_{\rm crys}(X/B^+_{\dR})\colonequals 	\mathrm{R}\Gamma_{\dR}(\widetilde{X}/S)\otimes_{A,\ell}B^+_{\dR}
$$
and it can be made independent on the various choices made, as shown  in \cite[Section 13.1 and Theorem 13.19]{bms1}. We also note that, by \Cref{prop:superf}, the functor $\widetilde{ X}\mapsto \mathrm{R}\Gamma_{\rm crys}(X/B^+_{\dR})$ is easily seen to be extended by the following composition
\begin{equation}\tag{${+\!+}$}\label{++}
\RigDA(S)^{\fd}\stackrel{\dR}{\longrightarrow}\mcP(A)^{\op}\stackrel{\ell^*}{\to}\mcP(B^+_{\dR})^{\op}\!\!\!\!\!.
\end{equation}

\begin{rmk}
In \cite{bms1}, the $B^+_{\dR}$-cohomology is defined for arbitrary smooth varieties over $C$, but it is not $\B^1$-invariant. We may interpret \eqref{++} as being an \emph{overconvergent} version of their construction.
\end{rmk}

\begin{thm}\label{dRFF=Bdr+}
Let $X$ be a smooth and proper variety over $C$. Then $\mathrm{R}\Gamma_{\rm crys}(X/B^+_{\dR})$ is canonically equivalent to $\dR^{\rm FF}_{C^\flat}(M_C(X)^\flat)\otimes_{\mcO_{\mcX(C^\flat)}} B^+_{\dR}$. In particular the de Rham-Fargues-Fontaine cohomology over a complete algebraically closed field $C$ is compatible with (an overconvergent version of) the $B^+_{\dR}$-cohomology of \cite{bms1}.
\end{thm}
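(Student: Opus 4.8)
The plan is to reduce both cohomology theories appearing in the statement to one and the same perfect complex over a single open affinoid $U\subset\mcX(C^\flat)$ containing the point $\infty$, and then to base change along the identification $\widehat{\mcO}_{\mcX(C^\flat),\infty}\cong B^+_{\dR}$. On the de Rham–Fargues–Fontaine side I would invoke \eqref{+}: by \Cref{cor:fw'} the specialization of $\dR^{\rm FF}_{C^\flat}$ at $\infty$ is the overconvergent de Rham realization over $C$, and more precisely $\dR^{\rm FF}_{C^\flat}(M_C(X)^\flat)\otimes_{\mcO_{\mcX(C^\flat)}}B^+_{\dR}$ is computed as $\varinjlim_{\infty\in U}\dR_{\mcO(U)}(M_U)\otimes_{\mcO(U)}B^+_{\dR}$, where $M_U\in\RigDA(\mcO(U))^{\fd}$ ranges over models of $M_C(X)$ under the continuity equivalence $\RigDA(C)^{\fd}\cong\varinjlim_{\infty\in U}\RigDA(\mcO(U))^{\fd}$ (coming from $\infty\sim\varprojlim_{\infty\in U}U$ and \Cref{thm:cont}). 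Since $\dR$ is compatible with pullbacks along open immersions, the transition maps of this filtered colimit are restrictions and the colimit into $\mcP(B^+_{\dR})^{\op}$ is base change; hence it suffices to exhibit one convenient model $M_U$ over one such $U$ and to identify $\dR_{\mcO(U)}(M_U)\otimes_{\mcO(U)}B^+_{\dR}$ with the Bhatt–Morrow–Scholze object.

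The geometric bridge is \Cref{lift}. Choosing $(S,\eta)$ and a smooth proper $\widetilde X/S$ with $\widetilde X\times_{S,\eta}C\cong X$ as in \cite[Corollary 13.16]{bms1}, \Cref{lift} produces an open affinoid $U\ni\infty$ and a map $f\colon U\to S$ with $\eta=f\circ\infty$. I would take $M_U$ to be the motive of $\widetilde X\times_{S,f}U$ over $U$: it is dualizable by \cite[Corollary 4.1.8]{agv} (base change of a proper smooth morphism), and $\infty^*M_U$ is the motive of $\widetilde X\times_{S,\eta}C=X$ over $C$, so $M_U$ is a model of $M_C(X)$ and may be used in the colimit above. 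By \Cref{cor:dR}(3) and \Cref{prop:superf} (which identifies $\dRRo{\widetilde X}{S}$ with $\dRRs{\widetilde X}{S}=R\Gamma_{\dR}(\widetilde X/S)$, as $\widetilde X/S$ is proper and smooth) we get $\dR_U(M_U)\cong f^*R\Gamma_{\dR}(\widetilde X/S)$. Since $\dR_S(\widetilde X)$ is a perfect complex (\Cref{cor:dRm}), solid base change is the naive one, so base changing further along $\mcO(U)\to B^+_{\dR}$ gives $\dR_U(M_U)\otimes_{\mcO(U)}B^+_{\dR}\cong R\Gamma_{\dR}(\widetilde X/S)\otimes_{A,\ell}B^+_{\dR}$, where $\ell\colon A\to\mcO(U)\to B^+_{\dR}$. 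Because $\eta=f\circ\infty$ and $\infty^\#\colon\mcO(U)\to C$ factors through $\theta\colon B^+_{\dR}\to C$, the map $\ell$ is a lift of $\eta$ over $C$, so by \eqref{++} and the independence–of–choices statement \cite[Theorem 13.19]{bms1} the right–hand side is exactly $R\Gamma_{\rm crys}(X/B^+_{\dR})$. Combining this with the first paragraph, both objects are canonically $\dR_U(M_U)\otimes_{\mcO(U)}B^+_{\dR}$, which proves the equivalence; canonicity (independence of the auxiliary choices) follows from the canonicity built into \eqref{+}, \eqref{++} and \cite[Section 13.1]{bms1}.

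The main obstacle I expect is bookkeeping the compatibilities rather than any single hard estimate: one must check that the notion of ``model along continuity'' used in \eqref{+} and the explicit model $\widetilde X\times_{S,f}U$ obtained from \Cref{lift} can be taken to agree (they become canonically identified after applying $\infty^*$), that the ring map $\ell$ produced geometrically is genuinely one of the lifts to which the independence statement of \cite{bms1} applies, and that completing a perfect complex on $\mcX(C^\flat)$ at $\infty$ coincides with base change along $\mcO(U)\to B^+_{\dR}$ for $U$ small — this last point uses $B^+_{\dR}\cong\widehat{\mcO}_{\mcX(C^\flat),\infty}$ together with the fact that, on perfect complexes, the derived tensor product commutes with the relevant filtered colimit over neighborhoods of $\infty$.
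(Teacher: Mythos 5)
Your proposal is correct and follows essentially the same route as the paper: both use the continuity/spreading-out description \eqref{+} of the de Rham--Fargues--Fontaine side, the lift $f\colon U\to S$ of $\eta$ from \Cref{lift} to compare with a BMS-type model $\widetilde X/S$, the compatibility of $\dR$ with pullbacks together with \Cref{prop:superf}, and the choice $\ell\colon A\to\mcO(U)\to B^+_{\dR}$. The paper packages the bookkeeping you describe into a single commutative diagram of realization functors (which in passing re-proves independence of $\ell$, where you instead cite \cite[Theorem 13.19]{bms1}), but the mathematical content is the same.
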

\begin{proof}
By $\RigDA(C)\cong\varinjlim\RigDA_{S,\eta}(S)$ we might fix a $(S,\eta)$ as above and show that for a given  $\ell\colon A\to B_{\dR}^+$, the functor \eqref{++} coincides with 
$$
\RigDA^{\fd}(S)\to\RigDA^{\fd}(C)\stackrel{\eqref{+}}{\to}\mcP(B^+_{\dR})^{\op}\!\!\!\!\!.
$$
To this aim, it suffices to choose a lift $\tilde{\ell}\colon U\to S$ as in \Cref{lift} and put $\ell \colon A\to B^+_{\dR}$ to be the one induced by $A\stackrel{\tilde{\ell}}{\to} \mcO(U)\to B^+_{\dR}$. The claim then follows from the commutative diagram below (which also re-proves that \eqref{++} is independent on the choice of $\ell$).
$$
\xymatrix{
\RigDA(S)\ar@/^25pt/[rrr]_{\eta^*}\ar[r]_{\tilde{\ell}^*}\ar[d]^{\dR}&\RigDA(U)\ar[d]^{\dR}\ar[r]&\varinjlim\RigDA(U)\ar@{-}[r]_-{\sim}\ar[d]^{\dR}&\RigDA(C)\ar[d]^{\eqref{+}}\\
\mcP(A)^{\op}\ar[r]^{\tilde{\ell}^*}\ar@/_25pt/[rrr]^{\ell^*}&\mcP(\mcO(U))^{\op}\ar[r]&\varinjlim\mcP(\mcO(U))^{\op}\ar[r]&\mcP(B^+_{\dR})^{\op}
}
$$
\end{proof}

{\begin{rmk}
This  completes our proof that $R\Gamma_{{\rm FF}_C}(-)\colonequals \dR_{C^\flat}^{\rm FF}(-^\flat)$ satisfies all the requirements of \cite[Conjecture 6.4]{scholze-icm2}. Notice that the description given in \eqref{++} shows that its completion at $\infty$ is an overconvergent version of $R\Gamma_{\rm crys}(-/B^+_{\dR})$ as defined in \cite[Section 13]{bms1}.
\end{rmk}}
\begin{rmk}
de Jong's theorem allows one to write $\Spa C\sim\varprojlim_{(S,\eta)}S$ with $S$ being smooth over $\Q_p$. By motivic continuity we deduce  $\RigDA(C)^{\fd}\cong\varinjlim\RigDA(S)^{\fd}$ so that  one can spread out a compact motive over $C$ to some dualizable motive defined over $\Spa(A)$ with $A$ smooth over $\Q_p$. This is the motivic version of the spreading out arguments of Conrad-Gabber mentioned in \cite[Remark 13.17]{bms1}.
\end{rmk}

\subsection{Comparison with rigid cohomology}
We first describe the de Rham-Fargues-Fontaine realization on objects with good reduction. Let us do it in the affinoid case, for simplicity. Let $S=\mathrm{Spa}(R,R^+) \in \Perf_{/\F_p}$. As an immediate consequence of \Cref{prop:gr}, we see, using the notation introduced there, that the composition 
$$
 \FDA(\mathrm{Spf}(R^+)) \stackrel{\eta^*}{\to} \RigDA(S)\stackrel{\dR_S^{\rm FF}}{\longrightarrow}\QCoh(\mcX(S))^{\op}
$$
 is simply given by composing $\widetilde{\mathcal{D}}(R^+)$ with $\dR_{\mcX(S)}$. Informally speaking: formal motives over $R^+$ uniquely lift to the Witt vectors of $R^+$, and the de Rham-Fargues-Fontaine realization of their generic fiber can be deduced from the overconvergent de Rham cohomology of this lift after inverting $p$.

Here is a variant without topology, i.e. on \emph{discrete} rings. Let $A$ be a perfect $\F_p$-algebra and $S=\mathrm{Spa}(R,R^+) \in \Aff\Perf_{/A}$ that is, an affinoid perfectoid space  with a map $f: S \to \mathrm{Spa}(A)$ ($A$ is endowed with the discrete topology). The composition
$$
\DA(\mathrm{Spec}(A)) \cong \FDA(\mathrm{Spf}(A)) \overset{f^\ast} \longrightarrow \FDA(\mathrm{Spf}(R^+)) \overset{\eta^\ast} \to \RigDA(S)\stackrel{\dR_S^{\rm FF}}{\longrightarrow}\QCoh(\mcX(S))^{\op}$$
defines a functor
$$
\Rig_{A,S}^{\mathrm{FF}}: \DA(\mathrm{Spec}(A)) \to \QCoh(\mcX(S))^{\op}
$$
which is compatible with pullbacks along maps $g: S^\prime \to S$ in $\Aff\Perf_{/A}$. %
{
By \Cref{thm:main}, the restriction of the functor above to  fully dualizable objects takes values in the {infinity-subcategory $\mcP(\mcX(S))$ made of perfect complexes on  $\mcX(S)$. In particular, we obtain  for each $S\in\Aff\Perf_{/A}$ a functor:
$$
\Rig_{A,S}^{\mathrm{FF}}: \DA(\mathrm{Spec}(A))^{\rm fd} \to \mcP(\mcX(S))^{\op}
$$
which is compatible with base change in $S$}.  
The category {$\mcP(\mcX(S))$ satisfies $v$-descent with respect to $S$ (cf. \cite[Proposition 2.4]{anschuetzlebrasf})}. 
We may then introduce the following.
\begin{dfn}
	We denote by $
	{\mcP(\mcX(\mathrm{Spa}(A)))}
	$
	the category
	$$
{\lim_{S\in\mathrm{AffPerf}_{/A}} \mcP({\mcX}(S))},
$$
	that is, 
	the category of global sections of the $v$-stack {$\mcP({\mcX}(-))$} restricted to $\mathrm{AffPerf}_{/A}$. 
\end{dfn}

One may think of {$\mcP(\mcX(\mathrm{Spa}(A)))$ as the category of perfect complexes} over the non-existing $\mcX(\mathrm{Spa}(A))$. This category is a priori inexplicit, but receives a functor from a more familiar category, as we now explain. 

\begin{dfn}
Set
$
Y_A\colonequals \mathrm{Spa}(W(A)[1/p],W(A))$. {It is a sheafy adic space (\cite[Remark 13.1.2]{berkeley}), endowed with a Frobenius endomorphism $\varphi$.} We let $
\mathrm{Isoc}_A
$ be 
the category {$(\mcP^{}(Y_A))^{h\varphi}$  of $\varphi$-equivariant perfect} complexes on $Y_A$.  
\end{dfn}

When $A=k$ is a perfect field of characteristic $p$, objects of $\mathrm{Isoc}_A$ are bounded complexes of isocrystals over $k$, whence the notation. 
We have for each $S=\mathrm{Spa}(R,R^+) \in\Aff\Perf_{/A}$ a functor
$$
\mathcal{E}_{A,S}:  \mathrm{Isoc}_A \to  {\mcP(\mcX(S))}
$$
induced by the pullback functor on solid quasi-coherent sheaves along the ($\varphi$-equivariant) map $W(A) \to W(R^+)$. It is functorial in $S\in\Aff\Perf_{/A}$. Taking the limit over $S$, we deduce a functor
$$
\mathcal{E}_A  :\mathrm{Isoc}_A \to  {\mcP(\mcX(\mathrm{Spa}(A)))}.
$$

\begin{rmk}\label{result-anschuetz}
In the case $A=\overline{\F}_p$, the functor $\mathcal{E}_{\overline{\F}_p}$ is an equivalence, as proved by Ansch\"utz \cite[Theorem 3.5]{ansc}.%
\end{rmk}
 
\begin{dfn}
We let $\Rig_A^{\rm FF}$ be the functor 
$$\Rig_A^{\mathrm{FF}}: \DA(\mathrm{Spec}(A))^{\rm fd} \to {\mcP(\mcX(\mathrm{Spa}(A)))^{\op}}
$$
obtained by taking the limit of the functors $\Rig_{A,S}^{\mathrm{FF}}$ for $S\in\Aff\Perf_{/A}$.
\end{dfn}

The functor $\Rig_A^{\mathrm{FF}}$ is nothing surprising: it is simply rigid cohomology in disguise. To make this precise, let us recall the definition of the latter.

\begin{dfn}
Let $A$ be a perfect $\F_p$-algebra. {The functor
$$
\DA(\Spec(A))^{\rm fd} \to \mathrm{Isoc}_A^{\rm op}
$$
obtained as the restriction to fully dualizable objects of the composition of the Monsky-Washnitzer-type functor
$$
\DA(\mathrm{Spec}(A)) \overset{\sigma^\ast} \cong \FDA(\mathrm{Spf}(W(A))) \to \FDA(\mathrm{Spf}(W(A)))^{h\varphi^\ast} \overset{\eta^\ast} \to \RigDA(Y_A)^{h\varphi^\ast}
$$
with
$$
\mathrm{dR}_{X_A}^{h\varphi^\ast}: \RigDA(Y_A)^{h\varphi^\ast} \to {\mathrm{Isoc}^{\rm op}_A}
$$
is called \textit{rigid cohomology} and denoted by $\mathrm{R}\Gamma_R^{\rm rig}$.}
\end{dfn}
Rigid cohomology of the motive of a proper smooth variety over $R$ is simply crystalline cohomology {of its special fiber}, by Berthelot's comparison result between crystalline cohomology and de Rham cohomology of a lift (cf. \cite[Corollary 3.8]{b-dJ} for a short proof).

Again as an immediate consequence of the definitions and of \Cref{prop:gr}, we get:
\begin{prop}
\label{comparison-ffdr-crystalline}
Let $A$ be a perfect $\F_p$-algebra. We have a natural isomorphism
$$
\mathcal{E}_A \circ \mathrm{R}\Gamma_A^{\rm rig} \cong \Rig_A^{\mathrm{FF}}
$$
of functors from $\DA(\mathrm{Spec}(A))^{\rm fd}$ to ${\mcP(\mcX(\mathrm{Spa}(A)))^{\op}}\!\!\!\!\!.$ \qed
\end{prop}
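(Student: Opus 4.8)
The plan is to unwind the definitions of the four functors in sight and reduce the claim to a functorial‑in‑$S$ identification of their values, with \Cref{prop:gr} as the single substantial input. Fix a fully dualizable $M_0\in\DA(\Spec A)^{\rm fd}$; via the special–fibre equivalences $\DA(\Spec A)\cong\FDA(\Spf A)\cong\FDA(\Spf W(A))$ (\cite[Theorem 3.1.10]{agv}) I also view it as a formal motive over $W(A)$, and write $M_{0,Y_A}\in\RigDA(Y_A)$ for the generic fibre of the latter, equipped with its canonical Frobenius‑equivariant structure. By construction $\mathrm{R}\Gamma_A^{\rm rig}(M_0)=\dR_{Y_A}(M_{0,Y_A})$ with the induced $\varphi$‑action, an object of $\mathrm{Isoc}_A$; and for each $S=\Spa(R,R^+)\in\Aff\Perf_{/A}$ there is a $\varphi$‑equivariant map $g_S\colon\mcY_{(0,\infty)}(S)\to Y_A$ induced by $W(A)\to W(R^+)$, so that $\mathcal{E}_{A,S}$ is exactly ``pull back along $g_S$, then descend the $\varphi$‑equivariant split perfect complex along $\mcY_{(0,\infty)}(S)\to\mcX(S)=\mcY_{(0,\infty)}(S)/\varphi^{\Z}$''. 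Thus I must compare, functorially in $S$, the object $\dR_{\mcX(S)}\big(\mcD(S)(\eta^*f_S^*M_0)\big)$ (the $S$‑component of $\Rig_A^{\mathrm{FF}}$, with $f_S\colon\Spf R^+\to\Spf A$ the structure map) with the $\varphi$‑descent of $g_S^*\dR_{Y_A}(M_{0,Y_A})$ (the $S$‑component of $\mathcal{E}_A\circ\mathrm{R}\Gamma_A^{\rm rig}$).

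The key step is to identify $\mcD(S)(\eta^*f_S^*M_0)$. Since $f_S^*M_0$ is a formal motive over $R^+$, \Cref{prop:gr} (through the auxiliary functor $\widetilde{\mcD}(R^+)$) provides a canonical isomorphism, in $\RigDA(\mcX(S))\cong\RigDA(\mcY_{(0,\infty)}(S))^{h\varphi^*}$, between $\mcD(S)(\eta^*f_S^*M_0)$ and the $\varphi$‑equivariant motive obtained by restricting to $\mcY_{(0,\infty)}(S)$ the $W(R^+)$‑lift of $f_S^*M_0$. By functoriality of the formal pullback and its compatibility with algebraic base change and with the Witt‑vector functor, that $W(R^+)$‑lift is the pullback of the $W(A)$‑lift of $M_0$ along $\Spf W(R^+)\to\Spec W(A)$, so restricting to $\mcY_{(0,\infty)}(S)$ yields precisely $g_S^*M_{0,Y_A}$, with its $\varphi$‑structure. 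Hence $\mcD(S)(\eta^*f_S^*M_0)\cong g_S^*M_{0,Y_A}$ as $\varphi$‑equivariant objects of $\RigDA(\mcY_{(0,\infty)}(S))$.

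It then remains to commute $\dR$ with these operations. I would compare the two objects of $\mcP^{\rm sp}(\mcX(S))$ by restricting along $\mcY_{(0,\infty)}(S)\to\mcX(S)$ together with the canonical $\varphi$‑equivariance, since $\mcP^{\rm sp}(\mcX(S))$ is the category of $\varphi$‑equivariant objects of $\mcP^{\rm sp}(\mcY_{(0,\infty)}(S))$. On one side, $\mathcal{E}_{A,S}(\mathrm{R}\Gamma_A^{\rm rig}(M_0))$ is by construction the $\varphi$‑descent of $g_S^*\dR_{Y_A}(M_{0,Y_A})$, so its restriction is $g_S^*\dR_{Y_A}(M_{0,Y_A})$. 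On the other, by \Cref{cor:dR}(3) ($\dR$ is compatible with pullbacks) the restriction of $\dR_{\mcX(S)}(\mcD(S)(\eta^*f_S^*M_0))$ is $\dR_{\mcY_{(0,\infty)}(S)}$ of the restriction of $\mcD(S)(\eta^*f_S^*M_0)$, which by the previous step is $g_S^*M_{0,Y_A}$; applying \Cref{cor:dR}(3) once more along $g_S$ turns this into $g_S^*\dR_{Y_A}(M_{0,Y_A})$. The two restrictions and their $\varphi$‑equivariant structures agree, hence so do the descended objects; everything stays in $\mcP^{\rm sp}$ because $\eta^*f_S^*M_0$ is dualizable whenever $M_0$ is and $\dR$ sends dualizable motives to split perfect complexes (\Cref{thm:main}). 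Naturality in $M_0$ and in $S$ is inherited from that of all the functors and of \Cref{prop:gr}, so passing to $\lim_S$ over $\Aff\Perf_{/A}$ gives the asserted natural isomorphism $\mathcal{E}_A\circ\mathrm{R}\Gamma_A^{\rm rig}\cong\Rig_A^{\mathrm{FF}}$.

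I expect the only real difficulty to be the bookkeeping of Frobenius‑equivariant structures: one must check that the Frobenius implicit in $\mathrm{Isoc}_A$ (from $\varphi$ on $Y_A$) pulls back along $g_S$ to the very Frobenius used in the Dwork‑trick construction of $\mcD(S)$ (\Cref{cor:dw}, \Cref{prop:dw}), and that the identifications $\RigDA(\mcX(S))\cong\RigDA(\mcY_{(0,\infty)}(S))^{h\varphi^*}$ and $\mcP^{\rm sp}(\mcX(S))\cong\mcP^{\rm sp}(\mcY_{(0,\infty)}(S))^{h\varphi^*}$ are compatibly matched by $\dR$; these follow from the compatibilities set up in \Cref{sec:dwork} and \Cref{sec:dR} but must be invoked in the right order. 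One should also remember to work in the Krull‑dimension‑unrestricted setting used for $\widetilde{\mcD}$ before \Cref{prop:gr}, since $A$, and hence $Y_A$ and $\mcX(S)$, may fail to be of finite Krull dimension.
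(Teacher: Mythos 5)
Your proposal is correct and follows exactly the route the paper intends: the paper records this proposition as an immediate consequence of the definitions together with \Cref{prop:gr}, and your argument is precisely that unwinding — identifying $\mcD(S)(\eta^*f_S^*M_0)$ with the $\varphi$-equivariant pullback $g_S^*M_{0,Y_A}$ via \Cref{prop:gr} and base-change compatibility of the formal/generic-fibre functors, then commuting $\dR$ with pullbacks (\Cref{cor:dR}(3)) and matching the $\varphi$-descent data. Your remarks on the Frobenius bookkeeping and on lifting the Krull-dimension restriction are the same caveats the paper handles implicitly, so nothing further is needed.
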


In particular, when $A=\overline{\F}_p$, by the equivalence of \Cref{result-anschuetz}, the functor $\Rig_A^{\mathrm{FF}}$ is literally just rigid cohomology.

\end{document}